\documentclass[11pt,reqno]{amsproc}

\usepackage[sc]{mathpazo}\renewcommand{\mathbf}{\mathbold}
\normalfont
\usepackage[T1]{fontenc}
\usepackage[ttdefault=true]{AnonymousPro}
\usepackage[margin=0.9in]{geometry}
\usepackage{comment}
\usepackage{scalerel,nicefrac}
\usepackage[all,cmtip]{xy}
\usepackage{longtable}
\usepackage{dirtytalk}
\usepackage[dvipsnames]{xcolor}
\usepackage{todonotes}
\usepackage[inline]{enumitem}

\usepackage{rotating, setspace}

\makeatletter
\renewcommand{\l@subsection}{\@tocline{2}{0pt}{2.5pc}{5pc}{}}
\renewcommand{\l@subsubsection}{\@tocline{2}{0pt}{5pc}{7.5pc}{}}
\makeatother

\usepackage{bbm}
\usepackage{listings}
\usepackage{mathtools,amssymb}
\mathtoolsset{showonlyrefs}
\usepackage{tikz-cd}
\usepackage{subfigure}
\usepackage{shuffle}
\usepackage{mathrsfs}
\usepackage{multicol}
\usepackage{booktabs}
\usepackage{makecell}
\usepackage{mleftright}
\mleftright

\makeatletter
\renewcommand{\@secnumfont}{\bfseries}
\makeatother

\makeatletter
\renewcommand{\@secnumfont}{\bfseries}
\makeatother

\usepackage{hyperref}
\hypersetup{
    colorlinks=true,
    linkcolor=blue,
    filecolor=magenta,      
    urlcolor=cyan,
    citecolor=blue
    }
\numberwithin{equation}{section}
\usepackage[thmtools-compat]{keytheorems}
\declaretheorem[numberwithin=section, style=remark]{remark}
\declaretheorem[sibling=remark]{conjecture, proposition, theorem,%
lemma, corollary}

\declaretheorem[sibling=remark, style=definition]{definition, example}

\usepackage[nameinlink]{cleveref}
\AddToHook{cmd/appendix/before}{\crefalias{section}{appendix}}

\newcommand{\bA}{\mathbf{A}}

\newcommand{\R}{\mathbb{R}}

\newcommand{\ip}[2]{\left \langle #1, #2 \right \rangle}
\newcommand{\lp}[2][]{\left \| #2 \right\|_{#1}}

\newcommand{\Expectation}[2][]{\mathbf{E}_{#1}\left[#2\right]}

\newcommand{\Indicator}[1]{\mathbf{1}_{#1}}

\newcommand{\Prob}[2][]{\mathbf{P}_{#1}\left(#2\right)}

\newcommand{\eqdist}{\stackrel{d}{=}}

\newcommand{\dto}{\stackrel{d}{\to}}

\newcommand{\set}[1]{\left\{#1 \right \}}
\newcommand{\bigO}[2][]{\mathcal{O}_{#1}\left( #2 \right)}

\newcommand{\tr}{\operatorname{tr}}
\newcommand{\sech}{\operatorname{sech}}
\newcommand{\sinch}{\operatorname{sinch}}

\newcommand{\so}{\mathfrak{so}}

\allowdisplaybreaks

\title{The Singular Values of L\'evy's Area Matrix}

\author{Danilo Jr Dela Cruz}
\address{Mathematical Institute, University of Oxford. Woodstock Rd, Oxford OX2 6GG UK.}
\email{delacruz@maths.ox.ac.uk}

\author{Harald Oberhauser}
\address{Mathematical Institute, University of Oxford. Woodstock Rd, Oxford OX2 6GG UK.}
\email{oberhauser@maths.ox.ac.uk}

\begin{document}
\begin{abstract}
    The matrix of L\'evy's areas of $d$-dimensional Brownian motion is a fundamental object in stochastic analysis.
    In this article, we study the singular values of this $d \times d$ skew-symmetric random matrix.
    First, we derive an explicit formula for the density of the singular values and, en passant, present a new short proof of the characteristic function of L\'evy's area when $d \ge 3$. 
    This also allows us to extend the well-known formula for the density of L\'evy's area to $d \ge 3$. 
    Next, we use these results to characterise the singular spectrum as a determinantal point process with its kernel in explicit form.  
    Finally, we study the asymptotics as $d \to \infty$: the empirical measure of singular values converges to an absolute Cauchy distribution, the largest singular values are of order $d$ with Gaussian fluctuations, the smallest singular values are of order $1/d$, and the local bulk spacings are of order $1/d$, with sine-kernel statistics after rescaling.
\end{abstract}

\maketitle

\section{Introduction}
Let $B=(B^1_t,\ldots,B^d_t)_{t \ge 0}$ be a $d$-dimensional Brownian motion. 
L\'evy's area process, $\bA=(\bA_t)_{t \ge 0}$ is defined as the $d \times d$-matrix-valued process
\[
\bA_t \coloneqq \operatorname{Anti}\left(\int_0^t B_s \otimes dB_s \right),
\]
where $\operatorname{Anti}\left(\cdot\right)$ denotes the antisymmetric part of a $d\times d$ matrix.
Equivalently in coordinates, the $(i,j)$-th entry of the process $\bA_t$ is given as
\[
\bA^{i,j}_t \coloneqq \frac{1}{2} \left(\int_0^t B_s^i dB_s^j - \int_0^t B_s^j dB_s^i \right) 
\]
which by Green's formula, has the natural interpretation as the signed area that lies under the curve $[0,t]\ni s \mapsto (B^i_s,B^j_s)$ and the chord connecting $0$ with $(B^i_t,B^j_t)$, see Figure \ref{fig:Planar Brownian Motion}.
\begin{figure}[ht]
    \centering
    \includegraphics[width=\linewidth]{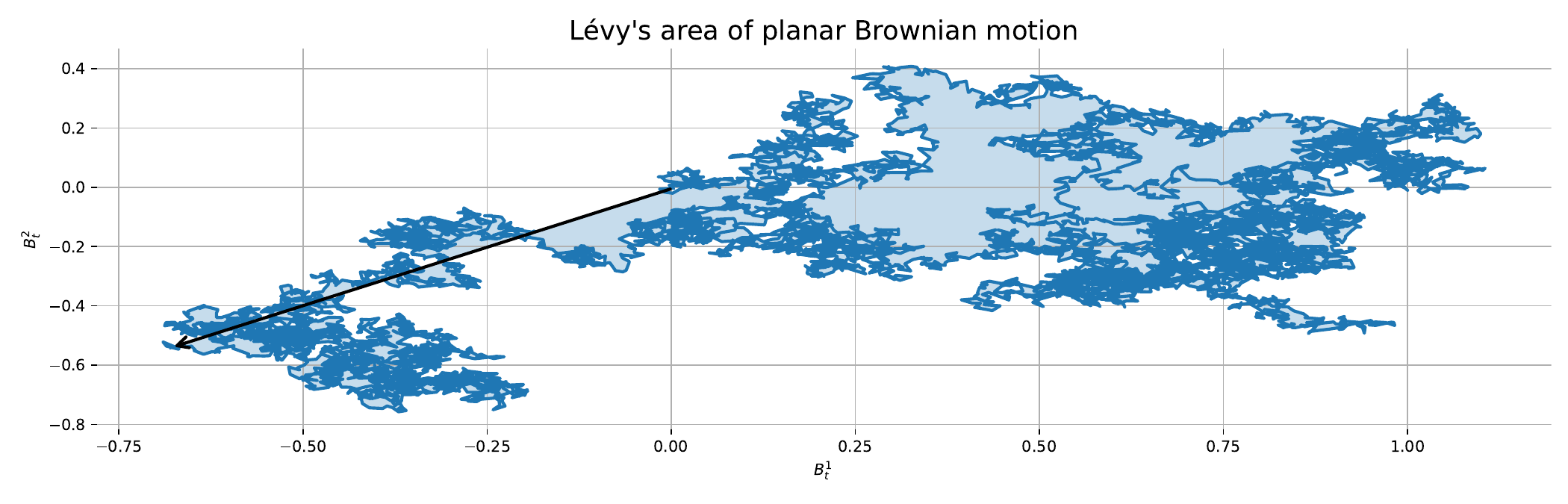}
    \caption{The trajectory of planar Brownian motion \((B_t^{1}, B^{2}_t)\) for \(t \in [0, 1]\), with the signed area enclosed between the path and its chord (black arrow) shaded in blue.}
    \label{fig:Planar Brownian Motion}
\end{figure}
\subsection*{The Singular Spectrum.}
Since $\bA_t$ is a (random) real-valued skew-symmetric matrix, it has $d$ (random) singular values $\sigma_k$; the non-zero singular values occur in pairs and if $d$ is odd, at least one of the singular values is $0$. 
Hence, the singular spectrum equals $\sigma_1,\sigma_1,\ldots,\sigma_n,\sigma_n$ when $d=2n$ is even and $\sigma_1,\sigma_1,\ldots,\sigma_{n},\sigma_{n},0$ when $d=2n+1$ is odd.
Empirically, one observes Figure \ref{fig:histogram_and_quantiles}.
This figure is intriguing: firstly, the heavy upper tails imply $\bA_t$ can be approximated by a low-rank matrix (via the singular value decomposition) and secondly, it indicates that in the limit as $d\to \infty$ the singular values converge to a deterministic limit distribution.
See also Section \ref{sec:geometry} for a geometric interpretation of singular values in the context of L\'evy's area.
\begin{figure}[ht]
    \centering
    \includegraphics[width=\linewidth]{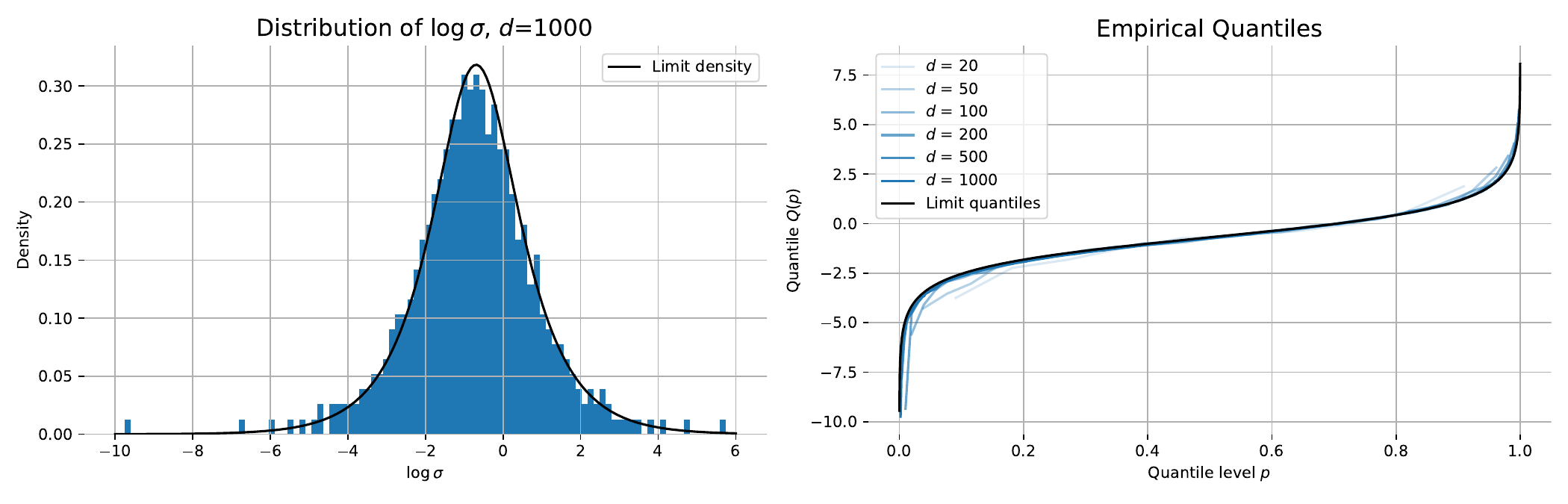}
    \caption{As the distribution over $\sigma_k$ is heavy tailed, we consider a log transform. (Left) Histogram of the empirical measure over $\log \sigma_k$ for $d=1000$ which agrees with the distribution of $\log |X|$ where $X$ is Cauchy. (Right) Quantiles of $\log \sigma_k$ for a range of $d$, showing the distribution is concentrated near zero with a heavy upper tail.}
    \label{fig:histogram_and_quantiles}
\end{figure}

\subsection*{Contributions.}
The goal of this article is to understand the distribution of the singular values of $\bA_t$, when $d$ is fixed and when $d\to \infty$.
Our main results are:
\begin{enumerate}
\item For any dimension $d$ we give an explicit formula for the joint distribution of the $n = \lfloor d /2 \rfloor$ distinct singular values of $\bA_t$. In the even case,
\[
 p(\sigma_1, \dots, \sigma_n)
    \propto
    \det\left[ \left( \frac{\pi}{t} \sigma_j \right)^{2(i-1)} \right]
    \det\left[ \sech\left( \frac{\pi}{t} \sigma_j \right)^{2(i-1)} \right]
    \prod_{i=1}^{n} \sech\left( \frac{\pi}{t} \sigma_i \right)
    .
\]
En passant, we give a new short proof of the characteristic function of $\bA_t$ when $d \ge 3$ and extend the well-known formula for the probability density of $\bA_t$ to $d \ge 3$.  
Further, we identify the SDE of the interacting particle system that governs the time evolution of $t \mapsto (\sigma_1(t)\,\ldots,\sigma_n(t))$ and show that $\sigma_k$ remain distinct.
\item For fixed time $t > 0$, we show that  $(\sigma_1,\ldots,\sigma_n)$ is a determinantal point process (DPP): for $k \le n$, the $k$-point correlation function is the determinant of the Gram matrix of a kernel.
\begin{equation*}
\qquad \qquad
\rho_k(\sigma_1,\ldots,\sigma_k)\coloneqq  \frac{n!}{(n-k)!}
    \int_{\mathbb{R}^{n-k}} p(\sigma_{1}, \dots, \sigma_{n}) d\sigma_{k+1} \dots d\sigma_{n}
=
\left(\frac{\pi}{t}\right)^{k}
\det\left[ K_{n}\left( \frac{\pi}{t} \sigma_{i}, \frac{\pi}{t} \sigma_{j}\right) \right]_{i, j = 1}^{k}
\end{equation*}
The kernel $K(x, y) = \sum_{m=0}^{n-1} p_m(x) q_m(y)$ is a sum of "biorthogonal functions" $p_m, q_m$, which are proportional to polynomials in $x^2$, $\sech(x)^2$ respectively.
Hence, they satisfy a recurrence relation whose coefficients we identify explicitly.
\[
\begin{array}{c|c|c}
  & d=2n & d=2n+1 \\
  \hline
p_m(x) & \mathrm{Re}\left( \left( x + i \frac{\pi}{2} \right)^{2m} \right) & \mathrm{Re}\left( \left( x + i \frac{\pi}{2} \right)^{2m+1} \right) \\
q_m(x) & \frac{2}{\pi}\frac{1}{ (2m)!} \sech^{(2m)}(x) & \frac{2}{\pi}\frac{-1}{(2m+1)!} \sech^{(2m+1)}(x)
\end{array}
.
\]
Besides connecting the study of L\'evy's area with the classic subject of DPPs, this representation gives an exact characterisation of the singular value law and suggests efficient sampling algorithms by reducing simulation to one-dimensional conditional DPP samplers.
However, it does not provide a joint sample of L\'evy's area and Brownian motion which would be required for SDE numerics.
\item We show that when $d \to \infty$, the empirical measure $\mu_{d}$ over the singular values converges weakly, with probability 1, to an absolute Cauchy distribution with scale parameter \(t / 2\).
That is
\begin{equation*}
    \mu_{d} := \frac{1}{n} \sum_{k=1}^{n} \delta_{\sigma_k} \dto \left|\mathrm{Cauchy}\left( 0, \frac{t}{2} \right)\right|\; a.s.
\end{equation*}
We then show the largest singular values are of order $n$ with Gaussian fluctuations, see Figure \ref{fig:largest singular values}. More precisely, let $\sigma_{(k)}$ be the $k$-th largest singular value, then
\begin{equation*}
    \sqrt{n}\left(\frac{\sigma_{(k)}}{n} - \frac{2}{2k-1} \frac{t}{\pi}\right)
    \dto
    N\left(0, \frac{1}{2} \left(\frac{2}{2k-1} \frac{t}{\pi}\right)^2\right)    .
\end{equation*}
On the other hand, the smallest and the bulk singular values are order $1/n$ and have fluctuations governed by the "sine kernel", see Figure \ref{fig:smallest singular values}.

\begin{figure}[ht]
    \centering
    \includegraphics[width=\linewidth]{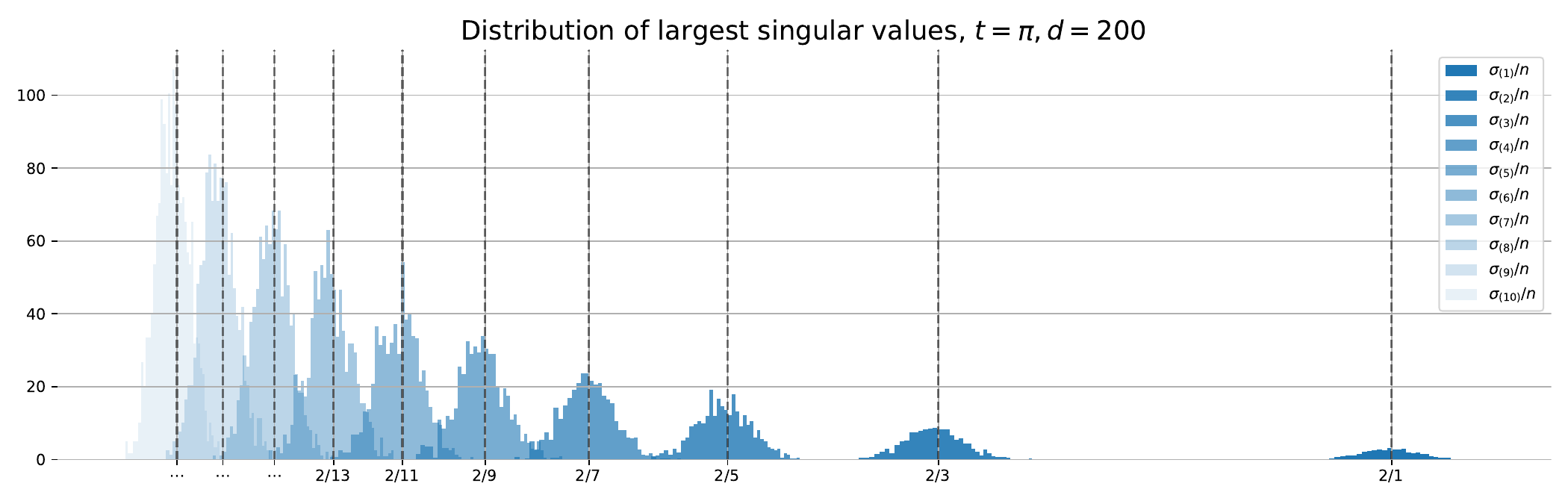}
    \caption{Histogram of the $k$-th largest singular value $\sigma_{(k)} / n$ for each $k \in [10]$, for $t = \pi, d=200$ over 1000 samples. The distribution of $\sigma_{(k)}$ is centred at $2 / (2k-1)$.}
    \label{fig:largest singular values}
\end{figure}

\begin{figure}[ht]
    \centering
    \includegraphics[width=\linewidth]{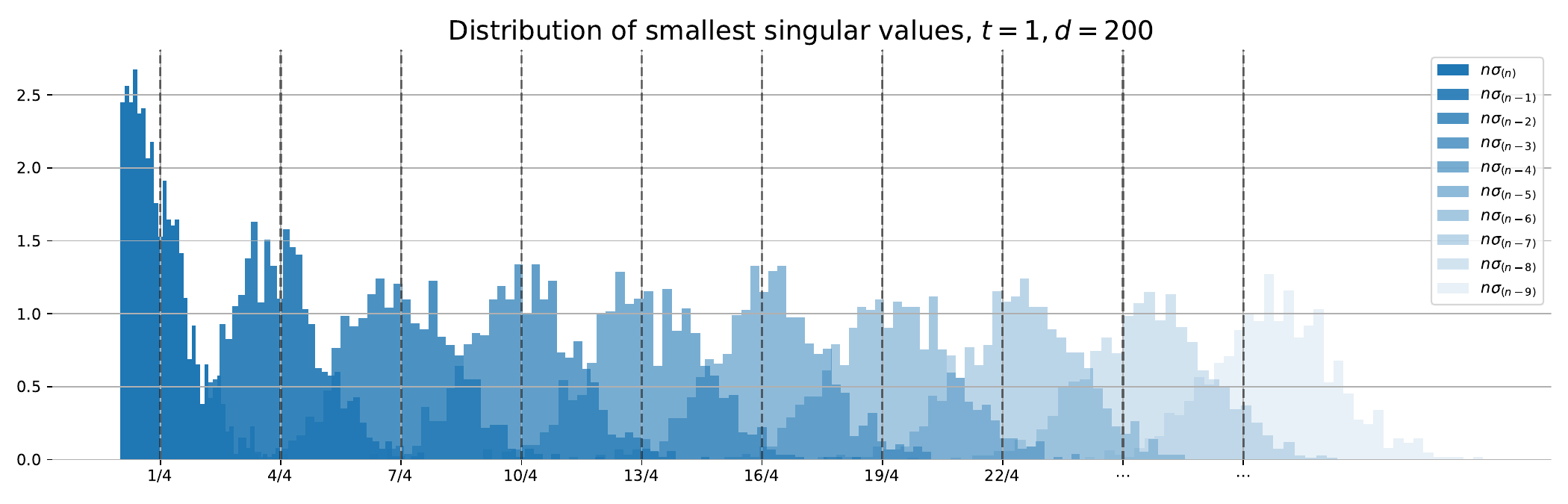}
    \caption{Histogram of the $k$-th smallest singular value $n\sigma_{(n-k)}$ for each $k \in [10]$, for $t = 1, d=200$ over 1000 samples.}
    \label{fig:smallest singular values}
\end{figure}
\end{enumerate}

\subsection*{Previous Work.}
L\'evy's area is a fundamental object in stochastic analysis with a vast literature; we only give a short overview that focuses on articles that are directly relevant for our purposes.
L\'evy's area process is named after Paul L\'evy who initiated its study in 1940 \cite{levyMouvementBrownienPlan1940} and subsequently in \cite{levyWienersRandomFunction1951} derived explicit formulas for its characteristic function $u \mapsto \mathbb{E}[e^{i u \bA^{i,j}_t}]$ and its characteristic function conditioned on the Brownian motion $(x,y,u)\mapsto \mathbb{E}[e^{iu \bA^{i,j}_t}|B^{i}_t=x, B^j_t=y]$. 
Note that L\'evy's original work and many subsequent articles focus on two-dimensional Brownian motion ($\bA^{i,j}_t$ as opposed to $\bA_t$ in the above characteristic functions) which makes computation and finding explicit formulas much easier; more on this below.
After L\'evy's contributions, the works of Gaveau \cite{gaveauPrincipeMoindreAction1977} and Hulanicki \cite{hulanicki1976} identified two-dimensional Brownian motion and its L\'evy's area as the "canonical Brownian motion" on the Heisenberg group; see \cite{baudoinStochasticAreasHorizontal2023} for a modern perspective on this approach that treats L\'evy's area simultaneously as a Brownian functional, a geometric object, and a source of explicit hypoelliptic PDE analysis.
This geometric, sub-Riemannian interpretation allowed Gaveau in \cite{gaveauPrincipeMoindreAction1977} to use PDE results about the fundamental solution of the Laplacian on the Heisenberg group to derive an "explicit formula" for the density of the three-dimensional process $(B^i_t,B^j_t,\bA^{i,j}_t)$.
After these classic results, Marc Yor returned to L\'evy's formula for $\mathbb{E}[e^{iu \bA_t^{i,j}}|B^{i}_t=x, B^j_t=y]$ and gave a short and elegant proof of this formula by purely stochastic calculus computations using Bessel processes combined with a result of David Williams \cite{yor1980remarques}.
Yet another related and more recent approach to  L\'evy's formula for the characteristic function is given by so-called affine processes, see \cite{cuchieroSignatureSDEsAffine2025}.
While much of the work we discussed above focused nearly exclusively on L\'evy's area for two-dimensional Brownian motion, Helmes and Schwane \cite{helmesLysStochasticArea}  extended the formula for characteristic functions given by L\'evy \cite{levyWienersRandomFunction1951} for the two-dimensional case, $d=2$, to the case $d\ge 3$.
Further, Helmes and Schwane \cite{helmesLysStochasticArea} also provide a formula for the joint density of the $d$-dimensional Brownian motion $B_t$ and the trace of $M\cdot \bA_t$ where $M$ is a skew-symmetric matrix, as well as a generalisation of this formula when $M\cdot \bA_t$ is replaced by the sum of several anti-commuting, orthogonal, and skew-symmetric matrices acting on $\bA_t$.
However, neither of these formulas implies a formula for the joint density of $(B_t,\bA_t)$ when $d \ge 2$. Ultimately, the reason is that the differential geometric interpretation of Gaveau relies on explicit formulas for the fundamental solutions of the Laplacian on a sub-Riemannian manifold which seem to be only available in closed form when $d=2$ and the manifold accurately captures $(B_t,\bA_t)$; explicit formulas for fundamental solutions are of course known for other manifolds, but these do not correspond to Brownian motion and its area process which seems to be a fundamental obstacle for the PDE approach to derive explicit density formulas when $d \ge 3$.
Another important theme in the study of L\'evy's area is series expansions. 
These go back to L\'evy's original work \cite{levyWienersRandomFunction1951} which provided two different proofs "L\'evy's formula": one via series expansion of Brownian motion, the other via rewriting Brownian motion with polar coordinates and stochastic integration.
Such series expansions have been especially important for numerics since truncation of such series allows one to approximately sample from $\bA_t$; we mention pars pro toto \cite{kloedenApproximationMultipleStochastic1992},\cite{wiktorssonJointCharacteristicFunction2001} and \cite{fosterApproximatingSignatureBrownian2025} among many articles.

\subsection*{Outline of this Article.}
We start Section \ref{section:Density of the L\'evy's Area Matrix} with a new short proof of the characteristic function of the matrix of L\'evy's area and Brownian motion when $d \ge 3$. 
The key idea is to switch to the coordinate system given by block diagonalisation of $\bA_t$ and use orthogonal invariance and independence to factorise the characteristic function.
While straightforward and natural, this highlights structure which then allows one to invert the characteristic function and obtain an explicit formula for the density when $d\ge3 $, Theorem \ref{thm:Density of L\'evy's Area Matrix}.
Although the explicit formula for the density of $\bA_t$ is classic when $d=2$, no such formula seems to be known for $d\ge 3$. 
We suspect the main reason is that inversion of the characteristic function appeared to be analytically intractable. However, with block diagonalisation, we will be able to use integral identities to obtain a closed formula for the density.
Furthermore, due to orthogonal invariance the singular values and vectors are independent, and the density of $\bA_t$ implies the density of the singular values, Corollary \ref{corollary:Joint density of L\'evy's area singular values}. We then show how this approach can be extended. In Section \ref{section:Density of L\'evy's areas and Brownian increments} we discuss the joint density of Brownian motion and L\'evy's areas and in Section \ref{section:Extension to other stochastic processes} we consider the L\'evy's areas of Gaussian processes with i.i.d.~coordinates.

Section \ref{sec:SDE} identifies the time-dynamics of $t \mapsto (\sigma_1(t),\ldots,\sigma_n(t))$ as an interacting particle system. 
The main idea is again to block diagonalise $\bA_t$ and use the Brownian motions that arise after projecting to the singular subspaces as driving noise for an SDE. 
This shows the singular values are repulsive and never collide.

Section \ref{section:Biorthogonal Ensemble}, shows that for fixed $t>0$, the singular values are a DPP. 
The kernel is in explicit form in terms of biorthogonal functions and we identify the coefficients of their recurrence relation, see Theorem \ref{theorem: Singular values are a DPP}. 
This links the study of the spectrum of L\'evy's area matrix to a classical model for repulsive systems. 
Since the kernel is explicit, the usual advantages of DPPs apply:  typically challenging computations have closed forms and existence of simple sampling schemes.  

Section \ref{section:Establishing Spectral Decay} characterises for fixed $t>0$ the behaviour of the singular values as $d \to \infty$. 
In view of the DPP representation from Section \ref{section:Biorthogonal Ensemble}, this reduces to finding limits of rescaled versions of the kernel. 
On a global scale, Theorem \ref{theorem:Global Law} proves the empirical measure over the singular values converges to an absolute Cauchy distribution. 
On a local scale, Theorems \ref{theorem: Smallest singular values}, \ref{theorem: Bulk singular values} and \ref{theorem: Largest singular values} obtain the limiting distribution of the smallest, bulk and largest singular values respectively.

Section \ref{section:Conclusion} summarises the article and gives further connections and directions for future research.

\subsection*{Notation and conventions.}
We denote the density of a random variable $X$ with $p_{X}(X)$ and omit the subscript to $p(X)$ if the random variable is clear.
Throughout, $B_t$ denotes a $d$-dimensional Brownian motion and $\bA_t$ its matrix of L\'evy's areas. We let $n = \lfloor d / 2 \rfloor$. For block diagonalisation, we consider the $2 \times 2$ matrix which we use as a building block.
$$
J = \begin{bmatrix}
    0 & 1 \\
    -1 & 0
\end{bmatrix}
.
$$
We summarise the remaining notation with the following table.
\begin{center}
 \begin{longtable}{lll}\toprule
 Symbol & Meaning & Page\\
 \toprule
      \multicolumn{2}{c}{Matrices and Vectors}\\
      \midrule
      $J(\varphi)$ & Rescaled $2 \times 2$ rotation matrix $\varphi J$ & \pageref{section:Density of the L\'evy's Area Matrix}  \\
      $J(\Phi)$ & Block diagonal matrix $\bigoplus_{k=1}^{n} J(\varphi_k) \oplus 0_{d - 2n}$ & \pageref{section:Density of the L\'evy's Area Matrix} \\
      $D_{\Phi}$ & Diagonal Matrix $\bigoplus_{k=1}^{n} \varphi_k I_2 \oplus 0_{d - 2n}$ & \pageref{theorem:new proof of cf}  \\
      $|\Theta|$ & Matrix modulus $|\Theta| = \sqrt{\Theta^{\top} \Theta} = Q D_{|\Phi|} Q^{\top}$ & \pageref{theorem:new proof of cf} \\
      $\so(d)$ & Space of $d\times d$ real skew-symmetric matrices & \pageref{theorem:new proof of cf} \\
      $O(d)$ & Space of $d\times d$ orthogonal matrices & \pageref{section:Density of the L\'evy's Area Matrix} \\
      $g(\Phi)$ & Jacobian for block diagonalisation (Lemma \ref{lemma:Jacobian for Block Diagonalisation of Skew-symmetric Matrices}) & \pageref{corollary:Joint density of L\'evy's area singular values} \\
      \midrule
      \multicolumn{2}{c}{Singular Values}\\
      \midrule
      $\lambda_k$ & Signed singular value of $\bA_t$ & \pageref{section:Density of the L\'evy's Area Matrix} \\
      $\sigma_k$ & Singular value of $\bA_t$, equal to $|\lambda_k|$  & \pageref{section:Density of the L\'evy's Area Matrix} \\
      $x_k$ & Rescaled singular value, equal in distribution to $\pi \sigma_k / t$ & \pageref{proof of density of levy's area matrix} \\
      $\sigma_{(k)}$ & The $k$-th largest singular value & \pageref{theorem: Largest singular values} \\
      $\varphi_k$ & Signed singular value of test matrix $\Theta$ & \pageref{section:Density of the L\'evy's Area Matrix}\\
      $\Phi$ & $\Phi = (\varphi_1,\dots, \varphi_n)$ Uppercase to indicate a collection of $\lambda_i$ & \pageref{section:Density of the L\'evy's Area Matrix} \\
      $u_k$ & Squared subspace norm & \pageref{corollary:joint density of sigma, u} \\
\midrule
\multicolumn{2}{c}{Determinantal Point Process}\\
\midrule
      $\mu_d$ &Empirical measure over singular values $\mu_d =\frac{1}{n} \sum_{k=1}^{n} \delta_{\sigma_k}$& \pageref{theorem:Global Law} \\
      $\nu_d$ & Counting measure over rescaled singular values $\nu_d = \sum_{k=1}^{n} \delta_{x_k}$& \pageref{theorem:Global Law} \\
      $K_n(x, y)$ & Kernel of the determinantal point process $x_i$& \pageref{theorem: Singular values are a DPP} \\
      $\rho_k$ & $k$-point correlation function & \pageref{definition:Determinantal Point Process} \\
      $p_m(x), q_m(x)$ & Biorthogonal functions & \pageref{theorem: Singular values are a DPP} \\
      $w(x)$ & Various weight functions & \pageref{thm:Density of L\'evy's Area Matrix} \\
      $\varrho$ & Limiting density of empirical measure & \pageref{theorem:Global Law} \\
 \bottomrule
 \end{longtable}
\end{center}

\section{The Characteristic Function and Density of L\'evy's Area}
\label{section:Density of the L\'evy's Area Matrix}
The main results of this section are: 
\begin{enumerate}[label=\roman*.]
\item a new proof for the formula for the characteristic function of L\'evy's area when $d \ge 3$,
\item an explicit formula for the density of L\'evy's area  when $d \ge 3$,
\item an explicit formula for the density of the singular values of L\'evy's area when $d \ge 3$.
\end{enumerate}
We first show that the characteristic function of $d$-dimensional Brownian motion and its L\'evy's areas "factorises" which allows one to recover the well-known formula of the characteristic function for general $d \ge 3$ from the case $d=2$. This follows from considering the block diagonal form of the skew-symmetric test matrix $\Theta \in \so(d)$ (Lemma \ref{lemma:Spectral Theorem for Skew-symmetric Matrices})
$$
\Theta = Q J(\Phi) Q^{\top},\quad J(\Phi) = \bigoplus_{k=1}^{n} J(\varphi_{k}) \oplus 0_{d-2n},\quad
J(\varphi_{k}) = \begin{bmatrix}
0 & \varphi_{k} \\
-\varphi_{k} & 0
\end{bmatrix}
$$
where $Q$ is an orthogonal matrix and $\varphi_k \in \R$ are its signed singular values. (For $\bA_t$, the signed singular values, which we denote $\lambda_k$, are more natural as they correspond to L\'evy's areas. E.g. for $d=2$, $\lambda_1 = \bA^{1, 2}$ and $\sigma_1 = |\lambda_1|$.)
While the proof of the characteristic function we present is straightforward and natural, it stands in stark contrast with previous approaches \cite{helmesLysStochasticArea}. 
Moreover, for the derivation of the density of L\'evy's area when $d \ge 3$ the switch to the block diagonal coordinate system is essential and allows the use of integral identities to derive an explicit formula.

\subsection{A New Proof for the Characteristic Function of L\'evy's Area}
Structurally, $\bA_t$ almost has independent entries: if $\set{i, j} \cap \set{k, l} = \emptyset$, then $(\bA_t^{i,j}, \bA_t^{l,k})$ are independent as they are functions of independent Brownian motions. Hence, there should be some form of factorisation. The following proof shows how this can be realised using orthogonal invariance of $B_t$ and allows us to recover the characteristic function for general $d$ from $d = 2$.

\begin{theorem} \label{theorem:new proof of cf}
Let $d \ge 2$, $\gamma \in \R^d$, and $\Theta \in \so(d)$. The characteristic function of $\bA_t$ and $B_t$ is
\begin{align}
\label{equation:CF of L\'evy's Area and Brownian motion}
\begin{split}
    \Expectation{\exp\left(i \sum_{i < j} \Theta_{i,j} \bA_{t}^{i, j} + i\sum_{i} \gamma_{i} B_{t}^{i}\right)}
    =
     \mathbb{E}\left[{\exp\left(\frac{i}{2}\tr{\Theta^{\top} \bA_t} + i\ip{\gamma}{B_t}\right)}\right] \\
    =
    \det\left( \cosh \frac{t}{2} |\Theta|\right)^{-1/2}  \times \exp\left(
    -\frac{t}{2}
    \gamma^{\top}
    \sinch\left(\frac{t}{2} |\Theta| \right)
    \cosh\left(\frac{t}{2} |\Theta| \right)^{-1}
    \gamma
    \right)
\end{split}
\end{align}
where $\sinch(x) = \sinh(x) / x$ and $|\Theta| = \sqrt{\Theta^{\top} \Theta}$ is the matrix modulus.
\end{theorem}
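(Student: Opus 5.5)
The plan is to block diagonalise the test matrix and exploit the orthogonal invariance of Brownian motion, so that the characteristic function factorises into $n$ planar factors (and, for odd $d$, one scalar factor). First I would check the trace identity: since $\Theta$ and $\bA_t$ are skew-symmetric, $\tr(\Theta^\top \bA_t) = \sum_{i,j}\Theta_{i,j}\bA^{i,j}_t = 2\sum_{i<j}\Theta_{i,j}\bA_t^{i,j}$, which gives the first equality in \eqref{equation:CF of L\'evy's Area and Brownian motion}.

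Next I write $\Theta = Q J(\Phi) Q^\top$ as in Lemma \ref{lemma:Spectral Theorem for Skew-symmetric Matrices}. Setting $\tilde B_t = Q^\top B_t$, which is again a standard $d$-dimensional Brownian motion, the L\'evy area is equivariant: since $\bA_t$ is the antisymmetric part of $\int_0^t B_s\otimes dB_s$ and this map is bilinear, $\tilde{\bA}_t = Q^\top \bA_t Q$. Then
\[
\tr(\Theta^\top \bA_t) = \tr\bigl(J(\Phi)^\top \tilde{\bA}_t\bigr) = 2\sum_{k=1}^n \varphi_k \tilde{\bA}^{2k-1,2k}_t, \qquad \ip{\gamma}{B_t} = \ip{\tilde\gamma}{\tilde B_t},
\]
where $\tilde\gamma = Q^\top\gamma$. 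The exponent is therefore a sum of functionals of the disjoint coordinate pairs $(\tilde B^{2k-1},\tilde B^{2k})$, plus the last coordinate $\tilde\gamma_d\tilde B^d_t$ when $d$ is odd. By independence of the coordinates, the expectation factorises into $n$ planar factors times $e^{-t\tilde\gamma_d^2/2}$.

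For each planar factor I invoke the classical $d=2$ formula of L\'evy, namely the joint characteristic function of $(B^1_t,B^2_t,\bA^{1,2}_t)$:
\[
\Expectation{e^{i\varphi \bA^{1,2}_t + i\ip{g}{(B^1_t,B^2_t)}}} = \cosh\bigl(\tfrac{t\varphi}{2}\bigr)^{-1}\exp\Bigl(-\tfrac{|g|^2}{\varphi}\tanh\bigl(\tfrac{t\varphi}{2}\bigr)\Bigr).
\]
This can be derived from L\'evy's conditional formula integrated against the Gaussian law of $(B^1_t,B^2_t)$, or directly by Yor's stochastic calculus argument; I would cite it as known. Since $\tfrac{1}{\varphi}\tanh(\tfrac{t\varphi}{2}) = \tfrac{t}{2}\sinch(\tfrac{t\varphi}{2})\cosh(\tfrac{t\varphi}{2})^{-1}$, which is even in $\varphi$, each factor depends only on $|\varphi_k|$.

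Finally I reassemble the factors. We have $|\Theta| = Q D_{|\Phi|}Q^\top$, where each $|\varphi_k|$ appears twice and the odd coordinate carries $0$. Hence
\[
\prod_{k=1}^n \cosh\bigl(\tfrac{t}{2}|\varphi_k|\bigr)^{-1} = \det\bigl(\cosh \tfrac{t}{2}|\Theta|\bigr)^{-1/2},
\]
the square root compensating for the doubled eigenvalues, and $\cosh 0 = 1$ for the zero block. Likewise the quadratic forms $\sum_k |\tilde\gamma_{(k)}|^2 f(|\varphi_k|) + \tilde\gamma_d^2 f(0)$, with $f(x) = \tfrac t2 \sinch(\tfrac t2 x)\cosh(\tfrac t2 x)^{-1}$ and $f(0)=t/2$, combine into $\tilde\gamma^\top f(D_{|\Phi|})\tilde\gamma = \gamma^\top f(|\Theta|)\gamma$ by functional calculus; the odd coordinate correctly matches $e^{-t\tilde\gamma_d^2/2}$.

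The main obstacle I expect is being careful about the sign and scaling conventions. In particular I need the factor $\tfrac12$ in the trace against the definition of $\bA$ with its $\tfrac12$, and the precise normalisation in L\'evy's planar formula, so that the $\cosh$ argument comes out as exactly $\tfrac t2|\varphi|$. If the planar joint formula cannot simply be cited, I would derive it by conditioning on $B_t$ and using L\'evy's conditional characteristic function $\tfrac{x}{\sinh x}\exp\bigl(\tfrac{|z|^2}{2t}(1-x\coth x)\bigr)$ with $x=t\varphi/2$, followed by a Gaussian integral in $z$.
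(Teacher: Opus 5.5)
Your proposal is correct and follows the paper's proof essentially step for step. Both arguments block-diagonalise $\Theta = Q J(\Phi) Q^{\top}$, use orthogonal invariance of $(B_t,\bA_t)$ to reduce to $J(\Phi)$, factorise over independent coordinate pairs via the planar $d=2$ formula, and reassemble through functional calculus in $|\Theta|$. Your only additions are the explicit check of the trace identity and the optional derivation of the planar joint formula from L\'evy's conditional characteristic function plus a Gaussian integral; the paper simply cites that formula from Helmes--Schwane.
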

\begin{proof}
    Let \(\Theta = Q J(\Phi) Q^{\top}\), by orthogonal invariance of $B_t$, \((B_t, \bA_t)\eqdist (QB_t, Q \bA_t Q^{\top})\) and
    \begin{equation*}
        \Expectation{\exp\left(\frac{i}{2}\tr{\Theta^{\top} \bA_t} + i\ip{\gamma}{B_t}\right)}
        =
        \Expectation{\exp\left(\frac{i}{2}\tr{J(\Phi)^{\top} \bA_t} + i\ip{Q^{\top}\gamma}{B_t}\right)}
        .
    \end{equation*}
    As \(B^i_t\) are independent, the exponent can be decomposed into independent terms
    \begin{equation*}
        \frac{i}{2}\tr{J(\Phi)^{\top} \bA_t} + i\ip{\gamma}{Q^{\top}B_t}
        =
        i\sum_{k=1}^{n} \set{\varphi_k \bA_t^{2k-1, 2k} +  \ip{Q^{\top}\gamma}{B_t}_{k}}
        + \Indicator{d - 2n > 0} i(Q^{\top} \gamma)_d B^d_t
    \end{equation*}
    where \(n = \lfloor d / 2 \rfloor\) and \(\ip{u}{v}_{k} := u_{2k-1} v_{2k-1} + u_{2k} v_{2k}\). Now the characteristic function factorises 
    \begin{align*}
        \Expectation{\exp\left(\frac{i}{2}\tr{J(\Phi)^{\top} \bA} + i\ip{Q^{\top}\gamma}{B}\right)}
        &=
        \prod_{k=1}^{n} \Expectation{\exp(i \varphi_k \bA^{2k-1, 2k} + i\ip{Q^{\top}\gamma}{B}_{k})} \\
        &\times
        \Expectation{\exp(\Indicator{d - 2n > 0} i(Q^{\top} \gamma)_d B^d)} \\
        &=
        \prod_{k=1}^{n} \cosh\left(\frac{t}{2} \varphi_k \right)^{-1}
        \exp\left(
        -\frac{t}{2}\frac{\sinch}{\cosh}\left(\frac{t}{2} \varphi_k \right)
        \ip{Q^{\top}\gamma}{Q^{\top}\gamma}_{k}
        \right) \\
        &\times
        \exp\left(
        - \Indicator{d - 2n > 0}
        \frac{t}{2}\frac{\sinch}{\cosh}\left(\frac{t}{2} 0\right)
        (Q^{\top}\gamma)_{d}^2
        \right)
    \end{align*}
and we applied the formula for $d=2$, see \cite{helmesLysStochasticArea}.
$$
\Expectation{\exp\left(i \varphi \bA^{1, 2}_{t} + i \gamma_{1} B_{t}^{1} + i \gamma_{2} B_{t}^{2}\right)}
=
\cosh\left( \frac{t}{2} \varphi \right)^{-1} \exp\left( - \frac{t}{2} \frac{\sinch}{\cosh}\left( \frac{t}{2} \varphi \right) (\gamma_{1}^{2} + \gamma_{2}^{2}) \right)
$$
To finish, we express in terms of $\Theta$.
First, $\prod_{k=1}^{n} \cosh\left(\frac{t}{2} \varphi_k \right)^{-1} = \det\left( \cosh \frac{t}{2} |\Theta|\right)^{-1/2}$ as \((\det Q)^2 = 1\) and \(\cosh(0) = 1\) (for \(d\) odd). Finally,
\begin{equation*}
    \sum_{k=1}^{n}
    \frac{\sinch}{\cosh}\left(\frac{t}{2} \varphi_k \right)
    \ip{Q^{\top}\gamma}{Q^{\top}\gamma}_{k}
    +
    \Indicator{d - 2n > 0}
       \frac{\sinch}{\cosh}\left(\frac{t}{2}0\right)
        (Q^{\top}\gamma)_{d}^2
    =
    \gamma^{\top} Q \frac{\sinch}{\cosh}\left(\frac{t}{2} D_{\Phi} \right) Q^{\top} \gamma
\end{equation*}
which written in terms of \(\Theta\) is
\begin{equation*}
    Q \frac{\sinch}{\cosh}\left(\frac{t}{2} D_{\Phi}\right) Q^{\top}
    =
    Q \sinch\left(\frac{t}{2} D_{\Phi}\right) Q^{\top}
    Q \cosh\left(\frac{t}{2} D_{\Phi}\right)^{-1} Q^{\top}
    =
    \sinch\left(\frac{t}{2} |\Theta| \right)
    \cosh\left(\frac{t}{2} |\Theta| \right)^{-1}
    .
\end{equation*}
This follows as $|\Theta| = Q D_{|\Phi|} Q^{\top}$ where $D_{|\Phi|} = \bigoplus_{i=1}^{n} |\varphi_i| I_2 \oplus 0_{d - 2n}$ and $\cosh, \sinch$ are even functions.
\end{proof}

\subsection{Density of L\'evy's Area and its Singular Values}
Although the formula for the characteristic function was known for a long time, just applying the inverse Fourier transform to \eqref{equation:CF of L\'evy's Area and Brownian motion} to find the density results in an integral over \(\Theta \in \so(d) \cong \R^{d(d-1)/2}\) which seems intractable.
Using the change of variables \(\Theta = Q J(\Phi) Q^{\top} \to (Q, \Phi)\) in our proof of the characteristic function, the density can be expressed in terms of an integral over the orthogonal group $Q$ and the singular values $\Phi$.
In the case of $\bA_t$ marginally,  both admit closed forms: the integral over $Q$ is a Harish-Chandra integral (Lemma \ref{lemma:Harish-Chandra Integrals over the Orthogonal Group}) and
the integral over $\Phi$ is in the form of Andr\'eief identity (Lemma \ref{lemma:Andr\'eief Identity}) due to the factorisation of the characteristic function.
Furthermore, properties of $\sech(x) = \cosh(x)^{-1}$ allow for further simplification: it is its own Fourier transform and its derivatives are polynomials of $\sech(x)$.
This yields an explicit expression for the density of $\bA_t$.

\begin{theorem}[Density of L\'evy's Areas]
\label{thm:Density of L\'evy's Area Matrix}
    Let \(\bA_t\) be the \(d \times d\) matrix of L\'evy's areas of $d$-dimensional Brownian motion and express $\bA_t = P J(\Lambda) P^{\top}$ in its block diagonal form, then the density of \(\bA_t\) is given by
    \begin{equation}
        p(A) =
        \left( \frac{\pi}{t} \right)^{d(d-1) / 2}
        \frac{2^{n} C_{d}}{(2\pi)^{n^{2}}}
        \;
        \prod_{1 \le i < j \le n} \frac{\mathrm{sech}\left( \frac{\pi}{t}\lambda_{j} \right)^{2} - \mathrm{sech}\left( \frac{\pi}{t}\lambda_{i} \right)^{2}}{\left( \frac{\pi}{t} \lambda_{i} \right)^{2} - \left( \frac{\pi}{t} \lambda_{j} \right)^{2}}
\prod_{i=1}^{n} w\left( \frac{\pi}{t} \lambda_i \right)
    \end{equation}
    where
    \begin{align}
    \label{equation:C_d}
        C_d &= \begin{cases} 
        \prod_{p=1}^{n-1} (2p)!, & d = 2n \\
        \prod_{p=1}^{n-1} (2p+1)!, & d = 2n+ 1
        \end{cases},\quad \\
        w(\lambda) &=
        \mathrm{sech}\left( \lambda \right)
        \begin{cases}
        1, & d = 2n \\
        \frac{1}{2\pi}\frac{1}{\lambda}\tanh\left(\lambda \right), & d = 2n+ 1
        \end{cases}
        .
    \end{align}
\end{theorem}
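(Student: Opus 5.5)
We obtain the density by Fourier inversion of the characteristic function of Theorem~\ref{theorem:new proof of cf} with $\gamma=0$:
\[
p(A)=\frac{1}{(2\pi)^{d(d-1)/2}}\int_{\so(d)} e^{-\frac{i}{2}\tr(\Theta^\top A)}\prod_{k=1}^n \sech\!\left(\tfrac t2\varphi_k\right)\,d\Theta .
\]
The integrand's damping factor depends on $\Theta$ only through its signed singular values $\Phi$. We therefore change variables $\Theta=QJ(\Phi)Q^\top$, with $Q$ ranging over $O(d)$ modulo the stabiliser, and use the Jacobian $g(\Phi)$ of Lemma~\ref{lemma:Jacobian for Block Diagonalisation of Skew-symmetric Matrices}. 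Up to a constant, $g(\Phi)=\prod_{i<j}(\varphi_i^2-\varphi_j^2)^2$ in the even case and $\prod_{i<j}(\varphi_i^2-\varphi_j^2)^2\prod_i\varphi_i^2$ in the odd case. Writing $A=PJ(\Lambda)P^\top$ and using invariance of Haar measure, the $Q$-integral reduces to $\int_{O(d)} e^{-\frac i2\tr(J(\Phi)^\top QJ(\Lambda)Q^\top)}\,dQ$. This is the Harish-Chandra integral of Lemma~\ref{lemma:Harish-Chandra Integrals over the Orthogonal Group}. Its value is a constant times
\[
\frac{\det[\cos(\varphi_i\lambda_j)]}{\Delta(\Phi^2)\,\Delta(\Lambda^2)}\quad (d=2n),\qquad
\frac{\det[\sin(\varphi_i\lambda_j)]}{\Delta(\Phi^2)\,\Delta(\Lambda^2)\prod_i\varphi_i\lambda_i}\quad (d=2n+1),
\]
where $\Delta$ is the Vandermonde determinant.

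After this step one factor $\Delta(\Phi^2)$ cancels against $g$. In the even case the remaining $\Phi$-integrand is $\Delta(\Phi^2)\det[\cos(\varphi_i\lambda_j)]\prod_k\sech(\tfrac t2\varphi_k)$. We write $\Delta(\Phi^2)=\det[\varphi_j^{2(i-1)}]$ and apply the Andr\'eief identity (Lemma~\ref{lemma:Andr\'eief Identity}). This gives
\[
n!\,\det\left[\int_{\R}\varphi^{2(i-1)}\cos(\varphi\lambda_j)\sech\!\left(\tfrac t2\varphi\right)d\varphi\right]_{i,j}.
\]
Because the integrand is even, integrating over $\R$ versus $[0,\infty)$ only changes the constant. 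Each entry is $(-1)^{i-1}\partial_\lambda^{2(i-1)}$ of the Fourier transform of $\sech(\tfrac t2\cdot)$, which equals $\tfrac{2\pi}{t}\sech(\tfrac{\pi}{t}\lambda_j)$. Using the self-duality of $\sech$ under the Fourier transform, entry $(i,j)$ is therefore a constant times $(\pi/t)^{2i-1}\sech^{(2i-2)}(x_j)$ with $x_j=\pi\lambda_j/t$.

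The odd case runs the same way. The extra $\prod\varphi_i$ from the Jacobian partly cancels, the entries become $\int\varphi^{2i-1}\sin(\varphi\lambda_j)\sech$, and these are odd derivatives $\sech^{(2i-1)}$, carrying an additional $1/\lambda_j$ from the Harish-Chandra denominator.

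The key algebraic step, and the one I expect to be the main obstacle, is reducing $\det[\sech^{(2i-2)}(x_j)]$ to a Vandermonde determinant in $\sech^2$. The plan is to use the fact that $\sech^{(2m)}(x)=\sech(x)\,P_m(\sech^2 x)$, where $P_m$ is a polynomial of degree $m$ with leading coefficient $(-1)^m(2m)!$. This follows by induction from $(\sech)''=\sech-2\sech^3$, together with the fact that $\tanh^2=1-\sech^2$. Row operations then reduce the determinant to
\[
\prod_{p}(2p)!\cdot\prod_j\sech(x_j)\cdot\det[\sech^{2(i-1)}(x_j)],
\]
and this product of factorials is the source of $C_d$. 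For odd $d$ we use $\sech^{(2m+1)}=-\tanh\cdot\sech\cdot\tilde P_m(\sech^2)$, where $\tilde P_m$ has leading coefficient proportional to $(2m+1)!$. This produces the factors $\tanh(x_j)$, and combined with $1/\lambda_j$ they give the weight $\tfrac{1}{x}\tanh(x)\sech(x)$.

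Dividing by the remaining $\Delta(\Lambda^2)$ then yields the product of difference quotients $(\sech^2 x_j-\sech^2x_i)/(x_i^2-x_j^2)$. The last task is bookkeeping of constants: the powers of $\pi/t$ come from the scaling, and the $2^n$, $(2\pi)^{n^2}$ and the Harish-Chandra and Jacobian constants must be tracked. I would fix these at the end by checking the case $d=2$, where the formula must reproduce L\'evy's density $\tfrac{\pi}{2t}\cdot\tfrac{2}{\pi}\cdots$, i.e. $\tfrac{1}{t}\sech(\pi\lambda/t)$, and by checking that the result normalises.
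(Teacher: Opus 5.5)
Your proposal is correct and follows essentially the same route as the paper. The shared steps are: Fourier inversion at $\gamma=0$, the block-diagonal change of variables with Jacobian $g(\Phi)$, the Harish-Chandra integral over $O(d)$, Andr\'eief's identity, self-duality of $\sech$ under the Fourier transform, and row reduction of the determinant of $\sech$-derivatives to a Vandermonde determinant in $\sech^2$, which produces $C_d$.

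The differences are cosmetic. The paper rescales to $t=\pi$ first and cites the explicit formula for derivatives of $\sech$ instead of your induction from $(\sech)''=\sech-2\sech^3$. It also reads all constants directly off the Jacobian and Harish-Chandra lemmas; your $d=2$ check is only a consistency check and cannot fix the $n$-dependent constants on its own.
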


\begin{remark}[Non-uniqueness of block diagonalisation and change of variables]
Strictly speaking, the change of variables $\bA_t = P J(\Lambda) P^{\top}$ is not injective as the decomposition is unique up to the sign of $\lambda_k$, multiplication of $P$ by a block diagonal matrix of rotation blocks and permutation of $\lambda$ and the corresponding blocks. To obtain a genuine parametrisation, we consider the transformation to the quotient space of block diagonalisation: the ordered singular $\sigma_{(k)}$ values and $\overline{P} \in O(d) / SO(2)^{n}$.
For considering integrals and distributions, this may be less convenient.
By multiplying by the redundancy factors, we can recover the unordered, signed singular values and matrix in $O(d)$. The overall Jacobian is $g(\Lambda)$, see Lemma \ref{lemma:Jacobian for Block Diagonalisation of Skew-symmetric Matrices}.
{\small
\begin{equation*}
    g(\Lambda) =
    \widetilde{\Delta}_{d}(\Lambda)^2
    \frac{C_d^{-1}}{n!}
    \cdot
    \begin{cases}
    (2\pi)^{n(n-1)},& d = 2n \\
    (2\pi)^{n^2},& d = 2n+1
    \end{cases}
    ,\quad
    \widetilde{\Delta}_{d}(\Lambda)
    =
    \prod_{1 \le i<j\le n} (\lambda_{i}^{2} - \lambda_{j}^{2})
    \begin{cases}
    1, & d = 2n \\
    \prod_{i} \lambda_{i},& d = 2n+1
    \end{cases}
    .
\end{equation*}
}
To recover the singular values $\sigma_k = |\lambda_k|$, the Jacobian is $2^n g(\Sigma)$ (noting that $g(\Lambda) = g(\Sigma)$).
\end{remark}

As the distribution of $\bA_t = P J(\Sigma) P^{\top}$ is invariant to orthogonal conjugation, then $\sigma_k$ and $P$ are independent with $P$ Haar-distributed.
This follows as the density of $\bA_t$ only depends on $\sigma_k$. Changing to the block diagonal form incurs a Jacobian which only depends on $\sigma_k$. Hence, the joint density of $(\Sigma, P)$ trivially factorises, with the density of $P$ being 1, which implies their independence and that $P \in O(d)$ is Haar-distributed. The density of $\sigma_k$ is given by the following corollary.

\begin{corollary}[Density of the Singular Values]
\label{corollary:Joint density of L\'evy's area singular values}
Express $\bA_t = P J(\Lambda) P^{\top}$ in its block diagonal form. The density of its singular values \(\sigma_k = |\lambda_k|\) is
    \begin{equation}
        p(\sigma_1, \dots, \sigma_n) =
        \left( \frac{2}{t} \right)^{n}
        \frac{(-1)^{n(n-1)/2}}{n!}
        \det\left[ \sech\left( \frac{\pi}{t}\sigma_{j} \right)^{2(i-1)} \right]
        \det\left[ \left(\frac{\pi}{t}\sigma_{j}\right)^{2(i-1)} \right]
        \prod_{i=1}^{n} w\left(\frac{\pi}{t}\sigma_i\right)
    \end{equation}
    where
    \begin{align}
        w(\sigma) &=
        \mathrm{sech}\left( \sigma \right)
        \begin{cases}
        1, & d = 2n \\
        \sigma \tanh\left(\sigma\right), & d = 2n+ 1
        \end{cases}
        .
    \end{align}
Furthermore, $\sigma_k$ is independent to the singular vectors $P$ which is a Haar-distributed orthogonal matrix.
\end{corollary}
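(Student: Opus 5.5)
The plan is to derive the corollary directly from Theorem~\ref{thm:Density of L\'evy's Area Matrix} by the change of variables $A = P J(\Lambda) P^{\top}$ described in the preceding remark.

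\textbf{Step 1: change of variables and independence.} Since the density $p(A)$ depends on $A$ only through the signed singular values $\lambda_k$, and only through $\lambda_k^2$, we write $p(A) = f(\Sigma)$ with $\sigma_k = |\lambda_k|$. Changing variables to $(\Sigma, P)$ with $P$ Haar distributed on $O(d)$ introduces the Jacobian $2^n g(\Sigma)$ from Lemma~\ref{lemma:Jacobian for Block Diagonalisation of Skew-symmetric Matrices}. The redundancy factors (signs, $SO(2)^n$ rotations, permutations) are already absorbed into $g$ as in the remark. The joint density of $(\Sigma,P)$ is then $2^n g(\Sigma) f(\Sigma) \cdot 1$. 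Because this is a product of a function of $\Sigma$ and the constant Haar density in $P$, it gives at once the independence of $\Sigma$ and $P$, the Haar law of $P$, and the marginal density $p(\sigma) = 2^n g(\Sigma) f(\Sigma)$.

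\textbf{Step 2: algebraic simplification.} Set $x_k = \pi\sigma_k/t$. The Jacobian contributes
\[
\widetilde\Delta_d(\Sigma)^2 = \Big(\tfrac{t}{\pi}\Big)^{2n(n-1)} \prod_{i<j}(x_i^2-x_j^2)^2 \times \Big(\prod_i (t/\pi)^2 x_i^2 \text{ if } d \text{ odd}\Big).
\]
One factor $\prod_{i<j}(x_i^2-x_j^2)$ cancels the denominator in the density of $\bA_t$. This leaves
\[
\prod_{i<j}(x_i^2-x_j^2)\,\prod_{i<j}\big(\sech^2 x_j-\sech^2 x_i\big).
\]
Both factors are Vandermonde products. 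With $V(y)=\det[y_j^{i-1}]=\prod_{i<j}(y_j-y_i)$, we have $\prod_{i<j}(x_i^2-x_j^2) = (-1)^{n(n-1)/2}\det[x_j^{2(i-1)}]$ and $\prod_{i<j}(\sech^2x_j-\sech^2x_i)=\det[\sech(x_j)^{2(i-1)}]$. This produces exactly the sign $(-1)^{n(n-1)/2}$ and the two determinants in the statement.

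In the odd case, the extra $\prod_i x_i^2$ combined with the old weight $\frac{1}{2\pi}\frac{\tanh x}{x}\sech x$ gives $x\tanh x\,\sech x$, up to the constant $(t/\pi)^{2n}/(2\pi)^n$. This is the new weight $w$.

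\textbf{Step 3: constant bookkeeping (main obstacle).} It remains to check that the collected prefactor reduces to $(2/t)^n/n!$. The factors to combine are:
\begin{itemize}
\item $(\pi/t)^{d(d-1)/2}$ from the density;
\item $(t/\pi)^{2n(n-1)}$, and in the odd case $(t/\pi)^{2n}$, from $\widetilde\Delta_d^2$;
\item $2^n C_d/(2\pi)^{n^2}$ from the density, against $C_d^{-1}(2\pi)^{n(n-1)}/n!$ (or $(2\pi)^{n^2}$ when $d$ is odd) from $g$;
\item the Jacobian factor $2^n$;
\item in the odd case, the $1/(2\pi)^n$ from the weight.
\end{itemize}
In the even case, $d(d-1)/2 - 2n(n-1) = n$, so the powers of $\pi/t$ reduce to $(\pi/t)^n$. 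Also $(2\pi)^{n(n-1)-n^2}=(2\pi)^{-n}$, and $C_d$ cancels. The total is $2^n\cdot 2^n(\pi/t)^n(2\pi)^{-n}/n! = (2/t)^n/n!$, as claimed.

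The odd case is checked the same way: $n(2n+1)-2n^2 = n$, $(2\pi)^{n^2}$ cancels, and the weight contributes $(2\pi)^{-n}$. I expect this bookkeeping, in particular keeping track of the $2^n$ factors and the signs of the Vandermonde products, to be the only delicate point. It can be double-checked by integrating the $n=1$ case, where $\int_0^\infty \tfrac{2}{t}\sech(\pi\sigma/t)\,d\sigma = 1$.
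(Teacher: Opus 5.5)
Your proposal is correct and follows the paper's proof: you change variables to $(\Sigma,P)$ with Jacobian $2^n g(\Sigma)$, read off independence and the Haar law of $P$ from the factorised joint density, then recognise the two Vandermonde determinants and collect constants, and your bookkeeping of the powers of $\pi/t$, $2\pi$ and $2$ is right for both parities. One small point in your favour: the sign $(-1)^{n(n-1)/2}$ does come from rewriting $\prod_{i<j}(x_i^2-x_j^2)$ as a Vandermonde determinant, as you say, since $\prod_{i<j}(\sech^2 x_j-\sech^2 x_i)$ already equals $\det[\sech(x_j)^{2(i-1)}]$ with no sign, whereas the paper's proof attributes the sign to the $\sech$ product (the final formula is the same either way).
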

\begin{proof}
    Perform the change of variables \(\bA = Q J(\Sigma) Q^{\top} \to (Q, \Sigma)\) to its orthogonal matrix and singular values $\Sigma$. By Lemma \ref{lemma:Jacobian for Block Diagonalisation of Skew-symmetric Matrices}, this incurs the Jacobian $2^{n}g(\Sigma)$.
Hence \(p_{\Sigma, Q}(\Sigma, Q) = p_{A}(P J(\Sigma) P^{\top}) 2^{n} g(\Sigma)\).
Since the density of \(\bA\) and the Jacobian only depend on $\Sigma$, then $\Sigma, Q$ are independent with $Q$ uniformly distributed on $O(d)$ (i.e. Haar-distributed). Integrating out \(Q\) yields \(p_{\Sigma}(\Sigma) =  p_{A}(P J(\Sigma) P^{\top}) 2^{n} g(\Sigma)\). Finally, we simplify the expression by recognising the Vandermonde determinants and collecting factors. We incur a minus sign as
{\small
\begin{equation*}
    \prod_{1 \le i < j \le n} \left(\mathrm{sech}\left( \frac{\pi}{t}\sigma_{j} \right)^{2} - \mathrm{sech}\left( \frac{\pi}{t}\sigma_{i} \right)^{2}\right) =
    (-1)^{n(n-1)/2}
    \det\left[ \sech\left( \pi \sigma_{j} / t \right)^{2(i-1)} \right]
    .
\end{equation*}
}
\end{proof}
\begin{proof}[Proof of Theorem~\ref{thm:Density of L\'evy's Area Matrix}]\label{proof of density of levy's area matrix}
Set $\gamma = 0$ in \eqref{equation:CF of L\'evy's Area and Brownian motion} and invert the characteristic function
\begin{equation*}
    p(A) = \frac{1}{(2\pi)^{d(d-1)/2}}\int_{\R^{d(d-1)/2}}
    \exp\left(-\frac{i}{2}\tr{\Theta^{\top} A}\right)
    \det\left( \cosh \frac{t}{2} |\Theta|\right)^{-1/2}
    d \Theta
    .
\end{equation*}
Changing variables \(\Theta = Q J(\Phi) Q^{\top} \to (Q, \Phi)\) incurs the Jacobian \(g(\Phi)\) (Lemma \ref{lemma:Jacobian for Block Diagonalisation of Skew-symmetric Matrices}).
This decomposes $p(A)$ into an integral over the signed singular values of $\Theta$
\begin{equation*}
    p(A) = \frac{1}{(2\pi)^{d(d-1)/2}}
    \int_{\mathbb{R}^{n}} I(A, \Phi) g(\Phi) \prod_{i=1}^{n} \mathrm{sech}\left( \frac{t}{2} \varphi_{i} \right) 
    d\Phi 
\end{equation*}
and an integral over the orthogonal group.
\begin{equation*}
    I(A, \Phi) = \int_{O(d)} \exp\left( \frac{i}{2} \mathrm{tr}\left(Q J(\Phi) Q^{\top} A \right) \right)  dQ
    .
\end{equation*}
To simplify subsequent computations, consider $\widetilde{\bA} = \bA_{\pi}$, denoting its singular values with $x_i$. To recover general $t$, we use scale invariance of Brownian motion $\widetilde{\bA} \eqdist \pi \bA_t / t$ and rescale at the end.

\textbf{Integral over the orthogonal group}
Applying the Harish-Chandra integral (Lemma \ref{lemma:Harish-Chandra Integrals over the Orthogonal Group}) yields
\begin{equation*}
I(\widetilde{A}, \Phi)
=
\frac{C_d (-1)^{n(n-1) / 2}}{\widetilde{\Delta}_{d}(X) \widetilde{\Delta}_{d}(\Phi)}
\begin{cases}
\det[\cos(x_{i}\varphi_{j} )], & d = 2n \\
\det[\sin(x_{i} \varphi_{j} )], & d = 2n+ 1
.
\end{cases}
\end{equation*}
The output simplifies when multiplied with \(g(\Phi)\). We also relate \(\widetilde{\Delta}(\Phi)\) to the Vandermonde determinant of $\Phi^2$ and insert the additional factor from the odd case into the determinant.
\begin{equation*}
    I(\widetilde{A}, \Phi) g(\Phi) =
    \widetilde{C}_d
    \frac{\det[\varphi_{j}^{2(i-1)}]}{\det[x_{j}^{2(i-1)}]}
        \begin{cases}
        \det[\cos(x_{i} \varphi_{j} )], & d = 2n \\
        \det\left[ \frac{\varphi_{j}}{x_{i}}\sin(x_{i}\varphi_{j} ) \right], & d = 2n+ 1
        \end{cases}
\end{equation*}
where
\begin{equation*}
    \widetilde{C}_{d}
    =
    \frac{(-1)^{n(n-1) / 2}}{n!}
    \begin{cases}
    (2\pi)^{n(n-1)}, & d = 2n \\
    (2\pi)^{n^2}, & d = 2n + 1
    \end{cases}
    .
\end{equation*}

\textbf{Integral over the signed singular values} The resulting integral over $\Phi$ is in the form of Andr\'eief Identity (Lemma \ref{lemma:Andr\'eief Identity}).
In the case \(d = 2n\) is even, we show
\begin{equation*}
    \begin{split}
    \int_{\mathbb{R}^{n}}
    \det[\varphi_{j}^{2(i-1)}]
    \det[\cos(x_{i} \varphi_{j} )]
    \prod_{i=1}^{n}\mathrm{sech}\left( \frac{\pi}{2} \varphi_{i} \right) d\Phi
    =
    n! \det\left[
    \int_{\R}
    \varphi^{2(i-1)}
    \cos(x_j \varphi)
    \sech\left( \frac{\pi}{2} \varphi \right) d \varphi
    \right] \\ =
    n!
    2^{n}
    \prod_{p=1}^{n-1} (2p)!
    \prod_{i=1}^{n} \mathrm{sech}\left( x_{i} \right)
    \prod_{i < j} \left( \mathrm{sech}\left(x_{i} \right)^{2} - \mathrm{sech}\left(x_{j} \right)^{2} \right)
    .
    \end{split}
\end{equation*}
By taking derivatives of the Fourier transform of $\sech$ $F(x) = \int_{\mathbb{R}}\cos(x_{j} \varphi)\sech\left( \frac{\pi}{2} \varphi \right) d \varphi = 2 \sech(x)$,
\begin{equation*}
    \int_{\mathbb{R}}
    \varphi^{2(i-1)}
    \cos(x_{j} \varphi)
    \sech\left( \frac{\pi}{2} \varphi \right) d \varphi
    =
    (-1)^{(i-1)}
     \partial_{x}^{2(i-1)} F(x_j)
     .
\end{equation*}
Next the derivative of $\sech$ are polynomials in $\sech$ where $W_{i, j}$ are given by Lemma \ref{lemma:Derivatives of sech}
\begin{equation*}
(-1)^{(i-1)} \partial_{x}^{2(i-1)} F(x_j)
=
2
\sum_{l=0}^{i-1} (-1)^{i-1-l}W_{2l+1, i-1-l} (2l)! \;\mathrm{sech}\left( x_{j} \right)^{2l+ 1}
.
\end{equation*}
As the \(i\)-th row is a polynomial in \(\sech\) of degree \(2i - 1\), we can simplify the determinant by removing lower degree terms. This leaves only the leading degree which have coefficients $(2i-2)!$, as $W_{2i-1, 0} = 1$, and reveals a Vandermonde determinant.
\begin{equation*}
\begin{split}
\det\left[
2
(2i-2)! \;\mathrm{sech}\left( x_{j} \right)^{2(i-1)+ 1}\right]
=
2^n
\prod_{p=1}^{n-1} (2p)!
\prod_{i=1}^{n} \mathrm{sech}\left( x_{i} \right)
\det\left[
\mathrm{sech}\left( x_{j} \right)^{2(i-1)}\right]
\end{split}
\end{equation*}
Similarly, in the case \(d = 2n + 1\) is odd
{\small
\begin{align*}
\int_{\mathbb{R}^{n}}
\det[\varphi_{j}^{2(i-1)}]
\det\left[ \frac{\varphi_{j}}{x_{i}}\sin(x_{i} \varphi_{j} ) \right]
\prod_{i=1}^{n}\mathrm{sech}\left( \frac{\pi}{2} \varphi_{i} \right) d\Phi
=
n! \det\left[
\int_{\R}
\varphi^{2(i-1)}
\frac{\varphi}{x_j}
\sin(x_j \varphi)
\sech\left( \frac{\pi}{2} \varphi \right) d \varphi
\right] \\ =
n!
2^{n}
\prod_{p=1}^{n-1} (2p+1)!
\prod_{i=1}^{n} \mathrm{sech}\left( x_{i} \right)
\frac{\tanh\left( x_{i} \right)}{x_{i}}
\prod_{i < j} \left( \mathrm{sech}\left( x_{i} \right)^{2} - \mathrm{sech}\left( x_{j} \right)^{2} \right)
.
\end{align*}
}
We consider {\small $\int_{\mathbb{R}} \varphi^{2(i-1)+1} \sin(x_{j} \varphi) \mathrm{sech}\left( \frac{\pi}{2} \varphi \right) d \varphi
=
(-1)^{i} \partial_{x}^{2(i-1)+1} F(x_j)$} and Lemma \ref{lemma:Derivatives of sech} to express
$$
\partial_{x}^{2(i-1)+1} F(x_j) = 2
\tanh\left( x_{j} \right)
\sum_{l=0}^{i-1} (-1)^{i-1-l}W_{2l+1, i-1-l} (2l+1)! \;\mathrm{sech}\left( x_{j} \right)^{2l+ 1}
.
$$
Applying the same technique with determinants yields {\small $\det\left[
2
\tanh\left( x_{j} \right)
(2i - 1)! \;\mathrm{sech}\left( x_{j} \right)^{2(i-1)+ 1}\right]$ }.

\textbf{Constant factors}
The \(n!\) cancels and the \((-1)^{n(n-1)/2}\) is used to flip the products in the \(\sech(x)^2\) which ensures the expression is positive as \(\sech(|x|)\) is decreasing. The factorials form \(C_d\). 
Next we consider the $(2\pi)^{d(d-1)/2}$ from the inverse Fourier transform.
In the even case \(d = 2n, d(d-1) / 2= 2n^2 -n\) and the overall constant factor is $2^{n} (2\pi)^{-n^2}$ and in the odd case \(d = 2n+1, d(d-1) / 2 = 2n^2 +n\) and the factor is $2^{n} (2\pi)^{-n^2 - n}$.
In the odd case we absorb $(2\pi)^{-n}$ into the weight function.

Finally, rescaling back to $\bA_t$ yields a factor of $(\pi / t)^{d(d-1)/2}$ and we set $x_i = \pi \lambda_i / t$.
\end{proof}

\subsection{Density of L\'evy's Area and Brownian Motion} \label{section:Density of L\'evy's areas and Brownian increments}
The same technique can be applied to study the density of \((B_t, \bA_t)\). However, the associated integral over the orthogonal group is more complicated and the authors are unaware if it has a closed form.

\begin{lemma}[Density of L\'evy's Areas and Brownian Motion]
\label{lemma:Joint Density of L\'evy's area and Brownian Motion}
Let \(\bA_t\) be the \(d \times d\) matrix of L\'evy's areas of $d$-dimensional Brownian motion $B_t$. Express \(\bA_t = P J(\Lambda) P^{\top}\) in its block diagonal form, then the density of \((B_t, \bA_t)\) is given by
    \begin{equation}
        p(B, A)
        =
        \frac{1}{(2\pi)^{d^2/2} \; t^{d/2}}
        \int_{\R^n}
        I(J(\Lambda), P^{\top}B, J(\Phi))
        g(\Phi)
        \prod_{k=1}^n \sinch\left(\frac{t}{2} \varphi_k\right)^{-1}
        d \Phi
    \end{equation}
    where \(g(\Phi)\) is the block diagonalisation from Jacobian (Lemma \ref{lemma:Jacobian for Block Diagonalisation of Skew-symmetric Matrices}) and 
    \begin{equation}
I(J(\Lambda), B, J(\Phi))
=
\int_{O(d)}
\exp\left( \mathrm{tr}\frac{i}{2} Q J(\Phi) Q^{\top} J(\Lambda) - \frac{1}{2t}B^{\top}Q
\frac{\cosh}{\sinch}\left( \frac{t}{2} D_{\Phi} \right)Q^{\top}B\right)
dQ
.
    \end{equation}
\end{lemma}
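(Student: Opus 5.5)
The plan is to invert the joint characteristic function of Theorem \ref{theorem:new proof of cf} only in the area variables, and to handle the $\gamma$-variables by a Gaussian integral rather than a Fourier inversion. Write the density as
\begin{equation*}
p(B,A)=\frac{1}{(2\pi)^{d(d-1)/2}}\frac{1}{(2\pi)^{d}}\int_{\so(d)}\int_{\R^d} e^{-\frac{i}{2}\tr(\Theta^\top A)-i\ip{\gamma}{B}}\,\varphi_{B,\bA}(\gamma,\Theta)\,d\gamma\,d\Theta .
\end{equation*}
Here $\varphi_{B,\bA}$ is the characteristic function from \eqref{equation:CF of L\'evy's Area and Brownian motion}.

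\textbf{Step 1: the $\gamma$-integral.} For fixed $\Theta$, the CF is Gaussian in $\gamma$ with covariance matrix $M=t\,\sinch(\tfrac t2|\Theta|)\cosh(\tfrac t2|\Theta|)^{-1}$. This matrix is positive definite, since $\sinch$ and $\cosh$ are positive on the reals. Computing the Gaussian Fourier integral gives
\begin{equation*}
(2\pi)^{d/2}\det(M)^{-1/2}\exp\!\left(-\tfrac12 B^\top M^{-1}B\right),\qquad M^{-1}=\tfrac1t\frac{\cosh}{\sinch}\!\left(\tfrac t2|\Theta|\right).
\end{equation*}
Combining $\det(M)^{-1/2}$ with the prefactor $\det(\cosh\frac t2|\Theta|)^{-1/2}$, the $\cosh$ terms cancel. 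What remains is $t^{-d/2}\det(\sinch\frac t2|\Theta|)^{-1/2}$. In block-diagonal coordinates this equals $t^{-d/2}\prod_k\sinch(\frac t2\varphi_k)^{-1}$, because each $\varphi_k$ appears twice and $\sinch(0)=1$ handles the odd case. For the constants: $(2\pi)^{-d}(2\pi)^{d/2}(2\pi)^{-d(d-1)/2}=(2\pi)^{-d^2/2}$, which matches the statement.

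\textbf{Step 2: change of variables.} Substitute $\Theta=QJ(\Phi)Q^\top$, which brings in the Jacobian $g(\Phi)$ of Lemma \ref{lemma:Jacobian for Block Diagonalisation of Skew-symmetric Matrices}. Using $|\Theta|=QD_{|\Phi|}Q^\top$ and evenness of $\cosh/\sinch$, the Gaussian exponent becomes $-\frac1{2t}B^\top Q\frac{\cosh}{\sinch}(\frac t2D_\Phi)Q^\top B$.

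\textbf{Step 3: reduce to block-diagonal $A$.} Write $A=PJ(\Lambda)P^\top$ and substitute $Q\mapsto PQ$, using invariance of Haar measure. The trace term becomes $\frac i2\tr(QJ(\Phi)Q^\top J(\Lambda))$, with the sign convention inherited from $\Theta^\top=-\Theta$. The vector $B$ becomes $P^\top B$. This produces exactly $I(J(\Lambda),P^\top B,J(\Phi))$.

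\textbf{Main obstacle: justifying the interchanges.} I must justify integrating over $\gamma$ first and then applying Fubini to split the integral into $Q$ and $\Phi$ parts. The original $\Theta$-integrand is not obviously absolutely integrable jointly with $\gamma$. However, after the $\gamma$-integration it is bounded by a multiple of $\prod_k\sinch(\frac t2\varphi_k)^{-1}|g(\Phi)|$, which decays exponentially against the polynomial growth of $g$. It is therefore integrable, and this is what justifies Fubini and the inversion formula. The inner $\gamma$-integral is absolutely convergent for each $\Theta$, since it is a Gaussian. Care is also needed with the sign inside the exponential, $\tr(\Theta^\top A)=-\tr(\Theta A)$, and with the overcounting factors that are already built into $g$.
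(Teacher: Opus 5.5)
Your proposal is correct and follows the paper's proof essentially step for step. You do the Gaussian inversion in $\gamma$ first, which cancels the $\cosh$ determinants, leaves $t^{-d/2}\det(\sinch\frac t2|\Theta|)^{-1/2}$ and gives the constant $(2\pi)^{-d^2/2}$. You then change variables $\Theta=QJ(\Phi)Q^{\top}$ with Jacobian $g(\Phi)$, and finish with the Haar substitution $Q\mapsto PQ$ to reduce to $J(\Lambda)$ and $P^{\top}B$. Your integrability remark also justifies the Fourier inversion and Fubini steps, which the paper leaves implicit: by Tonelli, the modulus of the joint characteristic function integrates to a constant times $\int g(\Phi)\prod_k\sinch(\frac t2\varphi_k)^{-1}\,d\Phi<\infty$.
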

\begin{proof}
Take the joint characteristic function \(\varphi_{B, A}(\gamma, \Theta)\) (\eqref{equation:CF of L\'evy's Area and Brownian motion}) and invert wrt to $\gamma$ and then $\Theta$. The integral over \(\gamma\) corresponds to the density of a centred normal distribution with covariance \(t  \sinch\left(\frac{t}{2} |\Theta| \right) \cosh\left(\frac{t}{2} |\Theta| \right)^{-1}\).
Hence,
\begin{align*}
    p(B, A)
&=
(2\pi)^{-d(d-1)/2 - d} \int_{\mathbb{R}^{d(d-1)/2}} \int_{\mathbb{R}^{d}}    
\exp\left(-\frac{i}{2}\tr{\Theta^{\top} \bA} - i\ip{\gamma}{B}\right)
\varphi_{B, A}(\gamma, \Theta)
d\gamma d\Theta \\
&= (2\pi)^{-d(d-1)/2 - \frac{d}{2}} t^{-\frac{d}{2}} \int_{\mathbb{R}^{d(d-1)/2}}
    \det\left( \sinch \frac{t}{2} |\Theta|\right)^{-1/2} \\
&\times
    \exp\left(
    -\frac{i}{2}\tr{\Theta^{\top} \bA}
    -\frac{1}{2t}
    B^{\top}
    \cosh\left(\frac{t}{2} |\Theta| \right)
    \sinch\left(\frac{t}{2} |\Theta| \right)^{-1}
    B
\right)
d\Theta
\end{align*}
This results in an integral over \(\R^{d(d-1)/2}\).
Setting \(\Theta = Q J(\Phi) Q^{\top} \to (Q, \Phi)\) yields
\begin{equation*}
p(B, A) = \frac{1}{(2\pi)^{d^2/2}\; t^{d/2}}
    \int_{\mathbb{R}^{n}}
    I(A, B, \Phi) g(\Phi) \prod_{i=1}^{n} \sinch\left( \frac{t}{2} \varphi_{i} \right)^{-1}
d\Phi
.
\end{equation*}
Block diagonalise $A = P J(\Lambda) P^{\top}$, by setting $P^{\top} Q \to Q$. The Jacobian is 1 as the Haar measure is invariant to orthogonal transformation which yields $I(J(\Lambda), P^{\top} B, J(\Phi))$.
\end{proof}

\begin{remark}[Simplification for $d=2$] \label{remark:Orthogonal Group Integral d=2}
When \(d = 2\) the second term no longer depends on $Q$ as $D_{\Phi} = \varphi I_2$ and $Q D_{\Phi} Q^{\top} = \varphi I_2$ which leaves a Harish-Chandra integral.
\begin{equation*}
I(J(\Lambda), W, J(\Phi))
=
\exp\left(-\frac{\lp[2]{B}^2}{2t} \frac{\cosh}{\sinch}\left( \frac{t}{2} \varphi \right) \right)
\int_{O(2)}
\exp\left( \mathrm{tr}\frac{i}{2} Q J(\Phi) Q^{\top} J(\Lambda)\right)
dQ
.
\end{equation*}
This recovers the joint density for \(d = 2\) as in \cite{gainesRandomGenerationStochastic1994}.
\begin{equation*}
p(B, A)
=
\frac{1}{(2\pi)^2 t}
\int_{\R}
\cos(\sigma \varphi)
\exp\left(-\frac{\lp[2]{B}^2}{2t} \frac{\cosh}{\sinch}\left( \frac{t}{2} \varphi \right) \right)
\sinch\left(\frac{t}{2} \varphi\right)^{-1}
d \varphi
\end{equation*}
\end{remark}

\begin{remark}[Weingarten Calculus]
For higher dimensions, $I(J(\Lambda), B, J(\Phi))$ is in principle computable via Weingarten Calculus \cite{collinsIntegrationRespectHaar2006, collinsPropertiesOrthogonalWeingarten2009} which gives expression for mixed moments of the entries of an orthogonal matrix.
I.e., let \(C = -\frac{1}{2t}\frac{\cosh}{\sinch}\left( \frac{t}{2} D_{\Phi} \right)\).
\begin{equation}
I(J(\Lambda), B, J(\Phi))
= \sum_{m, n = 1}^{\infty} \frac{1}{m! n!} \int_{O(d)} \mathrm{tr}\left( \frac{i}{2} J(\Lambda) Q J(\Phi) Q^{\top} \right)^{m} (B^{\top} Q C Q^{\top} B)^{n}\, dQ
\end{equation}
where
\begin{equation}
    \mathrm{tr} Q J(\Lambda) Q^{\top} J(\Phi)
    =
    \sum_{i, j \in [n]} \lambda_i \varphi_j \det Q_{[ij]},\quad
    Q_{[ij]}
    :=
    \begin{bmatrix}
        Q_{2i -1 , 2j -1} & Q_{2i -1 , 2j} \\
        Q_{2i , 2j -1} & Q_{2i , 2j}
    \end{bmatrix}
\end{equation}
and
\begin{equation}
    B^{\top} Q C Q^{\top} B
    = \sum_{i \in [n]} c_i \left[(Q^{\top} B)_{2i-1}^2 + (Q^{\top} B)_{2i}^2 \right]
    .
\end{equation}
However, it is unclear if this would yield a form that is useful for the remaining integral over \(\Phi\).
\end{remark}

As another simplification, we consider the marginal density of $(B, \Sigma)$ as integrating over $P$ removes the coupling in $I$. When $d=2n$, this yields an integral over the simplex (Lemma \ref{lemma:Hermite-Genocchi Integral}) which has an explicit form. However, the integral over $\Phi$ remains a challenge as it is not directly compatible with Andr\'eief identity.
\begin{lemma}[Density of Brownian Motion and the Singular Values] \label{lemma: Density of u and sigma_k}
Let $d = 2n$ and let \(h(\varphi) = \frac{\cosh}{\sinch}(\frac{t}{2} \varphi)\). The density of $B_t$ and the singular values $\sigma_i$ of $\bA_t$ is
\begin{equation}
\begin{split}
p(B, \sigma_1, \dots, \sigma_n)
=
\frac{(-1)^{(n-1)(n-2) / 2}\; C_d^{-1}}{2t\, \pi^{2n}\;n\cdot n! \lp[2]{B}^{2(n-1)}}
\det[\sigma_{j}^{2(i-1)}]
\int_{\mathbb{R}^{n}}
\det[\varphi_{j}^{2(i-1)}]
\det[\cos(\sigma_{i}\varphi_{j} )] \\
\times \prod_{k=1}^n \sinch\left(\frac{t}{2} \varphi_k\right)^{-1}
\left( \sum_{k=1}^{n} \frac{\exp(- \frac{\lp[2]{B}^2}{2t} \;h(\varphi_{k}))}{\prod_{l \neq k} (h(\varphi_{k}) - h(\varphi_{l}))} \right)
d \Phi
\end{split}
\end{equation}
where $C_d$ is the same as in \eqref{equation:C_d}.
\end{lemma}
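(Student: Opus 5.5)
Starting from Lemma \ref{lemma:Joint Density of L\'evy's area and Brownian Motion}, I will pass to the coordinates $(B, P, \Sigma)$ for the pair $(B_t,\bA_t)$. Write $\bA_t = P J(\Lambda) P^{\top}$; the change of variables $A \to (P,\Sigma)$ contributes the Jacobian $2^n g(\Sigma)$ (Lemma \ref{lemma:Jacobian for Block Diagonalisation of Skew-symmetric Matrices}). Integrating the resulting joint density over $P\in O(d)$ gives
\[
p(B,\Sigma) = 2^n g(\Sigma)\,\frac{1}{(2\pi)^{d^2/2} t^{d/2}} \int_{\R^n} g(\Phi)\prod_k \sinch\left(\tfrac t2\varphi_k\right)^{-1} \int_{O(d)} I(J(\Lambda),P^{\top}B,J(\Phi))\,dP\, d\Phi .
\]
Inside the $P$-integral I substitute $Q\to PQ$ and use the invariance of Haar measure. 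This turns the $\Lambda$-term into $\tr Q J(\Phi)Q^{\top}P^{\top}J(\Lambda)P$ and the $B$-term into $B^{\top}Q\,\frac{\cosh}{\sinch}(\tfrac t2 D_\Phi)\,Q^{\top}B$. Then I integrate over $P$ first, with $Q$ fixed. After the further substitution $P\to QP$, the $P$-integral becomes exactly the Harish-Chandra integral $I(J(\Lambda),\Phi)$ of Lemma \ref{lemma:Harish-Chandra Integrals over the Orthogonal Group}, and it no longer depends on $Q$. The coupling between the two terms is therefore removed. What remains over $Q$ is
\[
\int_{O(d)} \exp\left(-\frac{1}{2t} \sum_k h(\varphi_k)\big((Q^{\top}B)_{2k-1}^2+(Q^{\top}B)_{2k}^2\big)\right) dQ .
\]

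For this $Q$-integral, the vector $Q^{\top}B$ is uniform on the sphere of radius $\lp[2]{B}$ in $\R^{2n}$. The block squared norms $u_k/\lp[2]{B}^2$ are therefore Dirichlet$(1,\dots,1)$, i.e. uniform on the simplex, because each block is $2$-dimensional and $d=2n$ is even. So the $Q$-integral equals $(n-1)!$ times the simplex integral of $\exp(-\frac{\lp[2]{B}^2}{2t}\sum_k h_k s_k)$. By the Hermite--Genocchi formula (Lemma \ref{lemma:Hermite-Genocchi Integral}), this is a divided difference of the exponential:
\[
(n-1)!\left(-\frac{2t}{\lp[2]{B}^2}\right)^{n-1}\sum_k \frac{e^{-\lp[2]{B}^2 h_k/(2t)}}{\prod_{l\ne k}(h_k-h_l)} .
\]
This produces the factor $\lp[2]{B}^{-2(n-1)}$ and one sign $(-1)^{n-1}$.

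Finally I collect the constants. Following the proof of Theorem \ref{thm:Density of L\'evy's Area Matrix}, I multiply $I(J(\Lambda),\Phi)$ by $g(\Phi)$ to get the ratio of Vandermonde determinants in $\Phi^2$ and $\Sigma^2$ times $\det[\cos(\sigma_i\varphi_j)]$. Multiplying by $2^n g(\Sigma)$ then converts $\det[\sigma_j^{2(i-1)}]^{-1}$ into $\det[\sigma_j^{2(i-1)}]$. The remaining bookkeeping combines:
\begin{itemize}
\item the powers $(2\pi)^{n(n-1)}$ coming from the two Jacobians,
\item the factor $C_d$ from Harish-Chandra and $C_d^{-1}$ from each Jacobian, which net to $C_d^{-1}$,
\item the factors $1/n!$, $(n-1)!$ and $(2t)^{n-1}$,
\item the normalisation $(2\pi)^{-2n^2}t^{-n}$,
\item the signs $(-1)^{n(n-1)/2}$ and $(-1)^{n-1}$, which combine to $(-1)^{(n-1)(n-2)/2}$.
\end{itemize}
The main obstacle is this constant and sign bookkeeping, together with justifying the Fubini/order-of-integration swap used to decouple the two orthogonal integrals. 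The conceptual steps themselves are direct.
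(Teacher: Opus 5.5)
Your proposal is correct and follows the paper's proof essentially step for step. You integrate out $P$, use Haar invariance to split the orthogonal integral into a Harish-Chandra integral in $(\Sigma,\Phi)$ times a sphere average in $B$, reduce the latter to a Dirichlet$(1,\dots,1)$ simplex integral evaluated by Hermite--Genocchi, and then collect constants; the bookkeeping you list does give the stated constant and the sign $(-1)^{n(n-1)/2+n-1}=(-1)^{(n-1)(n-2)/2}$. The only slip is a harmless transposition: after shifting $Q$, the $\Lambda$-term should read $\tr QJ(\Phi)Q^{\top}PJ(\Lambda)P^{\top}$ rather than $P^{\top}J(\Lambda)P$, and it is this form for which your substitution $P\to QP$ removes the $Q$-dependence.
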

\begin{proof}
Consider the density $p_{B, \Sigma, P}(B, \Sigma, P) = 2^{n}\, p_{B, A}(B, P J(\Sigma) P^\top) g(\Sigma)$.
To get the density of $(B, \Sigma)$, we integrate over $P$ which only affects the second factor of $I(J(\Sigma), P^{\top}B, J(\Phi))$
\[
\int_{O(d)}
\exp\left(
- \frac{1}{2t}B^{\top}(PQ) \frac{\cosh}{\sinch}\left( \frac{t}{2} D_{\Phi} \right)(PQ)^{\top} B\right)
dP
.
\]
The change of variables $PQ \to P^{\top}$ (which has unit Jacobian) removes dependence on $Q$. Therefore the intractable integral $I$ now factorises into two integrals over $O(d)$
\[
\int_{O(d)} I(\Sigma, P^{\top} B, \Phi) dP
=
\int_{O(d)}
\exp\left( \mathrm{tr}\frac{i}{2} Q J(\Phi) Q^{\top} J(\Sigma)\right) dQ
\cdot
\int_{O(d)}
\exp\left(
- \frac{1}{2t}(PB)^{\top} \frac{\cosh}{\sinch}\left( \frac{t}{2} D_{\Phi} \right) (PB)\right)
dP
.
\]
The first term is a Harish-Chandra integral and the second term can be reduced into the Hermite-Genocchi integral. As $PB \eqdist \lp[2]{B} S$ where $S$ is uniformly distributed on the unit $d-1$-dimensional sphere, i.e., $S \sim \rm{Dirichlet}(1/2, \dots, 1/2)$, the integrand becomes
\[
\exp\left(
- \frac{\lp[2]{B}^2}{2t} S^{\top} \frac{\cosh}{\sinch}\left( \frac{t}{2} D_{\Phi} \right) S\right)
=
\exp\left(
- \frac{\lp[2]{B}^2}{2t}
\set{\sum_{k=1}^{n} h(\varphi_k)(S_{2k-1}^2 + S_{2k}^2) + \Indicator{d = 2n+1} S_d^2}
\right)
.
\]
Aggregating into $Z_k = S_{2k-1}^2 + S_{2k}^2$, $Z$ follows the $\rm{Dirichlet}(1, \dots, 1)$ distribution which is equivalent to being uniform on the $n-1$-dimensional unit simplex. Noting the volume of the simplex is $(n-1)!$ and letting $\lambda = \lp[2]{B}^2 / (2t)$,
\[
I'(B, \Phi)
:=
\int_{O(d)}
\exp\left(
- \frac{1}{2t}(PB)^{\top} \frac{\cosh}{\sinch}\left( \frac{t}{2} D_{\Phi} \right) (PB)\right)
dP
=
(n-1)! \int_{\Delta_{n-1}} \exp\left(
- \lambda \sum_{k=1}^{n} h(\varphi_k) z_k
\right)
dz
.
\]
Applying the Hermite-Genocchi integral to $f(x) = \exp(-x)$ (see Lemma \ref{lemma:Hermite-Genocchi Integral}), we obtain
\begin{align*}
I'(B, \Phi)
=
\frac{(n-1)!\;(-1)^{n-1}}{\lambda^{n-1}} \sum_{k=1}^{n} \frac{\exp(-\lambda \;h(\varphi_{k}))}{\prod_{l \neq k} (h(\varphi_{k}) - h(\varphi_{l}))}
.
\end{align*}
Now we can express the density as
\[
p(B, \Sigma)
=
\frac{2^{n}g(\Sigma)}{(2\pi)^{d^2/2} \; t^{d/2}}
\int_{\R^n}
I(\Sigma, \Phi)
I'(B, \Phi)
g(\Phi)
\prod_{k=1}^n \sinch\left(\frac{t}{2} \varphi_k\right)^{-1}
d \Phi
.
\]
In the even case,
\[
g(\Phi) =
\det[\varphi_{j}^{2(i-1)}]^{2}
\frac{C_d^{-1}}{n!}
(2\pi)^{n(n-1)},\quad
I(\Sigma, \Phi)
=
\frac{C_d (-1)^{n(n-1) / 2}}{\det[\sigma_{j}^{2(i-1)}] \det[\varphi_{j}^{2(i-1)}]}
\det[\cos(\sigma_{i}\varphi_{j} )]
.
\]
Collecting factors yields the result.
\end{proof}

Like in Corollary \ref{corollary:Joint density of L\'evy's area singular values}, we can integrate out the orthogonal matrix of $\bA_t = P J(\Sigma) P^{\top}$ and recover quantities that are independent to $P$. These are the singular values $\sigma_k$ and $u_k = (P_{2k-1}^{\top} B)^2 + (P_{2k}^{\top} B)^2$ which are the squared norm of the projection of $B_t$ onto each singular subspace of $\bA_t$.

\begin{corollary} [Density of the Singular Values and Squared Subspace Norms] The density of the singular values \(\sigma_k\) of the matrix of L\'evy's areas \(\bA_t = P J(\Sigma) P^{\top}\) and the squared subspace norms $u_k = (P_{2k-1}^{\top} B_t)^2 + (P_{2k}^{\top} B_t)^2$ of the Brownian motion $B_t$ is given by
\label{corollary:joint density of sigma, u}
    \[
p(u_{1}, \dots, u_n, \sigma_1, \dots, \sigma_n) =
\frac{g(\Sigma) (2\pi)^n}{(2\pi)^{d^2/2} \; t^{d/2}}
\int_{\R^n}
I(J(\Sigma), \overline{u}, J(\Phi))
g(\Phi)
\prod_{k=1}^n \sinch\left(\frac{t}{2} \varphi_k\right)^{-1}
d \Phi
\]
where $\overline{u} = [\sqrt{u_1}, 0, \dots, \sqrt{u_n}, 0]$ (adding $\sqrt{u_{n+1}} = |P_{2n+1}^{\top} B|$ if $d = 2n+1$). Furthermore, $\sigma_k, u_k$ are independent to $P$ which is Haar-distributed.
\end{corollary}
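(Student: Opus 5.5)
Starting from Lemma \ref{lemma:Joint Density of L\'evy's area and Brownian Motion}, I would write $B$ in terms of $(u,P)$ on top of $(\Sigma,P)$ and show that, after integrating out the redundant angular variables, the resulting density depends only on $(u,\Sigma)$. This mirrors the proof of Corollary \ref{corollary:Joint density of L\'evy's area singular values}.

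\textbf{Step 1: change variables to $(\Sigma,P,W)$.} Put $W = P^{\top}B$. By Lemma \ref{lemma:Joint Density of L\'evy's area and Brownian Motion} the density of $(B,A)$ at $(B, PJ(\Sigma)P^{\top})$ equals the $\Phi$-integral of $I(J(\Sigma),P^{\top}B,J(\Phi))$. For fixed $P$ the map $B\mapsto W$ has unit Jacobian. Changing $A\to(\Sigma,P)$ costs $2^n g(\Sigma)$ by Lemma \ref{lemma:Jacobian for Block Diagonalisation of Skew-symmetric Matrices}. Hence
\[
p(W,\Sigma,P)=2^n g(\Sigma)\,p_{B,A}(PW,PJ(\Sigma)P^{\top}).
\]
The right-hand side is a function of $(W,\Sigma)$ alone. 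So $(W,\Sigma)$ is independent of $P$, and $P$ is Haar distributed.

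\textbf{Step 2: pass to polar coordinates in each block.} Write $(W_{2k-1},W_{2k})=\sqrt{u_k}(\cos\theta_k,\sin\theta_k)$. Then $dW_{2k-1}dW_{2k}=\tfrac12 du_k\, d\theta_k$. In the odd case, the last coordinate gives $u_{n+1}=W_d^2$, with the analogous factor.

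I then need the integrand to be independent of the angles $\theta_k$. The key observation is that $I(J(\Sigma),W,J(\Phi))$ is invariant under $W\mapsto RW$ for any $R=\bigoplus_k R_{\theta_k}\oplus 1$ with $R_{\theta_k}\in SO(2)$. Such an $R$ commutes with $J(\Sigma)$, since $J$ commutes with $2\times2$ rotations. Substituting $Q\to R^{\top}Q$ in the Haar integral therefore leaves the trace term unchanged and turns $W$ into $R^{\top}W$ in the quadratic term. With this invariance, each $W$ block can be rotated to $(\sqrt{u_k},0)$, which gives the vector $\overline u$. In the odd case the sign of $W_d$ is handled by a reflection $\operatorname{diag}(1,\dots,1,-1)$, which also commutes with $J(\Sigma)$.

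\textbf{Step 3: integrate out the angles.} Each $\theta_k$ contributes $2\pi\cdot\tfrac12=\pi$. Together with the $2^n$ from Step 1 this gives $(2\pi)^n$, matching the stated constant. Independence of $(u,\Sigma)$ from $P$ and the Haar law of $P$ are inherited from Step 1.

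The main obstacle is bookkeeping rather than analysis. First, the $2^n$ sign redundancy in $\lambda_k$ must be combined correctly with the polar Jacobian $\tfrac12$ per block and the angular volume $2\pi$. Second, the odd case has the extra variable $u_{n+1}$: the reflection identity above is needed there, and the factor $2$ in $d(W_d)=du/(2\sqrt u)\times 2$ signs must be tracked. I would check the constants by comparing with $d=2$ against Remark \ref{remark:Orthogonal Group Integral d=2}.
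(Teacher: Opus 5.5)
Your proposal is correct and follows essentially the same route as the paper: the change of variables to $(W,\Sigma,P)$ with Jacobian $2^n g(\Sigma)$, polar coordinates in each $2\times 2$ block with Jacobian $\tfrac12$, invariance under block rotations, and the count $2^n\pi^n=(2\pi)^n$. You verify the rotation invariance directly on $I$ via commutation with $J(\Sigma)$, whereas the paper absorbs the rotation into $P$; this is the same fact. One small caution for odd $d$: the paper's $R(\theta)$ acts as the identity on the last coordinate, so the stated formula is a density in the signed coordinate $W_d=P_{2n+1}^{\top}B$. If you literally change variables to $u_{n+1}=W_d^2$ as you suggest, you pick up an extra factor $u_{n+1}^{-1/2}$ that does not appear in the statement.
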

\begin{proof}
Again perform the change of variables \(\bA_t = P J(\Sigma) P^{\top} \to (P, \Sigma)\) to its orthogonal matrix and singular values $\Sigma$.
Furthermore, the Jacobian is unchanged when we change from $B \to W = P^{\top}B$. Hence $p_{W, \Sigma, P}(W, \Sigma, P) = 2^{n}g(\Sigma)\, p_{B, A}(PW, PJ(\Sigma)P^{\top})$.
Next consider the following change of variables, which incurs a Jacobian of $(1/2)^n$ and $W = R(\theta)\, \overline{u}$ where $R(\theta) = \bigoplus_{k=1}^{n} R(\theta_k) \oplus I_{d-2n}$.
\[
\begin{bmatrix}
    W_{2k-1} \\
    W_{2k}
\end{bmatrix}
=
R(\theta_k)
\begin{bmatrix}
    \sqrt{u_k} \\
    0
\end{bmatrix},\quad
R(\theta_k)
=
\begin{bmatrix}
    \cos(\theta_k) & - \sin(\theta_k) \\
    \sin(\theta_k) & \cos(\theta_k)
\end{bmatrix}
\]
From Lemma \ref{lemma:Joint Density of L\'evy's area and Brownian Motion}, $p_{B, A}(PW, PJ(\Sigma)P^{\top})$ is invariant to $P$ as the dependence is only through
$$
I(J(\Sigma), P^{\top}PW, J(\Phi)) = I(J(\Sigma), W, J(\Phi))
.
$$
In particular, it is invariant to $\theta_k$ (which corresponds to multiplying $P$ by $2\times2$ rotation blocks). Integrating out $\theta_k$ incurs a factor of $(2\pi)^n$, hence 
$p_{u, \Sigma, P}(u, \Sigma, P) = \pi^n p_{W, \Sigma, P}(\overline{u},\Sigma, P)$. As the density is independent of $P$, then $u, \Sigma$ must be independent to $P$ and again $P$ is uniformly distributed on $O(d)$.
Integrating out $P$ yields $p_{u, \Sigma}(u, \Sigma) = g(\Sigma)\, p_{B, A}(P\overline{u}, PJ(\Sigma)P^{\top})$. Overall
$$
p_{u, \Sigma}(u, \Sigma) = (2\pi)^n g(\Sigma)\, p_{B, A}(P\overline{u}, PJ(\Sigma)P^{\top})
.
$$
\end{proof}

\subsection{Extension to other Stochastic Processes}
\label{section:Extension to other stochastic processes}
The factorisation argument holds for other stochastic processes with independent components and orthogonal invariance. This yields a formula for the density of the singular values of their matrix of L\'evy's areas. Such stochastic processes correspond to vector-valued processes whose components are i.i.d centred Gaussian processes (GP), which we assume are regular enough for L\'evy's area to be defined \cite{ferreiro-castillaLevyAreaGaussian2011}. This includes Brownian motion, as well Ornstein-Uhlenbeck processes and fractional Brownian motions with sufficiently high smoothness parameters. 
To see this, let $\widetilde{w}_t(\varphi)$ be the characteristic function of L\'evy's area between two independent copies of the GP then by repeating the proof of Theorem \ref{theorem:new proof of cf}
$$
\Expectation{\exp\left( \frac{i}{2} \mathrm{tr}(\Theta^{\top} \bA_{t}) \right)} = \Expectation{\exp\left( i \sum_{k=1}^{n} \varphi_{k}\bA^{2k-1, 2k} \right)} = \prod_{k=1}^{n} \widetilde{w}_{t}(\varphi_{k})
.
$$
Let $w_t$ be the density of $\bA^{i, j}_t$, to simplify the resulting determinant from Andr\'eief identity
$$
w_{t}(\sigma) = \frac{1}{2\pi}\int_{\R} \cos(\varphi \sigma) \widetilde{w}(\varphi) d \varphi \implies 2\pi\,(-1)^{i-1} \partial_{\sigma}^{2(i-1)} w_{t}(\sigma) = \int_{\R} \varphi^{2(i-1)} \cos(\sigma \varphi) \widetilde{w}(\varphi) d\varphi
.
$$
This again results in a determinantal form for the density of the singular values (see \eqref{equation:C_d} for $C_d$)
$$
p(\sigma_1, \dots, \sigma_n) = \frac{2^{n}}{n!}\, C_{d}^{-1} (-1)^{n(n-1)/2} \det[\sigma_{j}^{2(i-1)}] \det[\partial_{\sigma}^{2(i-1)} w_{t}(\sigma_{j})]
$$
This is in the form of a biorthogonal ensemble \cite{borodin1998biorthogonal}. In the case of Brownian motion, this will allow us to identify the singular values as a determinantal point process in Section \ref{section:Biorthogonal Ensemble}. Upon setting $x_{i} = \sigma_{i}^{2}$, barring the factor of $\prod x_i^{-1/2}$, it is also in the form of a "P\'olya ensemble on $M$" \cite{forsterPolynomialEnsemblesPolya2021}
$$
p(x_{1}, \dots, x_{n}) \propto \det[x_{j}^{i-1}] \det[(x^{1/2} \partial_{x} (x^{1/2} \partial_{x}))^{2(i-1)} w(x_{j}^{1/2})] \prod_{i=1}^{n} \frac{1}{x_{i}^{1/2}}
.
$$
These correspond to eigenvalue distributions which are closed under convolutions of their associated matrices. For L\'evy's areas, a natural analogue of the matrix convolution is the associated SDE which we study in the subsequent section.

\section{The Dynamics of the Singular Values} \label{sec:SDE}
The SDE for $t \mapsto (B_t,\bA_t)$ is classical. In view of Corollary \ref{corollary:joint density of sigma, u}, we identify the time evolution of its orthogonal invariants $(\sigma_1(t),\ldots,\sigma_n(t), u_1(t),\ldots,u_n(t))$, the singular values and squared subspace norms,
as an interacting particle system; see Figure \ref{fig:Singular Value Dynamics}. Using this SDE and the Kolmogorov forward equation, their joint density satisfies a PDE which we spell out, see Proposition \ref{prop:Joint density PDE}.

\begin{figure}[ht]
    \centering
    \includegraphics[width=\linewidth]{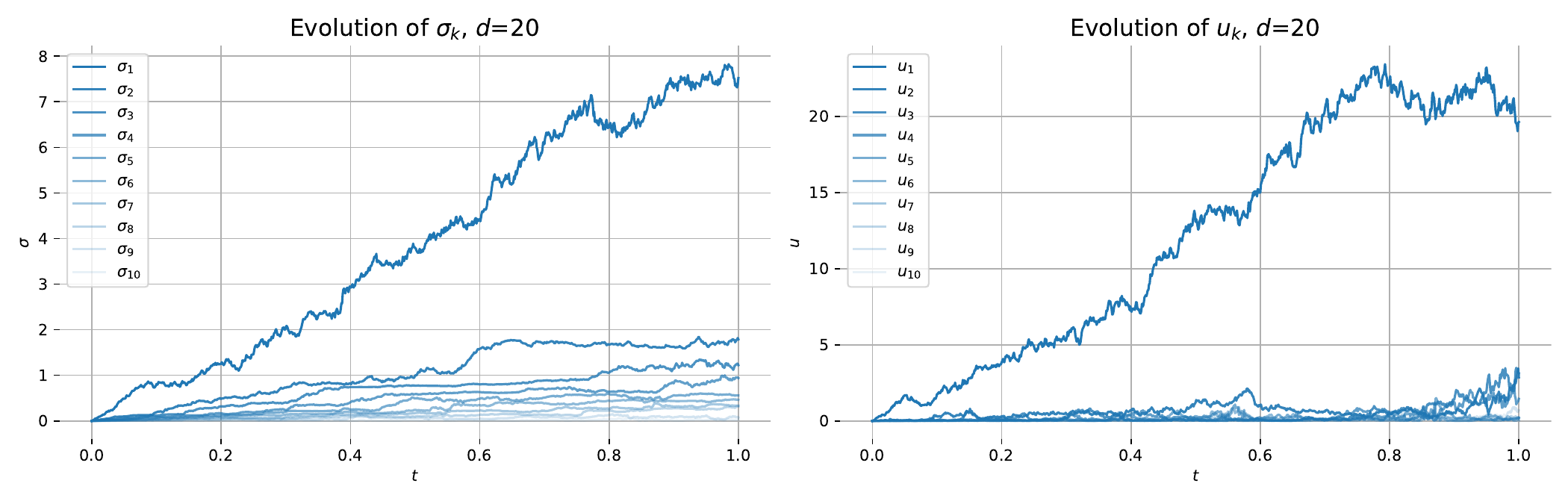}
    \caption{Evolution of the singular values \(\sigma_k\) of L\'evy's area \(\bA_t\) and squared subspace norms \(u_k\) of $B_t$ over \(t \in [0,1]\). The largest singular value $\sigma_{1}$ exhibits linear growth and is an order of magnitude larger than the remaining $\sigma_k$. While the $u_k$ do not preserve an ordering, $u_1$ seems to be generally larger than the remaining $u_k$.}
    \label{fig:Singular Value Dynamics}
\end{figure}

Throughout, we assume that $d$ is even. In the odd case, the only change is that one singular value is always $0$. Furthermore, we instead study the evolution of the signed singular values $\lambda_k(t)$ as they are more natural to analyse and $\sigma_k$ is simply $|\lambda_k|$. This is because they correspond to L\'evy's areas which can be signed. For example, when $d=2$, $\bA^{1, 2}_t = \lambda_1(t)$. We also order $\lambda(t)$ by decreasing absolute value, that is $|\lambda|$ lives on the Weyl chamber, and say
$$\lambda \in \mathcal{W}_n = \set{\lambda \in \R^{n}: |\lambda_1| \ge \cdots \ge |\lambda_n| \ge 0 }.$$
Next recall that the squared subspace norms are $u_k(t) = \lp[2]{P_k^{\top}(t) B_t}^2$, where $P_k(t)$ is the projection onto the singular subspace of $\sigma_k(t)$. If we assume $\sigma_k(t)$ are distinct, that is $\lambda \in \mathcal{W}^\mathrm{o}_n$, and consider the block diagonal form $\bA_t = Q_t J(\Lambda_t) Q_t^{\top}$, then
$$u_k(t) = (Q_{2k-1}^{\top}(t) B_t)^2 + (Q_{2k}^{\top}(t) B_t)^2.$$
In fact this representation will be valid for all $t > 0$ as we will show $\lambda_k(t)$ are always distinct, i.e., $\lambda \in \mathcal{W}_n^{\rm o}$
\footnote{This does not forbid $\lambda_n = 0$. E.g., for $n=1$ $\mathcal{W}_n = \R = \mathcal{W}_n^{\rm o}$.} as they repel (a classic behaviour of the eigenvalues of a random matrix, e.g. Dyson Brownian motion). We make this precise with the following theorem.

\begin{theorem}[An Interacting Particle System]\label{theorem:SDE of Singular values and radii}
For $t>0$ and $d = 2n$, consider the block diagonal form of the matrix of L\'evy's areas $\bA_t = Q_t J(\Lambda_t) Q_t^{\top}$. 
Then the following hold.

\begin{enumerate}
    \item The processes
\begin{equation}
    d\gamma_{k}(t) = \frac{B_t^{\top} P_{k}(t) dB_t}{\sqrt{u_k(t)}},\quad
d\theta_{k}(t) = \frac{B_t^{\top} J_{k}(t) dB_t}{\sqrt{u_k(t)}}
\end{equation}
are independent Brownian motions where $P_k(t)$ and $J_k(t)$ are the projection and rotation on the \(k\)-th singular subspace $\mathrm{span}(Q_{2k-1}, Q_{2k})$. These are given by
\begin{equation}
    P_{k}(t) = Q_{2k-1}(t) Q_{2k-1}^{\top}(t) + Q_{2k}(t) Q_{2k}^{\top}(t),\quad
    J_{k}(t) = Q_{2k-1}(t) Q_{2k}^{\top}(t) - Q_{2k}(t) Q_{2k-1}^{\top}(t).
\end{equation}
\item For $t = 0$, $\lambda_1(0),\ldots,\lambda_n(0), u_1(0), \dots, u_n(0)=0$ and for $t > 0$, $\lambda(t) \in \mathcal{W}^\mathrm{o}_n$ are distinct, $u_k(t) = \lp[2]{P_k^{\top}(t)B_t}^2 > 0$ are positive and $(\lambda_1(t),\ldots,\lambda_n(t),u_1(t),\ldots,u_n(t))$ solves the following SDE. 
\begin{equation}\label{eq:SDE Singular values}
\begin{split}
d \lambda_{k}(t)
&=
\frac{\sqrt{ u_{k} }}{2} d\theta_{k}(t)
+
\frac{\lambda_{k}}{4} \sum_{l \neq k} \frac{u_{k} + u_{l}}{\lambda^{2}_{k} - \lambda^{2}_{l}} dt \\
du_{k}(t) &= 2\sqrt{ u_{k} }\; d\gamma_{k}(t)
+ \sum_{l \neq k } \frac{ u_{l}  \sqrt{u_{k} } \; \lambda_{k}d\theta_{k}(t) +  u_{k} \sqrt{u_{l} } \; \lambda_{l}d\theta_{l}(t)}{\lambda^{2}_{k} - \lambda^{2}_{l}} \\
&+  \left\{  2+
\frac{\lambda^{2}_{k}}{2}
\sum_{\substack{l, r \neq k \\ l \neq r}} \frac{u_{l}(u_{r} + 2u_{k})}{(\lambda^{2}_{k} - \lambda^{2}_{l})(\lambda^{2}_{k} - \lambda^{2}_{r})}
+ \frac{1}{2}\sum_{l \neq k} \frac{\lambda^{2}_{k} u_{l}^{2} - \lambda^{2}_{l} u_{k}^{2}}{(\lambda^{2}_{k} - \lambda^{2}_{l})^2}
\right\} dt
\end{split}
\end{equation}
\end{enumerate}
\end{theorem}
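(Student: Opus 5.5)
The plan is to apply Itô calculus to the matrix process $\bA_t$ and push the result through the perturbation theory of the block diagonalisation $\bA_t = Q_t J(\Lambda_t) Q_t^{\top}$, in the same way one derives Dyson Brownian motion. The basic identity is $d\bA_t = \operatorname{Anti}(B_t \otimes dB_t) = \tfrac12(B_t\, dB_t^{\top} - dB_t\, B_t^{\top})$, with no Itô correction because the diagonal terms cancel. Hence $d\bA$ is a rank-two skew matrix driven by $dB$, and its quadratic covariations are
\[
d\langle \bA^{ij}, \bA^{kl}\rangle = \tfrac14\bigl(B^iB^k\delta_{jl} - B^iB^l\delta_{jk} - B^jB^k\delta_{il} + B^jB^l\delta_{ik}\bigr)\,dt .
\]

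\textbf{Step 1: first- and second-order perturbation formulas.} While the $|\lambda_k|$ are distinct (and, for $\lambda_n$ near $0$, after a local smooth choice of sign), the map $A \mapsto (\lambda_k, P_k)$ is smooth. I will show $\partial \lambda_k[H] = \tfrac12 \tr(J_k^{\top} H)$. The second derivative comes from the coupling of block $k$ with block $l \neq k$. Restricting $H$ to the $4\times 4$ compression onto $\mathrm{span}(Q_{2k-1},Q_{2k},Q_{2l-1},Q_{2l})$ and expanding the singular values of $J(\lambda_k)\oplus J(\lambda_l)+\varepsilon H$ to second order gives denominators $\lambda_k^2-\lambda_l^2$ with numerator proportional to $\lambda_k$. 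Itô's formula then gives
\[
d\lambda_k = \tfrac12\tr(J_k^{\top} d\bA) + \tfrac12 \sum_{l\ne k}\partial^2\lambda_k[d\bA,d\bA].
\]
The martingale part is $\tfrac12 \tr(J_k^{\top}\cdot \tfrac12(B\,dB^{\top}-dB\,B^{\top})) = \tfrac12 B^{\top}J_k\, dB$ up to sign convention, which equals $\tfrac{\sqrt{u_k}}{2}d\theta_k$. For the drift, evaluating $\partial^2\lambda_k$ on $d\bA\otimes d\bA$ with the covariation above only involves $B$ through $P_kB$ and $P_lB$. Contracting gives the term $\frac{\lambda_k}{4}\frac{u_k+u_l}{\lambda_k^2-\lambda_l^2}$.

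\textbf{Step 2: the Brownian motions.} Lévy's characterisation applies. $d\langle\gamma_k\rangle = B^{\top}P_k^2B/u_k\,dt = dt$ and $d\langle\theta_k\rangle = B^{\top}J_k^{\top}J_kB/u_k\,dt = dt$, since $J_k^{\top}J_k=P_k$. The cross-variations vanish: $P_kP_l=0$ and $P_kJ_l = 0$ for $k\neq l$, and $B^{\top}P_kJ_k^{\top}B = B^{\top}J_k^{\top}B = 0$ by skewness. This requires $u_k>0$, which is part of (2), so the two parts are proved together up to a stopping time.

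\textbf{Step 3: the $u_k$ equation.} Write $u_k = B^{\top}P_kB$ and apply Itô's product rule. The terms are $2B^{\top}P_k\,dB$ (giving $2\sqrt{u_k}\,d\gamma_k$), $\tr(P_k)\,dt = 2\,dt$, $B^{\top}dP_k\,B$, and the covariation $2\,dB^{\top}dP_k\,B$. For $dP_k$ I will use the first-order eigenprojection formula for the Hermitian matrix $i\bA$ (equivalently for $-\bA^2$, which has $P_k$ as eigenprojection for $\lambda_k^2$):
\[
dP_k = \sum_{l\neq k}\frac{P_k\, d(-\bA^2)\, P_l + P_l\, d(-\bA^2)\, P_k}{\lambda_k^2-\lambda_l^2} + (\text{second-order terms}).
\]
Here $d(\bA^2) = \bA\,d\bA + d\bA\,\bA + d\bA\,d\bA$. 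The identities $\bA P_k = \lambda_k J_k$ and $J_k B = \pm\sqrt{u_k}\,$(unit vector) turn the martingale part into the $\lambda_k\sqrt{u_k}\,u_l\,d\theta_k$ and $\lambda_l\sqrt{u_l}\,u_k\,d\theta_l$ terms. The remaining second-order terms produce the double sum over $l\neq r$ and the $(\lambda_k^2-\lambda_l^2)^{-2}$ sum. This bookkeeping is the main computational obstacle. I would organise it by working in the rotating frame $Q_t^{\top}B_t$ and tracking only components in the blocks $k,l,r$.

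\textbf{Step 4: non-collision and positivity.} For $t>0$, Corollary \ref{corollary:Joint density of L\'evy's area singular values} and Corollary \ref{corollary:joint density of sigma, u} give absolutely continuous laws with densities vanishing on $\{\lambda_i^2=\lambda_j^2\}$. So at each fixed $t$, almost surely the $|\lambda_k|$ are distinct and $u_k>0$ (for $u_k$, note $P_kB$ is a nondegenerate projection of $B$ independent of $P$). Extending from fixed times to all times is the genuinely hard part. I would first try a McKean-type argument: apply Itô to $\log\prod_{k<l}(\lambda_k^2-\lambda_l^2)$ and to $\log u_k$ up to the first hitting time of the boundary, and check that the drift stays bounded below, so the logarithm cannot reach $-\infty$ in finite time. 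The repulsive drift $\frac{\lambda_k(u_k+u_l)}{4(\lambda_k^2-\lambda_l^2)}$ and the $+2\,dt$ drift for $u_k$ (a squared Bessel of dimension two plus bounded perturbations) suggest this works. Should bounding the cross terms fail, the fallback is that $(B,\bA)$ is a hypoelliptic diffusion on the free nilpotent group, whose transition density is smooth, combined with a codimension argument: the collision set has codimension at least three in $\so(d)$, so it is polar.
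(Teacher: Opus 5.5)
Your outline is correct and has the same architecture as the paper's proof: It\^o's formula pushed through the perturbation theory of $\bA_t=Q_tJ(\Lambda_t)Q_t^{\top}$, L\'evy's characterisation for $\gamma_k,\theta_k$ (the same computation as the paper), absolute continuity of $(\lambda,u)(t_0)$ at a fixed $t_0>0$, and logarithmic Lyapunov functions for non-collision and positivity. It departs from the paper in two components.

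First, for $du_k$ the paper differentiates the columns $q_a$ of $Q$ twice. This forces it to carry the undetermined in-plane rotation coefficients $S_k$ and the auxiliary $T,U$ identities, and then to show that every $S_k$-term cancels. Treating $P_k$ as the spectral projection of $-\bA^2$ for the eigenvalue $\lambda_k^2$ is gauge invariant from the outset. The standard resolvent formulas for first and second derivatives of a spectral projection then replace most of that appendix; the bookkeeping is still yours to do, but the method is sound.

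Second, for the boundary behaviour the paper proves well-posedness of the SDE from interior data (four stopping times, Markov bounds, Borel--Cantelli) and identifies the actual process with the strong solution. A McKean argument applied directly to $(\lambda,u)$, which is a continuous function of $(B,\bA)$, needs no existence or uniqueness theory for the SDE. The paper's route gives well-posedness of the particle system as a stand-alone result; yours is shorter.

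Several points in Step 4 need adjusting.

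\emph{Collisions.} The drift of $\log\prod_{k<l}(\lambda_k^2-\lambda_l^2)$ is indeed bounded below. With $s_k=\lambda_k^2$ and $H_k=\sum_{l\neq k}(s_k-s_l)^{-1}$ it equals $\frac14\sum_k u_k\bigl(s_kH_k^2+\sum_{l\neq k}s_l(s_k-s_l)^{-2}\bigr)\ge 0$, which matches the paper's computation in Part 3 of the well-posedness lemma. So no compensating $\sum_k s_k$ term is needed: a continuous local martingale cannot tend to $-\infty$ in finite time, and collisions are excluded.

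\emph{Positivity of $u_k$.} The drift of $\log u_k$ is not bounded below globally; it has negative terms of order $(\lambda_k^2-\lambda_l^2)^{-2}$. You must therefore order the argument: first rule out a collision at the exit time $\tau$, then use that on $[t_0,\tau]$ the gaps, $u$ and $\lambda$ are pathwise bounded, hence so is this drift.

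\emph{The Bessel heuristic.} ``Squared Bessel of dimension two plus bounded perturbations'' is not by itself a valid reason. Dimension $2$ is critical, and a bounded negative drift perturbation would let the process reach $0$. What actually works is an exact balance. Near $u_k=0$ the drift is $2+\frac{c}{2}$ and the quadratic variation rate is $(4+c)u_k$, with $c=\lambda_k^2\bigl(\sum_{l\neq k}u_l/(\lambda_k^2-\lambda_l^2)\bigr)^2$. So the effective dimension is exactly $2$, and no $1/u_k$ term survives in the drift of $\log u_k$.

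\emph{The fallback.} Your codimension argument is not reliable as stated. Polarity for a hypoelliptic diffusion is not determined by Euclidean codimension, and the argument does not address $\{u_k=0\}$ at all.
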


\begin{proof}[Proof of Theorem \ref{theorem:SDE of Singular values and radii}]
The Brownian motions follow by considering their quadratic variation, see Lemma \ref{lemma:Radial and angular Brownian motions}.
Next, let $\mathcal{D}_n$ be the domain of $(\lambda, u)$ such that $\lambda \in \mathcal{W}_n$ and $u_k \ge 0$. Its interior $\mathcal{D}_n^{\rm{o}}$ corresponds to $\lambda \in \mathcal{W}^\mathrm{o}_n$ and $u_k > 0$.
For $t_0 > 0$, as the joint density of $(\lambda, u)(t_0)$ (Lemma \ref{lemma:Joint Density of L\'evy's area and Brownian Motion}) is absolutely continuous with respect to the product Lebesgue measure, $(\lambda, u)(t_0) \in \mathcal{D}_n^{\rm{o}}$ almost surely.
Applying It\^o formula in Lemma \ref{lemma:Derivation of singular value SDE} shows $(\lambda, u)$ satisfies the SDE until it hits the boundary of $\mathcal{D}_n$. By Lemma \ref{lemma:SDE well-posed, interior IC}, the SDE started at $\mathcal{D}_n^{\rm{o}}$ has a unique strong solution and the solution never hits the boundary, that is for all $t >  t_0$, $(\lambda, u)(t) \in \mathcal{D}_n^{\rm{o}}$. By taking a countable sequence of $t_n \downarrow 0$ and extending the unique solution from $t_{n-1}$ up to $t_n$, we deduce for all $t > 0$, $(\lambda, u)(t) \in \mathcal{D}_n^{\rm{o}}$ and satisfies the SDE.
\end{proof}
This agrees with the well-known case of $d=2$, see \cite[Lemma 2.1.1]{baudoinStochasticAreasHorizontal2023}, which we have now extended to $d = 2n$.
Furthermore, $u(t) = \sum_{k=1}^{n} u_k(t) = \lp[2]{B_t}^2$ should be a $2n$-dimensional squared Bessel process.
\[
d u(t) = 2 B_t^{\top} dB_t + 2n\,dt = 2 \sqrt{u(t)} d\gamma(t) + 2n\,dt,\quad
d\gamma(t) = \frac{B_t^{\top} dB_t}{\lp[2]{B_t}}
\]
Indeed, this follows from the SDE and will be important in proving the well-posedness of the SDE.
\begin{lemma}[SDE for $u$]
\label{lemma: SDE for u}
Let $u(t) = \sum_{k=1}^{n} u_k(t)$, then
\[
du(t) = 2 \sqrt{u(t)} \,d \gamma(t) + 2n\, dt
\]
where $\gamma$ is a Brownian motion independent of $\theta_k$. Furthermore, the dynamics of $u$ does not depend on $\lambda_k$.
\end{lemma}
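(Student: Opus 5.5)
We will sum the $u_k$-equations in \eqref{eq:SDE Singular values} over $k$ and show that everything except $2\sqrt{u_k}\,d\gamma_k$ and the constant drift $2\,dt$ cancels. Summing gives the candidate
\[
du = \sum_k 2\sqrt{u_k}\,d\gamma_k + 2n\,dt,
\]
provided the remaining martingale terms and the interaction drifts vanish in the sum. A shortcut is available and worth using first as a sanity check. Since $d=2n$ and $\sum_k P_k(t)=I_d$, we have $u=\lp[2]{B_t}^2$ directly, so It\^o's formula gives $du = 2B_t^{\top}dB_t + 2n\,dt$. The definition of $\gamma_k$ gives $\sqrt{u_k}\,d\gamma_k = B_t^\top P_k dB_t$, and summing yields $B_t^\top dB_t$. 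Hence the formula holds with $d\gamma = \sum_k \sqrt{u_k/u}\,d\gamma_k = B_t^\top dB_t/\lp[2]{B_t}$. This also certifies that the interaction terms of the $u_k$ SDE must cancel in the sum, which we will verify from the SDE for consistency.

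\textbf{Cancellations.} For the $d\theta$ terms, collect the coefficient of $d\theta_m$ after summing over $k$. It receives $\frac{u_l\sqrt{u_m}\lambda_m}{\lambda_m^2-\lambda_l^2}$ from $k=m$, $l\neq m$, and $\frac{u_k\sqrt{u_m}\lambda_m}{\lambda_k^2-\lambda_m^2}$ from $k\neq m$, $l=m$. Relabelling, these are antisymmetric and sum to zero.

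For the drift, the term $\tfrac12\sum_{k}\sum_{l\neq k}\frac{\lambda_k^2u_l^2-\lambda_l^2u_k^2}{(\lambda_k^2-\lambda_l^2)^2}$ pairs $(k,l)$ with $(l,k)$. This gives $\frac{(\lambda_k^2+\lambda_l^2)(u_l^2-u_k^2)}{(\lambda_k^2-\lambda_l^2)^2}$ per unordered pair, which is not zero by itself. It must therefore cancel against the triple sum $\frac{\lambda_k^2}{2}\sum_{l\neq r}\frac{u_l(u_r+2u_k)}{(\lambda_k^2-\lambda_l^2)(\lambda_k^2-\lambda_r^2)}$ summed over $k$. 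We would verify this by partial fractions in the variables $a_i=\lambda_i^2$, using identities of Lagrange-interpolation type:
\[
\sum_k \frac{a_k}{(a_k-a_l)(a_k-a_r)}\ \text{over } k\notin\{l,r\},
\]
combined with the boundary terms in which $k\in\{l,r\}$ is absent. This algebraic bookkeeping is the main obstacle. If it becomes unwieldy, we rely on the It\^o argument above, which proves the drift of $u$ equals $2n$ independently of the SDE's explicit form. Since that drift contains no $\lambda$, the dynamics of $u$ do not depend on $\lambda_k$.

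\textbf{Brownian motion and independence.} The process $\gamma$ is a continuous local martingale with $d\langle\gamma\rangle = B^\top B\,dt/u = dt$, so by L\'evy's characterisation it is a Brownian motion. For independence from $\theta_k$, compute
\[
d\langle\gamma,\theta_k\rangle = \frac{B^\top J_k^{\top} B}{\sqrt{u\,u_k}}\,dt = 0,
\]
which vanishes because $J_k$ is skew-symmetric. Joint Gaussianity of $(\gamma,\theta_1,\dots,\theta_n)$ then follows from the multidimensional L\'evy characterisation, as in Lemma \ref{lemma:Radial and angular Brownian motions}, giving independence.
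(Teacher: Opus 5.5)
Your main argument is correct, but it takes a different route from the paper. You use $\sum_k P_k(t)=I_d$ (valid since $d=2n$) to identify $u=\|B_t\|^2$ and apply It\^o's formula directly, so no interaction terms ever appear. The paper instead sums the $u_k$-equations of the system \eqref{eq:SDE Singular values} and asserts that the $d\theta$ cross terms, the triple sum and the double sum each vanish. Your route is shorter and fully rigorous for the L\'evy-area process. Your explicit cross-variation $d\langle\gamma,\theta_k\rangle=B^\top J_k B\,dt/\sqrt{u\,u_k}=0$ also justifies independence more carefully than the paper's one-liner.

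What the paper's route buys is an identity at the level of the SDE system. It holds for \emph{any} solution of \eqref{eq:SDE Singular values}, and that is how the lemma is used in Part~1 of Lemma~\ref{lemma:SDE well-posed, interior IC}. There $(\lambda,u)$ is an abstract solution started in $\mathcal{D}_n^{\rm o}$, and $u=\|B\|^2$ is not available. Your remark that the It\^o computation ``certifies'' the cancellation can be made rigorous, but as written it is only asserted. The argument would be: the summed interaction drift is a rational function of $(\lambda^2,u)$; it vanishes on a set of positive Lebesgue measure because the time-$t$ law is absolutely continuous; hence it vanishes identically.

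Your consistency check contains an error that would have sent the algebra in the wrong direction. The double sum does vanish on its own. The numerator $\lambda_k^2u_l^2-\lambda_l^2u_k^2$ is antisymmetric under $k\leftrightarrow l$, while $(\lambda_k^2-\lambda_l^2)^2$ is symmetric, so the $(k,l)$ and $(l,k)$ terms cancel exactly. Your expression $(\lambda_k^2+\lambda_l^2)(u_l^2-u_k^2)/(\lambda_k^2-\lambda_l^2)^2$ is an arithmetic slip.

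Consequently there is nothing for the triple sum to cancel against; it must vanish by itself, and it does. Write $a_i=\lambda_i^2$ and group terms by the unordered pair $\{i,j\}$ carrying the product $u_iu_j$ and by the remaining index $m$. The $u_lu_r$ part appears twice, once for each order of $(l,r)$, which absorbs the prefactor $\tfrac12$. Together with the $2u_ku_l$ part it gives
\[
\sum_{\{i,j\}}u_iu_j\sum_{m\notin\{i,j\}}\Bigl(\frac{a_m}{(a_m-a_i)(a_m-a_j)}+\frac{a_i}{(a_i-a_j)(a_i-a_m)}+\frac{a_j}{(a_j-a_i)(a_j-a_m)}\Bigr).
\]
Each bracket is the second divided difference of $x\mapsto x$ at $a_i,a_j,a_m$, hence zero. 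So your Lagrange-interpolation instinct was right, but the plan to make the two drift sums cancel each other would have failed. The three sums vanish separately, exactly as the paper claims.
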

\begin{proof}By symmetry, the following sums are 0.
{\small
$$
\sum_{l, k \text{ distinct}} \frac{ u_{l}  \sqrt{u_{k} } \; \lambda_{k}d\theta_{k}(t) +  u_{k} \sqrt{u_{l} } \; \lambda_{l}d\theta_{l}(t)}{\lambda^{2}_{k} - \lambda^{2}_{l}},
\sum_{l, k, r \text{ distinct}} \frac{u_{l}(u_{r} + 2u_{k})}{(\lambda^{2}_{k} - \lambda^{2}_{l})(\lambda^{2}_{k} - \lambda^{2}_{r})},
\sum_{l, k \text{ distinct}}
\frac{\lambda^{2}_{k} u_{l}^{2} - \lambda^{2}_{l} u_{k}^{2}}{(\lambda^{2}_{k} - \lambda^{2}_{l})^2}
=
0
$$
}
Therefore, $du(t) = 2 \sum_{k=1}^{n} \sqrt{u_k(t)} d\gamma_k(t) + 2n\, dt$. Next $d\gamma = \frac{1}{\sqrt{u}} \sum_k \sqrt{u_k} d \gamma_k$ is a Brownian motion by considering its quadratic variation
$d\gamma d \gamma = \frac{\sum u_k dt}{u} = dt$. Furthermore, it is independent of $\theta_k$ as $\theta_k, \gamma_k$ are independent. Therefore, $u$ is adapted to the filtration generated by $\gamma(t)$ and $u(0)$ which does not depend on $\lambda$.
\end{proof}

Having established that the singular values exhibit repulsion, we now show that they form a determinantal point process (DPP), a canonical modelling framework for repulsive phenomena.
\section{Singular Values as a Determinantal Point Process}
\label{section:Biorthogonal Ensemble}
DPPs are among the most tractable class of interacting point processes and allow for explicit computations and exact sampling. Not surprisingly, they are a key concept in random matrix theory \cite{andersonIntroductionRandomMatrices2009, soshnikovDeterminantalRandomPoint2000, johanssonRandomMatricesDeterminantal2005} where they model the eigenvalues of classical matrix models.  
\begin{definition}[Determinantal Point Process]
\label{definition:Determinantal Point Process}
    Given a collection of \(n\) points on $\mathcal{X} \subseteq \R$ with joint density \(p\), we define its \(k\)-point correlation function for \(k \le n\) as
    \begin{equation}
    \label{equation:Correlation Function}
        \rho_{k}(x_1, \dots, x_k)
        =
        \frac{n!}{(n-k)!}
        \int_{\mathcal{X}^{n-k}} p(x_{1}, \dots, x_{n}) dx_{k+1} \dots dx_{n}
        .
    \end{equation}
    It is said to be a determinantal point process (DPP) with kernel \(K_n(x, y)\) if its \(k\)-point correlation functions can be expressed as the determinant of the Gram matrix of the kernel.
    \begin{equation}
        \rho_{k}(x_1, \dots, x_k) =
        \det [K_n(x_i, x_j)]_{i, j \in [k]}
    \end{equation}
\end{definition}
By Corollary \ref{corollary:Joint density of L\'evy's area singular values}, the density of the (rescaled) singular values $x_i$ is
\begin{equation*}
    p(x_1, \dots, x_n)
    \propto
    \det\left[ x_{j}^{2(i-1)} \right]
    \det\left[ \sech\left(x_{j}\right)^{2(i-1)} \right]
    \prod_{i=1}^{n} w(x_i)
\end{equation*}
where $w(x) = \sech(x)$ when $d=2n$ and $w(x) = x \sech(x) \tanh(x)$ when $d = 2n+1$.
The following lemma identifies $x_i$ as a biorthogonal ensemble. Moreover, provided an associated Gram matrix is invertible, $x_i$ is also a DPP with the associated kernel is given in terms of "biorthogonal functions".
\begin{lemma}[Biorthogonal ensembles]
\label{lemma:Biorthogonal ensemble}
    An ensemble of \(n\) points \(x_i\) on \(\mathcal{X} \subseteq \R\) is a biorthogonal ensemble if there exists functions \(\xi_i, \eta_i, w\) such that
    \begin{equation*}
        p(x_1, \dots, x_n)
        \propto
         \det[\xi_{i}(x_j)]_{i, j \in [n]}
        \det[\eta_{i}(x_j)]_{i, j \in [n]}
        \prod_{i=1}^{n} w(x_i)
        .
    \end{equation*}
    If the associated Gram matrix
    \begin{equation*}
        G_{ij}
        = \ip{\xi_{i}}{\eta_{j}}_w
        = \int_{\mathcal{X}} \xi_{i}(x) \eta_{j}(x) w(x) dx
    \end{equation*}
    is invertible and admits an LU-decomposition $G = LU$ for some lower and upper triangular matrices \(L, U\), then \(x_i\) is a determinantal point process\footnote{Despite the extra factor of $\prod w(x_i)$ in $\rho_k$, these are still a DPP in the literal sense as one can absorb the factor in the determinant, e.g. by scaling the $i$-th row by $w(x_i)$ and consider the kernel $w(x) K_n(x, y)$.}. The correlation function can be expressed as
    \begin{equation*}
        \rho_k(x_1, \dots, x_k) =
        \det\left[ K_n\left( x_i, x_j \right) \right]_{i, j = 1}^{k}
        \prod_{i=1}^{k} w(x_i)
    \end{equation*}
    where $K_{n}(x, y) = \sum_{m=0}^{n-1} p_{m}(x) q_{m}(y)$ and \(p_m(x), q_m(x)\) are biorthogonal functions in the sense \(\ip{p_i}{q_j}_w = \delta_{ij}\). These are specified by the inverses of the triangular matrices \(P = L^{-1}, Q = (U^{-1})^{\top}\)
    \begin{equation*}
        p_{m}(x) = \sum_{k=0}^{m} P_{m+1, k+1} \xi_{k+1},\quad
        q_{m}(x) = \sum_{k=0}^{m} Q_{m+1, k+1} \eta_{k+1}
        .
    \end{equation*}
\end{lemma}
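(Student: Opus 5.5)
The plan is to follow the classical argument for biorthogonal ensembles (Borodin): replace the functions $\xi_i,\eta_i$ by biorthonormal combinations, rewrite the product of determinants as a single determinant of a kernel, and integrate out variables one at a time with the Gaudin--Mehta reproducing property.

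\textbf{Step 1: biorthonormalisation.} With $G=LU$, set $P=L^{-1}$ and $Q=(U^{-1})^{\top}$. Then $PGQ^{\top}=L^{-1}LUU^{-1}=I$. Define $p_m=\sum_k P_{m+1,k+1}\xi_{k+1}$ and $q_m=\sum_k Q_{m+1,k+1}\eta_{k+1}$; by bilinearity, $\ip{p_i}{q_j}_w=(PGQ^{\top})_{i+1,j+1}=\delta_{ij}$. Since $P,Q$ are triangular with nonzero diagonals, row operations give
\[
\det[\xi_i(x_j)]=\det(L)\det[p_{i-1}(x_j)],\qquad \det[\eta_i(x_j)]=\det(U)\det[q_{i-1}(x_j)].
\]
So $p\propto \det[p_{i-1}(x_j)]\det[q_{i-1}(x_j)]\prod w(x_i)$. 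Multiplying the two matrices, with one transposed, gives
\[
p(x)=c\,\det[K_n(x_i,x_j)]_{i,j=1}^n\prod_i w(x_i).
\]

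\textbf{Step 2: normalisation.} Expand $\det[p_{i-1}(x_j)]\det[q_{i-1}(x_j)]$ as a double sum over permutations $\sigma,\tau$ and integrate term by term. Biorthogonality forces $\sigma=\tau$, so $\int \det[p]\det[q]\prod w = n!$ (this is Andr\'eief's identity, Lemma \ref{lemma:Andr\'eief Identity}, applied to $PGQ^{\top}=I$). Hence $c=1/n!$.

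\textbf{Step 3: integrating out variables.} The kernel $K_n$ satisfies
\[
\int K_n(x,x)w(x)\,dx=n,\qquad \int K_n(x,z)K_n(z,y)w(z)\,dz=K_n(x,y),
\]
both by biorthogonality. The Gaudin lemma then gives
\[
\int \det[K_n(x_i,x_j)]_{i,j\le m}\,w(x_m)\,dx_m=(n-m+1)\det[K_n(x_i,x_j)]_{i,j\le m-1}.
\]
To prove it, expand along the last row and column. The diagonal term contributes $n$. Each off-diagonal term, after using the reproducing property, equals a determinant of order $m-1$ with a row permuted, which contributes $-1$; there are $m-1$ such terms. Iterating from $m=n$ down to $k+1$ gives the factor $(n-k)!$.

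Combining the three steps,
\[
\rho_k=\frac{n!}{(n-k)!}\cdot\frac{1}{n!}\,(n-k)!\,\det[K_n(x_i,x_j)]_{i,j\le k}\prod_{i\le k}w(x_i),
\]
which is the claim.

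The main obstacle is the bookkeeping of signs in the Gaudin expansion: showing each off-diagonal cofactor contributes exactly $-\det$ of the reduced matrix. Integrability is also needed, namely $\xi_i\eta_j w\in L^1$; I would take this as part of the hypothesis that $G$ exists.
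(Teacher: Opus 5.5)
Your proposal is correct. It is the classical biorthogonal-ensemble argument: set $P=L^{-1}$, $Q=(U^{-1})^{\top}$ so that $PGQ^{\top}=I$, merge the two determinants into $\det[K_n(x_i,x_j)]$, fix the constant $1/n!$ by Andr\'eief, and integrate out variables using $\int K_n(x,x)w(x)\,dx=n$ and the reproducing identity. The paper does not prove the lemma; it takes it as standard, attributing it to Borodin, and this is the argument behind it. The sign bookkeeping you flag works out: each $\sigma$ with $\sigma(i)=m$, $i\neq m$, corresponds bijectively to $\sigma\circ(i\,m)\in S_{m-1}$ of opposite sign, which gives the factor $n-(m-1)$.
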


\subsection{The Singular Values are a Determinantal Point Process}
We show the singular values satisfy the requirements of the lemma above and hence form a determinantal point process.

\begin{theorem}[Singular values are a DPP]
\label{theorem: Singular values are a DPP}
    Let $t > 0$ and $n = \lfloor d / 2 \rfloor$. The singular values of the $d \times d$ matrix of L\'evy's areas $\bA_t$ are a determinantal point process on \(\R_+\) with $k$-point correlation function
    \begin{equation}
        \rho_k(\sigma_1, \dots, \sigma_k) = \left(\frac{\pi}{t}\right)^{k} \det\left[ K_{n}\left( \frac{\pi}{t} \sigma_{i}, \frac{\pi}{t} \sigma_{j}\right) \right]_{i, j = 1}^{k},\quad k \le n
    \end{equation}
    and kernel
    \begin{equation}
        K_n(x, y) = \sum_{m=0}^{n-1} p_m(x) q_m(y)
    \end{equation}
   where $p_m, q_m$ are biorthogonal functions in the sense that \(\int_{0}^{\infty} p_i(x) q_j(x) dx = \delta_{ij}\).
\begin{align}
    \label{equation:x^2 Biorthogonal function definition}
        p_m(x)
        &=
        \mathrm{Re} \left[
\begin{cases}
\left( x + i\frac{\pi}{2}\right)^{2m}, & d = 2n\\
\left( x + i \frac{\pi}{2} \right)^{2m + 1}, & d = 2n+1
\end{cases} \right] \\
    \label{equation:sech Biorthogonal function definition}
        q_{m}(x)
&=
\frac{2}{\pi}
\begin{cases}
\frac{1}{(2m)!} \sech^{(2m)}(x), & d = 2n \\
\frac{-1}{(2m + 1)!} \sech^{(2m+1)}(x), & d = 2n + 1
.
\end{cases}
\end{align}
\end{theorem}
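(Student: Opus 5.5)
The plan is to apply Lemma~\ref{lemma:Biorthogonal ensemble} to the rescaled singular values $x_i = \pi\sigma_i/t$, whose density by Corollary~\ref{corollary:Joint density of L\'evy's area singular values} has the form $\det[\xi_i(x_j)]\det[\eta_i(x_j)]\prod_i w(x_i)$ on $\mathcal{X}=\R_+$. In the even case we take $\xi_i(x)=x^{2(i-1)}$, $\eta_i(x)=\sech(x)^{2(i-1)}$ and $w=\sech$. In the odd case we absorb the weight $x\sech(x)\tanh(x)$ into the factors, taking $\xi_i(x) = x^{2i-1}$ and $\eta_i(x)\sech(x) = \tanh(x)\sech(x)^{2i-1}$. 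The factor $(\pi/t)^k$ in $\rho_k$ then comes from the change of variables $\sigma\mapsto x$, so it suffices to prove the statement for the $x_i$.

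Rather than computing an LU decomposition of the Gram matrix explicitly, we exhibit the claimed biorthogonal families directly and check that they span the right spaces.
\begin{enumerate*}[label=(\roman*)]
\item $p_m$ is a polynomial in $x^2$ (even case) or $x$ times a polynomial in $x^2$ (odd case), with leading term $x^{2m}$ or $x^{2m+1}$, because $\mathrm{Re}(x+i\pi/2)^{N}$ retains exactly the even- or odd-parity terms according to $N$. Hence $\{p_0,\dots,p_{n-1}\}$ is a unitriangular change of basis of $\{\xi_i\}$.
\item By Lemma~\ref{lemma:Derivatives of sech}, $\sech^{(2m)}$ is $\sech$ times a polynomial of degree $m$ in $\sech^2$, with leading coefficient $(2m)!$ up to sign. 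Likewise $\sech^{(2m+1)} = -\tanh\sech\cdot(\text{poly in }\sech^2)$. So $q_m$ is a triangular recombination of $\eta_i w$, with nonzero diagonal.
\end{enumerate*}
Once $\int_0^\infty p_i q_j\,dx=\delta_{ij}$ is established, the Gram matrix between the spans is invertible. The determinants $\det[\xi_i(x_j)]\det[\eta_i(x_j)w(x_j)]$ agree up to a constant with $\det[p_{i}(x_j)]\det[q_{i}(x_j)]$, and the standard Cauchy--Binet/Gaudin integration argument (the content of Lemma~\ref{lemma:Biorthogonal ensemble}) gives $\rho_k=\det[K_n(x_i,x_j)]$. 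Normalisation fixes the constant automatically, since $\int \rho_1 = n$ follows from biorthogonality.

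The main step is the biorthogonality computation, which I would do via the Fourier transform. By evenness,
\[
\int_0^\infty p_i(x) q_j(x)\,dx = \tfrac12\int_\R p_i q_j\,dx
\]
in both parities, since the products are even. Write $p_i(x)=\mathrm{Re}(x+i\pi/2)^N$ and integrate by parts $j'$ times, where $j'=2j$ or $2j+1$. This moves the derivatives from $\sech$ onto the polynomial, giving
\[
\frac{(-1)^{j'}}{j'!}\cdot\frac{N!}{(N-j')!}\int_\R \mathrm{Re}\bigl(x+i\tfrac{\pi}{2}\bigr)^{N-j'}\sech(x)\,dx
\]
for $N\ge j'$, and zero for $N<j'$. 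The remaining integral is handled by shifting the contour: by analyticity of $\sech$ in the strip $|\mathrm{Im}|<\pi/2$,
\[
\int_\R (x+ia)^k\sech(x)\,dx
\]
is the $k$-th moment of $\sech$ shifted by $ia$. Equivalently, the generating function $\int e^{s(x+ia)}\sech x\,dx=\pi e^{isa}\sec(\pi s/2)$ has real part $\pi\cos(sa)\sec(\pi s/2)$. At $a=\pi/2$ this equals $\pi$ identically, so all moments of order $k\ge1$ vanish and the zeroth moment is $\pi$. Hence only $N=j'$, i.e.\ $i=j$, survives. The constant is $\tfrac12\cdot\pi\cdot\tfrac{2}{\pi}=1$ after tracking signs, which is exactly why the factor $2/\pi$ and the sign $-1$ in the odd case appear in $q_m$.

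I expect the obstacle to be this contour-shift identity. In particular, justifying the interchange and the boundary terms requires the pole of $\sech$ at $i\pi/2$ to be approached but not crossed. I would handle this by taking $a\uparrow\pi/2$ in the generating function, and by checking carefully that the signs in the integration by parts and the odd-case sign in $q_m$ are consistent.
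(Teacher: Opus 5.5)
Your proof is correct, but it verifies the stated kernel rather than deriving it, so it follows a different route from the paper.

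\textbf{The paper's route.} The paper constructs $p_m,q_m$ by LU-factorising the Gram matrix $G_{ij}=\int_\R x^{2i}\sech(x)^{2j+1}\,dx$.
\begin{itemize}
\item The Laplace transform of $\sech^{2j+1}$ equals, up to a constant, $\pi\sec(\pi t/2)\prod_{k\le j}(t^2-(2k-1)^2)$.
\item The first factor gives a lower Toeplitz factor built from Euler numbers. It is inverted via the pair $\sech(\sqrt x)$, $\cosh(\sqrt x)$.
\item The second factor gives an upper factor of elementary symmetric polynomials. It is inverted by complete symmetric polynomials, which are identified with the $W_{2k+1,j}$ of Lemma~\ref{lemma:Derivatives of sech}.
\item The closed forms of $p_m$ and $q_m$ then drop out.
\end{itemize}

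\textbf{Your route.} You take the stated functions as given and use Lemma~\ref{lemma:Derivatives of sech} only to get triangularity of spans. You then check $\int_0^\infty p_iq_j\,dx=\delta_{ij}$ by integrating by parts $2j$ or $2j+1$ times. This reduces everything to the single identity $\int_\R \mathrm{Re}\bigl(x+i\tfrac{\pi}{2}\bigr)^k\sech x\,dx=\pi\delta_{k0}$. I checked the parities, signs and constants in both cases, and they give exactly $\delta_{ij}$.

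\textbf{Comparison.} Your argument is shorter and explains transparently why the factor $2/\pi$ and the odd-case sign appear. It also needs no explicit LU machinery: biorthogonal bases of the two spans suffice for the Gaudin/Andr\'eief argument, and your triangularity supplies the factorisation required by Lemma~\ref{lemma:Biorthogonal ensemble} anyway. The cost is that it presupposes the answer. The paper's route discovers the functions, and its explicit factors $L$, $U$ and generating functions are reused later for the recurrence coefficients and the trace moments.

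\textbf{The anticipated obstacle.} The difficulty you expect does not arise, because no contour is actually shifted. $\sech$ is only evaluated on $\R$, and $a$ enters only through a constant factor:
\begin{equation*}
\int_\R e^{s(x+ia)}\sech x\,dx=e^{isa}\,\pi\sec(\pi s/2),\qquad |s|<1.
\end{equation*}
So you may set $a=\pi/2$ directly, with no limit $a\uparrow\pi/2$ needed. Exchanging the sum and the integral is justified because the integrand is dominated by $e^{|s|\,|x+ia|}\sech x$, which is integrable for $|s|<1$.
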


\begin{proof}[Proof of Theorem \ref{theorem: Singular values are a DPP}]
To simplify subsequent computations, we first consider the singular values $x_i$ of $\bA_{\pi}$, for which $x_i \eqdist \pi \sigma_i / t$. Following Lemma \ref{lemma:Biorthogonal ensemble}, this is a biorthogonal ensemble over $\mathcal{X} = \R_{+}$ with $\xi_{i}(x) = x^{2(i-1)}, \eta_{i}(x) = \sech(x)^{2(i-1)}$ and weight $w$. 
We proceed in three steps. The results follow from standard generating function arguments and is deferred to Appendix \ref{appendix:Proofs for the Determinantal Point Process}.
\begin{enumerate}[label=(\roman*)]
    \item Section \ref{section:LU decomposition of the Gram matrix} obtains the LU decomposition for the Gram matrix $G = LU$.
    \item Section \ref{section:Computing the biorthogonal functions} inverts $L$ and $U$ and computes the biorthogonal functions $p_m(x)$ and $q_m(x)$.
    \item Section \ref{section:Simplifying the Correlation Function} simplifies the correlation function and absorbs $w$ into the determinant.
\end{enumerate}
This shows that $x_i$ are DPP over $\R_{+}$ with kernel $K_n(x, y)$. Rescaling to $\sigma_i$ completes the proof.
\end{proof}

\subsection{Recurrence Relations}
The biorthogonal functions, up to a factor, are polynomials in $x^2$ and $\sech(x)^2$ respectively and hence satisfy a recurrence relation.
We identify the recurrence coefficients explicitly which will be useful for several purposes. For example evaluating the kernel and establishing bounds for the scaling limits in the next section.
The proof is also a standard computation with generating functions and deferred to Appendix \ref{appendix:Proofs for recurrence relations for the Biorthogonal functions}.

\begin{lemma}[Recurrence Relation]
\label{lemma:Recurrence}
For $d=2n$, $p_m(x)$ is a polynomial in $x^2$ and for $d=2n+1$, $p_m(x) / x$ is also a polynomial in $x^2$. The following recursion holds
\begin{equation}
x^{2} p_{m}(x) = \sum_{k=0}^{m+1} \alpha_{k, m} p_{m+1 - k}(x)
\end{equation}
where $T_{m} = 2 \left( \frac{2}{\pi} \right)^{2m+2}(2m+1)! \,(1 - 2^{-(2m+2)})\zeta(2m+2)$ are the tangent numbers (OEIS A000182), and $\zeta(s) = \sum_{n=1}^{\infty} \frac{1}{n^{s}}$ is the Riemann Zeta function and
\begin{equation}
\begin{split}
\alpha_{0, m} &= 1 \\
\alpha_{1, m} &= \left( \frac{\pi}{2} \right)^{2}
\begin{cases}
4m+1,  & d=2n \\
4m+3, & d=2n+1
\end{cases} \\
\alpha_{k, m} &=  \left( \frac{\pi}{2} \right)^{2k} 2T_{k-1}
\begin{cases}
\binom{2m+1}{2(m+1 - k)}, & d = 2n \\
\binom{2m+2}{2(m+1 - k)+1} & d = 2n+1
\end{cases},\quad 2 \le k \le m+1
.
\end{split}
\end{equation}
Similarly, for $d=2n$, $q_m(x) / \sech(x)$ is a polynomial in $\sech(x)^2$ and for $d=2n+1$, $q_m(x) / (\sech(x) \tanh(x))$ is also a polynomial in $\sech(x)^2$. The following recursion holds
$$
\sech(x)^{2} q_{m}(x) = \sum_{k=0}^{m+1} (-1)^{k-1} \beta_{k, m} q_{m+1-k}(x)
$$
where $0 \le \beta_{k, m} \le 1$ and
\begin{equation}
    \begin{split}
\beta_{0, m} &= 1 \\
\beta_{k, m} &= \frac{2(2k-1)}{\pi^{2k}} \zeta(2k),\quad 1 \le k \le m \\
\beta_{m+1, m} &= \frac{4\zeta(2m+2)}{\pi^{2m+2}}   
\begin{cases}
(2m+1) (1- 2^{-2m-2}), & d = 2n \\
m+1 -2^{-2m-2},& d = 2n+1
\end{cases}
.
\end{split}
\end{equation}
\end{lemma}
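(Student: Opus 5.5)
The plan is to prove both recursions with exponential generating functions. The $p_m$ side is direct. For the $q_m$ side I would read off the coefficients through the biorthogonality $\int_0^\infty p_i q_j\,dx=\delta_{ij}$ from Theorem~\ref{theorem: Singular values are a DPP}. I treat $d=2n$ in detail. The case $d=2n+1$ is identical after replacing $\cosh$ by $\sinh$ (odd powers $z^{2m+1}$) and $\sech^{(2m)}$ by $-\sech^{(2m+1)}$.

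\textbf{Structure and the $p_m$ recursion.} Put $z=x+i\tfrac{\pi}{2}$. Since $\mathrm{Re}(z^{2m})$ is even in $x$, it is a polynomial in $x^2$; in the odd case $\mathrm{Re}(z^{2m+1})$ is odd in $x$, so $p_m/x$ is a polynomial in $x^2$. Form
\[
P(s,x)=\sum_{m\ge0}p_m(x)\frac{s^{2m}}{(2m)!}=\mathrm{Re}\cosh(sz)=\cosh(sx)\cos\!\left(\tfrac{\pi s}{2}\right).
\]
Then
\[
x^2P=\cos\!\left(\tfrac{\pi s}{2}\right)\partial_s^2\bigl[P\sec\!\left(\tfrac{\pi s}{2}\right)\bigr].
\]
Expanding with Leibniz gives
\[
x^2P=\partial_s^2P+\pi\tan\!\left(\tfrac{\pi s}{2}\right)\partial_sP+\left(\tfrac{\pi}{2}\right)^2\bigl(1+2\tan^2\!\left(\tfrac{\pi s}{2}\right)\bigr)P,
\]
using $\sec''/\sec=1+2\tan^2$ in the variable $\pi s/2$. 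Write $\tan(\pi s/2)=\sum_k T_k(\pi s/2)^{2k+1}/(2k+1)!$ and express $\tan^2=\tan'-1$ through the same tangent numbers. Comparing coefficients of $s^{2m}/(2m)!$ then gives $x^2p_m$ as a combination of $p_{m+1},p_m,\dots,p_0$.
\begin{itemize}
\item The $\partial_s^2$ term gives $\alpha_{0,m}=1$.
\item The constant parts give $\alpha_{1,m}=(\pi/2)^2(4m+1)$.
\item The higher tangent coefficients combine into binomials $\binom{2m+1}{2(m+1-k)}$ times $2T_{k-1}$.
\end{itemize}
The zeta-form of $T_m$ is then the standard identity $\tan y=\sum_{k\ge1}\frac{2y}{((k-\frac12)\pi)^2-y^2}$.

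\textbf{The $q_m$ recursion.} First, $\sech^{(2m)}/\sech$ is a polynomial of degree $m$ in $\sech^2$. This follows by induction, since $\sech'=-\sech\tanh$ and $\tanh^2=1-\sech^2$; it is the content of Lemma~\ref{lemma:Derivatives of sech}. Hence $\sech^2q_m\in\mathrm{span}\{q_0,\dots,q_{m+1}\}$, the $q_j$ being triangular in the basis $\sech^{2j+1}$. The coefficient of $q_j$ equals $c_{j,m}=\int_0^\infty p_j\,\sech^2\,q_m\,dx$ by biorthogonality. To compute all of these at once I would take the double generating function
\[
\sum_{j,m}c_{j,m}\frac{s^{2j}u^{2m}}{(2j)!}=\frac{1}{\pi}\int_{\R}\cosh(sx)\cos\!\left(\tfrac{\pi s}{2}\right)\sech^2(x)\,\tfrac12\bigl(\sech(x+u)+\sech(x-u)\bigr)\,dx .
\]
This integral can be evaluated in closed form by shifting the contour to $\mathrm{Im}\,x=\pi$ and summing residues, or via the Fourier pair $\widehat{\sech^2}(\xi)\propto \xi/\sinh(\pi\xi/2)$. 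The result should be a ratio of trigonometric functions in $s$ and $u$. Its expansion involves $\pi s\cot$-type series whose coefficients are $\zeta(2k)/\pi^{2k}$, which is where $\beta_{k,m}=2(2k-1)\zeta(2k)\pi^{-2k}$ comes from. The alternating sign $(-1)^{k-1}$ and the bound $0\le\beta_{k,m}\le1$ then follow from $\zeta(2k)\le\zeta(2)$ and monotonicity of $(2k-1)\zeta(2k)/\pi^{2k}$.

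\textbf{Main obstacle.} The delicate step is the top coefficient $\beta_{m+1,m}$. It differs from the generic pattern because the expansion of $\sech^2\cdot\sech^{(2m)}$ is truncated at degree $m+1$, so the boundary term collects the tail of the series. I would handle it separately by matching leading coefficients. The leading coefficient of $q_{m+1}$ in $\sech^{2m+3}$ is known ($W_{2m+3,0}=1$ times factorials). Alternatively, I would pin it down by the single integral $\int_0^\infty p_{m+1}\sech^2 q_m\,dx$ computed directly, which produces the factor $(1-2^{-2m-2})$ via $\eta(2m+2)$-type alternating sums. In the odd case the analogous computation gives $m+1-2^{-2m-2}$. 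This residue-based closed form is where I expect most of the bookkeeping effort.
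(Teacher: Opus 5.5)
Your $p_m$ half is correct, and it takes a genuinely different, shorter route than the paper. The paper writes $x^2p_m=p_{m+1}+(\pi/2)^2p_m+\pi x\psi_m$ with $\psi_m=\mathrm{Im}(z^{2m})$, then removes $x\psi_m$ via $R=\tan(\frac{\pi t}{2})P$ and $R'=x\Psi+\frac{\pi}{2}P$. Your identity $x^2P=\cos(\frac{\pi s}{2})\,\partial_s^2[\sec(\frac{\pi s}{2})P]$, together with $1+2\tan^2=2\tan'-1$ and $\binom{2m}{2k}+\binom{2m}{2k+1}=\binom{2m+1}{2k+1}$ (resp.\ $\binom{2m+1}{2k}+\binom{2m+1}{2k+1}=\binom{2m+2}{2k+1}$), reproduces every $\alpha_{k,m}$.

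The $q_m$ half, however, is not a proof. It rests on the claim that your double integral ``should be'' a trigonometric ratio, and the idea that makes it computable is missing. That idea is the elementary identity
\[
\tfrac12\bigl(\sech(x+u)+\sech(x-u)\bigr)=\frac{\cosh u\,\sech x}{1+y\sinh^2u},\qquad y=\sech^2x,
\]
which gives $y\cdot\tfrac12\bigl(\sech(x+u)+\sech(x-u)\bigr)=\sinh^{-2}u\,\bigl(\cosh u\,\sech x-\tfrac12(\sech(x+u)+\sech(x-u))\bigr)$. Comparing coefficients of $u^{2m}$ here already proves the recursion, with no biorthogonality or residues needed. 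This is exactly the paper's argument in the variable $u=it$, where it reads $yQ=\csc^2(t)\,Q-\cos t/\sin^2t$. If you keep your route, the same identity and $\int_{\R}\cosh(sx)\sech(x\pm u)\,dx=\pi\cosh(su)\sec(\frac{\pi s}{2})$ give $F(s,u)=(\cosh u-\cosh su)/\sinh^2u$. Hence $c_{j,m}=-[u^{2(m-j)}]\operatorname{csch}^2u$ for $j\ge1$, and $c_{0,m}=\frac12[u^{2m}]\sech^2(u/2)$.

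Your ``main obstacle'' paragraph mislocates the boundary coefficient. Since $\beta_{k,m}$ multiplies $q_{m+1-k}$, $\beta_{m+1,m}$ is the coefficient of $q_0$, not of $q_{m+1}$. Matching leading coefficients only gives $\beta_{0,m}=1$. The integral that isolates $\beta_{m+1,m}$ is $\int_0^\infty p_0\sech^2q_m\,dx$; your $\int_0^\infty p_{m+1}\sech^2q_m\,dx$ just equals $-1$. The special form of $\beta_{m+1,m}$ comes from the extra term $\cosh u\,\sech x/\sinh^2u$ above, which is a multiple of $q_0$, not from a truncated tail.

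More seriously, you assert the odd-case value $m+1-2^{-2m-2}$ without computing it, and it is wrong. In the odd case the identity becomes $yQ^{o}=\csc^2(t)\,Q^{o}-\csc t$, so
\[
\beta_{m+1,m}=[t^{2m}]\csc^2t-[t^{2m+1}]\csc t=\frac{4\zeta(2m+2)}{\pi^{2m+2}}\bigl(m+2^{-2m-2}\bigr).
\]
For example, $q_0=\frac{2}{\pi}\sech x\tanh x$ and $q_1=\frac{2}{\pi}\sech x\tanh x\,(\frac16-\sech^2x)$ give $\sech^2q_0=-q_1+\frac16q_0$. So $\beta_{1,0}=\frac16$, not the stated $\frac12$. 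The paper's proof has the right expression $[t^{2m}]\csc^2t-[t^{2m+1}]\csc t$ but adds the two terms when simplifying, so the statement itself needs this correction. The even-case value and the bound $0\le\beta_{k,m}\le1$ are unaffected.
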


The technique can also be extended to get the coefficients of $x^{2N}p_m$. This gives an explicit formula for the expected trace moments $m_{N} = \Expectation{\sum_{i=1}^{n} x_{i}^{2N}} = \frac{1}{2} \Expectation{\tr{(\bA_{\pi}^{\top}\bA_{\pi})^{N}}}$.
\begin{lemma}[Expected Trace Moments]
\label{lemma:Moments of the empirical measure}
Let $d = 2n$.
For $N \in \mathbb{N}$, let $m_N$ denote the $N$-th expected trace moment $m_{N}
= \Expectation{\sum_{i=1}^{n} x_{i}^{2N}}
= \frac{1}{2} \Expectation{\tr{(\bA^{\top}\bA)^{N}}}
$, then
\begin{equation}
m_{N} = \left( \frac{\pi}{2} \right)^{2N} (-1)^{N-1} \left[ -n + \sum_{m=0}^{n-1} \sum_{j=0}^{N-1} \binom{2N}{2j+1} \binom{2(m+j)+1}{2m}(-1)^j T_{j}\right]
.
\end{equation}
\end{lemma}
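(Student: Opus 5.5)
We express the expected trace moment through the one-point correlation function and reduce it to a sum of single integrals. By Theorem~\ref{theorem: Singular values are a DPP} with $t=\pi$, the one-point function of the rescaled singular values is $\rho_1(x)=K_n(x,x)=\sum_{m=0}^{n-1}p_m(x)q_m(x)$. Hence
\[
m_N=\int_0^\infty x^{2N}K_n(x,x)\,dx=\sum_{m=0}^{n-1}\int_0^\infty x^{2N}p_m(x)q_m(x)\,dx .
\]
By biorthogonality, each summand equals the coefficient of $p_m$ when $x^{2N}p_m(x)$ is expanded in the basis $\{p_k\}$. So the task is to extract the diagonal coefficient $[p_m]\,x^{2N}p_m$, generalising the coefficients $\alpha_{k,m}$ of Lemma~\ref{lemma:Recurrence}, which is the case $N=1$.

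To compute this coefficient directly, use $d=2n$, so that $p_m(x)=\mathrm{Re}\,(x+i\pi/2)^{2m}$ and $q_m=\frac{2}{\pi}\frac{1}{(2m)!}\sech^{(2m)}$. Write $z=x+i\pi/2$, so $x=z-i\pi/2$. Since $x^{2N}$ is real, $x^{2N}p_m=\mathrm{Re}\big[(z-i\pi/2)^{2N}z^{2m}\big]$. Binomially expand this as $\sum_{l}\binom{2N}{l}(-i\pi/2)^{2N-l}\,\mathrm{Re}$-type terms in $z^{2m+l}$.

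Powers $z^{j}$ with $j$ even give $\mathrm{Re}\,z^{j}=p_{j/2}$ when the prefactor is real, and $\mathrm{Im}\,z^{j}$ when it is imaginary. Odd powers, and the imaginary parts of even powers, are not in the span of the $p_k$ directly and must be re-expanded. The cleanest route is to avoid re-expansion and compute the pairing
\[
\int_0^\infty \mathrm{Re}\big[(z-i\tfrac{\pi}{2})^{2N}z^{2m}\big]\,\sech^{(2m)}(x)\,dx
\]
by integrating by parts $2m$ times. The boundary terms at $x=0$ are evaluated using the Taylor coefficients of $\sech$, where odd derivatives vanish at $0$, and the Taylor coefficients of $\mathrm{Re}$ and $\mathrm{Im}$ of the polynomial. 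What remains is an integral of a polynomial against $\sech$. Its moments $\int_0^\infty x^{2k}\sech x\,dx$ involve Euler numbers, but the combination should collapse because the $p_m$ were built to be biorthogonal.

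Alternatively, I would use the generating-function machinery from Appendix~\ref{appendix:Proofs for recurrence relations for the Biorthogonal functions}: the tangent numbers $T_j$ arose there as coefficients of $\tan$ connecting odd powers of $z$ back to the $p_k$. I would repeat that argument with $x^2$ replaced by $x^{2N}$. Only terms $l=2j+1$ odd contribute to the diagonal, via the re-expansion coefficient of $z^{2m+2j+1}$-type terms onto $p_m$. That coefficient is $\binom{2(m+j)+1}{2m}(-1)^jT_j$ times $(\pi/2)^{2j+\cdots}$, which matches the inner sum in the claimed formula.

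The leftover $-n$ should come from the $l=0$ term. That term is $(-i\pi/2)^{2N}p_m=(-1)^N(\pi/2)^{2N}p_m$, which contributes $(-1)^N(\pi/2)^{2N}$ per $m$, i.e. $-n$ after factoring out $(\pi/2)^{2N}(-1)^{N-1}$. Even $l\ge 2$ should contribute nothing to the diagonal, by degree or parity. Summing over $m$ then gives the formula.

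The main obstacle is establishing the exact projection coefficient of odd-power terms such as $\mathrm{Im}(i\cdot z^{2m+2j+1})$ onto $p_m$. I would try first to derive it from the generating function $\sum_m p_m(x)s^{2m}/(2m)!=\mathrm{Re}\,e^{sz}$-type identities, together with the biorthogonal pairing $\int \cos(sx)\sech x\,dx=\pi\sech(\pi s/2)$. In that approach, pairing $e^{s z}$ against $q_m$ produces $\tan$ via $\tanh$ after the $i\pi/2$ shift, which yields the $T_j$. The $N=1$ case in Lemma~\ref{lemma:Recurrence} serves as a consistency check on the constants.
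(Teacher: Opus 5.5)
Your proposal is correct and is essentially the paper's argument. You expand $x^{2N}=(z-i\frac{\pi}{2})^{2N}$ against $z^{2m}$, read off the diagonal term $(-1)^N(\frac{\pi}{2})^{2N}$ from $l=0$, use that even $l\ge 2$ give exactly $p_{m+l/2}$, and re-expand the odd terms $\mathrm{Im}(z^{2(m+j)+1})$ via the identity $\mathrm{Im}\sinh(tz)=\tan(\frac{\pi t}{2})\,\mathrm{Re}\cosh(tz)$ already established in the proof of the recurrence lemma. That pointwise identity settles your ``main obstacle'' directly, with no integration by parts or pairing against $q_m$ needed: it gives the projection $\binom{2(m+j)+1}{2m}T_j(\frac{\pi}{2})^{2j+1}$ onto $p_m$, and the factor $(-1)^j$ in the formula comes from the prefactor $(-i\frac{\pi}{2})^{2N-2j-1}$, not from the re-expansion as you wrote.
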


We conclude this section by remarking standard properties of DPPs that are relevant to us.

\subsection{Sampling L\'evy's Area}\label{section:Sampling from DPP}
By Corollary \ref{corollary:Joint density of L\'evy's area singular values}, the singular values and singular vectors of $\bA_\pi = Q J(X) Q^{\top}$ are independent with $Q$ Haar-distributed. $Q$ can be generated in $\bigO{n^3}$, for example by taking the QR decomposition of a Gaussian matrix with independent entries \cite{stewartEfficientGenerationRandom1980}. The remaining challenge is to sample $x_i$.
Fortunately, $x_i$ are "finite rank projection DPPs", which admit an iterative sampling algorithm as the conditional densities \(p(x_{i+1}|x_{1:i})\) are in closed form \cite[Section 2]{hennigExactSamplingDeterminantal2018}.
\begin{equation}
    p(x_{1}, \dots, x_n)
    = 
    \prod_{k=1}^{n} p(x_k | x_{1:k-1})
    =
    \prod_{k=1}^{n}
    \frac{f_k(x_k)}{n - k +1}
\end{equation}
\begin{equation}
    f_k(x)
    =
    \begin{cases}
        K(x, x), & k = 1 \\
        K(x, x) - K(x, x_{1:k-1}) K(x_{1:k-1}, x_{1:k-1})^{-1} K(x_{1:k-1}, x),
        & 1 < k \le n
    \end{cases}
\end{equation}
where $K(x, x_{1:k-1})_i = K(x, x_i)$ and $K(x_{1:k-1}, x)_i = K(x_i, x)$ are $k-1$-dimensional vectors and \\ $K(x_{1:k-1}, x_{1:k-1})_{i, j} = K(x_i, x_j)$ is the $k-1 \times k-1$ kernel Gram matrix.
This yields an algorithm which runs in \(\bigO{n^3}\):
\begin{enumerate}[label=\roman*.]
    \item Generate an independent Haar-distributed matrix, which has cost $\bigO{n^3}$.
    \item For each conditional density, apply rejection sampling as $f_k(x)$ can be evaluated explicitly. The overall cost is $\bigO{n^3}$ as recurrence relations allow kernel evaluation in $\bigO{n}$ and the inverse of the kernel Gram matrix can be updated sequentially in $\bigO{n^2}$ at each step.
    \item Return $\bA_{\pi} = Q J(X) Q^{\top}$ (and rescale for general $t$).
    \item If we also have the squared subspace norms $u_k$, then we can generate a joint sample of Brownian motion by returning $B_{\pi} = Q \overline{u}$.
\end{enumerate}
Insights from Section \ref{section:Establishing Spectral Decay} can be used to inform our choice of proposal densities. For example $k=1$, should use a Cauchy proposal (see Figure \ref{fig:convergence of densities}) and subsequent conditionals would fare better with an Exponential proposal, as once we have observed large $x_i$ the tails of $p(x_{i+1} | x_{1:i})$ become thinner.

\subsection{Counting Statistics and Fredholm Determinants}
\label{section:Gap probabilities and Fredholm Determinants}
DPPs also provide a standard mechanism for studying singular values through counting statistics and Fredholm determinants. We briefly review this notion, see \cite[Section 11]{keating_RMT_lecture_notes_2025}.
Given a kernel $K_n : \mathcal{X} \times \mathcal{X}$ over a domain $\mathcal{X}$ satisfying $\sup_{x, y \in \mathcal{X}} |K_n(x, y)| < \infty$, for a set $B \subset \mathcal{X}$ we view $K_n: L^{2}(B) \to L^{2}(B)$ as an operator with $K_nf(x) = \int_{B} K_n(x, y) f(y)  \, dy$. Its Fredholm determinant on $B$ is given by
$$
\det[I - K_n]_{B} := \sum_{m=0}^{\infty} \frac{(-1)^{m}}{m!} \int_{B^{m}}  \det[K(x_{i}, x_{j})] dx_{1} \dots dx_{m}
.
$$
Consider the counting measure $\nu_{n} = \sum_{i=1}^{n} \delta_{x_{i}}$, then $\nu_{n}(B) = \# \{ x_{i} \in B \}$ gives the number of points contained in $B$.
For a DPP over $\mathcal{X}$ with kernel $K$ the probability generating function of $\nu_{n}(B)$ is given by the Fredholm determinant over $B$.
$$
G_{B}(t) = \det[I - tK_n]_{B},\quad
\Prob{\nu_{n}(B) = m} = \frac{(-1)^{m}}{m!} \left.\frac{d^m}{dt^m} G_{B}(t)  \right|_{t=1}
$$
For our DPP over $\mathbb{R}_{+}$, the gap probabilities $\Prob{ v_{n}(B) = 0} = G_{B}(1)$ gives the distribution function of the largest and smallest singular values.
$$
\Prob{x_{(1)} \le t} = \det[I - K_n]_{(t, \infty)},\quad
\Prob{x_{(n)} \ge t} = \det[I - K_n]_{(0, t)}
$$

With these results, we now study the behaviour of the singular values of $\bA_t$ as $d \to \infty$.
\section{Scaling Limits}
\label{section:Establishing Spectral Decay}
Classically one considers the limit of global and local statistics. 
Global statistics describe the distribution over all singular values and
local statistics study the fluctuations of singular values around a point in the spectrum.

\subsection{Global Statistics}
Figure \ref{fig:histogram_and_quantiles}, shows that the histogram over the singular values seems to be described well by a Cauchy distribution. This is also consistent with the nearly-linear trajectories of the singular values in high dimensions, see Figure \ref{fig:Dynamics of SV, High Dimension}.
\begin{figure}[h!]
    \centering
    \includegraphics[width=\linewidth]{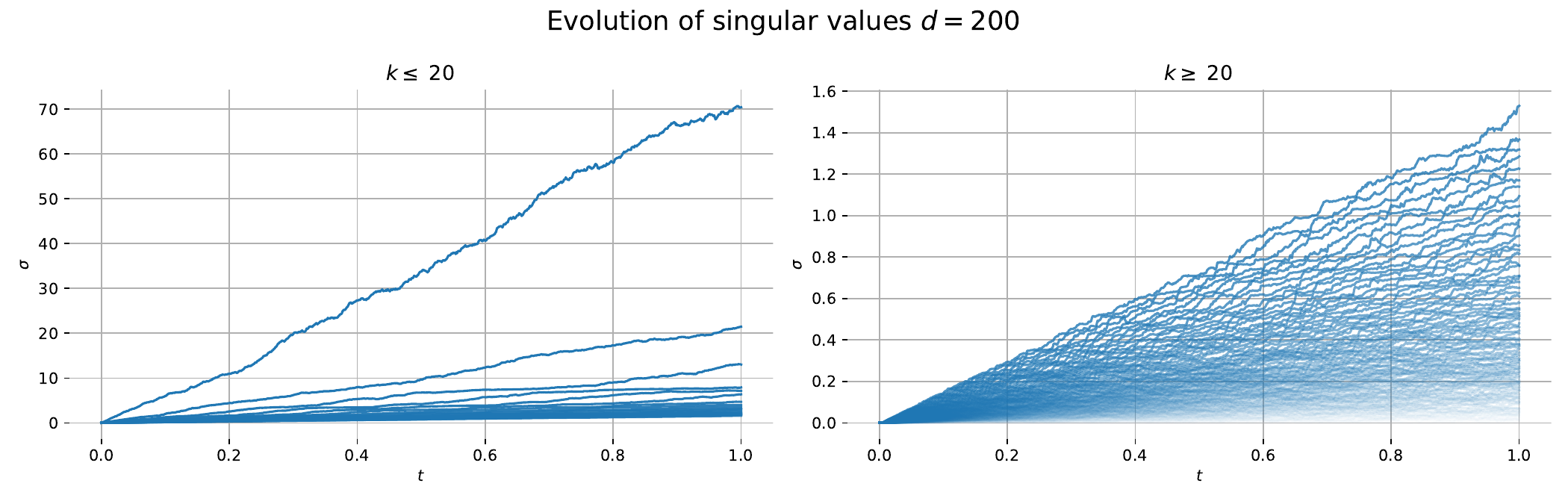}
    \caption{Evolutions of singular values over $[0, 1]$ with $d=200$. Since the largest singular values are an order of magnitude larger, we split $\sigma_{(k)}$ by $k \le 20$ and $k \ge 20$ with the trajectories shaded in progressively lighter shades of blue. The singular values preserve their ordering and all seem to exhibit linear growth.}
    \label{fig:Dynamics of SV, High Dimension}
\end{figure}
Indeed, we prove the following theorem.
\begin{theorem}
    \label{theorem:Global Law}
    The empirical measure of the singular values \(\mu_d(x)\) converges  weakly to the absolute value of a centred Cauchy distribution with scale parameter $t/2$, almost surely. 
    That is,
    \begin{equation}
    \mu_{d} := \frac{1}{n} \sum_{k=1}^{n} \delta_{\sigma_k} \dto \left|\mathrm{Cauchy}\left( 0, \frac{t}{2} \right)\right|\; a.s.
    \end{equation}
    This corresponds to the limit of the empirical measure having the density
    \begin{equation}\label{equation:limit cauchy density}
        \varrho_{t}(\sigma) = \frac{t}{\pi} \left( \sigma^{2} + \left( \frac{t}{2} \right)^{2} \right)^{-1}
    .
    \end{equation}
\end{theorem}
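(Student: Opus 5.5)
The plan is to prove the statement in two stages. First, show that the \emph{expected} empirical measure $\Expectation{\mu_d}$ converges weakly to $|\mathrm{Cauchy}(0,t/2)|$. Second, upgrade this to almost sure convergence using concentration for determinantal point processes.

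Throughout I work with the rescaled singular values $x_i \eqdist \pi\sigma_i/t$ of $\bA_\pi$. For these the claim reads $\frac1n\sum_k \delta_{x_k} \to |\mathrm{Cauchy}(0,\pi/2)|$, whose two-sided version has characteristic function $e^{-\pi|u|/2}$. Since the Cauchy law has no moments, Lemma \ref{lemma:Moments of the empirical measure} is of no use, and I would use Fourier transforms instead. By Theorem \ref{theorem: Singular values are a DPP}, $\Expectation{\frac1n\sum_k f(x_k)} = \frac1n\int_0^\infty f(x)K_n(x,x)\,dx$. Each product $p_m q_m$ is even in both parities, so I can take $f(x)=\cos(ux)$ and write
\[
\phi_n(u) := \frac1n\int_0^\infty \cos(ux)K_n(x,x)\,dx = \frac1{2n}\sum_{m=0}^{n-1}\int_{\R} e^{iux}p_m(x)q_m(x)\,dx .
\]
The aim is to show $\phi_n(u)\to e^{-\pi|u|/2}$ for every $u$. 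Lévy's continuity theorem, applied to the symmetrised measures, then gives weak convergence of the expected measure. A useful sanity check is that $\phi_n(0)=1$ by biorthogonality.

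To compute each term, I would integrate by parts $2m$ times (respectively $2m+1$ times in the odd case), moving the derivatives off $\sech^{(2m)}$ and onto $p_m(x)e^{iux}$. Since $p_m$ is a polynomial, Leibniz's rule gives a finite sum $\sum_j \binom{2m}{j}(iu)^{2m-j}\int \sech(x)\,e^{iux}\,p_m^{(j)}(x)\,dx$. Here $p_m^{(j)}$ is again the real part of a power of $x+i\pi/2$.

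The remaining integrals of the form $\int \sech(x)\,(x+i\pi/2)^k e^{iux}\,dx$ are derivatives in $u$ of the Fourier transform $\pi\sech(\pi u/2)$. A cleaner route, which I would try first, is to shift the contour $x\mapsto x-i\pi/2$. On that line $\sech$ has its pole at $0$, and the factor $e^{iux}$ picks up exactly $e^{\pi u/2}$; this is where the exponential $e^{-\pi|u|/2}$ should come from. Alternatively, I could sum over $m$ first using generating functions, in the spirit of Appendix \ref{appendix:Proofs for the Determinantal Point Process}, and collapse $\sum_m p_m(x)q_m(y)$ into a Christoffel–Darboux-type expression.

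I expect the main obstacle to be this step: extracting the exact asymptotics $\phi_n(u) = e^{-\pi|u|/2}+o(1)$ from the finite double sum, where individual terms grow and cancel heavily. If a closed form is out of reach, the fallback is the following. Show tightness of $\Expectation{\mu_d}$ from the bound $\frac1n\int_R^\infty K_n(x,x)\,dx \lesssim 1/R$, using Figure-\ref{fig:convergence of densities}-type tail control of $q_m$ via the recurrence in Lemma \ref{lemma:Recurrence}. Then identify the limit through the Stieltjes transform of $\Expectation{\mu_d}$ evaluated at $z=\pm i y$, which is again an explicit sum against $\sech$.

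For the almost sure statement, note that $(x_i)$ is a finite-rank projection DPP with $n$ points. For $f$ bounded and Lipschitz, the linear statistic $\sum_k f(x_k)$ obeys the Pemantle–Peres concentration inequality for such processes: $\Prob{|\sum f(x_k)-\Expectation{\sum f(x_k)}|>n\varepsilon}\le 2\exp(-c\,n\varepsilon^2/\|f\|_{\mathrm{Lip}}^2)$. This bound is summable in $n$, so Borel–Cantelli gives $\frac1n\sum_k f(x_k)-\Expectation{\cdot}\to0$ almost surely for each such $f$. Taking a countable convergence-determining family of such $f$ and combining with the first stage yields the almost sure weak convergence. Finally, undoing the rescaling $\sigma=t x/\pi$ turns scale $\pi/2$ into $t/2$, which gives the density \eqref{equation:limit cauchy density}.
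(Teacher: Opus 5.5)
Both stages of your plan have a genuine gap.

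\textbf{Identifying the limit.} Your first stage stops exactly where the content of the theorem lies: you never show $\phi_n(u)\to e^{-\pi|u|/2}$. The fallback is not an argument either, since it consists of a tail bound read off Figure~\ref{fig:convergence of densities} and a Stieltjes transform that is never evaluated. So the Cauchy law is never actually identified.

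The missing idea is that $K_n(x,x)=\sum_{m<n}p_m(x)q_m(x)$ is a partial sum whose generating function $\sum_m p_m(x)q_m(x)r^m$ is explicit. Since $\sum_{m\ge0}\frac{\sech^{(2m)}(x)}{(2m)!}w^{2m}=\frac12[\sech(x+w)+\sech(x-w)]$ and $p_m(x)=\mathrm{Re}\,(x+i\pi/2)^{2m}$, this sum is a closed-form combination of $\sech$ at $x(1\pm\sqrt r)\pm i\sqrt r\,\pi/2$ (analogously for $d$ odd). The relevant increments $w=\pm(x\pm i\pi/2)$ lie exactly on the boundary of the disc of convergence, because $\sech$ has poles at $\pm i\pi/2$. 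Letting $r\uparrow1$ gives $(1-r)\sum_m p_m(x)q_m(x)r^m\to(x^2+\pi^2/4)^{-1}$ for $x>0$.

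The Hardy--Littlewood Tauberian theorem (Lemma~\ref{lemma:Hardy-Littlewood Tauberian Theorem}) then gives $K_n(x,x)/n\to\varrho(x)$ pointwise. Its hypothesis $\sup_m|p_m(x)q_m(x)|<\infty$ follows from the Mittag-Leffler expansion of $\sech$ (Lemma~\ref{lemma:Sech Pole expansion}). Scheff\'e's lemma then yields convergence of $\Expectation{\mu_d}$, even in total variation, with no Fourier analysis.

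\textbf{Almost sure convergence.} The Pemantle--Peres inequality is not available here. It is a statement about strong Rayleigh measures, a class that contains DPPs with Hermitian kernels $0\le K\le I$. Your kernel $K_n(x,y)=\sum_m p_m(x)q_m(y)$ is instead a non-orthogonal projection between two different function spaces, and the singular-value process is in general not even negatively associated. For $d=4$ and $t=\pi$ one has $K_2(0,y)=\frac{\sech y}{\pi}[2-\frac{\pi^2}{4}(1-2\sech^2y)]$ and $K_2(y,0)=\frac1\pi[2+\frac{\pi^2}{4}-y^2]$. These have opposite signs at $y=2$, so $\rho_2(0,2)-\rho_1(0)\rho_1(2)=-K_2(0,2)K_2(2,0)>0$. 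Counts near $0$ and near $2$ are therefore positively correlated, and your Borel--Cantelli step has no input.

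The paper instead passes to $z_i=\sech^2(x_i)\in(0,1]$. There the $q_m$ are, up to a fixed factor, polynomials in $z$ obeying a recurrence with coefficients in $[0,1]$ (Lemma~\ref{lemma:Recurrence}). The path-counting formula of Lemma~\ref{lemma:BOE Variance bound} then gives $\mathrm{Var}\int z^l\,d\widetilde\mu_d\le l(4l)^{2l}/n^2$, which is summable. Hence all moments converge almost surely, compact support gives weak convergence, and the homeomorphism $x\mapsto\sech^2x$ carries this back to the $x_i$.

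A repair closer to your own plan is McDiarmid's inequality over the independent coordinate paths $B^1,\dots,B^d$. Replacing one of them changes $\bA_t$ only in a single row and column, i.e.\ by a skew matrix of rank at most $2$. By interlacing, this moves $\frac1n\sum_k f(\sigma_k)$ by $O(\|f\|_{\mathrm{TV}}/n)$, which gives summable exponential tails.
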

\begin{proof}
    We defer its proof to Section \ref{section:Proof global statistics}.
\end{proof}


\subsection{Local Statistics} 
Due to the above theorem, one expects $n\varrho_{t}(\sigma_{0})$ singular values to be located around a unit interval of $\sigma_{0}$. To get meaningful limits, we rescale such that the number of singular values in a unit interval of $\sigma_0$ is order one. Namely, $\widetilde{\sigma}_{i} = n\varrho_{t}(\sigma_{0})(\sigma_{i} - \sigma_{0})$.  The DPP structure is preserved under this rescaling, reducing the problem to studying another DPP but with a rescaled kernel.
Section \ref{section:Gap probabilities and Fredholm Determinants} shows that gap probabilities can be computed with Fredholm determinants and by \cite[Lemma 3.4.5]{andersonIntroductionRandomMatrices2009} their limits reduce to studying the limit of the rescaled kernels.
\begin{equation}
\sup_{x, y \in B} |K_{n}(x, y) - K(x, y)| \to 0 
\implies
\det[I - K_{n}]_{B} \to \det[I - K]_{B}
.
\end{equation}
Since only gap probabilities are of interest, we may conjugate the kernel, as this leaves the determinant unchanged and may be necessary to obtain a limiting kernel. To simplify the analysis, we restrict to $d=2n$ and we consider kernels corresponding to $t = \pi$ and denote $\varrho = \varrho_{\pi}$ but state probability results in terms of $\sigma_i$ by setting $x_i = \pi \sigma_i / t$.

First, for the smallest and bulk singular values, we consider the rescaled kernel
\begin{equation}\label{equation:Rescaled Kernel}
\widetilde{K}_{n, x_{0}}(u, v) =
\frac{\exp(- 2x_0 (u-v))}{n\varrho(x_{0})}
K_{n}\left(\sigma_{0} + \frac{u}{n \varrho(x_{0})}, \sigma_{0} + \frac{v}{n \varrho(x_{0})} \right)
\end{equation}
whose limit is described by the sine kernel. This is consistent with the universality of the sine kernel as the bulk scaling limit across a broad class of random matrix ensembles \cite{tao2012random}. Moreover, its Fredholm determinant is known explicitly through a solution of a Painlev\'e differential equation.

\begin{lemma}[Sine kernel and its Fredholm Determinant]
\label{lemma:Sine kernel}
See \cite[Theorem 3.1.2]{andersonIntroductionRandomMatrices2009}.
Define the sine kernel as
\begin{equation}
K_{\rm{sine}}(u, v) =
\frac{1}{\pi}
\begin{cases}
\frac{\sin(u-v)}{u-v}, & u \neq v\\
1 & u = v
.
\end{cases}
\end{equation}
Then for $x \ge 0$, its Fredholm determinant over $(-x, x)$ is
\begin{equation}\label{eq:Sine kernel Fredholm Determinant}
\det[I - K_{\mathrm{sine}}]_{(-x, x)}
= 1 - F_{\rm{sine}}(x)
= \exp\left( \int_{0}^{x} \frac{h(s)}{s}  \, ds  \right)
\end{equation}
where $h(s)$ is a solution of the $\sigma$-form of the Painlev\'e V differential equation. I.e.
$$
(s h'')^{2} + 4(sh' - h)(sh' - h + (h')^{2}) = 0
$$
such that
$$
h(s) = -\frac{s}{\pi} - \frac{s^{2}}{\pi^{2}} - \frac{s^{3}}{\pi^{3}} + O(s^{4}) \text{ as } s \downarrow 0
.
$$
Furthermore, $F_{\rm{sine}}$ is increasing with $F_{\rm{sine}}(0) = 0$ and $\lim_{ s \to \infty }F_{\rm{sine}}(s) = 1$. Therefore, $F_{\rm{sine}}$ is a CDF. 
\end{lemma}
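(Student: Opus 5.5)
This lemma is imported rather than proved from scratch. The plan is to reduce the Painlev\'e statement to \cite[Theorem 3.1.2]{andersonIntroductionRandomMatrices2009} (the Jimbo--Miwa--Mori--Sato result), to pin down the normalisation, and then to check the CDF properties directly from the determinantal structure.

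\textbf{Step 1: match the normalisation.} The cited theorem writes the gap probability of the sine process, with kernel $\sin(u-v)/(\pi(u-v))$ on a symmetric interval, as $\exp\left(\int_0^{\cdot} h(s)/s\,ds\right)$, where $h$ solves the $\sigma$-form of Painlev\'e V. I would transcribe that statement and then fix the variables by comparing small-$x$ expansions on both sides.
\begin{itemize}
\item On the Fredholm side, the series definition of $\det[I-K_{\rm sine}]_{(-x,x)}$ gives $1-\int_{-x}^x K_{\rm sine}(u,u)\,du+O(x^2)=1-\tfrac{2x}{\pi}+O(x^2)$.
\item On the Painlev\'e side, the boundary condition $h(s)=-s/\pi+O(s^2)$ gives $\exp\left(\int_0^x h(s)/s\,ds\right)=1-x/\pi+O(x^2)$.
\end{itemize}
These disagree by a factor of $2$. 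So the interval convention in the reference (for instance $(-x/2,x/2)$ versus $(-x,x)$, or an argument $\pi x$ versus $x$) has to be reconciled carefully. I expect this bookkeeping to be the only real obstacle. Concretely, I would write the statement with upper limit $2x$, or equivalently replace $h$ by the rescaled solution matching the reference, and check that the $\sigma$-form ODE and the first three Taylor coefficients transform consistently. Where the printed lemma disagrees with the cited source, the source's normalisation is the one I would keep.

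\textbf{Step 2: $F_{\rm sine}(0)=0$.} With $B=(-x,x)$, the quantity $\det[I-K_{\rm sine}]_{B}$ is the probability that the sine process has no point in $B$. For $x=0$ every term with $m\ge1$ in the Fredholm series vanishes, so the determinant equals $1$ and $F_{\rm sine}(0)=0$.

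\textbf{Step 3: monotonicity.} The sine process exists as a DPP, since $K_{\rm sine}$ is a locally trace-class Hermitian projection kernel; alternatively it is the bulk limit of GUE. The intervals $(-x,x)$ are nested, so the events $\{\text{no point in }(-x,x)\}$ are decreasing in $x$. Hence $1-F_{\rm sine}$ is non-increasing and $F_{\rm sine}$ is non-decreasing.

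\textbf{Step 4: $F_{\rm sine}(x)\to1$.} For a DPP with a Hermitian projection kernel, the counting variable $N_x$ of points in $(-x,x)$ has
\begin{itemize}
\item mean $\mathbf{E}N_x=2x/\pi$, and
\item variance $\int\!\!\int_{(-x,x)^2}K_{\rm sine}(u,v)^2\,du\,dv\cdot(\ldots)$, which is of order $\log x$ by the standard computation.
\end{itemize}
Chebyshev's inequality then gives
\[
\mathbf{P}(N_x=0)\le\frac{\operatorname{Var}N_x}{(\mathbf{E}N_x)^2}\to0 ,
\]
so $F_{\rm sine}(x)\to1$ as $x\to\infty$.

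Continuity of $F_{\rm sine}$ in $x$ follows from continuity of the Fredholm series in the domain of integration. Together with Steps 2--4, this shows $F_{\rm sine}$ is a CDF.
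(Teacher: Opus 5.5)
Your Step 1 check is correct, and it exposes a genuine misstatement in the lemma, not just a bookkeeping nuisance. The paper gives no proof beyond the citation. \cite[Theorem 3.1.2]{andersonIntroductionRandomMatrices2009} concerns the gap probability of an interval of length $t$, namely $(-t/2,t/2)$, with exactly the printed ODE and boundary data. As printed, the two sides of the displayed identity therefore already differ at order $x$. Keeping $h$ unchanged, the correct identity is
\[
\det[I-K_{\mathrm{sine}}]_{(-x,x)}=\exp\left(\int_0^{2x}\frac{h(s)}{s}\,ds\right).
\]
The agreement persists to third order. Indeed $\log\det[I-K_{\mathrm{sine}}]_{(-x,x)}=-\sum_{m\ge 1}\tfrac1m\operatorname{tr}K_{\mathrm{sine}}^m=-\tfrac{2x}{\pi}-\tfrac{2x^2}{\pi^2}-\tfrac{8x^3}{3\pi^3}+O(x^4)$, which equals $\int_0^{2x}h(s)/s\,ds$ to that order.

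Adopt this upper-limit-$2x$ form rather than the rescaling you call equivalent. The function $g(s)=h(2s)$ solves $(sg'')^2+4(sg'-g)\bigl(4(sg'-g)+(g')^2\bigr)=0$ with $g(s)=-\tfrac{2s}{\pi}-\tfrac{4s^2}{\pi^2}-\tfrac{8s^3}{\pi^3}+O(s^4)$, so neither the printed ODE nor the printed boundary data survive. Nor should you shrink the interval to $(-x/2,x/2)$. The bulk and hard-edge theorems use $\det[I-K_{\mathrm{sine}}]_{(-\pi x,\pi x)}=1-F_{\mathrm{sine}}(\pi x)$, i.e.\ they use $F_{\mathrm{sine}}$ only through its Fredholm definition on $(-x,x)$. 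The upper-limit-$2x$ correction therefore leaves them intact.

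For the remaining claims your argument goes beyond the paper, which asserts the CDF properties without proof. Steps 2 and 3 are fine, and non-decreasing is all a CDF needs. The sine process exists by Macchi--Soshnikov, since $K_{\mathrm{sine}}$ is the Fourier projection onto frequencies in $[-1,1]$. Alternatively, obtain the determinant as a limit of GUE gap probabilities, which are monotone in $x$.

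In Step 4 the variance expression is garbled. For a DPP with Hermitian kernel,
\[
\operatorname{Var}N_x=\int_{(-x,x)}K_{\mathrm{sine}}(u,u)\,du-\iint_{(-x,x)^2}K_{\mathrm{sine}}(u,v)^2\,du\,dv\le\mathbf{E}N_x=2x/\pi .
\]
This trivial bound already gives $\mathbf{P}(N_x=0)\le\operatorname{Var}N_x/(\mathbf{E}N_x)^2\le\pi/(2x)\to 0$ by Chebyshev. The $\log x$ variance asymptotics are therefore unnecessary.
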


Precisely, the smallest singular values are governed by the even sine kernel \cite{ehrhardtDysonsConstantsAsymptotics2007}, whose Fredholm determinant can be related to $F_{\rm{sine}}$.
This kernel arises when considering the limiting gap probabilities in the "hard-edge scaling limit" for the Laguerre or Jacobi random matrix ensembles. In our case, the "hard-edge" corresponds to the singular values being positive.

\begin{theorem}[Smallest Singular Values]
\label{theorem: Smallest singular values}
Let $d=2n$. Consider the rescaled kernel around $0$, then uniformly for $u, v$ in compact subsets of $\R$
\begin{equation}
\widetilde{K}_{n, 0}(u, v)
\to 
\pi K_{\rm{sine}}(\pi u, \pi v) + \pi K_{\rm{sine}}(\pi u, -\pi v)
.
\end{equation}
In particular, the distribution of the smallest singular value is given by
\begin{equation}
\lim_{n \to \infty}
\Prob{\frac{4n}{\pi t} \sigma_{(n)} \ge x} =
\sqrt{ 1 - F_{\mathrm{sine}}(\pi x) } \cdot \exp\left( -\frac{1}{2}
\int_{0}^{\pi x}  \sqrt{  -\frac{d^{2}}{ds^{2}} \log(1 - F_{\mathrm{sine}}(s)) }ds
\right)
.
\end{equation}
\end{theorem}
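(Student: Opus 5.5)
The plan is to rescale, obtain a closed contour-integral formula for $K_n$ near the origin, and read off the limit from the residues at the two poles of $\sech$ closest to the real axis. Work with $t=\pi$, so $x_i=\sigma_i$, and $\varrho(0)=4/\pi^2$; the scaling in the statement is then $n\varrho(0)x=4nx/\pi^2=\frac{4n}{\pi t}\sigma$ after undoing $x=\pi\sigma/t$. With $d=2n$, Theorem \ref{theorem: Singular values are a DPP} gives
\[
K_n(x,y)=\frac{2}{\pi}\,\mathrm{Re}\sum_{m=0}^{n-1}\frac{h^{2m}}{(2m)!}\sech^{(2m)}(y),\qquad h=x+i\tfrac{\pi}{2}.
\]
Here the real part acts only on the $p_m$ factor, and $q_m$ is real.

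\textbf{Step 1: contour formula for the kernel.} The sum is the even part of a truncated Taylor series of $\sech$ about $y$, so Cauchy's formula gives
\[
\sum_{m=0}^{n-1}\frac{h^{2m}}{(2m)!}\sech^{(2m)}(y)=\frac{1}{2\pi i}\oint_{|z-y|=r}\sech(z)\,\frac{z-y}{(z-y)^2-h^2}\Bigl(1-\Bigl(\frac{h}{z-y}\Bigr)^{2n}\Bigr)\,dz ,
\]
valid for any $r$ with $r<\sqrt{y^2+\pi^2/4}$; the integrand is in fact analytic at $z-y=\pm h$. The key observation is that $|h|=\sqrt{x^2+\pi^2/4}$ coincides with the radius of convergence at $y$ (the distance to the poles $\pm i\pi/2$). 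So for $x,y=O(1/n)$ we are exactly at the edge, and the partial sums genuinely depend on $n$.

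\textbf{Step 2: residue expansion.} I will split the integrand into the part without the factor $(h/(z-y))^{2n}$ and the part with it, and move the contour outward to $|z-y|=R$ with $\pi/2<R<3\pi/2$ (for instance $R=\pi$). This picks up residues at:
\begin{itemize}
\item the poles $z=\pm h+y$ of the rational factor, which contribute the "untruncated" term, essentially $\tfrac12\bigl(\sech(y+h)+\sech(y-h)\bigr)$, of size $O(1/(x\pm y))$;
\item the poles $z=\pm i\pi/2$ of $\sech$, which carry the factor $\bigl(h/(\pm i\pi/2-y)\bigr)^{2n}$.
\end{itemize}
On $|z-y|=R$ the truncation term is $O((|h|/R)^{2n})$, which is exponentially small. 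With $x=u/(n\varrho(0))$ and $y=v/(n\varrho(0))$, expanding $\log$ gives
\[
\Bigl(\frac{i\pi/2+x}{\pm i\pi/2-y}\Bigr)^{2n}\to \exp\bigl(\mp i\pi(u\pm v)\bigr)\cdot(\text{real phase}),
\]
which is the source of the oscillations. Collecting these contributions, taking the real part, multiplying by $1/(n\varrho(0))$ and by the conjugation factor in \eqref{equation:Rescaled Kernel} with $x_0=0$ (which is trivial), I expect the untruncated $1/(x\pm y)$ singularities to combine with the residues into $\frac{\sin\pi(u-v)}{u-v}+\frac{\sin\pi(u+v)}{u+v}$. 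This matches $\pi K_{\rm sine}(\pi u,\pi v)+\pi K_{\rm sine}(\pi u,-\pi v)$. All error terms are $O(1/n)$ uniformly for $u,v$ in compacts, by Taylor expansion in the exponent.

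\textbf{Step 3: smallest singular value.} Uniform convergence on $[0,x]^2$, together with \cite[Lemma 3.4.5]{andersonIntroductionRandomMatrices2009}, gives
\[
\Prob{\tfrac{4n}{\pi t}\sigma_{(n)}\ge x}=\det[I-\widetilde K_{n,0}]_{(0,x)}\to\det[I-K_{\rm even}]_{(0,x)} .
\]
The even-kernel operator on $(0,x)$ is unitarily equivalent to $K_{\rm sine}$ on $(-\pi x,\pi x)$ restricted to even functions. The factorisation $\det[I-K_{\rm sine}]=D_+D_-$ into even and odd parts, together with the Gaudin-type relation $D_\pm=\sqrt{D}\exp\bigl(\pm\tfrac12\int\sqrt{-(\log D)''}\bigr)$ from \cite{ehrhardtDysonsConstantsAsymptotics2007}, combined with Lemma \ref{lemma:Sine kernel}, gives the stated formula; the sign convention matching the stated formula is to be fixed there.

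\textbf{Main obstacle.} I expect Step 2 to be the main obstacle: tracking the residue contributions and their phases precisely, and checking that the apparent $1/(u\pm v)$ poles cancel, so that the limit is the smooth even sine kernel, uniformly including the diagonal $u=v$.
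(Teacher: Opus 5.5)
Your plan is correct, and its core is the same as the paper's. Only the two poles $\pm i\pi/2$ of $\sech$, which lie at distance $\approx |h|$ from $y$, contribute to the limit after dividing by $n\varrho(0)$. The factors $\bigl(h/(\pm i\pi/2-y)\bigr)^{2n}$ (up to conjugation, the paper's $(\bar z/y_0)^{2n}$ and $(z/y_0)^{2n}$) produce the two sine kernels.

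The difference is in how everything else is controlled. The paper writes $K_n$ exactly as a Mittag-Leffler series over all poles $y+i\pi(l+\frac12)$ (Lemma \ref{lemma:Pole expansion version of the kernel}). It then has to show by hand that the $l\ge 1$ terms are $o(n)$ uniformly, using an alternating-series bound for the conditionally convergent $1/y_l$ part and an $l^{-3}$ bound for the rest. Your Cauchy-integral route replaces all of this with a single exponentially small estimate on $|z-y|=R$, at the price of the extra untruncated term. Take the small circle with $r<|h|$, so that the untruncated part integrates to zero there and only the truncated part is moved outward. Your route is shorter and avoids conditional convergence. It also adapts to $x_0>0$ with $|h|<R<\sqrt{x_0^2+9\pi^2/4}$. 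The paper's series, in exchange, gives one explicit formula for $K_n$ that is reused for all interior scalings.

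When you carry out Step 2, the cancellation you flag as the main obstacle turns out to be milder than you expect. For real $x,y$ one has $\sech(y+h)=-i/\sinh(x+y)$ and $\sech(y-h)=i/\sinh(y-x)$. So the untruncated term is purely imaginary and drops out of $K_n=\frac{2}{\pi}\mathrm{Re}(\cdot)$ altogether. For $x,y=O(1/n)$ the two residues give exactly
\[
K_n(x,y)=-\frac{2}{\pi}\,\mathrm{Re}\Bigl[\frac{\pi/2+iy}{(x+y)(x-y+i\pi)}\,E_+ +\frac{\pi/2-iy}{(x-y)(x+y+i\pi)}\,E_-\Bigr]+O(4^{-n}),\qquad E_\pm=\Bigl(\frac{h}{\pm i\pi/2-y}\Bigr)^{2n}.
\]
At $y=-x$ one has $E_+=1$ and $\frac{\pi/2+iy}{x-y+i\pi}=-\frac{i}{2}$. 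At $y=x$ one has $E_-=1$ and $\frac{\pi/2-iy}{x+y+i\pi}=-\frac{i}{2}$. So each real part is separately regular across $u=-v$ and $u=v$ respectively. Uniformity near these lines then follows from analyticity in $(u,v)$ together with Cauchy estimates.

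There are a few slips to fix:
\begin{enumerate}
\item $E_-\to e^{-i\pi(u-v)}$, not $e^{+i\pi(u-v)}$. With your sign, the $u-v$ term comes out negated.
\item The extra factor is the real modulus $e^{(u^2-v^2)/(n\varrho(0))}\to 1$, not a phase.
\item The limit is $\frac{\sin\pi(u-v)}{\pi(u-v)}+\frac{\sin\pi(u+v)}{\pi(u+v)}$, so you dropped a factor $1/\pi$. A quick check is $\widetilde K_{n,0}(0,0)\to 2$.
\item In Step 3, the even kernel goes with the minus sign in the relation from \cite{ehrhardtDysonsConstantsAsymptotics2007}, as in the statement. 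This is forced at first order: $\det[I-K^\star_0]_{(0,x)}$ and $\det[I-K_{\mathrm{sine}}]_{(-\pi x,\pi x)}$ are both $1-2x+O(x^2)$, while $\frac12\int_0^{\pi x}\sqrt{-(\log D)''}=x+O(x^2)$.
\end{enumerate}
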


Around $x_0 > 0$, the limiting kernel is a rescaled sine kernel.

\begin{theorem}[Bulk Singular Values]
\label{theorem: Bulk singular values}
Let $d=2n$. Consider the kernel rescaled around $x_0 > 0$, then uniformly for $u, v$ in compact subsets of $\R$
\begin{equation}
\widetilde{K}_{n, x_{0}}(u, v)
\to
\pi K_{\rm{sine}}(\pi u, \pi v)
.
\end{equation}
Therefore, the probability all rescaled points are bounded away from $x_0$ by at least distance $x$ has limit
\begin{equation}
    \lim_{n \to \infty} \Prob{\forall k \in [n], n\varrho(x_{0}) (\pi \sigma_{k} / t - x_{0}) \notin (-x, x)} = 1 - F_{\mathrm{sine}}(\pi x)
    .
\end{equation}
\end{theorem}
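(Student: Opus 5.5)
The plan is to obtain a closed form for $K_n$ by summing the geometric series in $m$. This is possible once $q_m$ is written through the partial fraction (Mittag-Leffler) expansion of $\sech$. Throughout, $d=2n$ and $t=\pi$, so $\varrho(x_0)=(x_0^2+\pi^2/4)^{-1}=|z_0|^{-2}$ with $z_0:=x_0+i\pi/2$. The poles of $\sech$ are $a_k=i\pi(k+\tfrac12)$, $k\in\mathbb{Z}$, with residues $c_k=(-1)^{k+1}i$. Pairing $k$ with $-k-1$ gives an absolutely convergent expansion of $\sech y$, and differentiating termwise gives
\[
\frac{\sech^{(2m)}(y)}{(2m)!}=\sum_{k} \frac{c_k}{(y-a_k)^{2m+1}} .
\]
We also write $p_m(x)=\tfrac12\bigl[(x+i\tfrac{\pi}{2})^{2m}+(x-i\tfrac{\pi}{2})^{2m}\bigr]$ from \eqref{equation:x^2 Biorthogonal function definition}. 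Substituting both into $K_n(x,y)=\sum_{m<n}p_m(x)q_m(y)$ and summing each geometric series in $m$ gives
\[
K_n(x,y)=\frac{1}{\pi}\sum_{\pm}\sum_k \frac{c_k}{y-a_k}\cdot\frac{1-r_{k,\pm}^{\,n}}{1-r_{k,\pm}},\qquad r_{k,\pm}=\frac{(x\pm i\pi/2)^2}{(y-a_k)^2}.
\]

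Next we split these terms into a dominant part and a negligible part, with $x=x_0+u/(n\varrho)$ and $y=x_0+v/(n\varrho)$ and $u,v$ in a compact set. For $|k|\ge 1$ (with $k\ne -1$) we have $|y-a_k|\ge \pi$ while $|x\pm i\pi/2|\to|z_0|$. Pairing $k$ with $-k-1$, the whole tail sum of these terms should be $O(1)$ uniformly, without relying on $r^n\to0$, since $|r|$ need not be below $1$ for small $k$ when $x_0$ is large. A cruder route is to bound $\bigl|\tfrac{1-r^n}{1-r}\bigr|\le \min(n,\, 2/|1-r|)$ and note that $|1-r|$ stays bounded away from $0$ unless $|y-a_k|\approx|x\pm i\pi/2|$ with matching phase. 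Phase matching fails for all poles except the two nearest ones, $a_0=i\pi/2$ and $a_{-1}=-i\pi/2$. The "mismatched" combinations (for example $(x-i\pi/2)^2/(y+i\pi/2)^2$, which has modulus close to $1$ but phase close to $e^{-4i\arg z_0}\neq 1$ since $x_0>0$) are therefore also $O(1)$. After dividing by $n\varrho$ all of these vanish.

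The surviving terms are the two matched ones, namely $(x+i\pi/2)^{2m}/(y+i\pi/2)^{2m+1}$ together with its conjugate partner. For these,
\[
r=\Bigl(\tfrac{z}{w}\Bigr)^2,\quad z=x+i\tfrac{\pi}{2},\ w=y+i\tfrac{\pi}{2},\qquad \frac{z}{w}=1+\frac{u-v}{n\varrho\, z_0}+O(n^{-2}),
\]
so that
\[
r^n\to \exp\bigl(2(u-v)\bar z_0\bigr)=e^{2x_0(u-v)}e^{-i\pi(u-v)},\qquad 1-r\sim-\frac{2(u-v)}{n\varrho z_0}.
\]
The geometric factor is then of order $n$. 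After multiplying by $1/(n\varrho)$ and by the conjugating factor $e^{-2x_0(u-v)}$ from \eqref{equation:Rescaled Kernel}, the two conjugate contributions combine into
\[
\frac{e^{i\pi(u-v)}-e^{-i\pi(u-v)}}{2\pi i(u-v)}=\frac{\sin\pi(u-v)}{\pi(u-v)}=\pi K_{\rm sine}(\pi u,\pi v).
\]
The diagonal $u=v$ is handled by continuity, or equivalently by the bound $|(1-r^n)/(1-r)|\le n\max(1,|r|)^n$. All expansions are uniform on compact sets, which gives the uniform kernel convergence.

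The gap statement then follows from the setup of Section~\ref{section:Gap probabilities and Fredholm Determinants}. The conjugation by $e^{-2x_0(u-v)}$ leaves every $\det[\widetilde K(u_i,u_j)]$ unchanged. Uniform convergence of kernels on $(-x,x)$ gives convergence of the Fredholm determinants by \cite[Lemma 3.4.5]{andersonIntroductionRandomMatrices2009}. Finally, rescaling $\pi u$ maps $(-x,x)$ to $(-\pi x,\pi x)$, and Lemma~\ref{lemma:Sine kernel} identifies the limit as $1-F_{\rm sine}(\pi x)$.

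I expect the main obstacle to be the uniform $O(1)$ control of the non-dominant pole terms. The Mittag-Leffler series is only conditionally convergent, and for large $x_0$ several poles have $|r|$ comparable to $1$. My first attempt would be to pair symmetric poles and use the explicit bound $\min(n,2/|1-r|)$ on each term, checking separately that $1-r$ is bounded away from $0$ for every $k\neq 0,-1$ and for the mismatched signs.
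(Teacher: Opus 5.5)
Your route is the paper's: the Mittag-Leffler expansion of $\sech^{(2m)}$, summation of the geometric series in $m$, isolation of the matched pair of poles $\pm i\pi/2$ (your $k=-1$ with $+$ and $k=0$ with $-$), which yields the sine kernel, and then Fredholm determinants. Your limit computation for the matched terms and your treatment of the mismatched ones are correct.

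\textbf{The gap.} It sits exactly in the step you flagged, and the fix you propose there does not work. Pairing $k$ with $-k-1$ gives no cancellation. Since $c_{-k-1}=-c_k$ and $a_{-k-1}=-a_k$, both $c_k/(y-a_k)$ and $c_{-k-1}/(y-a_{-k-1})$ are asymptotically $(-1)^k/(\pi(k+\frac12))$, so the two terms add rather than cancel. Consequently the paired series $\sech y=\sum_{k\ge0}(-1)^k\frac{2\pi(k+1/2)}{y^2+\pi^2(k+1/2)^2}$ is only conditionally convergent, contrary to your first paragraph; its convergence comes from the alternation in $k$. Moreover $\frac{1-r^n}{1-r}\to1$ as $|k|\to\infty$, so the kernel terms themselves have modulus of order $1/(\pi|k|)$. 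Any termwise absolute bound, in particular $\frac{1}{|y-a_k|}\min(n,2/|1-r_{k,\pm}|)$, therefore gives an infinite bound on the tail, not an $O(1)$ one.

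\textbf{The missing step.} Split off the $m=0$ contribution before estimating, by writing $\frac{1-r^n}{1-r}=1+r\,\frac{1-r^{n-1}}{1-r}$.
\begin{itemize}
\item The ``$1$'' parts over $k\neq0,-1$, summed in symmetric order, equal $\sech(y)-\frac{c_0}{y-a_0}-\frac{c_{-1}}{y-a_{-1}}$. This is bounded. It is what the paper handles with its alternating series test on $\sum_l\mathrm{Im}(1/y_l)$.
\item The remainder for $k\neq0,-1$ is $\frac{c_k(x\pm i\pi/2)^2}{(y-a_k)^3}\cdot\frac{1-r_{k,\pm}^{n-1}}{1-r_{k,\pm}}$.
\end{itemize}
For the remainder, your worry about $|r|$ for large $x_0$ is unfounded. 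Since $(k+\frac12)^2\ge\frac94$ for $k\ne0,-1$,
\[
|r_{k,\pm}|=\frac{x^2+\pi^2/4}{y^2+\pi^2(k+\frac12)^2}\le\frac{x^2+\pi^2/4}{y^2+9\pi^2/4}.
\]
This is at most some $C<1$, uniformly for $u,v$ in a compact set and $n$ large, whatever the value of $x_0>0$. Hence $|1-r|\ge1-C$, and no phase-matching argument is needed for these poles. The remainder terms are then bounded by $\frac{2|x\pm i\pi/2|^2}{(1-C)\,\pi^3|k+\frac12|^3}$, uniformly in $n$. So the whole non-dominant contribution is $O(1)$ and vanishes after division by $n\varrho(x_0)$. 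With this repair the rest of your argument goes through.
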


\begin{proof}[Proof of Theorems \ref{theorem: Bulk singular values} and \ref{theorem: Smallest singular values}]
Lemma \ref{lemma:Convergence of the rescaled kernel} shows the kernels converge uniformly on compact subsets of $\R$. Therefore the Fredholm determinants converge and the limit gap probability is given by $\det[I - K^{\star}_{x_{0}}]_{(-x, x)}$. If $x_0 > 0$, as the limit $K^{\star}_{x_{0}}$ is a rescaled sine kernel and
$$
\det[I - K^{\star}_{x_{0}}]_{(-x, x)}
= \det[I - \pi K_{\mathrm{sine}}(\pi \cdot, \pi \cdot )]_{(-x, x)}
= \det[I - K_{\mathrm{sine}}]_{(-\pi x, \pi x)}
.
$$
If $x_0 = 0$, $n\varrho(x_0) = (\pi /2)^{-2}$ and by \cite[Equation 8]{ehrhardtDysonsConstantsAsymptotics2007} the Fredholm determinant is given by
$$
\log \det[I - K^{\star}_{0}]_{(0, x)}
=
\frac{1}{2} \log \det[I - K_{\mathrm{sine}}]_{(-\pi x, \pi x)} - \frac{1}{2} \int_{0}^{\pi x}  \sqrt{ - \frac{d^{2}}{ds^{2}} \log \det[I - K_{\mathrm{sine}}]_{(-s, s)} } ds
.
$$
Set $x_i = \pi \sigma_i / t$ to express the probability statements in terms of $\sigma_i$.
\end{proof}

Finally, we show that the largest singular values are asymptotically Gaussian with the $k$-th largest singular value centred around $\frac{2}{2k-1} \frac{t}{\pi} $. For this we consider the rescaled kernel
\begin{equation}
\widetilde{K}_{n, k}(u, v)
=
\frac{\sqrt{ 2n }}{2k-1} \exp(-\sqrt{2n}(u-v)) K_{n}\left( \frac{2n}{2k-1} + u\frac{\sqrt{ 2n }}{2k-1}, \frac{2n}{2k-1} + v\frac{\sqrt{ 2n }}{2k-1} \right)
.
\end{equation}
and show it converges uniformly to a limit independent of $v$ and equal to the Gaussian density
\begin{equation}
\phi(u) = \frac{1}{\sqrt{ 2\pi }} \exp\left( -\frac{u^{2}}{2} \right)
.
\end{equation}

\begin{theorem}[Largest singular values]
\label{theorem: Largest singular values}
Let $d=2n$.
For each $k$, $\widetilde{K}_{n, k}(u, v)$ converges uniformly over $u, v$ in compact subsets of $\R$ to a limit kernel which is independent of $v$ and equal to $\phi(u)$.
\begin{equation}
\widetilde{K}_{n, k}(u, v)
\to
\phi(u)
.
\end{equation}
In particular, the rescaled largest singular value converges to a normal distribution.
    \begin{equation}
    \sqrt{n}\left(\frac{\sigma_{(k)}}{n} - \frac{2}{2k-1} \frac{t}{\pi}\right)
    \dto
    N\left(0, \frac{1}{2} \left(\frac{2}{2k-1} \frac{t}{\pi}\right)^2\right)
    \end{equation}
\end{theorem}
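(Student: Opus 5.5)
The plan is to work with $t=\pi$, so that $x_i=\sigma_i$, and to analyse the explicit kernel $K_n(x,y)=\sum_{m<n}p_m(x)q_m(y)$ from Theorem \ref{theorem: Singular values are a DPP} directly in the regime $x,y\approx 2n/(2k-1)$.

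\textbf{Step 1: expand $q_m$.} For $y>0$ I would write
\[
\sech(y)=2\sum_{j\ge0}(-1)^je^{-(2j+1)y},
\qquad\text{so}\qquad
q_m(y)=\frac{4}{\pi}\sum_{j\ge0}(-1)^j\frac{(2j+1)^{2m}}{(2m)!}e^{-(2j+1)y}.
\]
Exchanging the sums gives
\[
K_n(x,y)=\frac{4}{\pi}\sum_{j\ge0}(-1)^je^{-(2j+1)y}\,\mathrm{Re}\,S_n\bigl((2j+1)(x+i\tfrac{\pi}{2})\bigr),
\qquad S_n(z)=\sum_{m<n}\frac{z^{2m}}{(2m)!},
\]
where $S_n$ is the truncated even exponential series. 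The untruncated series equals $\cosh z$. For $z=(2j+1)(x+i\pi/2)$ we have $\mathrm{Re}\cosh z=\cosh((2j+1)x)\cos((2j+1)\pi/2)=0$. Hence only the truncation tail $-\mathrm{Re}\sum_{m\ge n}z^{2m}/(2m)!$ contributes. This cancellation is the key structural point: it removes the exponentially large bulk and leaves an ``edge'' term.

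\textbf{Step 2: local analysis of the tail.} For $|z|=(2j+1)|x+i\pi/2|$, the weights $e^{-|z|}|z|^{2m}/(2m)!$ form a Poisson profile centred at $2m\approx|z|$ with width $\sqrt{|z|}$.

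\begin{itemize}
\item If $(2j+1)x<2n(1-\epsilon)$, the tail is exponentially smaller than $e^{(2j+1)x}$. After multiplying by $e^{-(2j+1)y}$ and the conjugation factor $e^{-\sqrt{2n}(u-v)}$, these terms are negligible.
\item If $(2j+1)x>2n(1+\epsilon)$, the prefactor $e^{-(2j+1)y}$ with $y\approx x$ makes the term exponentially small. I would use the trivial bound $|S_n(z)|\le\cosh|z|$ together with the factor $e^{-(2j+1)y}$.
\item The critical index is $j=k-1$. Setting $x=\frac{2n}{2k-1}+u\frac{\sqrt{2n}}{2k-1}$ gives $(2k-1)x=2n+u\sqrt{2n}$.
\end{itemize}

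In the critical case the phase is $(1+i\pi/(2x))^{2m}\approx e^{i\pi m(2k-1)/(2n)}$. Combined with the factor $\cos((2k-1)\pi/2)$ coming from the full sum, this makes consecutive tail terms nearly cancel in real part. By Abel summation, the real part of the tail reduces to roughly a single boundary term of size $\frac{(2n+u\sqrt{2n})^{2n}}{(2n)!}$, times a phase factor of order $\pi(2k-1)/(4n)\cdot n$. Stirling's formula, i.e.\ the local CLT for the Poisson distribution, turns
\[
e^{-(2k-1)x}\frac{((2k-1)x)^{2n}}{(2n)!}
\quad\text{into}\quad
\frac{1}{\sqrt{2n}}\,\phi(u)\,(1+o(1)).
\]
The remaining $v$-dependence $e^{-(2k-1)(y-x)}=e^{-\sqrt{2n}(v-u)}$ is exactly cancelled by the conjugation factor. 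The prefactor $\sqrt{2n}/(2k-1)$ absorbs the $1/\sqrt{2n}$ and the Jacobian. This yields $\widetilde K_{n,k}(u,v)\to\phi(u)$, uniformly on compacts.

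\textbf{Step 3: from the kernel to $\sigma_{(k)}$.} The same computation shows that the full kernel decomposes, up to conjugation and negligible errors, into a sum over $j$ of rank-one pieces. Each piece is localised near $2n/(2j+1)$ at scale $\sqrt{2n}/(2j+1)$, and the pieces are asymptotically non-interacting. The Fredholm determinant on $(s,\infty)$ therefore factorises asymptotically as
\[
\prod_{j}\Bigl(1-\int_{(s,\infty)}\text{$j$-th Gaussian}\Bigr).
\]
So the points $x_{(1)}>\dots>x_{(k)}$ behave like independent Gaussians with means $2n/(2j+1)$ and standard deviations $\sqrt{2n}/(2j+1)$. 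Since these means are separated by much more than the standard deviations, the gap probability for $x_{(k)}$ reduces to a single Gaussian CDF. Rescaling back to $\sigma$ via $x=\pi\sigma/t$ gives
\[
\sqrt n\Bigl(\frac{\sigma_{(k)}}{n}-\frac{2}{2k-1}\frac{t}{\pi}\Bigr)\dto N\Bigl(0,\frac{1}{2}\Bigl(\frac{2}{2k-1}\frac{t}{\pi}\Bigr)^2\Bigr).
\]

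\textbf{Main obstacle.} I expect the hardest part to be Step 2: making the oscillatory cancellation in $\mathrm{Re}$ of the tail rigorous and uniform. Two things need control:
\begin{itemize}
\item the phase drift $e^{i\pi m(2k-1)/(2n)}$ across the Gaussian window;
\item the sub-leading $\sech$ expansion terms and the terms $j\ne k-1$, which must be uniformly summable.
\end{itemize}
I would first try an integral representation, $\sum_{m\ge n}z^{2m}/(2m)!$ expressed as an incomplete-gamma-type integral, and apply Laplace's method. This avoids fragile term-by-term estimates.
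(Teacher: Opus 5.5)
Your Steps 1 and 2 follow the paper's route. The paper also expands $\sech(y)=2\sum_{l\ge1}(-1)^{l-1}e^{-(2l-1)y}$ and reduces the kernel to $\mathrm{Re}$ of a truncated $\cosh$ at $z_l=(2l-1)(x+i\frac{\pi}{2})$. It then carries out exactly your fallback: the truncated exponential series is written through the incomplete gamma function, with the contour taken along the real axis and then vertically. Since $e^{\pm z_l}$ is purely imaginary (your observation $\mathrm{Re}\cosh z_l=0$), only the vertical segment contributes. This gives the Gamma density times $A_l$, with $A_k\to(2k-1)\pi/4$, and Stirling then produces $\phi(u)$.

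One step of your Step 2 fails as written. For $(2j+1)x>2n(1+\epsilon)$ the bound $|S_n(z)|\le\cosh|z|$ gives nothing. Since $|z|\ge(2j+1)x$, you get $\cosh|z|\,e^{-(2j+1)y}\ge\frac12 e^{(2j+1)(x-y)}$. After the prefactor $\sqrt{2n}/(2k-1)$, this bound is of order $\sqrt n$ for every $j$ on the diagonal $u=v$, so it is neither small nor summable in $j$. The smallness has to come from the truncation, for instance
\[
|S_n(z)|\le\sum_{N<2n}\frac{|z|^N}{N!}\le e^{|z|-2nI(|z|/(2n))},\qquad I(\alpha)=\alpha-1-\log\alpha ,
\]
and this decay must beat the conjugation factor. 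That is the paper's $e^{-2nI(\alpha_l)}$ estimate combined with a geometric sum over $l>k$.

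The genuine gap is Step 3. Uniform convergence of $\widetilde K_{n,k}$ on compacts only yields convergence of Fredholm determinants over bounded rescaled sets $(z,R)$. The law of $x_{(k)}$ involves the unbounded set $(s,\infty)$. In the $k$-th scaling this set contains the $k-1$ upper windows, at rescaled distance $\asymp\sqrt n$, plus an infinite tail. Moreover, for $k\ge2$ the relevant quantity is not a gap probability but $\Prob{\nu_n(s,\infty)\le k-1}$.

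Your asserted factorisation into non-interacting rank-one pieces does not follow from ``the same computation''. Write $K_n(x,y)=\sum_l F_l(x)G_l(y)$ with
\[
F_l(x)=\frac4\pi\frac{((2l-1)x)^{2n-1}}{\Gamma(2n)}A_l(x),\qquad G_l(y)=e^{-(2l-1)y}.
\]
The determinant is then that of the infinite matrix $M_{lm}=\int_s^\infty F_lG_m$. Its off-diagonal entries carry factors $\bigl(\frac{2l-1}{2m-1}\bigr)^{2n-1}$, which a diagonal conjugation can remove. Even after that, they are small only through a cancellation: $A_l$ vanishes to leading order at $x\approx 2n/(2m-1)$, because $\sin((2m-1)s)\perp\sin((2l-1)s)$ on $[0,\pi/2]$. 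The remaining errors are only polynomially small in $n$. Turning this into $\det(I-\zeta M)\approx\prod_l(1-\zeta M_{ll})$ would need trace-class control on an unbounded domain and a perturbation bound for an infinite determinant, and you supply neither.

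The paper avoids all of this in three moves:
\begin{enumerate}
\item It uses the kernel limit only on bounded windows, obtaining $\Prob{\nu_{n,k}(z,R)=0}\to\Phi(z)+1-\Phi(R)$.
\item It proves $\Prob{x^{k,n}_{(k)}\ge R}\le C_k e^{-d_kR}$ uniformly in $n$. This comes from bounding $\Expectation{\binom{\nu_n(R,\infty)}{k}}=\int\det[K_n(x_i,x_j)]$ using the Gamma representation of the kernel and Chernoff bounds on $\widehat\Gamma(2n,\cdot)$.
\item It inducts on $k$: $x_{(k-1)}$ is asymptotically normal in its own scaling, so it escapes to $+\infty$ in the $k$-th one.
\end{enumerate}
This makes $\{x^{k,n}_{(k)}\le z\}$ and $\{\nu_{n,k}(z,R)=0\}$ agree up to errors that vanish as $n\to\infty$ and then $R\to\infty$. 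A uniform tail bound, together with control of the $k-1$ larger points, is what your Step 3 is missing.
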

\begin{proof}[Proof of Theorem \ref{theorem: Largest singular values}]
First set $t=\pi$ and rescale afterwards.
Lemma \ref{lemma:Convergence of the rescaled kernels at infinity} establishes the uniform convergence over compact subsets of $\R$ of $\widetilde{K}_{n, k}$ to $\phi(u)$ which corresponds to the rescaling
$$
x_{i}^{k, n} = (2k-1)\sqrt{ \frac{n}{2} } \left( \frac{x_{i}}{n} - \frac{2}{2k-1} \right)
.
$$
Letting $\nu_{n, k}(B) = \# \{ x_{i}^{k, n} \in B \}$ be the counting measure, this shows that for $B = (z, R)$
$$
\lim_{n \to \infty} \Prob{\nu_{n, k}(z, R) = 0} = \det[I - \phi]_{(z, R)} = \Phi(z) + 1 - \Phi(R)
.
$$
where $\Phi(u) = \int_{-\infty}^{u} \phi(u) du$ is the CDF of the standard normal distribution.

Roughly speaking, for each $k$, the rescaling sends all points larger than $x_{(k)}$ to infinity and all points smaller to $-\infty$.
Therefore, $\Prob{x_{(k)}^{k, n} \le z}$ is the same as asking for 0 points in $(z, R)$ provided $x^{k, n}_{(k)} \le R$. The latter will follow for sufficiently large $R$ by using large deviation bounds on $x^{k, n}_{(k)}$ from Lemma \ref{lemma:General LDB bound}.
Finally, this allows us to  show for each $k$, $x_{(k)}^{k, n}$ converges to a normal distribution as
$$
\lim_{n \to \infty} \Prob{x_{(k)}^{k, n} \le z} = \lim_{R \to \infty} \det[I - \phi]_{(z, R)} = \Phi(z)
.
$$

Formally, we proceed by induction on $x_{k}^{k, n}$ being normally distributed.

\textbf{Part 1: Base case} Decompose the CDF of $x_{(1)}^{1, n}$ as
$$
\Prob{x_{(1)}^{1, n} \le z} = \Prob{\nu_{n, 1}(z, \infty) = 0} = \Prob{\nu_{n, 1}(z, R) = 0} - \Prob{\nu_{n, 1}(z, R) = 0, x_{(1)}^{1, n} \ge R}
.
$$
The first term converges $\Phi(z) + 1 - \Phi(R)$ as $n \to \infty$ by the previous discussion.
Next, the second term is uniformly bounded in $n$ with a large deviation bound in $R$, Lemma \ref{lemma:General LDB bound}.
$$
\left|\Prob{\nu_{n, 1}(z, R) = 0, x_{(1)}^{1, n} \ge R}\right| \le \Prob{x_{(1)}^{1, n} \ge R} \lesssim C_{1} \exp\left( - d_{1} R \right)
$$
where $C_{1}, d_{1}$ are independent to $n$. Taking $R \to \infty$, we deduce $\lim_{ n \to \infty } \Prob{x_{(1)}^{1, n} \le z} = \Phi(z)$.

\textbf{Part 2: Induction} Now assume that $x_{(k-1)}^{k-1, n} \xrightarrow{d} N(0, 1)$ and decompose the CDF of $x_{(k)}^{k, n}$. First,
{\small
\begin{align*}
\Prob{x_{(k)}^{k, n} \le z} &= \Prob{x_{(k)}^{k, n} \le z, x_{(k-1)}^{k, n} > R} + \Prob{x_{(k)}^{k, n} \le z, x_{(k-1)}^{k, n} \le R} \\
\Prob{x_{(k)}^{k, n} \le z, x_{(k-1)}^{k, n} > R} &= \Prob{\nu_{n, k}(z, R) = 0, x_{(k-1)}^{k, n} > R} - \Prob{\nu_{n, k}(z, R) = 0, x_{(k)}^{k, n} > R,  x_{(k-1)}^{k, n} > R} \\
\Prob{\nu_{n, k}(z, R) = 0, x_{(k-1)}^{k, n} > R} &= \Prob{\nu_{n, k}(z, R) = 0} - \Prob{\nu_{n, k}(z, R) = 0, x_{(k-1)}^{k, n} \le R}
.
\end{align*}}
The first line follows from casing on $x_{(k-1)}^{k, n} > R$, the second line from casing on $x_{(k)}^{k, n} > R$ and the third line extracts the gap probability $\Prob{\nu_{n, k}(z, R) = 0}$.
Therefore,
\begin{equation*}
\begin{split}
\Prob{x_{(k)}^{k, n} \le z}
=
\Prob{\nu_{n, k}(z, R) = 0} - \Prob{\nu_{n, k}(z, R) = 0, x_{(k)}^{k, n} > R,  x_{(k-1)}^{k, n} > R} \\
+ \left\{  \Prob{x_{(k)}^{k, n} \le z, x_{(k-1)}^{k, n} \le R} - \Prob{\nu_{n, k}(z, R) = 0, x_{(k-1)}^{k, n} \le R} \right\}
.
\end{split}
\end{equation*}
Again the first term converges to $\Phi(z) + 1 - \Phi(R)$ as $n \to \infty$ and we bound the second term with a large deviation bound \ref{lemma:General LDB bound} to show it vanishes as $R \to \infty$ independent of $n$
$$
\left| \Prob{\nu_{n, k}(z, R) = 0, x_{(k)}^{k, n} > R,  x_{(k-1)}^{k, n} > R} \right| \le \Prob{x_{(k)}^{k, n} > R} \le C_k \exp(- d_k R)
$$
as $C_{k}, d_{k}$ are independent to $n$.
Now, the last two terms are bounded by
$$
\Prob{x_{(k-1)}^{k, n} \le R} = \Prob{x_{(k-1)}^{k-1, n} \le R\frac{2k-3}{2k-1} - 2 \sqrt{\frac{n}{2} }\left( 1 - \frac{2k-3}{2k-1} \right)}
.
$$
By the inductive hypothesis this must vanish in the limit. For any $M \ge 0$, the upper bound is eventually less than $-M$ and we can bound it by $\Phi(-M)$. As $M$ is arbitrary, its limit must be $0$.
$$
\lim_{ n \to \infty } \Prob{x_{(k-1)}^{k-1, n} \le \frac{R}{2k-1} - 2 \sqrt{\frac{n}{2} }\left( 1 - \frac{2k-3}{2k-1} \right)} \le \lim_{ n \to \infty } \Prob{x_{(k-1)}^{k-1, n} \le -M}  = \Phi(-M)
$$
Therefore, $\lim_{ n \to \infty } \Prob{x_{(k)}^{k, n} \le z} = \Phi(z)$ which establishes the inductive step.
\end{proof}

\subsection{Proof of Global statistics}
\label{section:Proof global statistics}
We begin by establishing the pointwise convergence of the normalised one-point correlation function $K_n(x, x) / n$ which is the density of the expected empirical measure $\Expectation{\mu_d(x)}$, see Figure \ref{fig:convergence of densities}. The key observation is $K_n(x, x)$ is a truncated Taylor expansion of $\sech$, however the expansion falls on the boundary of its radius of convergence. Instead we invoke the Hardy-Littlewood Tauberian theorem, Lemma \ref{lemma:Hardy-Littlewood Tauberian Theorem}, to determine the limit. To verify its hypothesis, we require suitable bounds on the derivatives of $\sech$ which we obtain via its pole expansion Lemma \ref{lemma:Sech Pole expansion}. By Scheff\'e's lemma, this shows convergence in expectation of the empirical measure.

\begin{lemma}[Pole Expansion of $\sech$] \label{lemma:Sech Pole expansion} We can express
\begin{equation}
    \sech(x) = i\sum_{k=0}^{\infty} (-1)^{k} \left[ \frac{1}{x+i\pi\left( k+\frac{1}{2} \right)} - \frac{1}{x-i\pi\left( k+\frac{1}{2} \right)} \right]
=
2\sum_{k=0}^{\infty} (-1)^{k-1} \, \mathrm{Im}(x^{-1}_{k})
\end{equation}
where $x_k = x + i \pi (k + \frac{1}{2})$. Furthermore,
\begin{align*}
    \sech ^{(2m)}(x) &= 2 (2m)! \sum_{k=0}^{\infty} (-1)^{k-1} \,\mathrm{Im}(x_{k}^{-2m-1}) \\
    \sech ^{(2m+1)}(x) &= 2 (2m+1)! \sum_{k=0}^{\infty} (-1)^{k} \,\mathrm{Im}(x_{k}^{-2m-2})
    .
\end{align*}
\end{lemma}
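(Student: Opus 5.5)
The plan is to start from the Mittag-Leffler partial fraction expansion of $\sech$. The function $\sech(x)=1/\cosh(x)$ is meromorphic, with simple poles at the zeros of $\cosh$, namely $x=\pm i\pi(k+\tfrac12)$ for $k\ge 0$. At such a pole $z_k$ the residue is $1/\sinh(z_k)$. Since $\sinh(i\pi(k+\tfrac12))=i\sin(\pi(k+\tfrac12))=i(-1)^k$, the residue at $+i\pi(k+\frac12)$ is $-i(-1)^k$ and the residue at $-i\pi(k+\frac12)$ is $i(-1)^k$. Summing the principal parts therefore gives
\[
i\sum_{k\ge0}(-1)^k\Bigl[\frac{1}{x+i\pi(k+\frac12)}-\frac{1}{x-i\pi(k+\frac12)}\Bigr],
\]
which, after reindexing signs, is exactly the stated series; the pole at $-i\pi(k+\frac12)$ is matched with the term $1/(x+i\pi(k+\frac12))$.

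To justify that $\sech$ equals this sum with no additional entire part, I would use the standard contour argument. Integrate $\sech(z)/(z-x)$ over squares $\Gamma_N$ whose sides pass through $\pm N\pi i$ and $\pm N\pi$. On these contours $|\cosh z|$ is bounded below uniformly in $N$, so the integral tends to $0$ as $N\to\infty$. The bracketed pairs are $O(k^{-2})$, since combining the two fractions gives a numerator of size $2i\pi(k+\tfrac12)$ over a denominator of size $k^2$. Hence the pairwise grouped series converges absolutely and locally uniformly on $\R$, and it equals the limit of the symmetric partial sums captured by $\Gamma_N$. Alternatively, and more simply, I could cite the classical expansion $\pi\sec(\pi z)=\sum_k(-1)^k(2k+1)/((k+\frac12)^2-z^2)$ and substitute $z=ix/\pi$. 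Either way, the second form follows because $\frac{1}{\bar w}-\frac1w=2i\,\mathrm{Im}(w^{-1})$ with $w=x_k$ and $x_k^{-1}$ conjugate to $(x-i\pi(k+\frac12))^{-1}$ for real $x$. Multiplying by $i(-1)^k$ then gives $2(-1)^{k-1}\mathrm{Im}(x_k^{-1})$.

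For the derivatives, I would differentiate the grouped series term by term. Note that $\partial_x^{j}\,x_k^{-1}=(-1)^j j!\,x_k^{-j-1}$, and that $\partial_x$ commutes with $\mathrm{Im}$ because $x$ is real. This gives $\sech^{(j)}(x)=2(-1)^j j!\sum_k(-1)^{k-1}\mathrm{Im}(x_k^{-j-1})$. Taking $j=2m$ gives the first formula, and taking $j=2m+1$ flips the sign to $(-1)^k$, giving the second.

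The main obstacle is justifying the term-by-term differentiation. For $j\ge1$ the individual terms are $O(k^{-j-1})$ with $j+1\ge2$, uniformly for $x$ in compacts, since $|x_k|\ge\pi(k+\frac12)$. So the differentiated series converge absolutely and uniformly, and the standard theorem on uniformly convergent series of derivatives applies. The only delicate term is the undifferentiated one, which is handled by the grouping argument above.
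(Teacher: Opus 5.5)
Your route is the paper's own: its proof is the single line ``apply Mittag-Leffler to $\sech$ and differentiate.'' Several parts of your proposal are correct: the residue computation, the matching of the grouped pairs with the symmetric partial sums over $\Gamma_N$, and the derivative formulas. However, two of your justifications are false as stated and need repair.

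\textbf{Convergence of the grouped series.} The bracketed pairs are not $O(k^{-2})$. A numerator of order $k$ over a denominator of order $k^2$ is of order $1/k$. Explicitly,
$i(-1)^k\bigl[\frac{1}{x_k}-\frac{1}{\bar x_k}\bigr]=(-1)^k\frac{2\pi(k+\frac12)}{x^2+\pi^2(k+\frac12)^2}\sim\frac{2(-1)^k}{\pi(k+\frac12)}$.
So the grouped series is not absolutely convergent. It converges only conditionally (at $x=0$ it is $\frac4\pi$ times the Leibniz series) and must be summed in the stated order.

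Nothing downstream actually needs absolute convergence. The term-by-term differentiation theorem only requires two things. The first is convergence of the undifferentiated series, which the contour argument or the alternating series test supplies. The second is locally uniform convergence of the differentiated series, which you correctly get from $|x_k|^{-j-1}\le(\pi(k+\frac12))^{-j-1}$ for $j\ge1$.

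\textbf{The contour estimate.} A uniform lower bound on $|\cosh z|$ along $\Gamma_N$ does not by itself make $\oint_{\Gamma_N}\frac{\sech z}{z-x}\,dz$ tend to $0$. With only $|\sech z|\le 1$, a contour of length about $8N\pi$ at distance about $N\pi$ from $x$ gives just an $O(1)$ bound. You need sharper facts about each side:
\begin{itemize}
\item On the vertical sides, $|\cosh z|\ge\sinh(N\pi)$, so their contribution is exponentially small.
\item On the horizontal sides, $|\sech(u\pm iN\pi)|=\sech(u)$. Since $\int_{\R}\sech u\,du=\pi$, their contribution is $O(1/N)$.
\end{itemize}
Alternatively, your fallback avoids this issue entirely: substituting $z=ix/\pi$ into the classical expansion of $\pi\sec(\pi z)$ is valid.

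\textbf{A small sign slip.} The identity you want is $\frac1w-\frac1{\bar w}=2i\,\mathrm{Im}(w^{-1})$, not $\frac1{\bar w}-\frac1w$. Your final coefficient $2(-1)^{k-1}\mathrm{Im}(x_k^{-1})$ is nonetheless correct.
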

\begin{proof}
    Apply Mittag-Leffler's theorem to $\sech(x)$ and differentiate.
\end{proof}

To strengthen to almost sure convergence, we consider an invertible transformation of the singular values onto a compact domain via $z_i = \sech(x_i)^{2}$. $z_i$ is again another biorthogonal ensemble for which $q_m$ is now a polynomial in $z$. Hence $z q_m$ satisfies a recurrence whose coefficients are bounded, Lemma \ref{lemma:Recurrence}. The following lemma then implies the variance of moments of the empirical measure decays like $\bigO{1/n^2}$. By Borel-Cantelli, the moments of the empirical measure converge almost surely. Since the support is compact, the measure is uniquely determined by its moments and hence the empirical measures over $z_i$ converge weakly almost surely. The convergence of the empirical measure over $x_i$ follows due to the invertibility of the transformation.

\begin{figure}[h!]
    \centering
    \includegraphics[width=\linewidth]{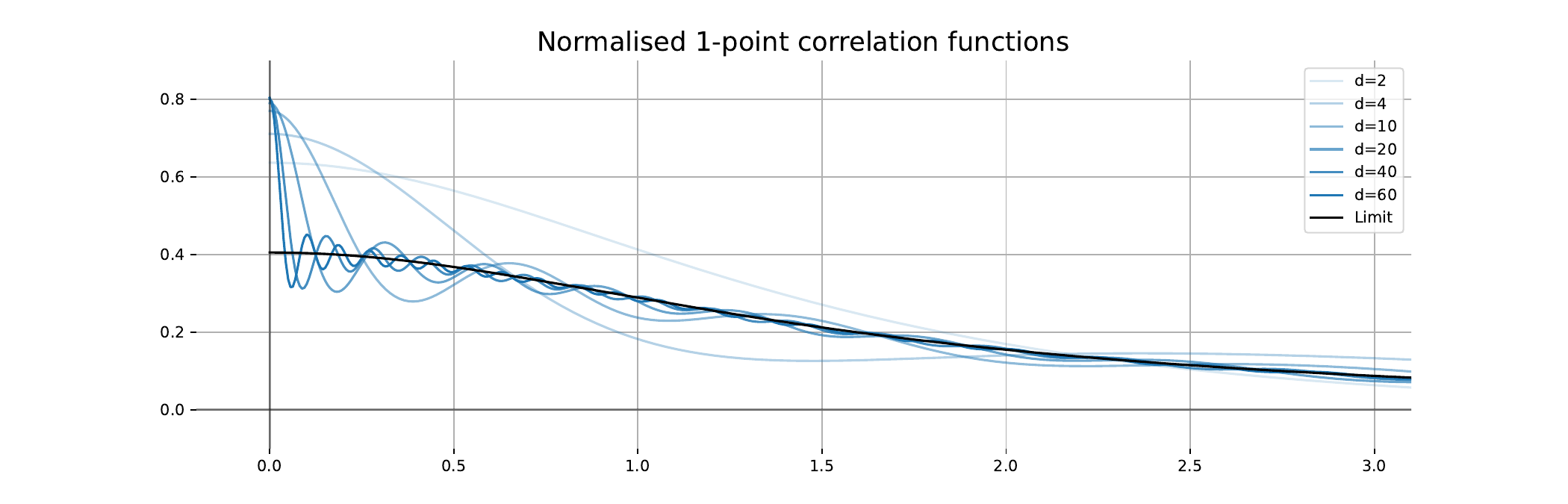}
    \caption{Normalised one-point correlation function $K_n(x, x) / n$ for various $d$. As $d$ increases, the functions converge to the Cauchy density which indicates convergence of the empirical measures over the singular values.}
    \label{fig:convergence of densities}
\end{figure}

\begin{lemma}[Variance bound for biorthogonal ensembles with recurrence]
\label{lemma:BOE Variance bound}
See \cite[Lemma 2.7]{hardyAverageCharacteristicPolynomials2015}.
Let $\{ x_{k} \}_{k \in [n]}$ be a biorthogonal ensemble on $\mathcal{X} \subset \mathbb{R}$ with biorthogonal functions $p_{m}, q_{m}$. I.e.,
$$
\rho_{k}(x_{1}, \dots, x_{k}) = \det[K_{n}(x_{i}, x_{j})]_{i, j \in [k]},\quad
K_{n}(x, y) = \sum_{m=0}^{n-1} p_{m}(x) q_{m}(y)
$$
and denote $\mu_{n}(x)$ as the empirical measure over the points $x_{i}$. Suppose that one of the biorthogonal functions satisfy the recurrence $xp_{m}(x) = \sum_{k=0}^{m+1} \beta_{k, m} \;  p_{k}(x)$, then the variance of moments of $\mu_n$ are given by
$$
\mathrm{Var}\left( \int_{\mathcal{X}} x^{l} d\mu_{n}(x)  \right)
=
\frac{1}{n^{2}} \sum_{k=0}^{n-1} \sum_{\substack{\gamma: |\gamma| = 2l+1 \\ \gamma(0) = k = \gamma(2l) \\ \gamma(l) \ge n}} w(\gamma)
$$
where $\gamma$ is a path on $\mathbb{N}$. The weight of an edge is $w(m \to k) = |\beta_{k, m}|$ and the weight of the path is
$$
w(\gamma) = \prod_{i=0}^{2l-1} w(\gamma(i) \to \gamma(i+1))
.
$$
\end{lemma}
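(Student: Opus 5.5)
The plan is to write the variance in terms of the kernel, recognise both terms as traces of powers of the (infinite, banded) matrix of multiplication by $x$ in the basis $\{p_m\}$, and then expand those traces as sums over lattice paths. Set $S_l = \sum_{i=1}^n x_i^l = n\int x^l\,d\mu_n$. Using the one- and two-point correlation functions $\rho_1(x) = K_n(x,x)$ and $\rho_2(x,y) = K_n(x,x)K_n(y,y) - K_n(x,y)K_n(y,x)$, the standard computation gives
\begin{equation*}
\mathrm{Var}(S_l) = \int_{\mathcal X} x^{2l} K_n(x,x)\,dx - \int_{\mathcal X^2} x^l y^l K_n(x,y) K_n(y,x)\,dx\,dy .
\end{equation*}
Dividing by $n^2$ accounts for the prefactor $1/n^2$ in the statement.

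\textbf{Step 1: the multiplication matrix.} I will assume the biorthogonal families $p_m, q_m$ are defined for all $m \ge 0$, not only $m<n$, with $\ip{p_i}{q_j} = \delta_{ij}$ for all $i,j$. This holds in our application, since the LU construction of Lemma \ref{lemma:Biorthogonal ensemble} does not depend on $n$. Let $M$ be the infinite matrix with entries $M_{m,k} = \beta_{k,m}$. The recurrence says $x p_m = \sum_k M_{m,k} p_k$, and each row of $M$ is finitely supported, with $k \le m+1$. Iterating, $x^j p_m = \sum_k (M^j)_{m,k} p_k$ is a finite combination. Biorthogonality then gives
\begin{equation*}
\int x^j p_m(x) q_k(x)\,dx = (M^j)_{m,k},
\end{equation*}
assuming enough integrability that these integrals exist, which I would check from the decay of $q_k$.

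\textbf{Step 2: traces.} Substitute $K_n = \sum_{m<n} p_m \otimes q_m$ and let $\Pi$ be the coordinate projection onto the indices $\{0,\dots,n-1\}$. By Step 1, the first term equals $\sum_{m<n} (M^{2l})_{m,m} = \operatorname{tr}(\Pi M^{2l} \Pi)$. Expanding the second term as a double sum over $m,m'<n$, the $x$-integral gives $(M^l)_{m,m'}$ and the $y$-integral gives $(M^l)_{m',m}$, so it equals $\operatorname{tr}(\Pi M^l \Pi M^l \Pi)$. Subtracting,
\begin{equation*}
\mathrm{Var}(S_l) = \operatorname{tr}\big(\Pi M^l (I-\Pi) M^l \Pi\big).
\end{equation*}
All sums here are finite because $M$ has bounded upward bandwidth: from an index below $n$, the powers $M^l$ only reach indices below $n+l$.

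\textbf{Step 3: path expansion.} Write $\operatorname{tr}(\Pi M^l (I-\Pi) M^l \Pi)$ as a sum over sequences $\gamma(0), \dots, \gamma(2l)$ in $\mathbb N$, that is, paths with $2l+1$ vertices, satisfying $\gamma(0) = \gamma(2l) = k < n$ and $\gamma(l) \ge n$. Each path contributes the product of the edge weights $M_{\gamma(i),\gamma(i+1)}$. Replacing the signed weights by $|\beta_{k,m}|$ gives the path-weight formula of the statement; this is literally an equality when the $\beta$'s are nonnegative. In general it is the upper bound that the subsequent Borel--Cantelli argument actually uses. In our application I would therefore state the inequality and, in the $z=\sech^2$ setting, note the sign structure $(-1)^{k-1}$ from Lemma \ref{lemma:Recurrence}.

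\textbf{Main obstacle.} The delicate point is Step 1: justifying the interchange of integrals and finite sums, and the integrability of $x^j p_m q_k$ on $\mathcal X$. The algebraic expansions are finite, so this reduces to checking that the moments $\int |x|^j |q_k|$ are finite. In the compact $z$-domain used later in the paper this is immediate.
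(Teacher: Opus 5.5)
Your argument is correct. The paper gives no proof of its own; it only cites Hardy's Lemma 2.7. Your derivation supplies the missing argument in the standard way: the variance from $\rho_1,\rho_2$, then the identity $\operatorname{tr}(\Pi M^{2l}\Pi)-\operatorname{tr}(\Pi M^l\Pi M^l\Pi)=\operatorname{tr}(\Pi M^l(I-\Pi)M^l\Pi)$, then the expansion into closed paths that sit at height $\ge n$ at time $l$.

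Two remarks sharpen it.

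\textbf{Signs.} You are right that with $w(m\to k)=|\beta_{k,m}|$ the displayed formula is, in general, only an upper bound. The exact identity uses the signed weights $M_{\gamma(i),\gamma(i+1)}$.

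In the paper's application, however, equality does hold. The $z=\sech^2$ recurrence (applied with the roles of $p$ and $q$ swapped) puts coefficient $(-1)^{k-1}\beta_{k,m}$ on the step $m\to m+1-k$. That is, the sign is $(-1)^{\text{displacement}}$, so the matrix is $D\widetilde M D^{-1}$ with $D=\mathrm{diag}((-1)^m)$ and $\widetilde M$ entrywise nonnegative. Along a closed path the $D$-factors telescope, so every signed path weight equals its absolute weight. You need not retreat to the inequality there.

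\textbf{Integrability.} The ``main obstacle'' you flag is not one. Since $x^jp_m=\sum_{k'}(M^j)_{m,k'}p_{k'}$ is a finite pointwise identity,
\[
|x^jp_mq_k|\le\sum_{k'}|(M^j)_{m,k'}|\,|p_{k'}q_k| ,
\]
and the right-hand side is integrable by the very hypothesis that the biorthogonality integrals exist. The same bound justifies Fubini in the double integral and gives $\mathbb{E}[S_l^2]<\infty$. No decay estimate on $q_k$ is needed.

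What you genuinely need, and correctly assume, is $\int p_jq_k=\delta_{jk}$ for all $j\ge 0$ and $k<n$. The recurrence at $m=n-1$ already involves $p_n$. This condition holds in the paper's setting because the LU factors there do not depend on $n$.
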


\begin{proof}[Proof of Theorem \ref{theorem:Global Law}]
    Let $t = \pi$ and rescale afterwards to recover general $t$.
    First, we show convergence in expectation which identifies the limit as the Cauchy distribution. Afterwards we show this convergence must be almost sure by using a variance bound on $\mu_d(x)$ in Lemma \ref{lemma:BOE Variance bound}.

    \textbf{Part 1: Convergence in Expectation}
     \begin{equation*}
         \Expectation{\mu_d(x)} \to \left|\mathrm{Cauchy}\left(0, \frac{\pi}{2}\right) \right|
     \end{equation*}
     The expected empirical measure \(\Expectation{\mu_d(x)}\) is absolutely continuous and its density is given by the normalised one-point correlation function \(\varrho_n(x) := K_n(x, x) / n\). By Scheff\'e's lemma it suffices to show that \(\varrho_n(x)\) converges pointwise almost everywhere to the density of \(\left|\mathrm{Cauchy}\left(0, \frac{\pi}{2}\right) \right|\) which is $\varrho(x) = \left(x^2 + \left(\frac{\pi}{2}\right)^2\right)^{-1}$. By Theorem \ref{theorem: Singular values are a DPP}, \(\varrho_n(x)\) is a truncated Taylor series of $\sech(x + (x + i \pi / 2))$
     \begin{equation*}
         \varrho_n(x) = \frac{1}{n} \sum_{m=0}^{n-1} p_m(x) q_m(x)
         .
     \end{equation*}
    However, the radius of convergence of \(\sech(x)\) is limited by its poles at \(\pm i\frac{\pi}{2}\) which limits the validity of \(\sech(x + u)\) expanded around \(x\) to \(|u| < x^2 + (\pi / 2)^2 =: R^2\). Therefore, we consider \(\sech(x + r u)\) and take \(r \to 1\) to obtain the limit.
    Specifically, we show for \(r \in (0, 1)\)
     \begin{equation*}
        \lim_{r\to 1}(1-r) \varrho_r(x) \to \varrho(x),\quad
         \varrho_r(x) := \sum_{m = 0}^{\infty} p_m(x) q_m(x) r^m
     \end{equation*}
     By the Tauberian theorem, Lemma \ref{lemma:Hardy-Littlewood Tauberian Theorem}, this also implies \(\varrho_n(x) \to \varrho(x)\) if \(p_m(x) q_m(x)\) are bounded over \(m\) (for fixed \(x\)).
     We now proceed by casing on the parity of the dimension. Therefore, if both the even and odd subsequences converge to the same limit, convergence holds for the full sequence $\mu_d(x)$.

     \textbf{Even case} Let \(d=2n\), the density is
     \begin{equation*}
         \varrho_n(x) = \frac{1}{n\pi}
\sum_{m=0}^{n-1}
\frac{\sech^{(2m)}(x)}{(2m)!}
\left[ \left( x + i\frac{\pi}{2} \right)^{2m} + \left( x - i\frac{\pi}{2} \right)^{2m} \right]
.
     \end{equation*}

     By considering
     \begin{equation*}
        2\sum_{m \ge 0} \frac{\sech^{(2m)}(x)}{(2m)!} u^{2m}
        = \sech(x + u) + \sech(x-u)
     \end{equation*}
     and letting $u = \sqrt{r}(x \pm i \pi / 2)$ as \(|u|^2 = r |\frac{\pi}{2}i \pm x|^2 < R^2\), we deduce
     \begin{equation*}
         \begin{split}
2\pi \varrho_{r}(x)
&=
\sech\left( x+\sqrt{ r }\left( \frac{\pi}{2}i -x \right) \right)
+
\sech\left( x-\sqrt{ r }\left( \frac{\pi}{2}i - x \right) \right) \\
&+
\sech\left( x+\sqrt{ r }\left( \frac{\pi}{2}i + x \right) \right)
+
\sech\left( x-\sqrt{ r }\left( \frac{\pi}{2}i + x \right) \right) \\
&=
\sech\left( x(1 +  \sqrt{ r } )+ i \sqrt{ r }\frac{\pi}{2} \right)
+
\sech\left( x(1 + \sqrt{ r } )-i\sqrt{ r }\frac{\pi}{2} \right) \\
&+
\sech\left( x(1 - \sqrt{ r } )+i\sqrt{ r }\frac{\pi}{2} \right)
+
\sech\left( x(1 - \sqrt{ r } )-i\sqrt{ r }\frac{\pi}{2} \right)
\end{split}
     \end{equation*}
Using
\begin{equation*}
    \sech(x+iu) + \sech(x-iu) = \frac{2\cosh(x) \cos(u)}{\sinh(x)^{2} + \cos(u)^{2}}
\end{equation*}
\begin{equation*}
    \pi \varrho_{r}(x)
    =
    \frac{\cosh(x(1+\sqrt{ r }))\cos\left( \sqrt{ r } \frac{\pi}{2} \right)}{\sinh(x(1+\sqrt{ r }))^{2} + \cos\left( \sqrt{ r } \frac{\pi}{2} \right)^{2}}
+
\frac{\cosh(x(1-\sqrt{ r }))\cos\left( \sqrt{ r } \frac{\pi}{2} \right)}{\sinh(x(1-\sqrt{ r }))^{2} + \cos\left( \sqrt{ r } \frac{\pi}{2} \right)^{2}}
\end{equation*}
Provided \(x \neq 0\), the first term vanishes as \(r \to 1\) as \(\cos( \pi/ 2) = 0\).
For the second term \(\cosh(x(1-\sqrt{ r })) \to 1\), change variables to \(s = \sqrt{r}\) and consider the asymptotics \(\cos( \pi s/2) \sim \pi (1-s)/2\) and \(\sinh(x(1-s)) \sim x(1-s)\) to deduce
\begin{equation*}
\frac{\cos\left( s \frac{\pi}{2} \right)}{\sinh(x(1-s))^{2} + \cos\left( s \frac{\pi}{2} \right)^{2}}
\sim
\frac{\frac{\pi}{2}(1-s)}{x^{2}(1-s)^{2} + \left( \frac{\pi}{2} \right)^{2}(1-s)^{2}}
=
\frac{1}{(1-s)} \frac{1}{2} \left(x^2 + \left(\frac{\pi}{2}\right)^2\right)^{-1}
.
\end{equation*}
Therefore, for \(x > 0\)
$$
(1-r) \varrho_r(x) \sim \frac{(1 + \sqrt{r})}{2} \varrho(x) \to \varrho(x)
.
$$
To apply the Tauberian theorem, it suffices to show \(p_m(x) q_m(x)\) is bounded over \(m\). By considering the polar form of \(x + i\frac{\pi}{2}  = R(x) \exp(i \theta(x))\), \(R(x)^2 = (\pi/2)^2 + x^2, \theta(x) = \arctan(\pi /(2x))\), we rewrite
\begin{equation*}
    \left|
    \frac{1}{2\pi}
\frac{\sech^{(2m)}(x)}{(2m)!}
\left[ \left( x + i\frac{\pi}{2} \right)^{2m} + \left( x - i\frac{\pi}{2} \right)^{2m} \right]
    \right|
    =
    \frac{1}{\pi}
    \frac{|\sech^{(2m)}(x)|}{(2m)!} R(x)^{2m} |\cos(2m \theta(x))|
    .
\end{equation*}
Cauchy's estimate for \(\sech^{(2m)}\) will not be tight enough.
Instead, as \(\sech(x)\) is meromorphic, we consider its pole expansion with $x_{k} = x + i\pi\left( k+\frac{1}{2} \right)$ (Lemma \ref{lemma:Sech Pole expansion}).
\begin{equation*}
\sech ^{(2m)}(x) = 2 (2m)! \sum_{k=0}^{\infty} (-1)^{k-1} \,\mathrm{Im}(x_{k}^{-2m-1})
.
\end{equation*}
To form our bound, let $x_{k} = R_{k} e^{i \theta_{k}}$ with $R_{k}^{2} = x^{2} + \pi^{2}\left( k+\frac{1}{2} \right)^{2}$ and apply triangle inequality.
\begin{equation*}
    |\sech ^{(2m)}(x)| \le 2(2m)! \sum_{k \in \mathbb{N}_{0}} \frac{1}{R_{k}^{2m+1}}
\end{equation*}
To see that the terms of the series are bounded in \(m\), note $R(x) = R_{0}$ and $R_{k}$ is increasing. Hence \((R_{0}/R_{k})^{2m} < (R_{0} / R_{k})^{2} < 1\) and
\begin{equation*}
|\sech ^{(2m)}(x)| \frac{R_{x}^{2m}}{2(2m)!}
\le
\sum_{k=0}^{\infty} \frac{R_{0}^{2m}}{R_{k}^{2m+1}}
\le
R_{0}^2 \sum_{k=0}^{\infty} \frac{1}{R_{k}^{3}}
\lesssim
\sum_{k=1}^{\infty} \frac{1}{k^{3/2}}
< \infty
.
\end{equation*}
The Tauberian theorem then implies \(\lim_{n \to \infty} \varrho_n(x) \to  \lim_{r\to 1} (1-r) \varrho_r(x)\) which is equal to \(f(x)\) except at 0.
By Scheff\'e's lemma, convergence in distribution follows.

\textbf{Odd case} Let \(d=2n+1\), the density is
     \begin{equation*}
\varrho_n(x) = \frac{1}{n \pi} 
\sum_{m=0}^{n-1}
\frac{\sech^{(2m+1)}(x)}{(2m+1)!}
\left[ \left( x + i\frac{\pi}{2}\right)^{2m+1} + \left( x - i\frac{\pi}{2} \right)^{2m+1} \right]
.
     \end{equation*}
As in the even case, considering
\begin{equation*}
2\sum_{m \ge 0} \frac{\sech^{(2m+1)}(x)}{(2m+1)!} u^{2m+1}
=
\sech(x + u) - \sech(x-u)
\end{equation*}
yields
\begin{equation*}
\begin{split}
2\pi \varrho_{r}(x)
&=
\sech\left( x+\sqrt{ r }\left( \frac{\pi}{2}i -x \right) \right)
-
\sech\left( x-\sqrt{ r }\left( \frac{\pi}{2}i - x \right) \right) \\
&-
\sech\left( x+\sqrt{ r }\left( \frac{\pi}{2}i + x \right) \right)
+
\sech\left( x-\sqrt{ r }\left( \frac{\pi}{2}i + x \right) \right) \\
&=
-\sech\left( x(1 +  \sqrt{ r } )+ i \sqrt{ r }\frac{\pi}{2} \right)
-
\sech\left( x(1 + \sqrt{ r } )-i\sqrt{ r }\frac{\pi}{2} \right) \\
&+
\sech\left( x(1 - \sqrt{ r } )+i\sqrt{ r }\frac{\pi}{2} \right)
+
\sech\left( x(1 - \sqrt{ r } )-i\sqrt{ r }\frac{\pi}{2} \right)
.
\end{split}
\end{equation*}
This is the same as the even case, however the vanishing term has a negative sign which does not change the limit. To deduce the terms are bounded in $m$, a similar analysis shows
\begin{equation*}
    |\sech ^{(2m+1)}(x)| \frac{R_{x}^{2m}}{2(2m+1)!}
    \le
    \sum_{k=0}^{\infty} \frac{R_0^{2m+1}}{R_k^{2m+2}}
    \le
R_{0}^2 \sum_{k=0}^{\infty} \frac{1}{R_{k}^{3}}
\lesssim
\sum_{k=1}^{\infty} \frac{1}{k^{3/2}}
< \infty
    .
\end{equation*}

\textbf{Part 2: Almost sure convergence} 
Having established convergence in expectation, we now deduce almost sure convergence. Consider the biorthogonal ensemble $z_{i} = \sech(x_{i})^{2}$. It is also a biorthogonal ensemble. Let $f(x) = \sech(x)^{2}$ then
$$
\rho_{k}(z_{1}, \dots, z_{k}) = \det[K_{n}(f^{-1}(z_{i}), f^{-1}(z_{j}))]_{i, j = 1}^{k} \prod_{i=1}^{k} \frac{1}{2z_{i} \sqrt{ 1-z_{i} }}
$$
and $z$ is a DPP with kernel $K_{n}'(z, w)$ and denote its empirical measure with $\widetilde{\mu}_{d}$
$$
K_{n}'(z, w) = \sum_{m=0}^{n-1} q_{m}(f^{-1}(z))  \frac{p_{m}(f^{-1}(w))}{2 w \sqrt{ 1-w }}
$$
Its biorthogonal functions satisfy a recurrence relation. Namely, $zq_{m} = \sum_{k=0}^{m+1} (-1)^{k-1} \beta_{k, m} q_{k}$ by Lemma \ref{lemma:Recurrence}. Next Lemma \ref{lemma:BOE Variance bound} yields an expression for the variance of the moments of the empirical measure $\widetilde{\mu}_{d}$ over $z_{i}$. 
To bound the variance, use that the recursion coefficients are in $[0, 1]$ for both $d=2n, 2n+1$, hence the weight of any path is less than 1 and the variance is proportional to the number of paths which start and end at $\gamma(0) \le n -1$ and are greater than $n$ at step $l$.
$$
\mathrm{Var}\left( \int_{\mathcal{X}} x^{l} d\mu_{n}(x)  \right)
\le
\frac{1}{n^{2}}\;
\#\{\gamma: |\gamma| = 2l+1, \gamma(0) = \gamma(2l) \le n-1, \gamma(l) \ge n \}
.
$$
The number of such paths can be bounded independent of $n$.
As $\gamma(l) \ge n$, the path must start at $\gamma(0) \ge n-l$, giving at most $l$ start points. From a state $m$, the path can only move to $\{ 0, \dots, m+1 \}$. As the path has $2l$ steps and starts and returns to $\gamma(0)$, it cannot go higher than $\gamma(0) + 2l$ or lower than $\gamma(0) - 2l$. Therefore the path occupies at most $4l$ states.  Therefore, the number of such paths is then bounded by $(4l)^{2l}$. Finally, as there are $l$ starting points
$$
\mathrm{Var}\left( \int_{\mathcal{X}} x^{l} d\mu_{n}(x)  \right)
\le
\frac{l (4l)^{2l}}{n^{2}}
.
$$
By a Borel-Cantelli argument, this implies almost sure convergence of the moments. As the support of $\widetilde{\mu}_{d}$ is bounded by a compact set $[0, 1]$, convergence of moments imply convergence in distribution. Finally, as $\sech(x)^{2} : [0, \infty) \to (0, 1]$ is a diffeomorphism, almost sure convergence of $\widetilde{\mu}_{d}$ implies almost sure convergence of $\mu_{d}$.
\end{proof}

\subsection{Proofs for Limits of Rescaled Kernels}
\label{appendix:Proofs for the Scaling Limits}
We now show convergence of the rescaled kernels introduced earlier. This follows from using a particular representation of the kernel.
For the smallest and bulk singular values, we use the pole expansion of $\sech$ to reveal a geometric series.

\begin{lemma}[Pole Expansion Version of the Kernel]
\label{lemma:Pole expansion version of the kernel}
    Let $z = x + i \frac{\pi}{2}, y_l = y + i \pi \left( l + \frac{1}{2} \right)$. If $x \neq y$, the kernel can be rewritten as
\begin{equation}
K_{n}(x, y)
=
\frac{2}{\pi}
\sum_{l = 0}^{\infty}
(-1)^{l-1}
\mathrm{Im}\left( \frac{1}{y_{l}} \frac{1 - (z / y_{l})^{2n}}{1 - (z / y_{l})^{2}} \right)
+ \mathrm{Im}\left( \frac{1}{y_{l}} \frac{1 - (\overline{z} / y_{l})^{2n}}{1 - (\overline{z} / y_{l})^{2}} \right)
.
\end{equation}
If $x=y$, replace the first term in the $l=0$ contribution with $\mathrm{Im}\left( n / y_0 \right)$ and $\mathrm{Im}\left( nz / y^2_0 \right)$ for $d=2n, 2n+1$.
\end{lemma}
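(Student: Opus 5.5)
The plan is to substitute the explicit biorthogonal functions of Theorem \ref{theorem: Singular values are a DPP} into $K_n(x,y)=\sum_{m=0}^{n-1}p_m(x)q_m(y)$. We replace each derivative $\sech^{(2m)}(y)$ (respectively $\sech^{(2m+1)}(y)$) by its pole expansion from Lemma \ref{lemma:Sech Pole expansion}. We then interchange the finite sum over $m$ with the series over poles $l$, and sum the resulting finite geometric series in closed form. Throughout, $x,y\ge 0$, $z=x+i\frac{\pi}{2}$ and $y_l=y+i\pi(l+\frac12)$.

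\textbf{Even case.} Write $p_m(x)=\mathrm{Re}(z^{2m})=\frac12(z^{2m}+\overline{z}^{2m})$. By Lemma \ref{lemma:Sech Pole expansion},
\[
q_m(y)=\frac{2}{\pi}\frac{\sech^{(2m)}(y)}{(2m)!}=\frac{4}{\pi}\sum_{l\ge0}(-1)^{l-1}\mathrm{Im}(y_l^{-2m-1}).
\]
The key elementary identity is that for real $c+\overline c$ we have $(c+\overline c)\,\mathrm{Im}(a)=\mathrm{Im}(ac)+\mathrm{Im}(a\overline c)$. This turns each product into
\[
p_m(x)q_m(y)=\frac{2}{\pi}\sum_{l\ge0}(-1)^{l-1}\Big[\mathrm{Im}\big(y_l^{-1}(z/y_l)^{2m}\big)+\mathrm{Im}\big(y_l^{-1}(\overline z/y_l)^{2m}\big)\Big].
\]
Since the sum over $m$ is finite and each pole series converges absolutely ($|y_l|\sim\pi l$ and the powers are at least $|y_l|^{-1}$ times bounded factors), we may swap the sums. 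Summing $\sum_{m=0}^{n-1}w^{2m}=\frac{1-w^{2n}}{1-w^2}$ with $w=z/y_l$ and $w=\overline z/y_l$ gives the stated formula. This is valid whenever $w^2\ne1$.

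\textbf{Degenerate ratios.} I would check when $w^2=1$ can occur.
\begin{enumerate*}[label=(\roman*)]
\item We have $z=-y_l$ never, and $\overline z=y_l$ never, by comparing imaginary parts.
\item $z=y_l$ forces $l=0$ and $x=y$.
\item $\overline z=-y_l$ forces $l=0$ and $x=-y$, i.e. $x=y=0$. This sub-case must be handled by the same limiting convention as the diagonal.
\end{enumerate*}
When $x=y$, the $l=0$, $z$-term is literally $\sum_{m=0}^{n-1}\mathrm{Im}(1/y_0)=\mathrm{Im}(n/y_0)$, which is the stated replacement. It is also the continuous limit of the geometric expression as $z\to y_0$.

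\textbf{Odd case.} The same steps apply with $p_m=\mathrm{Re}(z^{2m+1})$ and $q_m=-\frac{2}{\pi}\frac{\sech^{(2m+1)}}{(2m+1)!}=\frac{4}{\pi}\sum_l(-1)^{l-1}\mathrm{Im}(y_l^{-2m-2})$. Now the summand is $\mathrm{Im}\big(y_l^{-1}(z/y_l)^{2m+1}\big)$, so the geometric sum carries an extra factor $z/y_l$. The diagonal term becomes $\sum_m\mathrm{Im}(z/y_0^2)=\mathrm{Im}(nz/y_0^2)$, matching the statement. I read the displayed formula as the even-case form, with the odd case obtained by inserting this factor $z/y_l$ (respectively $\overline z/y_l$).

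\textbf{Main obstacle.} The only genuinely delicate point is justifying the interchange of sums. I would handle it by bounding the $m$-th terms uniformly by $C|y_l|^{-1}\max(1,R_0/|y_l|)^{2n}$, which is summable in $l$ for fixed $n$; this reuses the estimate from the proof of Theorem \ref{theorem:Global Law}. Keeping careful track of signs and the factor $2$ then gives the result.
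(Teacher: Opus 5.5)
Your proposal is correct and is essentially the paper's proof: substitute the pole expansion of Lemma~\ref{lemma:Sech Pole expansion} into $q_m$, split $\mathrm{Re}(z^{2m})$ into conjugates, and sum the finite geometric series in $m$. Your treatment of the odd case (the extra factor $z/y_l$) and of the additional degenerate point $x=y=0$ is, if anything, more careful than the paper's.

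One correction concerns the interchange of sums. Absolute convergence actually fails: the $m=0$ terms satisfy $\mathrm{Im}(y_l^{-1})\sim -1/(\pi l)$, so your bound $C|y_l|^{-1}\max(1,R_0/|y_l|)^{2n}$ is not summable in $l$. No such bound is needed, however, because exchanging a finite sum over $m$ with convergent (here alternating) series over $l$ is just linearity of limits.
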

\begin{proof}
By the pole expansion of Lemma \ref{lemma:Sech Pole expansion},
\begin{align*}
p_{m}(x) &=
\begin{cases}
\mathrm{Re}(z^{2m}), & d=2n \\
\mathrm{Re}(z^{2m+1}), & d=2n+1
\end{cases} \\
q_{m}(y) &=
\frac{4}{\pi}
\sum_{l=0}^{\infty} (-1)^{l-1}
\begin{cases}
\mathrm{Im}(y_{l}^{-2m-1}), & d=2n \\
\mathrm{Im}(y_{l}^{-2m-2}), & d=2n+1
\end{cases}
\end{align*}
In the even case, the kernel can be written as
$$
K_{n}(x, y)
=
\frac{4}{\pi} 
\sum_{l = 0}^{\infty}
(-1)^{l-1}
\sum_{m=0}^{n-1}
\mathrm{Im}(y_{l}^{-2m-1})
\mathrm{Re}(z^{2m})
$$
$$
\mathrm{Im}(y_{l}^{-2m-1})
\mathrm{Re}(z^{2m})
=
\frac{1}{4i} (z^{2m} + \overline{z}^{2m})(y_{k}^{-2m-1} - \overline{y}_{k}^{-2m-1})
.
$$
Combine to reveal a geometric series $\sum_{m=0}^{n-1} a^{2m} = \frac{1 - a^{2n}}{1 - a^{2}}$ if $a \neq \pm 1$ (otherwise the sum is $n$, which occurs if $x = y$ at $l=0$).
Group into conjugate pairs to rewrite each summand as
$$
2i\left[ 
\mathrm{Im}\left( \frac{1}{y_{l}} \frac{1 - (z / y_{l})^{2n}}{1 - (z / y_{l})^{2}} \right)
+ \mathrm{Im}\left( \frac{1}{y_{l}} \frac{1 - (\overline{z} / y_{l})^{2n}}{1 - (\overline{z} / y_{l})^{2}} \right)
 \right]
$$
from which the result follows.
\end{proof}

This allows to identify the limit in terms of the sine kernel.
\begin{lemma}[Convergence of Rescaled Kernels at $x_0$]
\label{lemma:Convergence of the rescaled kernel}
Consider the kernel rescaled around $x_0$.
\begin{equation}
\widetilde{K}_{n, x_{0}}(u, v) = \frac{\exp\left(-2x_0(u-v) \right)}{n \varrho(x_{0})} K_{n}\left( x_{0} + \frac{u}{n \varrho(x_{0})}, x_{0} + \frac{v}{n \varrho(x_{0})} \right)
\end{equation}
As $n \to \infty$, the kernel converges uniformly over compact subsets. I.e., for fixed $M > 0$,
$$
\sup_{u, v \in [-M, M]} \left|
\widetilde{K}_{n, x_0}(u, v)
-
K^{\star}_{x_0}(u, v)
\right| \le C_{n,M} \xrightarrow[n \to \infty]{} 0
.
$$
If $x_0 > 0$, the limiting kernel is a rescaled sine kernel $K^{\star}_{x_0}(u, v) = \pi K_{\mathrm{sine}}(\pi u, \pi v)$ (see Lemma \ref{lemma:Sine kernel}). If $x_0 = 0$, the limit is the even part of the sine kernel $K^{\star}_{0}(u, v) = \pi K_{\rm{sine}}(\pi u, \pi v) + \pi K_{\rm{sine}}(\pi u, -\pi v)$.
\end{lemma}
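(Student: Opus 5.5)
Start from the pole expansion of the kernel in Lemma~\ref{lemma:Pole expansion version of the kernel}, with $d=2n$. Set $x = x_0 + u/(n\varrho(x_0))$ and $y = x_0 + v/(n\varrho(x_0))$, so that $z = x + i\pi/2$ and $y_l = y + i\pi(l+\tfrac12)$. For fixed $l$, write the summand as
\[
\frac{1}{y_l}\,\frac{1-(z/y_l)^{2n}}{1-(z/y_l)^2},
\]
together with its counterpart with $\overline z$. The prefactor $1/(n\varrho(x_0))$ kills every bounded contribution. Hence only terms that are of order $n$ survive in the limit.

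\textbf{Separating the terms.} For $l\ge 1$ we have $|z/y_l| \le R_0/R_1 <1$ and $|\overline z/y_l|<1$ uniformly for $u,v\in[-M,M]$. Both geometric ratios then stay bounded away from $1$, so $(z/y_l)^{2n}\to 0$ exponentially and the summand is $O(1/|y_l|)$. The alternating signs $(-1)^{l-1}$ give a convergent sum: pair consecutive $l$, or use $|y_l|^{-1}-|y_{l+1}|^{-1} = O(l^{-2})$. The whole $l\ge1$ tail is therefore $O(1)$ and vanishes after dividing by $n$. The same holds for the $\overline z$ term at $l=0$, since $\overline z/y_0 = (x-i\pi/2)/(y+i\pi/2)$ has modulus close to $1$ but argument close to $-2\theta(x_0)\neq 0$ when $x_0>0$. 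The only singular term is $z/y_0 = (x+i\pi/2)/(y+i\pi/2)$, which tends to $1$.

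\textbf{The $l=0$ term for $x_0>0$.} Write
\[
\frac{z}{y_0} = 1 + \frac{(u-v)/(n\varrho)}{y_0} = \exp\!\Big(\frac{u-v}{n\varrho\, y_0}+O(n^{-2})\Big).
\]
Then
\[
(z/y_0)^{2n} \to \exp\!\big(2(u-v)/(\varrho\, w_0)\big), \qquad w_0 = x_0+i\pi/2,
\]
and
\[
1-(z/y_0)^2 \sim -\frac{2(u-v)}{n\varrho\, w_0}.
\]
Since $\varrho = 1/|w_0|^2$, we get $1/(\varrho w_0) = \overline{w_0} = x_0 - i\pi/2$. The $l=0$ term becomes
\[
\frac{n\varrho}{2(u-v)}\,\Big(e^{2(u-v)(x_0-i\pi/2)}-1\Big)
\]
up to conjugation and sign. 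After the $\tfrac{2}{\pi}\mathrm{Im}$, the factor $1/(n\varrho)$, and the conjugation factor $e^{-2x_0(u-v)}$, this should reduce to $\sin(\pi(u-v))/(\pi(u-v))$ modulo terms of the form $g(u)/g(v)$ and additive errors. Such gauge factors do not change the determinants, so I expect to match $\pi K_{\rm sine}(\pi u,\pi v)$ possibly only up to conjugation. I would therefore check the imaginary-part bookkeeping carefully; this is the main computational obstacle. The diagonal $u=v$ is handled by the $\mathrm{Im}(n/y_0)$ replacement, which gives the limit value $1$ and agrees with continuity. Uniformity on $[-M,M]^2$ follows because the expansions are in $(u-v)/n$ with remainders uniformly $O(1/n)$, and $(e^{a s}-1)/s$ is entire in $s$.

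\textbf{The case $x_0=0$.} Here $\theta(0)=\pi/2$, so $\overline z/y_0 = (x-i\pi/2)/(y+i\pi/2) \to -1$ as well. This term is then also of order $n$ rather than bounded. Repeating the expansion with $\overline z/y_0 = -(1 - (u+v)\cdot\mathrm{const}/n + \cdots)$ gives a second sine term in $u+v$. Since $\exp(-2x_0(u-v))=1$, the sum of the two terms is $\pi K_{\rm sine}(\pi u,\pi v)+\pi K_{\rm sine}(\pi u,-\pi v)$. The one delicate point is near $u=-v$, where the second ratio equals $-1$ exactly. This is handled by the same entire-function remark, since $(1-a^{2n})/(1-a^2)$ is a polynomial in $a$.
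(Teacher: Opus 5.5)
This is essentially the paper's proof. You use the same pole-expansion representation and discard the $l\ge 1$ terms and the non-resonant $l=0$ term after dividing by $n$. For that non-resonant term, what you actually need is $|\overline{z}/y_0|^{2n}=|z/y_0|^{2n}\to e^{2x_0(u-v)}$ bounded, not merely modulus near $1$. You then expand the resonant ratio $z/y_0\to 1$ and pick up the extra resonance $\overline{z}/y_0\to -1$ at $x_0=0$, exactly as the paper does.

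The step you flag as the main obstacle closes exactly, with no gauge or conjugation freedom. Your displayed $l=0$ expression carries the sign $(-1)^{l-1}=-1$. Since $\mathrm{Im}\,e^{2(u-v)(x_0-i\pi/2)}=-e^{2x_0(u-v)}\sin(\pi(u-v))$, it contributes $\frac{n\varrho}{\pi(u-v)}e^{2x_0(u-v)}\sin(\pi(u-v))$ to $K_n$. The prefactor $e^{-2x_0(u-v)}/(n\varrho)$ turns this into precisely $\sin(\pi(u-v))/(\pi(u-v))=\pi K_{\rm sine}(\pi u,\pi v)$. At $x_0=0$ the same computation with $(\overline{z}/y_0)^{2n}\to e^{i\pi(u+v)}$ yields $\sin(\pi(u+v))/(\pi(u+v))=\pi K_{\rm sine}(\pi u,-\pi v)$.
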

\begin{proof}
We apply the representation from Lemma \ref{lemma:Pole expansion version of the kernel} and case on $x_0 > 0$ and $x_0 = 0$.
\[
K_{n}(x, y)
=
\frac{2}{\pi}
\sum_{l = 0}^{\infty}
(-1)^{l-1}
\left[
\mathrm{Im}\left( \frac{1}{y_{l}} \frac{1 - (z / y_{l})^{2n}}{1 - (z / y_{l})^{2}} \right)
+ \mathrm{Im}\left( \frac{1}{y_{l}} \frac{1 - (\overline{z} / y_{l})^{2n}}{1 - (\overline{z} / y_{l})^{2}} \right)
 \right]
\]
Let $x = x_{0} + \frac{u}{n}, y = x_{0} + \frac{v}{n}$ and let $z = x + i \frac{\pi}{2}, y_{l} = y + i\pi \left( l+\frac{1}{2} \right)$.

\textbf{Part 1: $x_0 > 0$} Observe
$$
\frac{z}{y_{l}} \to \frac{x_{0} + i\frac{\pi}{2}}{x_{0} + i \pi \left( l+\frac{1}{2} \right)},\quad \left|\frac{x_{0} + i\frac{\pi}{2}}{x_{0} + i \pi \left( l+\frac{1}{2} \right)} \right| \le 1
$$
which is strict if $l \ge 1$. Therefore, the contribution over $l \ge 1$ to the limit kernel vanishes. Consider
$$
\frac{1}{y_{l}} \frac{1 - (z / y_{l})^{2n}}{1 - (z / y_{l})^{2}} = \frac{1}{y_{l}} + \frac{z}{y_{l}^{2}} \frac{1 - \left( \frac{z}{y_{l}} \right)^{2(n-1)}}{1 - \left( \frac{z}{y_{l}} \right)^{2}}
.
$$
Despite the first term growing like $l^{-1}$, its contribution still converges to 0 uniformly by an alternating series test.
First, the terms of the series are decreasing once $2l + 1 \ge |x_{0} + \frac{v}{ n}| / \pi$.
$$
\sum_{l=1}^{\infty} \mathrm{Im}\left( \frac{1}{y_{l}} \right)
=
\sum_{l=1}^{\infty} \frac{(-1)^{l}\frac{\pi}{2}(2l+1)}{\left( x_{0} + \frac{v}{n} \right)^{2} + \pi^{2}(2l+1)^{2}}
$$
This will happen at $l^{\star}$ which is independent of $n$. As $v$ is bounded, $|x_{0} + \frac{v}{n}| \le x_{0} + \frac{|v|}{n} \le x_{0} + \frac{M}{n}$ and for $n \ge Mc$, the terms are decreasing for $l \ge l ^{\star} = \max(0, \lceil \frac{1}{2} \frac{x_{0} + c - \pi}{\pi} \rceil)$. Hence we can bound the tail of the series by the value at $l^{\star}$.
Dividing by $n$ will send the contribution to 0. The other term can be bounded by its absolute value which is order $l^{-3}$. Again as $|u|, |v| \le M$ for $n \ge M / c$
$$
\left| \frac{z}{y_{l}}\right|^{2} = \left| \frac{\overline{z}}{y_{l}}\right|^{2}
\le \frac{x_{0}^{2} + c^{2} + \left( \frac{\pi}{2} \right)^{2} + 2x_{0}c}{x_{0}^{2} + c^{2} + \left( \frac{\pi}{2} \right)^{2} -2x_{0}c + \pi^{2}(l^{2} + l)}
:= C
.
$$
For $c < \min (x_{0} ,\pi^{2} / (2x_{0}))$, $C < 1$. As $|y_{l}| \ge \pi l$ and $|z| \le |x_0 + c + i \pi /2|$
$$
\left|\frac{z}{y_{l}^{2}} \frac{1 - \left( \frac{z}{y_{l}} \right)^{2(n-1)}}{1 - \left( \frac{z}{y_{l}} \right)^{2}}\right|
\le
\frac{1}{l^{3}}
\frac{\sqrt{ (x_{0} + c)^{2} + (\pi/2)^{2} }}{\pi^{3}} \frac{1}{1-C}
$$
and dividing the associated series by $n$ will also send the contribution to 0.

Therefore, the only term to consider is $l = 0$. As $x_0 \neq 0, \frac{\overline{z}}{y_{0}} \not\to \pm 1$ and its contribution vanishes. Whereas $\frac{z}{y_{0}} \to \frac{x_{0} + i \frac{\pi}{2}}{x_{0} + i \frac{\pi}{2}} =1$ which will need care for its limit. Let $w = x_{0} + i \frac{\pi}{2}$,
\begin{align*}
\frac{z}{y_{0}} = 1 + \frac{u-v}{w} \frac{1}{n} + O(n^{-2}) 
&\implies \left( \frac{z}{y_{0}} \right)^{2} = 1 + 2\frac{u-v}{w} \frac{1}{n} + O(n^{-2}) \\
&\implies \left( \frac{z}{y_{0}} \right)^{2n} = \exp\left( 2 \frac{u-v}{w} \right) (1 + O(n^{-1})
.
\end{align*}
Therefore,
$$
\frac{1}{n} K_{n}\left( x_{0} + \frac{u}{n}, x_{0} + \frac{v}{n } \right)
\to
\frac{2}{\pi}
\frac{1}{2(u-v)} \left[
\mathrm{Im}\left(
1 -
\exp\left(\frac{2(u-v)}{|w|^{2}} \overline{w} \right)
 \right)
 \right]
 .
$$
Then use $\mathrm{Im}(\exp(a+ ib)) = \exp(a)\sin(b)$ to simplify the limit as
$$
\frac{1}{\pi(u-v)} \exp\left( \frac{2(u-v)}{|w|^{2}} x_{0} \right) \sin\left( \frac{\pi(u-v)}{|w|^{2}} \right)
.
$$
To get the limit of $\widetilde{K}_{n, x_0}$, we set $u \to u / \varrho(x_{0})$ which simplifies as $|w|^{2} = x^{2} + (\frac{\pi}{2})^{2} = \varrho(x_{0})^{-1}$. Introducing $\exp\left(-2x_0(u-v)\right)$ cancels "$\exp(a)$". If $u = v$, we get $1$ as the geometric sum is just a sum of ones. Overall the limit kernel corresponds to $\pi K_{\rm{sine}}(\pi u, \pi v)$.

\textbf{Part 2: $x_0 = 0$} We proceed as before but this time $\frac{\overline{z}}{y_{0}} \to -1$. Similarly,
\begin{align*}
\frac{\overline{z}}{y_{0}}
= \frac{\frac{u}{n} - w}{\frac{v}{n} + w} = -1 + \frac{(u+v)}{w} \frac{1}{n}+ O(n^{-2})
&\implies \left( \frac{\overline{z}}{y_{0}} \right)^{2} = 1 - \frac{2(u+v)}{w} \frac{1}{n} + O(n^{-2}) \\
&\implies \left( \frac{\overline{z}}{y_{0}} \right)^{2n} \to \exp\left( - \frac{2(u+v)}{w} \right)
.
\end{align*}
This adds an additional term of $\pi K_{\rm{sine}}(\pi u, -\pi v)$. The contribution over $l \ge 1$ still vanishes by a similar argument but we instead use the bound
$$
\left| \frac{z}{y_{l}}\right|^{2} = \left| \frac{\overline{z}}{y_{l}}\right|^{2}
\le \frac{c^{2} + \left( \frac{\pi}{2} \right)^{2} }{\left( \frac{\pi}{2} \right)^{2}+ \pi^{2}(l^{2} + l)}
:= C
$$
which is also less than $1$ for $c < \pi \sqrt{l^2 + l}$.
\end{proof}

For the largest singular values, we use a representation of the kernel in terms of the density of the Gamma distribution with unit scale and shape parameter $2n$, namely $x^{2n - 1} e^{-x} / \Gamma(2n)$. We also consider its survivor function, which is also known as the incomplete gamma function
\begin{equation}
    \widehat{\Gamma}(n, x)  = \frac{1}{\Gamma(n)}\int_{x}^{\infty} t^{n-1} e^{-t} dt
    .
\end{equation}

\begin{lemma}[Gamma Distribution Density Representation of the Kernel]
\label{lemma:Incomplete Gamma Function representation of the kernel}
Let $x_l = (2l-1)x$ then
\begin{equation}
K_{n}(x, y)
=
\frac{4}{\pi}
\sum_{l=1}^{\infty} e^{(2l-1)(x-y)} \frac{e^{-x_{l}}x_{l}^{2n-1}}{\Gamma(2n)} A_l
\end{equation}
where
\begin{equation}
A_l = 
\int_{0}^{(2l-1)\pi / 2} \left( 1 + \frac{t^{2}}{x_{l}^{2}} \right)^{\frac{2n-1}{2}} \sin\left( (2n-1)\arctan\left( \frac{t}{x_{l}} \right) \right) \sin (t) dt
.
\end{equation}
\end{lemma}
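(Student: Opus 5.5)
The plan is to start from the explicit biorthogonal functions of Theorem \ref{theorem: Singular values are a DPP} in the even case $d=2n$. Write
\[
K_n(x,y)=\frac{2}{\pi}\sum_{m=0}^{n-1}\mathrm{Re}\bigl(z^{2m}\bigr)\frac{\sech^{(2m)}(y)}{(2m)!},\qquad z=x+i\tfrac{\pi}{2}.
\]
Instead of the pole expansion, I use the exponential expansion $\sech(y)=2\sum_{l\ge1}(-1)^{l-1}e^{-(2l-1)y}$, valid for $y>0$. Differentiating termwise gives $\sech^{(2m)}(y)=2\sum_{l\ge1}(-1)^{l-1}(2l-1)^{2m}e^{-(2l-1)y}$. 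Since the $m$-sum is finite, I can exchange the sums and obtain
\[
K_n(x,y)=\frac{4}{\pi}\sum_{l\ge1}(-1)^{l-1}e^{-(2l-1)y}\,\mathrm{Re}\,E_n(w_l),\qquad E_n(w)=\sum_{m=0}^{n-1}\frac{w^{2m}}{(2m)!},\quad w_l=(2l-1)z .
\]
I then split $e^{-(2l-1)y}=e^{(2l-1)(x-y)}e^{-x_l}$, which already produces the prefactor appearing in the lemma.

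Next I rewrite the truncated even exponential series through the incomplete gamma representation. For the partial exponential sum,
\[
\sum_{k<2n}\frac{w^k}{k!}=e^{w}-\frac{1}{\Gamma(2n)}\int_0^{w}s^{2n-1}e^{w-s}\,ds .
\]
Taking the even part $E_n(w)=\tfrac12\bigl[S(w)+S(-w)\bigr]$ and substituting $s\mapsto -s$ in the second integral gives
\[
E_n(w)=\cosh w-\frac{1}{2\Gamma(2n)}\int_0^{w}s^{2n-1}\bigl(e^{w-s}-e^{-(w-s)}\bigr)\,ds .
\]
Now I evaluate at $w=w_l=x_l+i(2l-1)\pi/2$. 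Then $\cosh w_l=\cosh(x_l)\cos((2l-1)\pi/2)+i(\cdots)$ is purely imaginary, so it drops out after taking the real part.

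The key step is to integrate along the contour from $0$ to $x_l$ along the real axis and then vertically via $s=x_l+it$, $t\in[0,(2l-1)\pi/2]$.
\begin{itemize}
\item On the horizontal piece, $s^{2n-1}$ is real and $e^{\pm(w_l-s)}=e^{\pm(x_l-s)}e^{\pm i(2l-1)\pi/2}$. This is $\pm i(-1)^{l-1}$ times a real number, so the real part vanishes.
\item On the vertical piece, I write $s^{2n-1}=(x_l^2+t^2)^{(2n-1)/2}e^{i(2n-1)\arctan(t/x_l)}$ and factor out $x_l^{2n-1}$, leaving $(1+t^2/x_l^2)^{(2n-1)/2}$. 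I also use $e^{w_l-s}-e^{-(w_l-s)}=2i\sin\bigl((2l-1)\pi/2-t\bigr)=2i(-1)^{l-1}\cos t$ and $ds=i\,dt$.
\end{itemize}
The integrand then becomes $-2(-1)^{l-1}x_l^{2n-1}(1+t^2/x_l^2)^{(2n-1)/2}e^{i(2n-1)\arctan(t/x_l)}\cos t$. Taking real parts and inserting the prefactor $-1/(2\Gamma(2n))$ produces the factor $\frac{x_l^{2n-1}}{\Gamma(2n)}$, with the sign $(-1)^{l-1}$ cancelling the one from the $\sech$ expansion.

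The main obstacle is exactly this bookkeeping of phases and signs, so that the result comes out as $\sin\bigl((2n-1)\arctan(t/x_l)\bigr)\sin t$ rather than the $\cos\cdot\cos$ combination my quick computation suggests. I would resolve this by integrating by parts once in $t$, or by recombining the real part via $\mathrm{Re}(ie^{i\theta})=-\sin\theta$ after shifting the contour endpoint. I would also check that the $\cos\cdot\cos$ and $\sin\cdot\sin$ forms differ only by boundary terms that vanish because $\cos((2l-1)\pi/2)=0$. Finally, I would check the case $n=1$ directly, where $K_1(x,y)=\frac{2}{\pi}\sech(y)$, to fix the overall constant $\frac{4}{\pi}$. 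I would also note that the $l$-series converges absolutely for $y>0$ and $x$ fixed, because $A_l$ grows at most polynomially in $l$ while $e^{-(2l-1)y}$ decays geometrically.
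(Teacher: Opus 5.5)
Your route is the paper's: expand $\sech(y)=2\sum_{l\ge1}(-1)^{l-1}e^{-(2l-1)y}$, swap the finite $m$-sum, write the truncated exponential via the Taylor remainder (incomplete gamma), and deform to the real-then-vertical contour. Your handling of $\cosh w_l$ and of the horizontal leg is correct. There is, however, a genuine error in the even-part identity, and it is exactly what produces your $\cos\cdot\cos$ mismatch.

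Substituting $s\mapsto -s$ in $\int_0^{-w}s^{2n-1}e^{-w-s}\,ds$ flips the sign of both $s^{2n-1}$ (an odd power) and $ds$. The two signs cancel, so
\[
E_n(w)=\cosh w-\frac{1}{\Gamma(2n)}\int_0^{w}s^{2n-1}\cosh(w-s)\,ds ,
\]
not the version with $e^{w-s}-e^{-(w-s)}$. For $n=1$ the correct identity returns $E_1\equiv 1$, while yours returns $e^{-w}+w$. With $\cosh$ in place of $\sinh$, on the vertical leg $s=x_l+it$ you get $\cosh(w_l-s)=\cos\bigl((2l-1)\tfrac{\pi}{2}-t\bigr)=(-1)^{l-1}\sin t$. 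Combined with $ds=i\,dt$ and $\mathrm{Re}\bigl[i(x_l+it)^{2n-1}\bigr]=-\mathrm{Im}\bigl[(x_l+it)^{2n-1}\bigr]$, this gives
\[
\mathrm{Re}\,E_n(w_l)=\frac{(-1)^{l-1}}{\Gamma(2n)}\int_0^{(2l-1)\pi/2}\mathrm{Im}\bigl[(x_l+it)^{2n-1}\bigr]\sin t\,dt=\frac{(-1)^{l-1}x_l^{2n-1}}{\Gamma(2n)}A_l .
\]
Its sign cancels the one from the $\sech$ expansion and yields the lemma. The paper keeps the two terms $e^{\pm z_l}\widehat{\Gamma}(2n,\pm z_l)$ separate and deforms each contour; the two vertical legs then combine into $\sin t$. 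This is the same computation with the correct sign.

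The repairs you propose would not work, because your $\cos\cdot\cos$ expression is a genuinely different number, not the same integral in another form. For $n=1$ it gives $\int_0^{(2l-1)\pi/2}x_l\cos t\,dt=(-1)^{l-1}x_l$ instead of $\int_0^{(2l-1)\pi/2}t\sin t\,dt=(-1)^{l-1}$. Integrating by parts leaves the boundary term $\mathrm{Re}\bigl[(x_l+ia)^{2n-1}\bigr]\sin a$ at $a=(2l-1)\pi/2$. That term does not vanish, since it is $\cos a$, not $\sin a$, that is zero there; the leftover integral also involves the power $2n-2$ rather than $2n-1$. Your planned $n=1$ check against $K_1(x,y)=\tfrac{2}{\pi}\sech(y)$ would expose the discrepancy. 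The fix has to be made upstream, in the sign of the even-part identity, not by manipulating the final integral.
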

\begin{proof}
By considering the geometric series of $\frac{1}{1+x}$ which has a radius of convergence of $|x| < 1$, the expansion below converges absolutely for $x > 0$ as $|e^{-2x}| < 1$. Taking derivatives
\begin{align*}
\sech(x)
&= \frac{2e^{-x}}{e^{-2x} + 1}
= 2\sum_{l=1}^{\infty} (-1)^{l-1} e^{-(2l-1)x}  \\
\sech^{(2m)}(x) 
&=
2\sum_{l=1}^{\infty} (-1)^{l-1} (2l-1)^{2m} e^{-(2l-1)x}
.
\end{align*}
The following double series is absolutely convergent as $\sech ^{(2m)}(x)$ is absolutely convergent and the sum over $m$ is finite. By Fubini's theorem,
\begin{align*}
\sum_{m=0}^{n-1} p_{m}(x) q_{m}(y)
&= \frac{2}{\pi}\sum_{m=0}^{n-1} \frac{\sech^{(2m)}(y)}{(2m)!} \mathrm{Re}\left( \left( x + \frac{i\pi}{2} \right)^{2m} \right) \\
&= \frac{4}{\pi} \sum_{l=1}^{\infty} (-1)^{l-1} e^{-(2l-1) y} \sum_{m=0}^{n-1}  \frac{(2l-1)^{2m}}{(2m)!} \mathrm{Re}\left( \left( x + \frac{i\pi}{2} \right)^{2m} \right) \\
&= \frac{4}{\pi} \sum_{l=1}^{\infty} (-1)^{l-1} e^{-(2l-1) y} \; S_{n, l}(x)
\end{align*}
Let $z_l = (2l-1)\left( x +i\frac{\pi}{2} \right)$,
\begin{align*}
S_{n, l}(x) &= \sum_{m=0}^{n-1} \frac{(2l-1)^{2m}}{(2m)!} \mathrm{Re}\left( \left( x + \frac{i\pi}{2} \right)^{2m} \right)
= \mathrm{Re}\left( \sum_{m=0}^{n-1} \frac{z_l^{2m}}{(2m)!} \right) 
= \frac{1}{2}\mathrm{Re}\left( \sum_{m=0}^{2n-1} \frac{z_l^{m}}{m!} + \frac{(-z_l)^{m}}{m!} \right)
\end{align*}
By Taylor's remainder theorem for $e^{x}$,
$$
\sum_{m=0}^{2n-1} \frac{x^{m}}{m!}
=e^{x} \left[1 - \frac{1}{\Gamma(2n)}\int_{0}^{x} t^{2n-1} e^{-t} dt\right]
 = e^{x} \widehat{\Gamma}(2n, x)
 .
$$
As $e^{x}$ is entire, we can extend to complex $z$ by integrating along any path $\gamma$ which starts at 0 and ends at $z$. 
Next, $\exp\left( (2l-1) \frac{\pi}{2} i \right) = i^{(2l-1)} = (-1)^{l-1}\; i$, hence
\begin{align*}
\mathrm{Re}(e^{\pm z_{l}}\widehat{\Gamma}(2n, \pm z_{l})) &= \pm e^{\pm (2l-1)x} (-1)^{l} \; \mathrm{Im}\,\widehat{\Gamma}(2n, \pm z_l) \\
S_{n, l}(x) &= (-1)^{l-1}\frac{e^{(2l-1)x}}{2 \Gamma(2n)} \mathrm{Im} \left[
\int_{0}^{z_l} t^{2n-1} e^{-t} dt
-
e^{-2(2l-1)x}
\int_{0}^{-z_l} t^{2n-1} e^{-t} dt
\right]
.
\end{align*}
For the path, we choose $\gamma$ which runs along the real axis up to $\mathrm{Re}\, z_l$ and then goes vertically to $z_l$ as along the real axis the integrand is real and its imaginary part is 0. I.e.,
$$
\int_{0}^{z_l} t^{2n-1} e^{-t} dt
=
\int_{0}^{x_{l}} t^{2n-1} e^{-t} dt + i e^{-x_{l}} \int_{0}^{(2l-1) \pi / 2} (x_{l} + it)^{2n-1}e^{- it}  \, dt
$$
where $x_l = (2l-1)x$. Therefore,
\begin{align*}
S_{n, l}(x) &= \frac{(-1)^{l-1}}{\Gamma(2n)} \int_{0}^{(2l-1)\pi / 2} \mathrm{Im} [(x_{l} + it)^{2n-1}]  \, \sin (t) dt \\
&= (-1)^{l-1} e^{(2l-1) x} \frac{e^{-x_{l}}x_{l}^{2n-1}}{\Gamma(2n)} \int_{0}^{(2l-1)\pi / 2} \left( 1 + \frac{t^{2}}{x_{l}^{2}} \right)^{\frac{2n-1}{2}} \sin\left( (2n-1)\arctan\left( \frac{t}{x_{l}} \right) \right) \sin (t) dt
.
\end{align*}

\end{proof}

This allows us to identify the limit kernel as the density of the normal distribution.
\begin{lemma}[Convergence of Rescaled Kernels at Infinity]
\label{lemma:Convergence of the rescaled kernels at infinity}
Consider the rescaled kernel
$$
\widetilde{K}_{n, k}(u, v) = \frac{\sqrt{ 2n }}{2k-1} \exp(-\sqrt{2n}(u-v)) K_{n}\left(\frac{2n}{2k-1} + u\frac{\sqrt{ 2n }}{2k-1}, \frac{2n}{2k-1} + v\frac{\sqrt{ 2n }}{2k-1} \right),\quad
$$
As $n \to \infty$, the kernel converges uniformly over compact subsets. I.e., for fixed $M > 0$,
$$
\sup_{u, v \in [-M, M]} \left|
\widetilde{K}_{n, k}(u, v)
-
\frac{1}{\sqrt{ 2\pi }} \exp\left( -\frac{u^{2}}{2} \right)
\right| \le C_{n,M} \xrightarrow[n \to \infty]{} 0
$$
\end{lemma}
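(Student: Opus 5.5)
The plan is to start from the Gamma-density representation of Lemma \ref{lemma:Incomplete Gamma Function representation of the kernel} and show that, after the rescaling, only the $l=k$ summand survives, and that it converges to $\phi(u)$. Put $x = \frac{2n + u\sqrt{2n}}{2k-1}$, $y = \frac{2n+v\sqrt{2n}}{2k-1}$, so that $x_l = (2l-1)x = r_l(2n + u\sqrt{2n})$ with $r_l = \frac{2l-1}{2k-1}$, and $(2k-1)(x-y) = \sqrt{2n}(u-v)$. Substituting,
\[
\widetilde{K}_{n,k}(u,v) = \frac{4}{\pi}\frac{\sqrt{2n}}{2k-1}\sum_{l\ge1} e^{(r_l-1)\sqrt{2n}(u-v)}\,\frac{e^{-x_l}x_l^{2n-1}}{\Gamma(2n)}\,A_l .
\]
For $l=k$ the exponential prefactor is exactly $1$, which is why the conjugation factor $\exp(-\sqrt{2n}(u-v))$ was chosen.

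\textbf{The $l=k$ term.} Here $x_k = 2n+u\sqrt{2n}$, and the Gamma$(2n,1)$ density at this point is $\frac{1}{\sqrt{2n}}\phi(u)(1+o(1))$ uniformly for $|u|\le M$, by the local CLT, or directly by Stirling's formula and a second-order expansion of $(2n-1)\log x_k - x_k$. For $A_k$, on the fixed interval $t\in[0,(2k-1)\pi/2]$ we have $t/x_k = O(1/n)$. Hence
\[
\Bigl(1+\tfrac{t^2}{x_k^2}\Bigr)^{(2n-1)/2} = 1+O(1/n), \qquad (2n-1)\arctan(t/x_k) = t\,\tfrac{2n-1}{x_k}+O(n^{-2}) = t + O(n^{-1/2}),
\]
both uniformly in $t$ and $|u|\le M$. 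Therefore
\[
A_k \to \int_0^{(2k-1)\pi/2}\sin^2 t\,dt = \frac{(2k-1)\pi}{4}.
\]
Multiplying by $\frac{4}{\pi}\frac{\sqrt{2n}}{2k-1}$ gives exactly $\phi(u)$, with an error $C_{n,M}\to0$ uniform in $u,v\in[-M,M]$.

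\textbf{The terms $l\neq k$.} For these, $r_l\neq1$ and I will use Stirling to bound
\[
\frac{e^{-x_l}x_l^{2n-1}}{\Gamma(2n)} \lesssim \sqrt{n}\,\exp\bigl(2n(\log r_l + 1 - r_l) + O(\sqrt n\,|\log r_l| + r_l\sqrt n)\bigr).
\]
Since $\log r + 1 - r \le -c\min(|r-1|^2, r)$ for $r\ne1$, each such term is exponentially small in $n$. For large $l$ the decay is like $e^{-c n r_l}$, which gives summability in $l$. The remaining factors must not spoil this. The prefactor satisfies $e^{(r_l-1)\sqrt{2n}(u-v)} \le e^{2M\sqrt{2n}\,r_l}$, which is absorbed by $e^{-cnr_l}$. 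For $A_l$, since $x_l \ge r_l n$, I bound
\[
|A_l| \le \tfrac{(2l-1)\pi}{2}\Bigl(1+\tfrac{(2l-1)^2\pi^2}{4x_l^2}\Bigr)^{n} \le \tfrac{(2l-1)\pi}{2}\exp\Bigl(\tfrac{\pi^2(2k-1)^2}{4n}\Bigr),
\]
which is polynomial in $l$ and bounded in $n$. Summing, the total contribution from $l\neq k$ is $O(e^{-c'n})$ uniformly on $[-M,M]^2$.

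\textbf{Main obstacle.} I expect the delicate step to be making the $l\neq k$ bound uniform: near $l=k\pm1$ the ratio $r_l$ is fixed but the exponent must still beat the $\sqrt{n}$ prefactors and the $e^{O(\sqrt n)}$ conjugation factor, and for $l\to\infty$ one needs a clean bound that is summable in $l$. I would first try the explicit inequality $\log r+1-r\le -(r-1)^2/(2\max(r,1))$ together with the crude bound on $A_l$ above. If that is too lossy for small $r_l$, i.e.\ $l<k$, I would treat those finitely many terms separately using $r_l\le (2k-3)/(2k-1)<1$.
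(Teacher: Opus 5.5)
Your proposal is correct and follows essentially the same route as the paper. Both use the Gamma-density representation of the kernel, Stirling or a local CLT together with $A_k\to(2k-1)\pi/4$ for the $l=k$ term, and exponential smallness of the $l\neq k$ terms coming from $I(r)=r-1-\log r>0$. Your explicit inequality $I(r)\ge (r-1)^2/(2\max(r,1))$ and your $l$-uniform bound on $|A_l|$ replace the paper's convexity/geometric-series tail argument, and are if anything cleaner.

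The only slip is the claim $e^{(r_l-1)\sqrt{2n}(u-v)}\le e^{2M\sqrt{2n}\,r_l}$, which fails for $l<k$ (where $r_l<1$ and $u<v$). Use $|r_l-1|\le\max(r_l,1)$ instead. This is harmless, since those finitely many terms carry a factor $e^{-cn}$ that absorbs $e^{2M\sqrt{2n}}$.
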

\begin{proof}
Let $x  = \frac{2n}{2k-1} + u\frac{\sqrt{ 2n }}{2k-1}$ and $y  = \frac{2n}{2k-1} + v\frac{\sqrt{ 2n }}{2k-1}$ and $x_l = (2l-1)x$. By Lemma \ref{lemma:Incomplete Gamma Function representation of the kernel},
$$
\widetilde{K}_{n, k}(u, v) \sim
\sum_{l=1}^{\infty} e^{\frac{2 \sqrt{ 2n }}{2k-1} (u-v)(l-k)} \frac{\sqrt{ 2n }}{2k-1}\frac{4}{\pi} \frac{e^{-x_{l}}x_{l}^{2n-1}}{\Gamma(2n)} A_l
.
$$
Let $z = 2n + u \sqrt{2n}$, then $x_l = \alpha_l z$ where $\alpha_l = (2l-1)/(2k-1)$. Let $\phi(u) = \frac{1}{\sqrt{ 2\pi }} \exp\left( -\frac{u^{2}}{2} \right)$ denote the density of the standard normal distribution. By Stirling's approximation $n! \sim \sqrt{2 \pi n} (n / e)^{n}$ and $\log(1 + x) = x - \frac{x^{2}}{2} + O(x^{3})$,
$$
\frac{e^{-x_{l}} x_{l}^{2n-1}}{\Gamma(2n)} = \frac{2n}{\alpha_{l}(2n + u \sqrt{ 2n })}  \left( \frac{e x_{l}}{2n} \right)^{2n} e^{-x_{l}}
\sim \frac{\exp(-2n I(\alpha_{l}))}{\alpha_{l} \sqrt{ 2n }} \phi(u) \exp\left(O(n^{-1/2}) \right)
.
$$
where $I(\alpha) = \alpha - 1 - \log \alpha \ge 0$ is zero at $\alpha=1$, convex and increasing for $\alpha \ge 1$.
Recall
$$
A_l = 
\int_{0}^{(2l-1)\pi / 2} \left( 1 + \frac{t^{2}}{x_{l}^{2}} \right)^{\frac{2n-1}{2}} \sin\left( (2n-1)\arctan\left( \frac{t}{x_{l}} \right) \right) \sin (t) dt
.
$$
As $u$ is bounded, the integrands converge uniformly
$$
1 \le \left( 1 + \frac{t^{2}}{x_{l}^{2}} \right)^{\frac{2n-1}{2}} \le \exp\left( \frac{2n-1}{2} \frac{t^{2}}{\alpha_{l}^{2}(2n + u \sqrt{ 2n })^{2}} \right) \to \exp(0) = 1
,
$$
$$
(2n-1)\arctan\left( \frac{t}{x_{l}} \right)  \to (2n-1) \frac{t}{\alpha_{l}(2n + u \sqrt{ 2n })}  \to \frac{t}{\alpha_{l}}
.
$$
Therefore,
$$
A_{l} \sim \int_{0}^{(2l-1) \pi /2} \sin\left( \frac{t}{\alpha_{l}} \right) \sin(t)   \, dt
\implies
A_k \sim \int_{0}^{(2k-1) \pi /2} \sin(t)^2 dt
= (2k-1) \frac{\pi}{4}
.
$$
For $l = k, \alpha_k = 1, I(\alpha_k) = 0$ and the contribution is
$$
\frac{\sqrt{ 2n }}{2k-1}\frac{4}{\pi} \frac{e^{-x_{k}}x_{k}^{2n-1}}{\Gamma(2n)} A_k
\sim \phi(u)
.
$$
For $l \neq k$, $A_l$ is bounded and the $-2n I(\alpha_{l})$ term will dominate the $\sqrt{ 2n }(u-v)(l-k)$ term as $u,v$ are bounded and only grows like $\sqrt{n}$. Therefore, the contribution over $l \neq k $ is bounded by
$$
\left|\sum_{l \neq k} e^{\frac{2 \sqrt{ 2n }}{2k-1} (u-v)(l-k)} \frac{\sqrt{ 2n }}{2k-1}\frac{4}{\pi}\frac{e^{-x_{l}}x_{l}^{2n-1}}{\Gamma(2n)} A_l\right|
\lesssim
\sum_{l < k} \exp(-2n I(\alpha_{l}))
+
\sum_{l > k} \exp(-2n I(\alpha_{l}))
.
$$
The first term is finite and the second term can be bounded by a geometric. Specifically, as the derivative $I'(\alpha)$ is positive and increasing for $\alpha > 1$ and $\alpha_{l}$ are increasing with constant spacings $\alpha_{l+1} - \alpha_l = 2 / (2k-1)$
\begin{equation*}
\begin{split}
I(\alpha_{k+r+1}) - I(\alpha_{k+r}) = \int_{0}^{2 / (2k-1)} I'(\alpha_{k+r} + \alpha) d\alpha  \ge \int_{0}^{2 / (2k-1)} I'(\alpha_{k+1} + \alpha) d\alpha =  I(\alpha_{k+2}) - I(\alpha_{k+1}) \ge \delta \\
\implies I(\alpha_{k+r+1}) - I(\alpha_{k+1}) = \sum_{l=1}^{r} I(\alpha_{k + l + 1}) - I(\alpha_{k+l}) \ge r \delta
\end{split}
\end{equation*}
Therefore, choosing $\delta = \min (I(\alpha_{k+1}), I(\alpha_{k+2}) - I(\alpha_{k+1})) / 2$, the geometric sum vanishes uniformly.
{\small
$$
\sum_{l > k} \exp(-2n I(\alpha_{l})) \le \exp(-2n I(\alpha_{k+1})) \sum_{r=0}^{\infty} \exp(-2n r \delta) =  \frac{\exp(-2n I(\alpha_{k+1}))}{1 - \exp(-2n \delta)}
\sim \exp(-2n (I(\alpha_{k+1}) - \delta))
$$
}
\end{proof}

\subsection{Large Deviation Bounds for the Largest Singular Values}
In order to prove the convergence of the largest singular values, we show they cannot deviate to be much larger than their limiting values. For the largest singular value, we adapt the argument of \cite[Lemma 3.3.2]{andersonIntroductionRandomMatrices2009} which bounds the tail probability via a moment method.
\begin{lemma}[Large Deviation Bound for the Largest Singular Values]
\label{lemma: LDB of largest SV}
For $R \ge 1$,
\begin{equation}
    \Prob{\sqrt{ \frac{n}{2} }\left( \frac{x_{(1)}}{n} - 2 \right) \ge R} \le \frac{5}{2} \exp\left( -\frac{R}{8} \right)
\end{equation}
\end{lemma}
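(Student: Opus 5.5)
The plan is to set $t=\pi$, so the points are the $x_i$, and use the first-moment (union) bound. With $a_R:=2n+R\sqrt{2n}$, the event in question is exactly $\set{x_{(1)}\ge a_R}$, and
\[
\Prob{x_{(1)}\ge a_R}\le \Expectation{\nu_n(a_R,\infty)}=\int_{a_R}^{\infty}K_n(x,x)\,dx .
\]
This is the DPP analogue of the moment method of \cite[Lemma 3.3.2]{andersonIntroductionRandomMatrices2009}. It works at the level of the one-point function, so only an explicit bound on the diagonal of the kernel is needed. If this route fails, the fallback is the genuine moment method: Markov's inequality with the trace moments $m_N$ of Lemma \ref{lemma:Moments of the empirical measure}, choosing $N\asymp\sqrt{n}$. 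I expect that to be messier because of the tangent-number sums.

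\textbf{Step 1: bound the kernel diagonal.} Use the Gamma representation of Lemma \ref{lemma:Incomplete Gamma Function representation of the kernel} with $y=x$, where the exponential prefactor equals $1$:
\[
K_n(x,x)=\frac{4}{\pi}\sum_{l\ge1}\frac{e^{-x_l}x_l^{2n-1}}{\Gamma(2n)}A_l,\qquad x_l=(2l-1)x .
\]
\begin{itemize}
\item In $A_l$ we have $t\le (2l-1)\pi/2$, so $t/x_l\le \pi/(2x)$.
\item For $x\ge 2n$ this gives $(1+t^2/x_l^2)^{(2n-1)/2}\le \exp\bigl((2n-1)\pi^2/(8x^2)\bigr)\le e^{\pi^2/(16n)}\le e^{\pi^2/16}$.
\item The sines are bounded by $1$, hence $|A_l|\le c\,(2l-1)$ with an absolute constant $c$.
\end{itemize}
Therefore $K_n(x,x)\le \frac{4c}{\pi}\sum_{l\ge1}(2l-1)\,g_{2n}\bigl((2l-1)x\bigr)$, where $g_{2n}$ is the Gamma$(2n,1)$ density.

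\textbf{Step 2: integrate termwise.} Substituting $s=(2l-1)x$ gives
\[
\int_{a_R}^\infty (2l-1)\,g_{2n}\bigl((2l-1)x\bigr)\,dx=\Prob{G_{2n}\ge (2l-1)a_R},\qquad G_{2n}\sim\mathrm{Gamma}(2n,1).
\]
So the whole bound is a sum of Gamma tails. Apply the Chernoff bound
\[
\Prob{G_{2n}\ge 2n(1+\varepsilon)}\le \exp\bigl(-2n\,(\varepsilon-\log(1+\varepsilon))\bigr).
\]
\begin{itemize}
\item For $l=1$, take $\varepsilon=R/\sqrt{2n}$ and use $\varepsilon-\log(1+\varepsilon)\ge \varepsilon^2/(2(1+\varepsilon))$. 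This gives exponent at least $R^2/(2(1+\varepsilon))$.
\item If $\varepsilon\le1$, this is at least $R^2/4\ge R/4$ for $R\ge1$.
\item If $\varepsilon>1$, it is at least $R\sqrt{2n}/4\ge R/4$.
\item For $l\ge2$, the threshold $(2l-1)a_R\ge 3a_R$ puts $\varepsilon\ge2$. Using $\varepsilon-\log(1+\varepsilon)\ge \varepsilon/4$ there, the tails are $\le \exp(-(2l-1)a_R/8)$. These form a geometric series dominated by $e^{-3R/8}\cdot e^{-n/2}$, which is also $\lesssim e^{-R/8}$.
\end{itemize}

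\textbf{Step 3: collect constants.} Summing the contributions yields a bound of the form $C e^{-R/8}$. The choice $8$ is deliberately lossy so that the prefactor $\frac{4c}{\pi}(1+\sum_{l\ge2}\cdots)$ can be absorbed into $5/2$.

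The main obstacle is Step 1 together with this constant bookkeeping. The bound on $A_l$ must be uniform in $n$, $l$ and $x\ge a_R$, and the factor $(2l-1)$ it produces must be absorbed. If the crude estimate $|A_l|\le c(2l-1)$ spoils the explicit constant $5/2$, I will sharpen the $l=1$ term to $A_1\le \int_0^{\pi/2}e^{\pi^2/(16n)}\sin t\,dt$, and treat the $l\ge2$ terms only up to a constant multiple of $e^{-n}$.
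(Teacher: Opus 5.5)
Your proof is correct, but it takes a different route from the paper's proof of this lemma. The paper uses the moment method you list as a fallback. It applies Markov's inequality with $\Expectation{x_{(1)}^{2N}}\le m_N$, bounds the alternating tangent-number sum of Lemma~\ref{lemma:Moments of the empirical measure} by its last term to get $m_N\le\frac52(2n+2N)^{2N}$, and then chooses $2N=\lfloor R\sqrt{2n}/2\rfloor$. Both the $5/2$ and the rate $1/8$ come from that choice.

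Your first-moment bound via $\Expectation{\nu_n(a_R,\infty)}$ is instead the $k=1$ case of the argument the paper uses only for Lemma~\ref{lemma:General LDB bound}. That argument takes a factorial moment of $\nu_n$, uses the Gamma representation of Lemma~\ref{lemma:Incomplete Gamma Function representation of the kernel}, bounds $|A_l|\lesssim 2l-1$, changes variables to Gamma tails, and applies Chernoff. Your version improves the $A_l$ factor to $e^{\pi^2/(16n)}$ by using $x\ge 2n$.

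Your route has three advantages. You need neither the trace moments nor the monotonicity of the $a_j$. The decay you obtain is at least $e^{-R/4}$, and Gaussian for $R\le\sqrt{2n}$. And $k=1$ becomes part of a single argument covering all $k$. The paper's route gives an explicit constant directly from Markov, without any case analysis.

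One step needs an extra sentence. As written, the absorption into $5/2$ claimed in Step~3 does not follow from your Step~1--2 estimates when $n$ and $R$ are small. Your prefactor is $2e^{\pi^2/(16n)}\approx 3.7$ at $n=1$. So at $n=1$, $R=1$, the $l=1$ term alone is only bounded by $3.7\,e^{-1/4}\approx 2.9$, which exceeds $\frac52 e^{-1/8}\approx 2.2$.

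The clean fix is to note that the inequality is vacuous for $R\le 8\log(5/2)\approx 7.33$, because the right-hand side is then at least $1$. For $R\ge 8\log(5/2)$:
\begin{itemize}
\item You have $e^{-R/4}\le\frac25e^{-R/8}$, so the $l=1$ contribution is at most $2e^{\pi^2/16}\cdot\frac25 e^{-R/8}<1.5\,e^{-R/8}$.
\item Since $a_R\ge 2+R\sqrt2$, your geometric bound makes the $l\ge 2$ contribution smaller than $0.1\,e^{-R/8}$, uniformly in $n\ge1$.
\end{itemize}
Together these give a total below $1.6\,e^{-R/8}$, which proves the stated bound.
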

\begin{proof}
By Markov's inequality,
$$
\Prob{x_{(1)} \ge 2n + R \sqrt{ 2n }}
\le \frac{\Expectation{x_{(1)}^{2N}}}{\left( 2n + R \sqrt{ 2n } \right)^{2N}}
\le \frac{m_{N}(n)}{\left( 2n + R \sqrt{ 2n } \right)^{2N}}
$$
Recall if $a_{j} \ge 0$ are increasing, then $|\sum_{j=0}^{N-1} (-1)^{j} a_{j}| \le a_{N-1}$. By Lemma \ref{lemma:Moments of the empirical measure},
$$
m_{N} = \left( \frac{\pi}{2} \right)^{2N} (-1)^{N-1} \left[ -n + \sum_{m=0}^{n-1} \sum_{j=0}^{N-1} \binom{2N}{2j+1} \binom{2(m+j)+1}{2m}(-1)^{j}T_{j}\right]
$$
and for fixed $m$ define
$$
a_{j} = \binom{2N}{2j+1} \binom{2(m+j)+1}{2m}(-1)^{j} \, 2 \left( \frac{2}{\pi} \right)^{2j+2} (2j+1)!\,Z_{1}(2j+2)
$$
where $(1 - 2^{-2j})\zeta(2j) = Z_1(2j)$. For fixed $k$, $Z_{k}(2j) = \sum_{m=k}^{\infty} \left( \frac{2k-1}{2m-1} \right)^{2j}$ can be bounded by the integral test for $j \ge 1$
$$
1 \le
Z_{k}(2j)
\le 1 + \int_{k}^{\infty} \frac{(2k-1)^{2j}}{(2x-1)^{2j}}  \, dx 
\le 1 + \frac{1}{2}\frac{2k-1}{2j-1}
\le \frac{2k+1}{2}
$$
For $0 \le j \le N-2$, $a_{j+1} / a_j > 1$ which implies $a_j$ is increasing
$$
\frac{a_{j+1}}{a_{j}} = \frac{(2(N-j)-1)!}{(2(N-j)-3)!} \frac{(2j+1)!}{(2j+3)!} \frac{(2(m+j)+3)!}{(2(m+j)+1)!} \left( \frac{2}{\pi} \right)^{2} \frac{Z_{1}(2j+4)}{Z_{1}(2j+2)}
\ge 6 \left( \frac{2}{\pi} \right)^{2} \frac{2}{3} = \left( \frac{4}{\pi} \right)^{2} > 1
.
$$
As the sum is alternating and $a_j$ is increasing, it is dominated by its final term
$$
\left( \frac{\pi}{2} \right)^{2N} (-1)^{N-1}a_{N - 1} = \frac{(2m+2N-1)!}{(2m)!}\, 2 Z_{1}(2N) \le (2n + 2N)^{2N-1} \, 2 Z_{1}(2N)
.
$$
Therefore, as $\left( \frac{\pi}{2} \right)^{2N} n \le (2n + 2N)^{2N}$ and $Z_1(2N) \le \frac{3}{2}$
.
$$
m_{N} \le \left( \frac{\pi}{2} \right)^{2N}(-1)^{N}n + \frac{2n\; Z_{1}(2N)}{2n + 2N} (2n + 2N)^{2N} \le \frac{5}{2} (2n + 2N)^{2N}
$$
Choosing $2N = \lfloor\frac{1}{2} R \sqrt{ 2n }\rfloor$ implies $\frac{2n + 2N}{2n + R \sqrt{ 2n }} \le 1 - \frac{2N}{2n + R \sqrt{ 2n }}$. By using $1-x \le e^{-x}$
$$
\left( \frac{2n + 2N}{2n + R \sqrt{ 2n }} \right)^{2N}
\le
\exp\left( - \frac{4N^{2}}{2n + R \sqrt{ 2n }} \right)
\le
\exp\left( -\frac{R}{8} \right)
.
$$
This yields the exponential bound on the probability
$$
\Prob{\sqrt{ \frac{n}{2} }\left( \frac{x_{(1)}}{n} - 2 \right) \ge R}\le \frac{m_{N}}{(2n + R \sqrt{ 2n })^{2N}} \le \frac{5}{2} \left( \frac{2n + 2N}{2n + R \sqrt{ 2n }} \right)^{2N} \le \frac{5}{2} \exp\left( -\frac{R}{8} \right)
$$
\end{proof}

For the $k$-th largest singular value with $k > 1$, this approach requires controlling $\Expectation{\sum_{i=k}^{n} x_{(i)}^{2N}}$ which we do not currently have in closed form. Instead we use the DPP structure to obtain a tighter bound for general $k$.

\begin{lemma}[Large Deviation Bound for the $k$-th Largest Singular Value]
\label{lemma:General LDB bound}
For $R \ge \frac{(2k-1)\log 2}{2\sqrt{ 2 }}$
\begin{equation}
\Prob{(2k-1)\sqrt{ \frac{n}{2} }\left( \frac{x_{(k)}}{n} - \frac{2}{2k-1}\right) \ge R}
\le  C_k
\exp\left( - d_k R \right)
\end{equation}
where $C_k, d_k$ are independent of $n$
$$
C_k = 3k e^{k} (k+2)^{k}k!\,2^{k} \exp\left( \frac{k(2k-1) \pi^{2}}{8}\right),\quad
d_k = \min\left( \frac{1}{\sqrt{ 2 }}, \frac{2\sqrt{ 2 }}{2k-1} \right)
.
$$
\end{lemma}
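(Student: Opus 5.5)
The event says at least $k$ points of the DPP $x_i$ (with $t=\pi$, $d=2n$) exceed the threshold $T := \frac{2n}{2k-1} + \frac{R\sqrt{2n}}{2k-1}$. Since $\{x_{(k)} \ge T\}$ forces at least $k$ points in $[T,\infty)$, I will bound the factorial moment
\[
\Prob{x_{(k)} \ge T} \le \Expectation{\binom{\nu_n[T,\infty)}{k}} = \frac{1}{k!}\int_{[T,\infty)^k} \det[K_n(x_i,x_j)]_{i,j=1}^k \, dx_1\cdots dx_k .
\]
This reduces the whole problem to pointwise control of the kernel far out in the tail.

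To control the determinant, I will use the Gamma representation of Lemma \ref{lemma:Incomplete Gamma Function representation of the kernel}, $K_n(x,y) = \frac{4}{\pi}\sum_{l\ge1} e^{(2l-1)(x-y)} g_l(x) A_l(x)$, where $g_l(x) = e^{-x_l}x_l^{2n-1}/\Gamma(2n)$ and $x_l=(2l-1)x$. The key structural observation is that this exhibits $K_n$ as a sum of rank-one terms $f_l(x) h_l(y)$ with $h_l(y)=e^{-(2l-1)y}$. By Cauchy–Binet, $\det[K_n(x_i,x_j)]_{k\times k}$ is then a sum over $k$-subsets $S$ of indices $l$ of $\det[f_l(x_i)]\det[h_l(x_j)]$.

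For each subset $S$, I will bound the determinants by Hadamard (or simply by $k!\prod$ of the maximal entries). This yields terms of the form $\prod_{l\in S}\sup_x |f_l(x)h_l(x)|$, up to factorials. Equivalently, after conjugation $e^{x_i}$ cancels between $f$ and $h$, so one is left with $\prod_{l\in S} g_l(x_{\pi(l)}) |A_l|$ together with cross exponentials $e^{(2l-1)(x_i - x_j)}$; these cross exponentials should be handled by pairing the two determinants carefully. The bound $|A_l| \le (2l-1)\frac{\pi}{2}\, e^{(2l-1)^2\pi^2/8}$, obtained from $(1+t^2/x_l^2)^{(2n-1)/2} \le e^{t^2 n/x_l^2}$ with $x_l \ge n$ on the region of integration, explains the factor $\exp(k(2k-1)\pi^2/8)$ appearing in $C_k$.

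The tail estimate will come from $g_l(x) \le \exp(-2n I(\alpha))$ up to polynomial factors, with $I(\alpha) = \alpha-1-\log\alpha$, which is the same Stirling/Chernoff computation used in Lemma \ref{lemma:Convergence of the rescaled kernels at infinity}. The integral $\int_T^\infty g_l$ is a Gamma survival function $\widehat\Gamma(2n,(2l-1)T)/(2l-1)$. For $l \ge k$ we have $(2l-1)T \ge 2n + R\sqrt{2n}$, and a Chernoff bound on the Gamma tail gives at most $e^{-cR}$ once $R$ is of order one; this is where the $1/\sqrt2$ in $d_k$ should come from. For $l<k$ the mass is at most $1$, but any $k$-subset $S$ must contain some index $l \ge k$. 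If instead the largest index used is $m \ge k$, one gets decay $\exp(-2nI(\alpha_m))$, giving the rate $\frac{2\sqrt2}{2k-1}R$ in the linear regime. Summing over $S$ then gives a geometric-type series in $m$ as in the proof of Lemma \ref{lemma:Convergence of the rescaled kernels at infinity}, which yields the combinatorial constants $(k+2)^k\,2^k\,k!\,3k e^k$.

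The main obstacle is the Cauchy–Binet step. The cross factors $e^{(2l-1)(x_i-x_j)}$ can be large, so integrating each determinant separately will fail, and the determinant structure must be kept. The first thing I will try is to rescale rows by $e^{(2k-1)x_i}$ so that every surviving factor is $g_l(x_i)\,e^{-2(l-k)(x_i-x_j)}$-like. I would then verify, using the threshold condition $R \ge (2k-1)\log 2/(2\sqrt2)$, that the smallest index contributing forces $\sum_{l\in S}(2l-1) \ge k^2$. This should give a clean product of one-dimensional Gamma tails.
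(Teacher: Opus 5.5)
Your outline has the same skeleton as the paper's proof: the factorial-moment bound, the Gamma representation of $K_n$, Cauchy--Binet/Leibniz to kill repeated indices, Chernoff bounds on $\widehat{\Gamma}(2n,\cdot)$, and the fact that every $k$-subset of indices contains some $l\ge k$. However, the step you single out as the main obstacle is left open, and the remedies you suggest would not close it.

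Bounding the two determinants separately, by Hadamard or by maximal entries, destroys the pairing between the powers $(2l-1)^{2n-1}x^{2n-1}$ and the exponentials $e^{-(2l-1)x}$. The max-entry bound, for instance, leaves $\prod_i (2l_{\max}-1)^{2n-1}x_i^{2n-1}e^{-(2l_{\min}-1)x_i}$. Its integral carries the factor $\bigl((2l_{\max}-1)^{2n-1}/(2l_{\min}-1)^{2n}\bigr)^k$, which is exponentially large in $n$; nothing of the form $\prod_{l\in S}\sup_x|f_l(x)h_l(x)|$ comes out. Conjugating the kernel by $e^{(2k-1)x}$ leaves the determinant unchanged, so it cannot help. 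Finally, $\sum_{l\in S}(2l-1)\ge k^2$ holds for every $k$-subset of $\mathbb{N}$ whatever $R$ is, so it carries no information. The hypothesis $R\ge (2k-1)\log 2/(2\sqrt 2)$ is only used at the very end: it ensures $e^{-c_kR}\le \tfrac12$ with $c_k=2\sqrt2/(2k-1)$, so that $\sum_{r\ge 1}e^{-c_k rR}\le 2e^{-c_kR}$.

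The missing observation is that the cross factors cost nothing once you integrate, so no determinant structure is needed beyond the vanishing of repeated-index terms. Write $K_n(x,y)=\sum_{l\ge1}F_l(x)e^{-(2l-1)y}$ with $F_l(x)=\frac{4}{\pi}\frac{(2l-1)^{2n-1}x^{2n-1}}{\Gamma(2n)}A_l(x)$. Then $\det[K_n(x_i,x_j)]=\sum_{l_1,\dots,l_k}\prod_i F_{l_i}(x_i)\det[e^{-(2l_i-1)x_j}]$; expand the last determinant by Leibniz and use the triangle inequality term by term. For distinct $l_1,\dots,l_k$ and a fixed permutation $\sigma$, the integrand $\prod_j |F_{l_j}(x_j)|\,e^{-(2l_{\sigma^{-1}(j)}-1)x_j}$ factorises over $j$. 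Substituting $y_j=(2l_{\sigma^{-1}(j)}-1)x_j$ gives
\[
\int_{[T,\infty)^k}\prod_{j=1}^{k}|F_{l_j}(x_j)|\,e^{-(2l_{\sigma^{-1}(j)}-1)x_j}\,dx\le\prod_{j=1}^{k}\frac{4}{\pi}\sup_{x\ge T}|A_{l_j}(x)|\,\frac{(2l_j-1)^{2n-1}}{(2l_{\sigma^{-1}(j)}-1)^{2n}}\,\widehat{\Gamma}\bigl(2n,(2l_{\sigma^{-1}(j)}-1)T\bigr).
\]
Each ratio can be huge, but because $\sigma$ permutes $\{l_1,\dots,l_k\}$ their product is exactly $\prod_j(2l_j-1)^{-1}$. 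This cancels the interval length $(2l_j-1)\pi/2$ in the bound on $A_{l_j}$. So every $\sigma$ contributes at most $2^kE^k\prod_j\widehat{\Gamma}_{l_j}$, where $E:=\sup_{l\ge1}\sup_{x\ge T}\frac{2|A_l(x)|}{(2l-1)\pi}$. The sum over $\sigma$ costs only $k!$, and you reach $e_k(\widehat{\Gamma}_l)\le\frac{1}{(k-1)!}\bigl(\sum_l\widehat{\Gamma}_l\bigr)^{k-1}\sum_{l\ge k}\widehat{\Gamma}_l$, as in the paper.

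This needs $E$ finite and independent of $n$, which your bound on $A_l$ does not give. The factor $e^{(2l-1)^2\pi^2/8}$ grows like $e^{cl^2}$, while $\widehat{\Gamma}(2n,(2l-1)T)$ decays only exponentially in $l$, so the sum over index sets would diverge. Use $x_l=(2l-1)x\ge(2l-1)T$ instead of $x_l\ge n$. Then $t/x_l\le\pi/(2T)$ on $[0,(2l-1)\pi/2]$, and $(1+t^2/x_l^2)^{(2n-1)/2}\le\exp\bigl(\frac{(2n-1)\pi^2(2k-1)^2}{8(2n+R\sqrt{2n})^2}\bigr)\le e^{\pi^2(2k-1)^2/8}$ for all $l$.
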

\begin{proof}
As $\Indicator{\nu_{n}(R, \infty) \ge k} \le \binom{\nu_{n}(R, \infty)}{k}$
$$
\Prob{x_{(k)} \ge R}
= \Prob{\nu_{n}(R, \infty) \ge k}
\le \Expectation{\binom{\nu_{n}(R, \infty)}{k}}
=
\int_{(R, \infty)^{k}} \det[K_{n}(x_{i}, x_{j})]_{i, j=1}^{k}
dx_{1:k}
.
$$
Hence
$$
\Prob{(2k-1)\sqrt{ \frac{n}{2} }\left( \frac{x_{(k)}}{n} - \frac{2}{2k-1}\right) \ge R}
\le
\int_{\left(\frac{1}{2k-1}(2n + R \sqrt{2n}), \infty\right)^{k}} \det[K_{n}(x_{i}, x_{j})]_{i, j=1}^{k}
dx_{1:k}
.
$$

By the Gamma function representation of the kernel, Lemma \ref{lemma:Incomplete Gamma Function representation of the kernel},
$$
K_{n}(x, y) = \frac{4}{\pi} \sum_{l=1}^{\infty} e^{-(2l-1)y} \frac{(2l-1)^{2n-1}x^{2n-1}}{\Gamma(2n)} A_{l}(x)
$$
where
$$
A_l(x) = 
\int_{0}^{(2l-1)\pi / 2} \left( 1 + \frac{t^{2}}{((2l-1)x)^{2}} \right)^{\frac{2n-1}{2}} \sin\left( (2n-1)\arctan\left( \frac{t}{(2l-1)x} \right) \right) \sin (t) dt
.
$$
Apply Leibniz formula $\det[M_{ij}]_{i,j = 1}^{k} = \sum_{\sigma \in S_{k}} \mathrm{sgn}(\sigma) \prod_{i=1}^{k} M_{\sigma(i) i}$ to show
\begin{align*}
\det[K(x_{i}, x_{j})]_{i,j =1}^{k}
&=
\left( \frac{4}{\pi} \right)^{k}\sum_{\sigma \in S_{k}} \mathrm{sgn}(\sigma) \sum_{l_{1}, \dots, l_{k}=1}^{n} \prod_{i=1}^{k} \frac{(2l_{i}-1)^{2n-1} x_{i}^{2n-1}}{\Gamma(2n)} A_{l_{i}}(x_{i}) e^{-(2l_{i}-1) x_{\sigma(i)}} \\
&=
\left( \frac{4}{\pi} \right)^{k}
\sum_{l_{1}, \dots, l_{k}=1}^{\infty}
\prod_{i=1}^{k} \frac{(2l_{i}-1)^{2n-1} x_{i}^{2n-1}}{\Gamma(2n)} A_{l_{i}}(x_{i})
\sum_{\sigma \in S_{k}} \mathrm{sgn}(\sigma)   \prod_{i=1}^{k}e^{-(2l_{i}-1) x_{\sigma(i)}}
\end{align*}
The second term is a determinant
$$
\sum_{\sigma \in S_{k}} \mathrm{sgn}(\sigma)   \prod_{i=1}^{k}e^{-(2l_{i}-1) x_{\sigma(i)}}
=
\det[e^{-(2l_{i}-1)x_{j}}]_{i, j=1}^{k}
$$
and if two $l_{i}$ are equal, two rows are equal and the determinant is 0. Therefore, we can restrict the sum to $l_{1}, \dots, l_{k}$ distinct and we can rewrite $\Expectation{\binom{\nu_{n}(R, \infty)}{k}}$ as
$$
k!
\left( \frac{4}{\pi} \right)^{k}
\sum_{\sigma \in S_{k}} \mathrm{sgn}(\sigma)
\sum_{1 \le l_{1} < \dots < l_{k} < \infty}
\int_{\left( \frac{1}{2k-1}(2n + R \sqrt{2n }), \infty \right)^{k}}  \prod_{i=1}^{k} \frac{(2l_{i}-1)^{2n-1}x_{i}^{2n-1}}{\Gamma(2n)} A_{l_{i}}\left( x_{i} \right) e^{-(2l_{i} - 1)x_{\sigma(i)}} dx_{1:k}
.
$$
Next use, $1 + x \le e^x, |\sin(x)| \le 1$ to bound
$$
\sup_{x \ge \frac{1}{2k-1}(2n + R \sqrt{ 2n })}|A_{l}(x)|
\le \frac{(2l-1) \pi}{2} \exp\left( \frac{2n-1}{2} \frac{(\pi / 2)^{2} (2k-1)^{2}}{(2n + R \sqrt{ 2n })^{2}} \right)
\le \frac{(2l-1) \pi}{2} \exp\left( \frac{\pi^{2} (2k-1)^{2}}{8} \right)
.
$$
Now we apply triangle inequality: first use the bound for $A_l(x)$, then change variables to $y_{i} = (2l_{\sigma^{-1}(i)}-1) x_{i}$ to reveal normalised incomplete gamma functions with $\alpha_l = (2l - 1) / (2k -1)$ and sum over all permutation to incur an additional $k!$ factor. This yields the upper bound
$$
(k!)^{2} 2^{k}
\exp\left( \frac{\pi^{2}\, k(2k-1)^{2}}{8} \right)
\sum_{1 \le l_{1} < \dots < l_{k} < \infty} 
\prod_{i=1}^{k} \widehat{\Gamma}(2n, \alpha_{l_{i}}(2n + R \sqrt{ 2n }))
.
$$
The sum over $l_i$ is an elementary symmetric polynomial in $\widehat{\Gamma}_{l} := \widehat{\Gamma}(2n, \alpha_{l}(2n + R \sqrt{ 2n })) \ge 0$. Provided the sums converge, which we show next, the following bound follows by properties of elementary symmetric polynomials: $e_{k}(\widehat{\Gamma}_{l}) = \sum_{i=k}^{\infty} \widehat{\Gamma}_{i} e_{k-1}(\widehat{\Gamma}_{l} \setminus \widehat{\Gamma}_{i})$ and $k! e_{k}(\widehat{\Gamma}_{l}) \le \left( \sum_{l} \widehat{\Gamma}_{l} \right)^{k}$.
$$
\sum_{1 \le l_{1} < \dots < l_{k} < \infty} 
\prod_{i=1}^{k} \widehat{\Gamma}(2n, \alpha_{l_{i}}(2n + R \sqrt{ 2n }))=
e_{k}(\widehat{\Gamma}_{l})
\le e_{k-1}(\widehat{\Gamma}_{l}) \sum_{l \ge k} \widehat{\Gamma}_{l}
\le \frac{1}{(k-1)!} \left( \sum_{l=1}^{\infty} \widehat{\Gamma}_{l} \right)^{k-1} \sum_{l \ge k} \widehat{\Gamma}_{l}
$$

Recall that $\widehat{\Gamma}(2n, x) = \Prob{X_{2n} \ge x}$ where $X_{2n}$ is a Gamma distribution with rate 1 and shape $2n$. By Markov inequality and the MGF of the Gamma distribution, for $t \in(0, 1)$
$$
\widehat{\Gamma}(2n, x) \le e^{-tx} \Expectation{e^{tX}} = e^{-tx} (1-t)^{-2n} = \exp(-tx - 2n \log(1-t))
$$
Minimise over $t$, to get $t^{\star} = 1 - \frac{2n}{x}$ which is valid if $x > 2n$. Hence for $\alpha \geq 1$, by $\log(1 + x) \le x$
$$
\widehat{\Gamma}(2n, \alpha(2n + R \sqrt{ 2n })) \le \exp\left( -2n I(\alpha) -\alpha R \sqrt{ 2n } + 2n \log\left( 1 + \frac{R}{\sqrt{ 2n }} \right) \right) \le \exp(-(\alpha-1)\sqrt{ 2 } R)
.
$$
For $\alpha=1$, the bound above is trivial and we use a tighter bound $\log(1+x) \le x - \frac{x^{2}}{2(1 + x)}$
$$
\widehat{\Gamma}(2n, 2n + R \sqrt{ 2n }) \le \exp\left( -\frac{R^{2}}{2\left( 1 + \frac{R}{\sqrt{ 2n }} \right)} \right) \le \exp\left( 1 - \frac{R}{\sqrt{ 2 }} \right)
.
$$
Overall we have, (including a redundant factor of $e > 1$ for $l \neq k$ to simplify the final bound)
$$
\widehat{\Gamma}_l
\le
e
\begin{cases}
1, & l < k \\
\exp\left( - \frac{R}{\sqrt{ 2 }} \right), &  l = k \\
\exp\left( -\frac{2(l-k)}{2k-1} R\sqrt{ 2 }\right), & l > k
.
\end{cases}
$$

For $l < k$,  $\sum_{l < k} \widehat{\Gamma}_l\le (k-1)e$. Next let $c_{k} = 2 \sqrt{ 2 } / (2k-1)$ and for $R \ge \log 2 / c_{k}$
$$
\sum_{l \ge k} \widehat{\Gamma}_l
\le e\left( \exp\left( -\frac{R}{\sqrt{ 2 }} \right) + \frac{e^{-c_{k}R}}{1- e^{-c_{k}R}} \right)
\le 3e \exp\left( - \min\left( \frac{1}{\sqrt{ 2 }}, c_{k} \right)R \right)
.
$$
In particular, this is less than $3e$ and $\sum_{l} \widehat{\Gamma}_l\le e(k+2)$.  Therefore,
$$
\Expectation{\binom{\nu_{n,k}(R, \infty)}{k}} \le  3k e^{k} (k+2)^{k}k!\,2^{k} \exp\left( \frac{k(2k-1) \pi^{2}}{8}\right)
\exp\left( - \min\left( \frac{1}{\sqrt{ 2 }}, \frac{2\sqrt{ 2 }}{2k-1} \right)R \right)
.
$$
\end{proof}

\section{Conclusion, Geometry and Future Directions}
\label{section:Conclusion}
Somewhat surprisingly, there is little literature about the singular values of L\'evy's area matrix despite it being a fundamental object of stochastic analysis for over 70 years. 
In this article, we derived explicit formulas for its distribution, obtained an interacting-particle description, established a determinantal point process representation, and analysed both global and local asymptotic regimes.
We hope the results of this paper convince the reader that the singular spectrum of L\'evy's area possesses a rich and surprisingly tractable structure.
Below we give a geometric interpretation of the singular values as L\'evy's areas of a transformed path.
Finally, we conclude with future directions.

\subsection{Geometric Interpretation}\label{sec:geometry}
Because $\bA_t$ is skew-symmetric, it is the orthogonal direct sum of planar rotations.
Formally, there exists an orthogonal decomposition 
\[
\mathbb{R}^d = \bigoplus_{i=1}^n V_i \text{ if }d\text{ is even, and }\mathbb{R}^d = \bigoplus_{i=1}^n V_i\oplus \overline{V}_{0} \text{ if }d\text{ is odd}
\]
such that:
\begin{itemize}
    \item each $V_i$ is a $2$-dimensional subspace and $\overline{V}_0$ is a one-dimensional subspace
    \item $V_i \perp V_j$ for $i \neq j$ are orthogonal,
    \item each $V_i$ is invariant under $\bA_t$, i.e.\ $\bA_t(v) \in V_i$ for $v \in V_i$,
    \item and there exists an orthonormal basis of $V_i$ in which the restriction of $\bA_t$ takes the form
    \[
    \bA_t|_{V_i} =
    \begin{pmatrix}
    0 & \sigma_i(t) \\
    -\sigma_i(t) & 0
    \end{pmatrix}, \quad \sigma_i(t) \ge 0.
    \]
\end{itemize}
Spelled out in matrix notation, this is the block diagonal form of $\bA_t = Q_t J(\Sigma_t) Q_t^{\top}$.
The above representation combined with the fact that the coordinates of $\bA_t$ are the areas of the Brownian trajectories $[0,t] \ni s\mapsto B_s$, implies that
\begin{itemize}
\item \(\hat B_s = Q_t^{\top}B_s\) for $s \in [0,t]$ is a (path dependent) rotation of the realised Brownian path,
    \item the matrix of L\'evy's areas of $\hat B_t$ is block diagonal with only $2n$ non-zero entries $\pm \sigma_k$
    \item these $\sigma_k$ are the successive extremal areas of orthogonal planar projections of $B_t$, e.g.~$\sigma_1 = \max_{u\perp v, |u|=|v|=1} u^\top \bA_t v$ corresponds to the maximal L\'evy's area between $u^{\top} B_t$ and $v^{\top} B_t$.
\end{itemize}
In Figure \ref{fig:transformed BM L\'evy's area matrix}, we visualise trajectories of $\hat B_t$ and its L\'evy's areas:
as \(\sigma_{(1)}\) is large, \(s \mapsto(\hat B_1(s), \hat B_2 (s))\) must form a coherent trajectory enclosing a significant amount of area.
In contrast, small singular values correspond to incoherent trajectories with negligible enclosed areas.
The mechanism underlying this behaviour appears to be that \(\hat B_{2k}(t) \sim \cos(\pi(2k-1)t)\) and \(\hat B_{2k-1}(t) \sim \sin(\pi(2k-1) t)\) behave like scaled and phase-shifted sinusoids.
Hence for pairs of the same frequency, \((\hat B_{2k-1}, \hat B_{2k})\) form circles which enclose area of the same sign.
In contrast, pairs with differing frequencies form Lissajous curves which yield zero signed area.
We sketch in Remark \ref{rem:lissajous} how these Lissajous figures and the asymptotics of the leading singular values emerge from optimal series expansions.
\begin{figure}
    \centering
    \includegraphics[width=\linewidth]{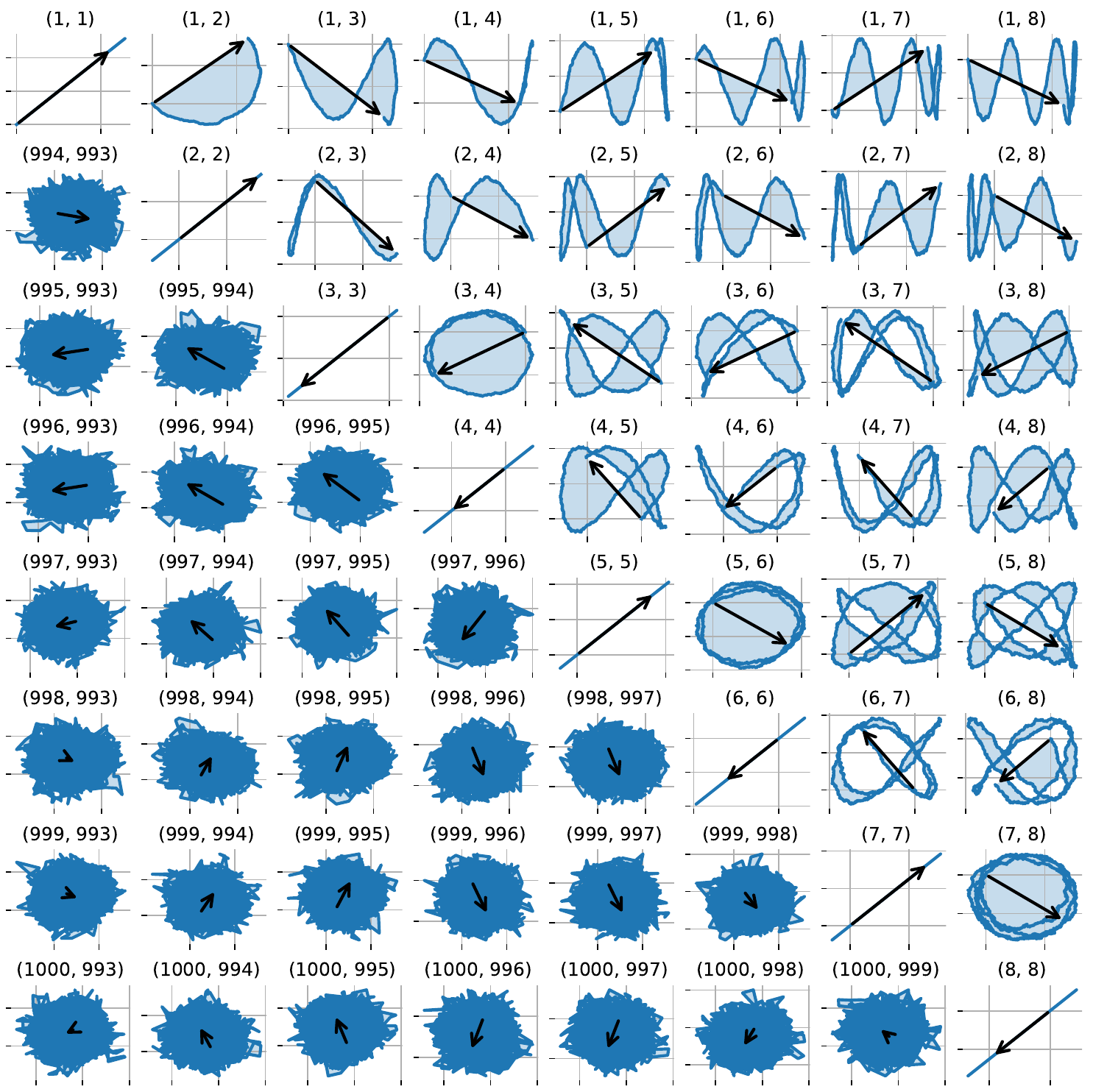}
    \caption{We visualise the singular values of the matrix of L\'evy's area \(\bA_t = Q_t J(\Sigma_t) Q_t^{\top}\) for \(d=1000\)-dimensional Brownian motion \(B_t\). The singular values correspond to the L\'evy's area of \(\hat B_t = Q_1^{\top} B_t\). Like in Figure \ref{fig:Planar Brownian Motion}, we plot the trajectory and enclosed area corresponding to pairs for the first and last eight coordinates of $\hat B_t$.}
    \label{fig:transformed BM L\'evy's area matrix}
\end{figure}

\begin{remark}[SVD vs Optimal approximation with respect to the Gaussian Hilbert Space]\label{rem:lissajous}
Recently, \cite{dickinsonDiagonalizationSimulationLevy2025a} provide a series expansion of L\'evy's area which at each truncation level $n$ is the optimal approximation with respect to functions of $4n$ elements of the Gaussian Hilbert space of \(B\).
\begin{align*}
\bA_t
&=
\sum_{k=1}^{\infty}
\frac{t}{\pi(2k-1)} [X_{k} \otimes Y_{k} - Y_{k} \otimes  X_{k}] \\
B_{t} &=
\sum_{k=1}^{\infty}
\frac{\sqrt{ 2 }}{\pi(2k-1)}
[\sin((2k-1)\pi t)X_{k} + (1- \cos((2k-1) \pi t))Y_{k}]
\end{align*}
where $X_{k}, Y_{k}$ are independent \(d\)-dimensional standard Gaussian vectors
\begin{equation*}
    X_{k} = \sqrt{ 2 } \int_{0}^{1} \cos((2k-1) \pi t) dB_{t},\quad
Y_{k} = \sqrt{ 2 } \int_{0}^{1} \sin((2k-1) \pi t) dB_{t}
.
\end{equation*}
On the other hand, the block diagonal form corresponds to the truncated singular value decomposition which yields the optimal approximation over matrices of a given rank. These are different forms of optimality, but as $d \to \infty$, they seem to agree at least in terms of the behaviour in Figure \ref{fig:transformed BM L\'evy's area matrix}.
To see this, note independent vectors are approximately orthogonal which makes the series expansion a natural candidate for the SVD with $Q_{2k-1} =\frac{1}{\sqrt{2n}} X_k, Q_{2k} = \frac{1}{\sqrt{2n}} Y_k$. As $\hat B_s = Q_t^{\top} B_s$,
\begin{equation*}
\hat B_{2k}(s) \sim \frac{2\sqrt{ n}}{\pi(2k-1)}
(1- \cos((2k-1) \pi s)),\quad
\hat B_{2k-1}(s) \sim \frac{2\sqrt{n}}{\pi(2k-1)}
\sin((2k-1)\pi s)
\end{equation*}
which agrees with the sinusoidal behaviour. Furthermore, the L\'evy's area of $\hat B_{2k}(s), \hat B_{2k-1}(s)$ is approximately $\frac{t}{\pi} \frac{2n}{2k-1}$ which agrees with the limit of $\sigma_{(k)}$ and the Gaussian fluctuations follows as it is approximately a sample mean.
\begin{equation*}
    \sqrt{ d } \left( \frac{\sigma_{(k)}}{d} - \frac{t}{\pi(2k-1)} \right)
\sim
\frac{\sqrt{ d }}{2 \pi (2k-1)}
\left( 
\frac{1}{d}
\sum [(X_{k}^{i})^{2} + (Y_{k}^{i})^{2}]
-2
 \right)
\end{equation*}
It is intriguing that these two notions of optimality appear to coincide in the limit $d \to \infty$, and understanding the mechanism behind this phenomenon is another interesting open question.
\end{remark}

\subsection{Future Directions}

\begin{description}
    \item[High Dimensions, Low Rank Approximations, and Signatures]
    The decay of the singular spectrum, Figure \ref{fig:histogram_and_quantiles}, shows that low-rank approximations to $\bA_t$ will work well, thus one can avoid the $O(d^2)$ complexity as $d \to \infty$ by smart approximations.
    In fact,  one of our original motivations for this article was to better understand the empirical success of the low-rank approximation of signature tensors $\int dB^{\otimes m}$ developed in \cite{kiraly2019kernels} and \cite{tothseq2tens}. 
    From this perspective, our results justify the Brownian case with signature level $m=2$. 
    For general processes, one cannot hope for an explicit formula but an interesting question is already the behaviour of the singular spectrum of Brownian iterated integrals for degree higher than $2$. 
    The situation here is also complicated by the case that there are several different notions of singular values resp.~eigenvalues for higher order tensors which do not share all the nice properties of the spectrum of matrices. 
    \item[Beyond Brownian Motion]
    The low-rank approximations to higher iterated integrals have shown very robust empirical performance over a wide range of different processes so potentially there are other, maybe more algebraic, reasons that allow one to hope for low-rank structure of iterated integrals in high dimensions, see Figure~\ref{fig:Singular Spectrum of GPs}.
    We have seen the singular values of $\bA_t$ for Gaussian processes remain a biorthogonal ensemble, suggesting that many of the techniques developed here may be extended.
    A natural next step is to understand how the singular values of a Gaussian process are determined by its covariance function.
    \item[Numerics] We derived an explicit formula for the density of the singular values and for a DPP with explicit kernel.
    Although numerics are beyond the scope of this work, we again draw attention to the fact that such DPPs famously come with algorithms that allow to sample exactly and in polynomial time (unlike MCMC methods where one waits for the chain to converge).     
    To make it useful for strong numerical schemes of SDEs, one would need to extend such a DPP representation to sample jointly from the squared subspace norms of Brownian motion and the singular values; see Section \ref{section:Sampling from DPP}. 
    Unfortunately, for the derivation of the joint density, the Harish-Chandra form breaks and we have an additional term in the integral over the orthogonal group, see Section \ref{section:Density of L\'evy's areas and Brownian increments}.
    We leave this as an open problem and invite readers to investigate.
\end{description}

\begin{figure}[h!]
    \centering
    \includegraphics[width=\linewidth]{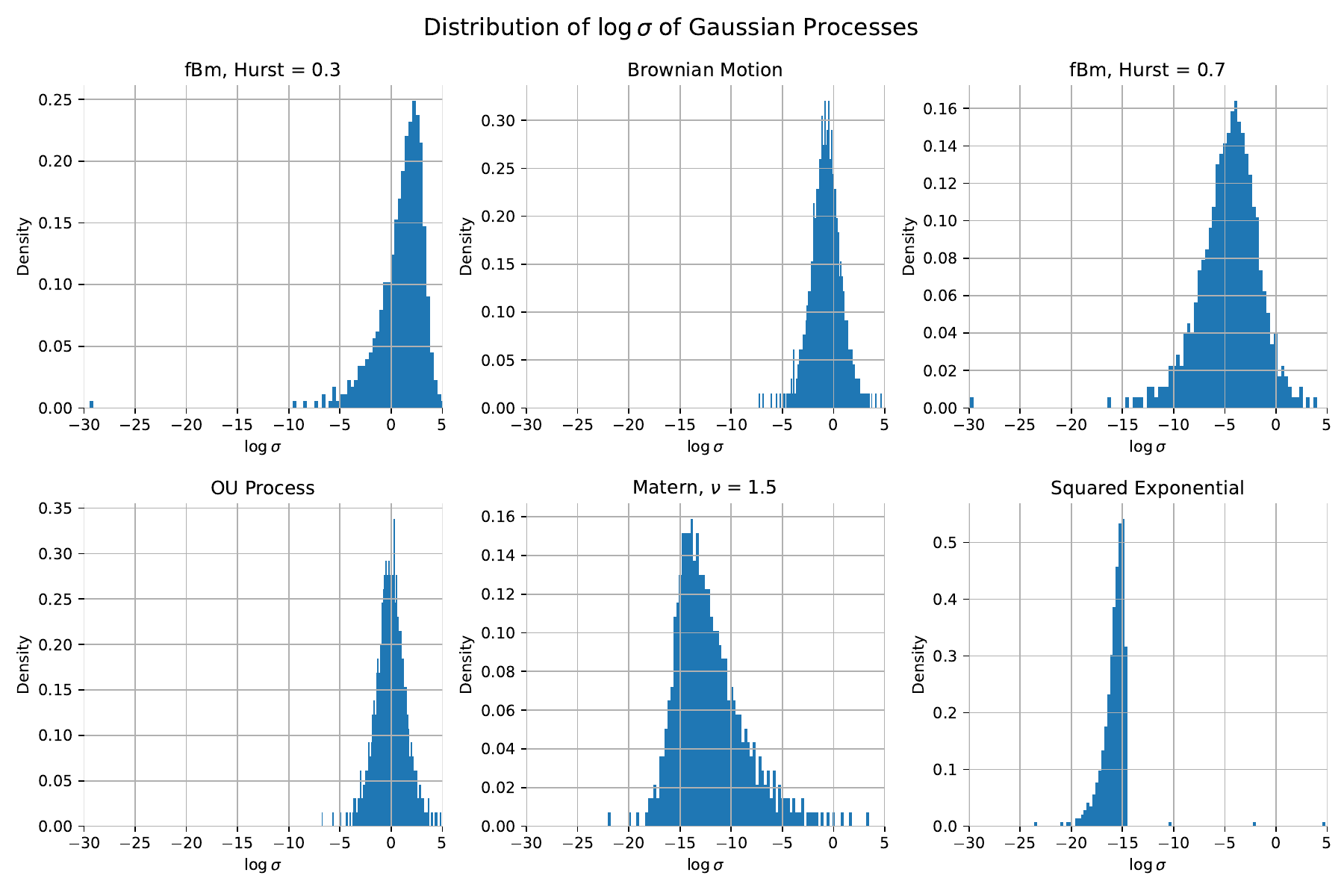}
    \caption{Histograms of the empirical measure of the log singular values for various Gaussian processes, analogous to Figure \ref{fig:histogram_and_quantiles}. The first row shows fractional Brownian motions with increasing Hurst parameter, and the second row shows Mat\'ern Gaussian processes with increasing smoothness parameter. In general, smoother paths appear to exhibit faster spectral decay.}
    \label{fig:Singular Spectrum of GPs}
\end{figure}

\textbf{Acknowledgments}
Danilo Jr Dela Cruz was supported by the EPSRC Centre for Doctoral Training in Mathematics of Random
Systems: Analysis, Modelling and Simulation (EP/S023925/1) and Harald Oberhauser was supported by the EPSRC grant "Mathematical Foundations of Intelligence: An Erlangen Programme for AI" [EP/Y028872/1].
\bibliographystyle{amsalpha}
\bibliography{references}       

\appendix
\section*{Appendix}
Appendix \ref{appendix:Auxiliary Results} contains standard results that we use throughout the article.
Appendix \ref{appendix:Proofs for the Dynamics of the Singular Values} contains the proof of the SDE dynamics of the singular values from Section \ref{sec:SDE}.
Appendix \ref{appendix:Proofs for the Determinantal Point Process} contains part of the proof needed for the DPP, Section \ref{section:Biorthogonal Ensemble}.

\section{Auxiliary Results}
\label{appendix:Auxiliary Results}

We collect several standard identities and lemmas used throughout the paper.

\subsection{Matrix Identities}\label{appendix:Matrix identities}

\begin{lemma}[Block diagonal representation of skew-symmetric matrices]
\label{lemma:Spectral Theorem for Skew-symmetric Matrices}
    For $A \in\so(d)$,  there exists an orthogonal matrix $Q \in O(d)$ and \(n = \lfloor d /2 \rfloor\) "signed singular values" \(\lambda_k \in \mathbb{R}\) such that
    \begin{equation}
        A = Q J(\Lambda) Q^{\top},\quad
        J(\Lambda) = \bigoplus_{k=1}^{n} J(\lambda_k) \oplus 0_{d-2n},\quad
        J(\lambda) = \begin{bmatrix}
            0 & \lambda \\
            -\lambda & 0
        \end{bmatrix}
        .
    \end{equation}
    If $\lambda$ are distinct, this decomposition is unique up to the sign and permutation of $\lambda$ and a rotation of the basis of each singular subspace \(\mathrm{span}(Q_{2k-1}, Q_{2k})\). The singular value decomposition (SVD) \(A = Q D_{\Sigma} \bar{Q}^{\top}\) is obtained by setting $\sigma_k = |\lambda_k|$ \((\bar{Q}_{2k-1} , \bar{Q}_{2k}) = (Q_{2k}, Q_{2k-1})\) and forming the diagonal matrix \(D_{\Sigma} = \bigoplus_{k=1}^{n} D_{\sigma_k} \oplus 0_{d-2n}, D_{\sigma} = \sigma I_2\). See \cite[Section 4.1]{ferrerCorrelationFunctionsHarishChandra2007}.
\end{lemma}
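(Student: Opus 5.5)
We prove the block diagonal representation in three steps: a complexification argument for existence, a counting-of-parameters argument for uniqueness, and a direct check for the singular value decomposition.

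\textbf{Existence.} Since $A^{\top} = -A$, the matrix $iA$ is Hermitian, so $A$ is normal and diagonalisable over $\mathbb{C}$ by a unitary matrix. Its eigenvalues are purely imaginary, because $\langle Av, v\rangle = -\langle v, Av\rangle$ forces $\overline{\mu} = -\mu$. Because $A$ is real, the non-real eigenvalues come in conjugate pairs $\pm i\lambda_k$. If $v = a + ib$ (with $a, b \in \R^d$) is an eigenvector for $i\lambda$ with $\lambda \neq 0$, then $\overline{v}$ is an eigenvector for $-i\lambda$. Since these eigenvalues are distinct and $A$ is normal, $v \perp \overline{v}$, which gives $|a| = |b|$ and $a \perp b$. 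Separating real and imaginary parts in $A(a + ib) = i\lambda(a + ib)$ yields $Aa = -\lambda b$ and $Ab = \lambda a$. After normalising, the pair $(a, b)$ spans a two-dimensional $A$-invariant plane on which $A$ acts by $J(\lambda)$, up to the sign convention, which is absorbed into the sign of $\lambda$.

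Rather than organising this eigenvector-by-eigenvector, we will run an induction on $d$. Pick one such plane $V$. Skew-symmetry gives $\langle Aw, v\rangle = -\langle w, Av\rangle = 0$ for $w \in V^{\perp}$ and $v \in V$, so $V^{\perp}$ is also invariant. We then recurse on $A|_{V^{\perp}}$, which is again skew-symmetric. The kernel of $A$ is handled as follows: it is split into planes on which $\lambda_k = 0$, together with one leftover line when its dimension is odd. Its dimension is congruent to $d$ mod $2$, since the nonzero eigenvalues pair up, so this gives exactly $n = \lfloor d/2 \rfloor$ blocks plus $0_{d - 2n}$. Collecting the orthonormal bases of all the planes and the line into $Q$ gives $A = Q J(\Lambda) Q^{\top}$.

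\textbf{Uniqueness.} Suppose the $\lambda_k$ are distinct; I read this as the $|\lambda_k|$ being distinct. The main step is that the planes are then intrinsic: $\mathrm{span}(Q_{2k-1}, Q_{2k})$ is the eigenspace of the symmetric matrix $A^{\top}A = -A^2$ for the eigenvalue $\lambda_k^2$. When the $|\lambda_k|$ are distinct, this eigenspace is exactly two-dimensional. (If $d$ is even and some $\lambda_k = 0$, it is the kernel; if $d$ is odd, the kernel additionally contains the line.) Any other decomposition must therefore use the same planes, possibly permuted, each with an orthonormal basis. Two orthonormal bases of a plane differ by an element of $O(2)$, and conjugating $J(\lambda)$ by $R \in O(2)$ gives $J(\det(R)\,\lambda)$. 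This produces exactly the stated freedom: rotations within each block, sign flips from reflections, and permutations of the blocks.

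\textbf{Singular value decomposition.} With $\bar{Q}$ defined by swapping columns within each pair, we have $J(\lambda) = D_{|\lambda|}\, S$, where $S$ is the signed swap. Writing $Q J(\Lambda) Q^{\top} = Q D_{\Sigma} \bar{Q}^{\top}$ amounts to checking blockwise that $J(\lambda) \begin{bmatrix} 0 & 1 \\ 1 & 0 \end{bmatrix}$ is diagonal with entries $\lambda$ and $-\lambda$. The sign discrepancy is absorbed into a column of $\bar{Q}$, which stays orthogonal. This bookkeeping of signs, so that the singular values $\sigma_k = |\lambda_k|$ come out nonnegative, is the only delicate part. Throughout, we rely on the reference cited in the statement for the conventions.
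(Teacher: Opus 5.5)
The paper does not prove this lemma; it only points to Ferrer et al. Your existence argument is the standard one and is correct: complexify, use $v\perp\bar v$ to get an orthonormal pair with $Aa=-\lambda b$, $Ab=\lambda a$, split off the invariant plane and recurse on $V^{\perp}$, then cut the kernel into planes plus one line. Your reading of ``$\lambda$ distinct'' as ``$|\lambda_k|$ distinct'' is also right, and in fact necessary. For $A=J(1)\oplus J(-1)$ in $d=4$ we have $A^2=-I_4$, so every $Q\in O(4)$ commuting with $A$ (a copy of $U(2)$) fixes $A$. That is much more freedom than rotations inside two fixed planes.

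There is, however, a step in your uniqueness argument that fails. You claim that for distinct $|\lambda_k|$ the $\lambda_k^2$-eigenspace of $A^{\top}A$ is exactly two-dimensional. Your own parenthetical contradicts this when $d$ is odd and some $\lambda_k=0$. Then $\ker A=\ker A^{\top}A$ is three-dimensional, so $\mathrm{span}(Q_{2k-1},Q_{2k})$ is not intrinsic, and the conclusion is genuinely false. For example, take $d=5$ and $A=J(1)\oplus 0_3$, so $\Lambda=(1,0)$: the second block can be any $2$-plane in the $3$-dimensional kernel. Or take $d=3$ and $A=0$, where every $Q\in O(3)$ works. The uniqueness clause therefore needs the extra hypothesis $\lambda_k\neq 0$ for all $k$ when $d$ is odd. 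This holds almost surely for L\'evy's area, since its singular values have a density. Even under that hypothesis, the freedoms must also include the sign of $Q_d$, which spans $\ker A$ and is determined only up to $\pm$. So your claim that this produces ``exactly the stated freedom'' is not quite right. The defect sits in the statement itself, but your proof should not assert two-dimensionality in that case.

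On the SVD, your blockwise identity $J(\lambda)\begin{bmatrix}0&1\\1&0\end{bmatrix}=\mathrm{diag}(\lambda,-\lambda)$ is correct. It also shows the sign is not optional bookkeeping. With the pure swap $(\bar Q_{2k-1},\bar Q_{2k})=(Q_{2k},Q_{2k-1})$ from the statement, $QD_{\Sigma}\bar Q^{\top}=\sum_k\sigma_k\,(Q_{2k-1}Q_{2k}^{\top}+Q_{2k}Q_{2k-1}^{\top})$ is symmetric. It therefore cannot equal a nonzero skew-symmetric $A$, so the lemma as written is wrong here. State the corrected choice explicitly: $(\bar Q_{2k-1},\bar Q_{2k})=(\mathrm{sgn}(\lambda_k)\,Q_{2k},\,-\mathrm{sgn}(\lambda_k)\,Q_{2k-1})$, with $\mathrm{sgn}(0):=1$ and $\bar Q_d=Q_d$ for odd $d$. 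This $\bar Q$ is orthogonal, and $QD_{\Sigma}\bar Q^{\top}=\sum_k\lambda_k\,(Q_{2k-1}Q_{2k}^{\top}-Q_{2k}Q_{2k-1}^{\top})=QJ(\Lambda)Q^{\top}$.
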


\begin{lemma}[Jacobian for block diagonalisation of skew-symmetric matrices]
\label{lemma:Jacobian for Block Diagonalisation of Skew-symmetric Matrices}
    See \cite[Section 4.1]{ferrerCorrelationFunctionsHarishChandra2007}.
    Let $A = Q J(\Lambda) Q^{\top} \in \so(d)$ and \(n = \lfloor d/2\rfloor\), then
    \begin{equation}
        dA = g(\Lambda) \prod_{k=1}^{n} d \lambda_{k} \;  dQ
    \end{equation}
    where \(d\lambda_k\) is the Lebesgue measure over \(\mathbb{R}\) and \(dQ\) is the Haar-measure over \(O(d)\) with $\int_{O(d)} dQ = 1$.
    The Jacobian is
    \begin{equation}\label{equation:Skew-symmetric block diagonalisation Jacobian}
        g(\Lambda) =
        \widetilde{\Delta}_{d}(\Lambda)^2
        \frac{C_d^{-1}}{n!}
        \cdot
        \begin{cases}
        (2\pi)^{n(n-1)},& d = 2n \\
        (2\pi)^{n^2},& d = 2n+1
        \end{cases}
        .
    \end{equation}
    where \(C_d\)
    \begin{equation} \label{equation:C_d appendix}
        C_d = \begin{cases}
        \prod_{p=1}^{n-1} (2p)!, & d = 2n \\
        \prod_{p=1}^{n-1} (2p+1)!, & d = 2n+ 1
        \end{cases}
    \end{equation}
    and \(\widetilde{\Delta}(\sigma)\) is the generalised Vandermonde determinant which depends on the parity of \(d\).
    \begin{equation}\label{equation:Generalised Vandermonde determinant}
        \widetilde{\Delta}_{d}(\Lambda)
        =
        \prod_{1 \le i<j\le n} (\lambda_{i}^{2} - \lambda_{j}^{2})
        \begin{cases}
        1, & d = 2n \\
        \prod_{i} \lambda_{i},& d = 2n+1
        \end{cases}
    \end{equation}
\end{lemma}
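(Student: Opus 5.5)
The plan is to compute the pullback of Lebesgue measure on $\so(d)$ under the map $(Q,\Lambda)\mapsto QJ(\Lambda)Q^{\top}$ in the standard way: first obtain the Vandermonde-type factor, which is pure linear algebra, and then fix the normalising constant separately by a Gaussian integral.

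\textbf{Step 1 (differential).} Differentiate $A=QJ(\Lambda)Q^{\top}$ and conjugate back:
\[
Q^{\top}dA\,Q = dJ(\Lambda) + [\Omega, J(\Lambda)], \qquad \Omega = Q^{\top}dQ \in \so(d).
\]
Since conjugation by $Q$ preserves Lebesgue measure on $\so(d)$, it suffices to compute the volume form of the right-hand side. The entries of $\Omega$ form a left-invariant frame on $O(d)$, so their wedge product is a constant multiple of Haar measure.

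\textbf{Step 2 (block computation).} Partition $\Omega$ into $2\times 2$ blocks $\Omega_{ij}$, together with $2\times 1$ blocks $\Omega_{i,0}$ when $d$ is odd.
\begin{itemize}
\item The diagonal blocks $\Omega_{ii}$ are multiples of $J$, so they commute with $J(\lambda_i)$ and contribute nothing. These are exactly the redundant $SO(2)^n$ directions. In their place, $dJ(\Lambda)$ supplies the $d\lambda_i$.
\item For $i<j$, the $(i,j)$ block of the commutator is the image of $\Omega_{ij}$ under the linear map $X\mapsto XJ(\lambda_j)-J(\lambda_i)X$ on $2\times 2$ matrices. Writing $X$ in the basis $\{I,J,\sigma_z,\sigma_x\}$, this map splits into two $2\times2$ blocks with determinants $(\lambda_i-\lambda_j)^2$ and $(\lambda_i+\lambda_j)^2$. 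The total determinant is therefore $(\lambda_i^2-\lambda_j^2)^2$.
\item In the odd case, the map on the $2\times1$ block is $X\mapsto -J(\lambda_i)X$, which has determinant $\lambda_i^2$.
\end{itemize}
Multiplying these contributions gives $dA = c_d\,\widetilde{\Delta}_d(\Lambda)^2\prod_k d\lambda_k\, dQ$ for some constant $c_d$, where integrating over the full group $O(d)$ and over all signed, unordered $\Lambda$ overcounts each $A$ by the factor $2^n n!\,\mathrm{vol}(SO(2)^n)$ relative to the quotient.

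\textbf{Step 3 (constant).} I expect this step to be the main obstacle. I would determine $c_d$ by evaluating the Gaussian integral $\int_{\so(d)} e^{-\frac12\sum_{i<j}A_{ij}^2}\,dA = (2\pi)^{d(d-1)/4}$ in both coordinate systems. In block coordinates, $\sum_{i<j} A_{ij}^2 = \sum_k \lambda_k^2$, so the integral reduces to
\[
c_d\int_{\R^n}\widetilde{\Delta}_d(\Lambda)^2 e^{-\frac12\sum\lambda_k^2}\,d\Lambda .
\]
Substituting $s_k=\lambda_k^2$ turns this into a Laguerre Selberg integral with parameter $a=\mp\tfrac12$ according to parity, whose value is a product of Gamma functions. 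Matching the two evaluations, and simplifying the Gamma products with the duplication formula, should produce $C_d^{-1}(2\pi)^{n(n-1)}/n!$ and $C_d^{-1}(2\pi)^{n^2}/n!$ respectively. Because the powers of $2\pi$ and the factorials in $C_d$ are easy to get wrong, I would check the resulting constant against $d=2$, where $g=1$, and against $d=3$. Alternatively, one can cite the result directly as in \cite[Section 4.1]{ferrerCorrelationFunctionsHarishChandra2007}.
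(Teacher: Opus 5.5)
Your proposal is correct. It takes a different route from the paper, which gives no derivation here and simply imports the formula from \cite[Section 4.1]{ferrerCorrelationFunctionsHarishChandra2007}.

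\textbf{The Vandermonde factor.} The tangent-space computation $Q^{\top}dA\,Q=dJ(\Lambda)+[\Omega,J(\Lambda)]$ works as you describe. Your block determinants $(\lambda_i-\lambda_j)^2(\lambda_i+\lambda_j)^2$ and $\lambda_i^2$ are right, so this step produces $\widetilde{\Delta}_d(\Lambda)^2$.

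\textbf{The constant.} The step you flagged as the main obstacle does close. Substitute $s_k=\lambda_k^2$ and use the Laguerre Selberg integral at $\gamma=1$ with weight $s^{\mp 1/2}e^{-s/2}$. Together with $\Gamma(j+\tfrac12)=\sqrt{\pi}\,(2j)!/(4^j j!)$, respectively $\Gamma(j+\tfrac32)=\sqrt{\pi}\,(2j+1)!/(2^{2j+1}j!)$, both parities give
\[
\int_{\R^n}\widetilde{\Delta}_d(\Lambda)^2\, e^{-\frac12\sum_k\lambda_k^2}\,d\Lambda=(2\pi)^{n/2}\,n!\,C_d .
\]
Dividing $(2\pi)^{d(d-1)/4}$ by this gives exactly $C_d^{-1}(2\pi)^{n(n-1)}/n!$ and $C_d^{-1}(2\pi)^{n^2}/n!$. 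Your sanity checks also pass: $g=1$ for $d=2$, and $g=2\pi\lambda^2$ for $d=3$, which is polar coordinates on $\R^3\cong\so(3)$.

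\textbf{Comparison.} The citation buys brevity. Your route pins down the conventions on which the constant depends:
\begin{itemize}
\item Lebesgue measure on the $d(d-1)/2$ upper-triangular entries;
\item Haar measure on $O(d)$ rather than $SO(d)$, normalised to mass one;
\item signed, unordered $\lambda_k$.
\end{itemize}
These are exactly what the paper needs to match the inverse Fourier transform and the Harish-Chandra constant $C_d$ in the density computation. With only a citation, the reader must reconcile them with the source by hand.

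\textbf{Two things to tidy if you write it out.}
\begin{itemize}
\item The wedge of the off-diagonal-block entries of $\Omega$ is an invariant form on $O(d)/SO(2)^n$, not Haar measure on $O(d)$. The clean statement is the integration formula
\[
\int_{\so(d)} f\,dA=\int_{O(d)}\int_{\R^n} f(QJ(\Lambda)Q^{\top})\,g(\Lambda)\,d\Lambda\,dQ ,
\]
proved on the full-measure regular set, where the map is a finite cover, using uniqueness of the invariant measure on the quotient.
\item For odd $d$, the centraliser of a generic $J(\Lambda)$ in $O(d)$ is $SO(2)^n\times O(1)$, so your overcount factor is missing a factor of $2$. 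This is harmless, because the Gaussian normalisation absorbs it.
\end{itemize}
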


\subsection{Integral Identities}
\label{appendix:Integral identities}

\begin{lemma}[Harish-Chandra integral over \(O(d)\)]
\label{lemma:Harish-Chandra Integrals over the Orthogonal Group}
    See \cite[Equation 1.3, 1.4]{forrester2019orthogonal}.
    For \(A, B \in \so(d)\) with singular values \(\set{a_i}_{i \in [n]}, \set{b_i}_{i \in [n]}\) respectively, 
    \begin{equation}
        \int_{O(d)} \exp\left(\frac{1}{2}\tr(A Q B Q^{\top})\right) dQ
        =
        \frac{C_d}{\widetilde{\Delta}_d(a) \widetilde{\Delta}_d(b)}
        \begin{cases}
        \det[\cosh(a_{i} b_{j} )]_{i, j = 1}^{n}, & d = 2n \\
        \det[\sinh(a_{i} b_{j} )]_{i, j = 1}^{n}, & d = 2n+ 1
        \end{cases}
        .
    \end{equation}
    Where \(C_d\) is the same as in equation \ref{equation:C_d appendix}. Due to the invariance of the Haar measure to multiplication by orthogonal matrices, \(A, B\) are not required to be in block diagonal form. Once in block diagonal form, by analytic continuation, this extends to $A = J(X), B = J(Y)$ where $x_i, y_i$ are complex valued.
\end{lemma}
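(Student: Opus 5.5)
The plan is to obtain the formula as the Harish-Chandra formula for the compact Lie groups $SO(2n)$ (type $D_n$) and $SO(2n+1)$ (type $B_n$), and then average over the two components of $O(d)$.

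\textbf{Reduction to a maximal torus.} By bi-invariance of $dQ$, we may conjugate $A$ and $B$ into block diagonal form $A=J(a)$ and $B=J(b)$ (Lemma \ref{lemma:Spectral Theorem for Skew-symmetric Matrices}). These lie in the Cartan subalgebra $\mathfrak{t}$ of $\so(d)$, which consists of block diagonal $J(\cdot)$'s. The pairing $\tfrac12\tr(J(a)J(b))=-\sum_k a_kb_k$ is (minus) the invariant form restricted to $\mathfrak t$. Harish-Chandra's formula for a compact connected group $G$ states
\[
\int_G e^{\langle \mathrm{Ad}_g X, Y\rangle}dg=c_G\,\frac{\sum_{w\in W}\varepsilon(w)e^{\langle wX,Y\rangle}}{\pi(X)\pi(Y)},\qquad \pi(X)=\prod_{\alpha>0}\alpha(X).
\]
It can be proved via the Duistermaat–Heckman theorem on coadjoint orbits, or via the heat-kernel argument: both sides satisfy the same radial Laplacian eigen-equation and are $W$-anti-invariant after multiplying by $\pi$.

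\textbf{Case $d=2n+1$.} The positive roots are $e_i\pm e_j$ and $e_i$, so $\pi(a)=\prod_{i<j}(a_i^2-a_j^2)\prod_i a_i=\widetilde\Delta_d(a)$. The Weyl group is the group of signed permutations. The alternating sum $\sum_w\varepsilon(w)e^{\langle wa,b\rangle}$ factorises row by row: summing over the sign changes turns each $e^{a_ib_{\tau(i)}}$ into $2\sinh(a_ib_{\tau(i)})$, because a sign flip has $\varepsilon=-1$. The permutation sum is then $2^n\det[\sinh(a_ib_j)]$. Since $O(2n+1)=SO(2n+1)\times\{\pm I\}$ and $-I$ acts trivially by conjugation, the $O(d)$ integral equals the $SO(d)$ integral.

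\textbf{Case $d=2n$.} The roots are $e_i\pm e_j$, giving $\pi=\prod_{i<j}(a_i^2-a_j^2)$, and $W$ consists of permutations with an even number of sign changes. The alternating sum is then $\det[\cosh]+\det[\sinh]$ up to a factor $2^{n-1}$. The other component of $O(2n)$ is $SO(2n)R$, where $R$ flips the sign of one coordinate, and conjugation by $R$ sends $a_n\mapsto -a_n$. Averaging the two components therefore kills the $\det[\sinh]$ term (it is odd in $a_n$) and leaves $\det[\cosh(a_ib_j)]$. In both cases, the global sign coming from $-\sum a_kb_k$ is absorbed: for $\cosh$ by evenness, and for $\sinh$ by flipping the sign of all $b_j$, which multiplies both $\det[\sinh]$ and $\widetilde\Delta_d(b)$ by $(-1)^n$.

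\textbf{The constant.} I expect fixing the constant $C_d$ to be the main obstacle. The plan is to let $b\to 0$ along $b=\epsilon\beta$, where the left side tends to $1$. On the right, expand $\cosh(a_i\epsilon\beta_j)$, respectively $\sinh(a_i\epsilon\beta_j)$, in powers of $\epsilon$. The lowest nonvanishing order of the determinant picks the even powers $0,2,\dots,2n-2$, respectively the odd powers $1,3,\dots,2n-1$, one per column. By Cauchy–Binet this gives
\[
\det[\cosh(a_i\epsilon\beta_j)]\sim\epsilon^{n(n-1)}\,\frac{\det[a_i^{2(j-1)}]\det[\beta_i^{2(j-1)}]}{\prod_{p=0}^{n-1}(2p)!},
\]
and analogously with $(2p+1)!$ in the odd case. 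The two Vandermonde determinants cancel $\widetilde\Delta_d(a)\widetilde\Delta_d(b)$, which yields exactly $C_d=\prod_{p=1}^{n-1}(2p)!$ or $\prod_{p=1}^{n-1}(2p+1)!$, matching \eqref{equation:C_d appendix}. Care is needed to confirm that the sign $(-1)^{n(n-1)/2}$ from the ordering of the Vandermondes matches the convention $\widetilde\Delta_d=\prod_{i<j}(\lambda_i^2-\lambda_j^2)$.

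\textbf{Complex singular values.} The extension to complex $a_i,b_i$ follows from analytic continuation. Both sides are entire in $(a,b)$: the left side is an integral of an entire function over a compact group, and the right side is an alternating function divided by its Vandermonde, so the singularities are removable. Since the two sides agree on the reals, they agree everywhere.
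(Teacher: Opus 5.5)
Your derivation is correct, and it differs from the paper, which does not prove this lemma at all but simply quotes \cite[Equations 1.3, 1.4]{forrester2019orthogonal}. You instead start from the general Harish-Chandra formula for the connected groups $SO(2n+1)$ (type $B_n$) and $SO(2n)$ (type $D_n$). You then pass to $O(d)$ through its component structure, and pin down $C_d$ by letting $b=\epsilon\beta\to0$ and applying Cauchy--Binet. The cost is invoking the general theorem; what it buys is transparency. For $d=2n+1$, the $\sinh$-determinant comes from the full signed-permutation Weyl group, and $-I$ makes the $O(d)$ and $SO(d)$ integrals equal. For $d=2n$, the pure $\cosh$-determinant is a feature of $O(2n)$ specifically: over $SO(2n)$ alone there is an extra multiple of $\det[\sinh(a_ib_j)]$, which only the reflection coset removes. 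Your route also independently confirms the constant $C_d$, on which the later density computation relies.

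Two minor points.
\begin{itemize}
\item In the odd case, the sign from $\tfrac12\tr(J(a)J(b))=-\sum_k a_kb_k$ is not actually removed by flipping all $b_j$. The Harish-Chandra side equals $(-1)^n c_G 2^n\det[\sinh(a_ib_j)]/(\pi(a)\pi(b))$ identically. This is harmless, because the $(-1)^n$ is simply part of the constant that your $b\to0$ normalisation fixes.
\item The Vandermonde sign $(-1)^{n(n-1)/2}$ you flag appears once for $a$ and once for $\beta$, so it cancels and no convention check is needed.
\end{itemize}
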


\begin{lemma}[Andr\'eief identity]
\label{lemma:Andr\'eief Identity}
    See \cite[Equation 1.7]{forresterMeetAndreiefBordeaux2018}.
    Given integrable functions \(f_i, g_i, h\)
    \begin{equation}
    \begin{split}
        \int_{\R^{n}}
        \det[f_{j}(\varphi_{i})]
        \det[g_{j}(\varphi_{i})]
        \prod_{i=1}^{n} h(\varphi_{i}) d\varphi_{i}
        = 
        n!
        \det
        \left[
        \int_{\R} f_{i}(\varphi) g_{j}(\varphi) h(\varphi) d\varphi
        \right]
    \end{split}
    .
    \end{equation}
\end{lemma}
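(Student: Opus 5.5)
The Andr\'eief identity follows from the Leibniz expansion of both determinants followed by Fubini. I would write
\begin{equation*}
\det[f_j(\varphi_i)] = \sum_{\sigma \in S_n} \mathrm{sgn}(\sigma) \prod_{i=1}^n f_{\sigma(i)}(\varphi_i),
\qquad
\det[g_j(\varphi_i)] = \sum_{\tau \in S_n} \mathrm{sgn}(\tau) \prod_{i=1}^n g_{\tau(i)}(\varphi_i).
\end{equation*}
Multiplying the two expansions together with $\prod_i h(\varphi_i)$ gives a finite double sum over $(\sigma,\tau)$. Each term is a product of functions of the separate variables $\varphi_i$. Integrability of each product $f_a g_b h$ (which is how I read the hypothesis "integrable functions") lets us apply Fubini term by term. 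The $(\sigma,\tau)$ term then becomes
\begin{equation*}
\mathrm{sgn}(\sigma)\mathrm{sgn}(\tau)\prod_{i=1}^n M_{\sigma(i),\tau(i)},
\qquad
M_{a,b} := \int_{\R} f_a(\varphi) g_b(\varphi) h(\varphi)\, d\varphi.
\end{equation*}

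Next I reindex. Put $\pi = \tau\circ\sigma^{-1}$ and substitute $i = \sigma^{-1}(a)$ in the product. This turns it into $\prod_{a} M_{a,\pi(a)}$, with sign $\mathrm{sgn}(\sigma)\mathrm{sgn}(\tau) = \mathrm{sgn}(\pi)$. For each fixed $\sigma$, the sum over $\tau$ is therefore exactly $\sum_{\pi}\mathrm{sgn}(\pi)\prod_a M_{a,\pi(a)} = \det M$. Summing over the $n!$ choices of $\sigma$ gives $n!\det M$, which is the claim. The index convention in $\det[f_j(\varphi_i)]$ versus the transposed convention does not matter, since $\det$ is transpose-invariant.

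The only point needing care is the justification of Fubini, i.e.\ absolute integrability of each product term. Each term factorises, so its absolute integral is $\prod_i \int |f_{\sigma(i)} g_{\tau(i)} h|$, which is finite under the standing assumption that each $f_a g_b h \in L^1(\R)$. In the paper's applications ($\varphi^{2(i-1)}$ against $\cos$ or $\sin$ with $\sech$ weight) this clearly holds, because of the exponential decay of $\sech(\pi\varphi/2)$. I do not expect any real obstacle beyond stating this integrability hypothesis precisely.
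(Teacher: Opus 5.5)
Your proof is correct: the double Leibniz expansion, termwise Fubini (justified by Tonelli on each factorised term), and the reindexing by $\tau\circ\sigma^{-1}$ give exactly $n!\det\left[\int_{\R} f_i g_j h\right]$. The paper gives no proof of its own and only cites Forrester, where essentially the same computation appears (one determinant is expanded, the variables are relabelled to symmetrise, and the other determinant is integrated column by column by multilinearity). Your reading of the hypothesis as $f_a g_b h\in L^1(\R)$ for all $a,b$ is the right one. It is also what the paper's applications actually require, since $\varphi^{2(i-1)}$ and $\cos(x\varphi)$ are not themselves integrable.
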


\begin{lemma}[Hermite-Genocchi Integral over the simplex]
\label{lemma:Hermite-Genocchi Integral}
See \cite[Theorem 4.2]{baxter2011functionals}.
Let $f$ be an $n-1$-times differentiable function and $x_1, \dots, x_n$ be real numbers. Consider the $n-1$-dimensional simplex
\begin{equation*}
    \Delta_{n-1} = \set{t \in \R_{+}^n: \sum_{k=2}^{n} t_k \le 1, t_1 = 1 - \sum_{k=2}^{n} t_k}
\end{equation*}
then the following integral over the simplex is equal to the divided difference
\begin{equation}
    \int_{\Delta_{n-1}} f^{(n-1)}\left( \sum_{k=1}^{n} t_k x_k\right) dt
    =
    f[x_1, \dots, x_n]
    =
    \sum_{k=1}^{n} \frac{f(x_k)}{\prod_{l \neq k} (x_k - x_l)}
\end{equation}
\end{lemma}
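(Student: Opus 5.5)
The plan is to prove the identity by induction on $n$, assuming throughout that the $x_k$ are pairwise distinct; that is the setting in which the explicit sum formula makes sense and the one in which it is used in Lemma \ref{lemma: Density of u and sigma_k}. The proof has two parts. First, the simplex integral satisfies the divided difference recurrence
\[
f[x_1,\dots,x_n] = \frac{f[x_2,\dots,x_n]-f[x_1,\dots,x_{n-1}]}{x_n-x_1}.
\]
Second, the explicit sum $\sum_k f(x_k)/\prod_{l\neq k}(x_k-x_l)$ satisfies the same recurrence with the same initial value.

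\textbf{Base cases.} For $n=1$ the simplex is a single point and both sides equal $f(x_1)$. For $n=2$, parametrise by $t_2\in[0,1]$ and $t_1=1-t_2$. The fundamental theorem of calculus gives
\[
\int_0^1 f'\bigl(x_1+t_2(x_2-x_1)\bigr)\,dt_2=\frac{f(x_2)-f(x_1)}{x_2-x_1},
\]
which agrees with the sum formula.

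\textbf{Inductive step.} I would write the argument as $x_1+\sum_{k=2}^n t_k(x_k-x_1)$, with $(t_2,\dots,t_n)$ ranging over $t_k\ge0$, $\sum_{k\ge2} t_k\le1$. Integrate first in $t_n$ over $[0,\,1-\sum_{k=2}^{n-1}t_k]$. Since $f^{(n-1)}$ is the $t_n$-derivative of $f^{(n-2)}(\cdot)/(x_n-x_1)$, this inner integral equals $1/(x_n-x_1)$ times the difference of two values of $f^{(n-2)}$:
\begin{itemize}
\item At the upper endpoint, $t_1=0$, so the argument is $\sum_{k=2}^{n} t_k x_k$ with $t_n=1-\sum_{k=2}^{n-1}t_k$. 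This is a point of the $(n-2)$-simplex with nodes $x_2,\dots,x_n$, parametrised by the free coordinates $t_2,\dots,t_{n-1}$.
\item At the lower endpoint, $t_n=0$, so the argument is $\sum_{k=1}^{n-1}t_kx_k$ with $t_1=1-\sum_{k=2}^{n-1}t_k$. This is a point of the $(n-2)$-simplex with nodes $x_1,\dots,x_{n-1}$.
\end{itemize}
Both remaining integrals run over the same region $\{t_k\ge0,\ \sum_{k=2}^{n-1}t_k\le1\}$, which is exactly the parametrisation of $\Delta_{n-2}$ used in the statement, only with relabelled nodes. Applying the induction hypothesis to $f$, which is $(n-2)$-times differentiable, shows that the simplex integral obeys the recurrence above.

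\textbf{Matching the explicit sum.} It then remains to check that $S(x_1,\dots,x_n):=\sum_k f(x_k)/\prod_{l\neq k}(x_k-x_l)$ satisfies the same recurrence, which I would verify coefficient by coefficient in $f(x_k)$.
\begin{itemize}
\item For $k=1$, only the second term contributes, giving $-\frac{1}{x_n-x_1}\cdot\frac{1}{\prod_{l\neq1,n}(x_1-x_l)}=\frac{1}{\prod_{l\neq1}(x_1-x_l)}$. The case $k=n$ is symmetric.
\item For $1<k<n$, the coefficient is
\[
\frac{1}{x_n-x_1}\cdot\frac{1}{\prod_{l\neq k,1,n}(x_k-x_l)}\left[\frac{1}{x_k-x_n}-\frac{1}{x_k-x_1}\right].
\]
The bracket equals $\frac{x_n-x_1}{(x_k-x_n)(x_k-x_1)}$, which yields $1/\prod_{l\neq k}(x_k-x_l)$.
\end{itemize}

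\textbf{Expected obstacle.} The main hazard is bookkeeping rather than any analytic difficulty. The change of variables must be done carefully: $t_1$ is eliminated, and the boundary faces must be identified with simplices of the stated form, with the correct Lebesgue normalisation (total volume $1/(n-2)!$), so that no stray factorial constants appear. If coincident $x_k$ were needed, the left side is continuous in $(x_1,\dots,x_n)$, so the identity would extend by continuity with the divided difference interpreted as a limit; the paper only uses distinct values.
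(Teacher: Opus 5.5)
Your proof is correct, and it supplies something the paper does not: the paper has no argument for this lemma, it only cites \cite[Theorem 4.2]{baxter2011functionals}, and it uses the result once, with an exponential $f$, for the density of $(B_t,\sigma_1,\dots,\sigma_n)$.

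You give the classical self-contained induction, and the steps hold up.
\begin{itemize}
\item Integrating out $t_n$ turns the simplex integral into $\bigl(I(x_2,\dots,x_n)-I(x_1,\dots,x_{n-1})\bigr)/(x_n-x_1)$.
\item Both faces are identified correctly. For the upper face you move $x_n$ into the first slot, which is harmless because the explicit sum is symmetric in the nodes.
\item Your coefficient-by-coefficient check that $\sum_k f(x_k)/\prod_{l\neq k}(x_k-x_l)$ obeys the same recurrence is right.
\end{itemize}

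Your write-up also makes explicit two things the statement leaves implicit. First, the nodes must be pairwise distinct for the right-hand side to make sense. Second, $dt$ means Lebesgue measure $dt_2\cdots dt_n$ on the free coordinates, with total mass $1/(n-1)!$. This is exactly the normalisation behind the factor $(n-1)!$ in the paper's application.

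The one point to tighten is regularity. Mere $(n-1)$-fold differentiability does not justify the fundamental theorem of calculus in $t_n$, the Fubini step, or your continuity argument for repeated nodes. You should assume $f^{(n-1)}$ is continuous. This costs nothing for the paper, where $f$ is an exponential $y\mapsto c\,e^{-\lambda y}$.

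In that exponential case you could even skip the induction. The integral $\int_{\Delta_{n-1}} e^{\sum_k a_k t_k}\,dt$ is the $n$-fold convolution of the functions $s\mapsto e^{a_k s}$ on $[0,\infty)$, evaluated at $s=1$. Taking partial fractions of $\prod_k (p-a_k)^{-1}$ in the Laplace variable $p$ then gives $\sum_k e^{a_k}/\prod_{l\neq k}(a_k-a_l)$ directly. Your induction buys generality in $f$; this shortcut is all the paper actually needs.
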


\subsection{Analytic Preliminaries}



\begin{lemma}[Fourier transform of $\sech$]
\label{lemma:Fourier Transform of sech}
    For $a, b \in \mathbb{R}, a \neq 0$
$$
\int_{\mathbb{R}} e^{itx} \sech(ax) dx
=
\frac{\pi}{|a|} {\sech\left( \frac{\pi t}{2a} \right)}
.
$$
\end{lemma}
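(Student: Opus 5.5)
The cleanest route is to compute $\int_{\mathbb{R}} e^{itx}\sech(ax)\,dx$ directly for $a=1$ by contour integration and then obtain general $a$ by substitution. First reduce to $a>0$: since $\sech$ is even, $\sech(ax)=\sech(|a|x)$, so we may assume $a>0$. The substitution $y=ax$ then gives
\[
\int_{\mathbb{R}} e^{itx}\sech(ax)\,dx=\frac{1}{a}\int_{\mathbb{R}} e^{i(t/a)y}\sech(y)\,dy .
\]
It therefore suffices to show that $F(s):=\int_{\mathbb{R}} e^{isy}\sech(y)\,dy=\pi\sech(\pi s/2)$. Replacing $s$ by $t/a$ then yields $\frac{\pi}{|a|}\sech\left(\frac{\pi t}{2a}\right)$, and because $\sech$ is even the sign of $a$ is irrelevant here as well. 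The integral converges absolutely because $\sech(y)\le 2e^{-|y|}$.

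To compute $F(s)$, integrate $f(z)=e^{isz}\sech(z)$ around the rectangle with vertices $\pm R$ and $\pm R+i\pi$. The function $\cosh z$ vanishes only at $z=i\pi(k+\tfrac12)$, so the only pole inside the rectangle is the simple pole at $z=i\pi/2$. Its residue is
\[
\frac{e^{is\cdot i\pi/2}}{\sinh(i\pi/2)}=\frac{e^{-\pi s/2}}{i}.
\]
On the top edge, $\cosh(y+i\pi)=-\cosh y$ and $e^{is(y+i\pi)}=e^{-\pi s}e^{isy}$. That edge, traversed from right to left, therefore contributes $+e^{-\pi s}F(s)$ in the limit.

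The vertical edges vanish as $R\to\infty$. On them $|\cosh(\pm R+i\theta)|\ge\sinh R$, and $|e^{isz}|\le e^{\pi|s|}$ is bounded, so each vertical edge is $O(\pi e^{\pi|s|}/\sinh R)\to0$. The residue theorem then gives
\[
(1+e^{-\pi s})F(s)=2\pi i\cdot\frac{e^{-\pi s/2}}{i}=2\pi e^{-\pi s/2},
\]
and hence $F(s)=\dfrac{2\pi}{e^{\pi s/2}+e^{-\pi s/2}}=\pi\sech(\pi s/2)$.

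There is no real obstacle; the only point needing care is the residue and orientation bookkeeping. As a check on signs, at $s=0$ the formula gives $F(0)=\int\sech=\pi$, which is correct, and it also matches the identity $\int\cos(x\varphi)\sech(\tfrac{\pi}{2}\varphi)\,d\varphi=2\sech(x)$ used earlier in the proof of the density of L\'evy's areas (take $a=\pi/2$).
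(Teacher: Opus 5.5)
Your proposal is correct and takes the paper's approach; the paper's proof only says the result follows from a standard contour integral argument. Your rectangle of height $\pi$, the residue $e^{-\pi s/2}/i$ at $z=i\pi/2$, the top-edge contribution $e^{-\pi s}F(s)$, the vanishing vertical edges and the rescaling $y=ax$ supply exactly those details, and the parameter $b$ in the statement plays no role.
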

\begin{proof}
    This follows from a standard contour integral argument.
\end{proof}

\begin{lemma}[Laplace transform of powers of $\sech$]
\label{lemma:Laplace Transform of powers of sech}
$$
\varphi_{p}(t) = \int_{\mathbb{R}} e^{tx} \sech(x)^{p} \, dx
=
2^{p-1} \frac{\Gamma\left(\frac{p + t}{2}\right) \Gamma\left(\frac{p - t}{2}\right)}{\Gamma(p)}
$$
\end{lemma}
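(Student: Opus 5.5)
The plan is to reduce the integral to a Beta integral with one substitution. This works for real $p>0$ and $|t|<p$, which is exactly the range where the integral converges. Since $\sech(x)^p \sim 2^p e^{-p|x|}$ as $|x|\to\infty$, the integrand decays at both ends precisely when $|t|<p$. I would state this condition explicitly, since the lemma as written leaves it implicit.

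First rewrite the weight in exponential form:
\[
\sech(x)^p = \frac{2^p}{(e^x+e^{-x})^p} = \frac{2^p e^{px}}{(1+e^{2x})^p}.
\]
The integrand then becomes $2^p e^{(p+t)x}(1+e^{2x})^{-p}$. Next substitute $s=e^{2x}$, which maps $\mathbb{R}$ bijectively onto $(0,\infty)$ with $dx = ds/(2s)$ and $e^{(p+t)x} = s^{(p+t)/2}$. This gives
\[
\varphi_p(t) = 2^{p-1}\int_0^\infty s^{\frac{p+t}{2}-1}(1+s)^{-p}\, ds .
\]

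Now apply the second-kind Beta integral
\[
\int_0^\infty s^{a-1}(1+s)^{-(a+b)}\, ds = B(a,b) = \frac{\Gamma(a)\Gamma(b)}{\Gamma(a+b)},
\]
valid for $\operatorname{Re} a,\operatorname{Re} b>0$. Take $a=(p+t)/2$ and $b=(p-t)/2$, so that $a+b=p$; the condition $a,b>0$ is the same as $|t|<p$. This yields $2^{p-1}\Gamma(\frac{p+t}{2})\Gamma(\frac{p-t}{2})/\Gamma(p)$, which is the claimed formula.

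If complex $t$ is needed (for instance to read off the Fourier transform $t=i\omega$, consistent with Lemma \ref{lemma:Fourier Transform of sech} at $p=1$ via the reflection formula), I would extend by analytic continuation. Both sides are holomorphic in the strip $|\operatorname{Re} t|<p$, and the substitution argument goes through verbatim there, since only $\operatorname{Re} a,\operatorname{Re} b>0$ is used.

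There is no real obstacle. The only step needing care is recording the convergence condition $|\operatorname{Re} t|<p$ and checking the factor $2^{p-1}$, which comes from $2^p$ times the $\tfrac12$ in $dx=ds/(2s)$.
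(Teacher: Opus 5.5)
Your proof is correct and takes the paper's route. The paper also substitutes $u=e^{2x}$ to reach $2^{p-1}\int_0^\infty u^{\frac{p+t}{2}-1}(1+u)^{-p}\,du$, then makes the further substitution $u=v/(1-v)$ to land on the standard Beta integral; your direct appeal to the second-kind Beta integral packages that step. Your explicit convergence condition $|\mathrm{Re}\, t|<p$ is a worthwhile addition, since the paper leaves it implicit.
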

\begin{proof}
First, substitute $u = e^{2x}, du = 2u dx$ to show
$$
\varphi_{p}(t) = 2^{p-1} \int_{\mathbb{R}_{+}} u^{\frac{p+t}{2} - 1} (1+u)^{-p}\, du 
.
$$
Then $u = \frac{v}{1-v}, du  = (1-v)^{-2}dv$, to relate it to the Beta function and Gamma function.
$$
\varphi_{p}(t) = 2^{p-1} \int_{(0, 1)}  v^{\frac{p+t}{2}  - 1} (1 - v)^{\frac{p-t}{2}- 1} dv
$$
\end{proof}

\begin{lemma}[Derivatives of \(\sech x\)]
\label{lemma:Derivatives of sech}
    See \cite[Equation 20, 21]{wintucky1971formulas}
    \begin{align}
        \mathrm{sech}^{(2m)}(x)
        &=
        \sum_{n=0}^{m} (-1)^{n} W_{2n + 1, m-n} (2n)! \;\mathrm{sech}(x)^{2n+ 1} \\
        \mathrm{sech}^{(2m+1)}(x)
        &=
        \tanh(x)
        \sum_{n=0}^{m} (-1)^{n+1}W_{2n + 1, m-n} (2n + 1)! \;\mathrm{sech}(x)^{2n   + 1}
    \end{align}
where the coefficients $W$ are defined recursively
\begin{equation}
\begin{split}
        W_{2n+1, 0} = 1,\quad
        W_{2n+1, 1} = \sum_{m=0}^{n} (2m+1)^{2},\quad \\
        W_{2n+1, k} = (2n+1)^{2} W_{2n+1, k-1} + W_{2n-1, k}
        .
\end{split}
\end{equation}
\end{lemma}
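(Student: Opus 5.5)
Expanding $\sech$ as a geometric series gives the two derivative formulas directly. For $x>0$, $\sech(x)=2e^{-x}/(1+e^{-2x})=2\sum_{l\ge1}(-1)^{l-1}e^{-(2l-1)x}$, and this series may be differentiated termwise. Writing $\sech(x)^{2n+1}$ in the same way, or simply comparing with the recursion, I would not try to match coefficients through exponential sums. Instead I would argue by induction on $m$, differentiating the claimed polynomial representation directly.

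\textbf{Step 1: the basic derivative rules.} We have $\frac{d}{dx}\sech(x)^{2n+1}=-(2n+1)\tanh(x)\sech(x)^{2n+1}$ and $\frac{d}{dx}\bigl(\tanh(x)\sech(x)^{2n+1}\bigr)=\sech(x)^{2n+3}-(2n+1)\tanh^2(x)\sech(x)^{2n+1}$. Using $\tanh^2=1-\sech^2$, the second expression equals $(2n+2)\sech(x)^{2n+3}-(2n+1)\sech(x)^{2n+1}$.

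\textbf{Step 2: odd from even.} Assume the even formula holds at level $m$ and differentiate it once. Each term $(-1)^nW_{2n+1,m-n}(2n)!\sech^{2n+1}$ becomes $(-1)^{n+1}W_{2n+1,m-n}(2n+1)!\tanh\sech^{2n+1}$, since $(2n)!(2n+1)=(2n+1)!$. This is exactly the odd formula at level $m$, with no change of coefficients.

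\textbf{Step 3: even at $m+1$ from odd at $m$.} Differentiate the odd formula using Step 1. The term of index $n$ contributes $(-1)^{n+1}W_{2n+1,m-n}(2n+1)!\bigl[(2n+2)\sech^{2n+3}-(2n+1)\sech^{2n+1}\bigr]$. Collect the coefficient of $\sech^{2n+1}$. One contribution comes from index $n$ and equals $(-1)^n W_{2n+1,m-n}(2n+1)^2(2n)!$. The other comes from index $n-1$ and equals $(-1)^n W_{2n-1,m-n+1}(2n)!$, using $(2n-1)!\,(2n)=(2n)!$. Their sum is $(-1)^n(2n)!\bigl[(2n+1)^2W_{2n+1,m-n}+W_{2n-1,m+1-n}\bigr]$, which by the recursion equals $(-1)^n(2n)!\,W_{2n+1,m+1-n}$.

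\textbf{Boundary conditions.} These are the main place where care is needed. For the top index $n=m+1$, only the $n-1=m$ term contributes, giving $W_{2m+1,0}=1$; this matches $W_{2m+3,0}=1$. For $n=0$, only the first contribution is present, so I need to set $W_{-1,k}=0$. Then the recursion gives $W_{1,k}=W_{1,k-1}=1$, which agrees with $W_{1,1}=\sum_{m=0}^0(2m+1)^2=1$.

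\textbf{Consistency of $W_{2n+1,1}$.} It remains to check that the stated closed form for $W_{2n+1,1}$ agrees with the recursion at $k=1$. The recursion gives $W_{2n+1,1}=(2n+1)^2+W_{2n-1,1}$, and telescoping this yields $\sum_{j\le n}(2j+1)^2$, as claimed. The base case is $\sech^{(0)}=\sech$, which is the $m=0$ even formula.

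I expect the only real obstacle to be the index bookkeeping at the edges, namely the conventions $W_{-1,\cdot}=0$ and $W_{\cdot,0}=1$. The analytic content is just Steps 1–3.
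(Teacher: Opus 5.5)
Your argument is correct. The paper gives no proof of this lemma; it only cites \cite[Equations 20, 21]{wintucky1971formulas}. Your two-step induction is therefore a self-contained replacement for that citation rather than a variant of an argument in the paper. In the first step, differentiating the even formula gives the odd one with unchanged coefficients. In the second, differentiating the odd formula and using $\tanh^2 = 1-\sech^2$ gives the even formula at level $m+1$, with the coefficients matching the recursion exactly.

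Besides removing the dependence on the reference, your proof makes explicit a convention the paper's recursion leaves implicit. At $n=0$ the recursion refers to $W_{-1,k}$, and your proof shows that $W_{-1,k}=0$ for $k\ge 1$ is exactly what is needed; equivalently, $W_{1,k}=1$ for all $k$. This is also the convention forced by the paper's later identification $W_{2n+1,k}=h_k(1^2,3^2,\dots,(2n+1)^2)$ in Lemma~\ref{lemma:Inverse of U}, since $h_k$ of the empty set vanishes for $k\ge 1$. Your telescoping check further shows that the separately stated closed form for $W_{2n+1,1}$ is redundant, because it follows from the recursion at $k=1$.

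One presentational point: delete the opening paragraph about the geometric series. The claim that it ``gives the two derivative formulas directly'' is unjustified, since that expansion produces $\sech^{(k)}$ as an exponential sum rather than as a polynomial in $\sech$. You also never use it.
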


\begin{lemma}[Hardy-Littlewood Tauberian Theorem]
\label{lemma:Hardy-Littlewood Tauberian Theorem}
    See \cite[pg. 184]{duren2012invitation}.
    Given a sequence of \(a_n \ge -C\) bounded below, define over \(\rho \in (0, 1)\)
    \begin{equation}
        A(r) = \sum_{m=0}^{\infty} a_m r^m  
    \end{equation}
    If \(\lim_{r \to 1-} (1-r)A(r)\) exists, then the following limit also exists and is equal to it.
    \begin{equation}
        \lim_{n \to \infty} \frac{1}{n} \sum_{m=0}^{n-1} a_m = \lim_{r \to 1-} (1-r)A(r)
    \end{equation}
\end{lemma}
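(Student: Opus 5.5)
We would follow Karamata's proof of the Hardy--Littlewood Tauberian theorem. Write $L=\lim_{r\to1-}(1-r)A(r)$.

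\textbf{Reduction to nonnegative coefficients.} Replace $a_m$ by $a_m+C\ge 0$. Since $(1-r)\sum_m C r^m=C$ and the Cesàro means of the constant sequence $C$ equal $C$, both sides of the claimed identity shift by exactly $C$. So we may assume $a_m\ge0$.

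\textbf{Step 1: polynomial test functions.} For a monomial $P(t)=t^k$ we have
\[
(1-r)\sum_m a_m r^m P(r^m)=\frac{1-r}{1-r^{k+1}}\,(1-r^{k+1})A(r^{k+1}).
\]
As $r\to1-$ the first factor tends to $\frac{1}{k+1}$ and the second tends to $L$. Hence the limit is $L/(k+1)=L\int_0^1 t^k\,dt$. By linearity, for every polynomial $P$,
\[
\lim_{r\to1-}(1-r)\sum_m a_m r^m P(r^m)=L\int_0^1 P(t)\,dt .
\]

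\textbf{Step 2: extension to a discontinuous test function.} This is the main obstacle. Let $g(t)=t^{-1}\mathbf 1_{[e^{-1},1]}(t)$, so that $\int_0^1 g=1$. Because $a_m\ge0$, the functional
\[
\Lambda_r(h)=(1-r)\sum_m a_m r^m h(r^m)
\]
is monotone in $h$. It therefore suffices, for each $\varepsilon>0$, to find polynomials $p\le g\le q$ on $[0,1]$ with $\int_0^1(q-p)<\varepsilon$. Then $\liminf\Lambda_r(g)$ and $\limsup\Lambda_r(g)$ are both squeezed to within $L\varepsilon$ of $L$.

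To build $p$ and $q$, first pick continuous $h_1\le g\le h_2$ that differ from $g$ only on small neighbourhoods of the jump at $e^{-1}$, so that $\int_0^1(h_2-h_1)$ is small. Then approximate $h_1-\delta$ and $h_2+\delta$ uniformly within $\delta$ by polynomials (Weierstrass). A technical point: the bound $\Lambda_r(1)\to L$ controls the uniform $\delta$-errors, since $|\Lambda_r(f)|\le\|f\|_\infty\Lambda_r(1)$ for bounded $f$.

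\textbf{Step 3: conclusion.} Take $r=e^{-1/N}$. Then $r^m\ge e^{-1}$ exactly when $m\le N$, and $r^m g(r^m)=1$ for these $m$. So
\[
\Lambda_r(g)=(1-e^{-1/N})\sum_{m=0}^{N}a_m\to L .
\]
Since $N(1-e^{-1/N})\to1$, this gives $\frac1N\sum_{m=0}^{N}a_m\to L$.

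Finally we pass from the sum up to $N$ to the sum up to $N-1$. Monotonicity of the partial sums, which holds because $a_m\ge0$, sandwiches $\frac1n\sum_{m=0}^{n-1}a_m$ between $\frac{n-1}{n}\cdot\frac1{n-1}\sum_{m=0}^{n-1}a_m$ and $\frac{n+1}{n}\cdot\frac1{n+1}\sum_{m=0}^{n}a_m$. Both bounds tend to $L$, so the Cesàro limit equals $L$ as claimed.
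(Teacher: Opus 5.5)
Your proof is correct. The paper gives no argument of its own and only cites Duren, and your proof is the standard Karamata argument found there, with the shift by $C$ handling the bounded-below hypothesis: polynomial test functions via $(1-r)A(r^{k+1})$, a monotone squeeze around $t^{-1}\mathbf{1}_{[e^{-1},1]}(t)$, and evaluation at $r=e^{-1/N}$. Two small simplifications are available: the remark about $\Lambda_r(1)$ controlling uniform errors is unnecessary, since Step 1 applies directly to the polynomials $p,q$, and the final sandwich is just the identity $\frac1n\sum_{m=0}^{n-1}a_m=\frac{n-1}{n}\cdot\frac{1}{n-1}\sum_{m=0}^{n-1}a_m$.
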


\section{Proofs for the Dynamics of the Singular Values}
\label{appendix:Proofs for the Dynamics of the Singular Values}

This appendix develops the SDE governing the signed singular values and associated subspace norms. We begin by establishing perturbation formulas for the signed singular vectors and signed singular values of a skew-symmetric matrix (Appendix \ref{appendix:Perturbation analysis for Skew-symmetric matrices}). These formulas are used to obtain the SDE satisfied by the signed singular values $\lambda_k$ and squared subspace norms $u_k$ via It\^o's formula (Appendix \ref{appendix:Derivation of the Singular Value SDE}). We then prove that this SDE is well-posed when started in the interior of the state space (Appendix \ref{appendix:Well-posedness of the SDE}) and derive the corresponding Fokker-Planck equation (Appendix \ref{appendix:Derivation of the PDE}).

\subsection{Perturbation Analysis for Skew-symmetric matrices}
\label{appendix:Perturbation analysis for Skew-symmetric matrices}

\begin{lemma}[Derivatives of singular vector and values of skew-symmetric matrices]\label{lemma:derivatives of singular vectors and values}
Let \(A = Q J(\Lambda) Q^{\top}\) be a skew-symmetric with distinct singular values \(\lambda_k\) and singular vectors \(q_k\). For \(\alpha = (\alpha_1, \alpha_2)\), denote \(\partial_{\alpha} \) as the partial derivative with respect to \(A_{\alpha}\) then
\begin{equation} \label{equation: sval derivatives}
\begin{split}
        \partial_{\alpha} \lambda_{k} &= [J_{k}]_{\alpha} \\
    \partial_{\alpha} \partial_{\beta} \lambda_{k}
&=
\sum_{a} [S_{a, 2k}]_{\alpha} [J_{2k-1, a}]_{\beta} - [S_{a, 2k-1}]_{\alpha} [J_{2k, a}]_{\beta}
\end{split}
\end{equation}
and
\begin{equation} \label{eq: svector derivatives}
    \begin{split} 
    \partial_{\alpha} q_{a}
    &= \sum_{b} q_b [S_{b, a}]_{\alpha} \\
    \partial_{\alpha} \partial_{\beta} q_{a}
    &= \sum_{b} q_b [T_{b, a}]_{\alpha, \beta}
\end{split}
\end{equation}
where \(E_{a, b} := q_a q_b^{\top}\), \(J_{a, b} := E_{a, b} - E_{b, a}\), \(S_{a, b}:= q_{a}^{\top} \partial q_b\), \([T_{a, b}]_{\alpha, \beta} := q_{a}^{\top} \partial_{\alpha} \partial_{\beta} q_b\). As shorthand, \(J_k := J_{2k-1, 2k}, S_{k} := S_{2k-1, 2k}\).
Furthermore, \(J_{a, b}, S_{a, b}\) are skew as matrices \(J_{a, b}^{\top} = - J_{a, b}, S_{a, b}^{\top} = -S_{a, b}\) and skew in their index \(J_{a, b} = -J_{b, a}, S_{a, b} = - S_{b,a}\). In particular, \(S_{a, a} = J_{a, a} = 0\).
Similarly, \([T_{a, b}]_{\alpha, \beta} = [T_{a, b}]_{\beta, \alpha}\) and
\begin{equation} \label{eq: Taa expansion}
    [T_{a, a}]_{\alpha, \beta}
    =
    \sum_{c} [S_{a, c}]_{\alpha} [S_{c, a}]_{\beta}
    .
\end{equation}

For \(l \neq k\), define \(D_{kl} = \lambda_k^2 - \lambda_l^2\)
\begin{equation} \label{eq: S_a,b identity}
\begin{split}
    D_{kl} \; S_{2l-1, 2k-1} &= \lambda_{k} J_{2l-1, 2k} - \lambda_{l}J_{2l, 2k-1} \\
D_{kl} \; S_{2l, 2k-1} &= \lambda_{k}J_{2l, 2k} + \lambda_{l}J_{2l-1, 2k-1} \\
D_{kl} \; S_{2l-1, 2k} &= -\lambda_{l}J_{2l,2k} - \lambda_{k}J_{2l-1, 2k-1} \\
D_{kl} \; S_{2l, 2k} &= \lambda_{l} J_{2l-1,2k} - \lambda_{k}J_{2l, 2k-1}
\end{split}
\end{equation}
Multiplication of \(J\) yields
\begin{equation}\label{eq: J multiplication}
    J_{a b}^{\top} J_{c d} = J_{ab} J_{c, d}^{\top} =
    \delta_{ac} E_{bd} + \delta_{bd} E_{ac}
    - \delta_{ad} E_{bc} - \delta_{bc} E_{ad}
    .
\end{equation}
This shows
\begin{equation} \label{eq: JS multiplication}
\begin{split}
D_{kl} J_{2k-1, 2l-1}^{\top} S_{2l-1, 2k-1} &= -\lambda_{k} E_{2k-1, 2k} + \lambda_{l} E_{2l-1, 2l} \\
D_{kl} J_{2k-1, 2l}^{\top}S_{2l, 2k-1} &= -\lambda_{k} E_{2k-1, 2k} -\lambda_{l} E_{2l, 2l-1} \\
D_{kl} J_{2k, 2l-1}^{\top} S_{2l-1, 2k} &= \lambda_{l} E_{2l-1, 2l} + \lambda_{k} E_{2k , 2k-1} \\
D_{kl} J_{2k, 2l}^{\top} S_{2l, 2k} &= -\lambda_{l} E_{2l, 2l-1} + \lambda_{k} E_{2k, 2k-1}
,
\end{split}
\end{equation}
\begin{equation} \label{eq: JkS multiplication}
\begin{split}
D_{kl} J_{k}^{\top} S_{2l-1, 2k-1} = \lambda_{k} E_{2k-1, 2l-1} + \lambda_{l} E_{2k, 2l}\\
D_{kl} J_{k}^{\top} S_{2l, 2k-1} = \lambda_{k}E_{2k-1, 2l} - \lambda_{l}E_{2k, 2l-1}\\
D_{kl} J_{k}^{\top} S_{2l-1, 2k} = -\lambda_{l} E_{2k-1, 2l} + \lambda_{k} E_{2k, 2l-1}\\
D_{kl} J_{k}^{\top} S_{2l, 2k} = \lambda_{l} E_{2k-1, 2l-1} + \lambda_{k} E_{2k, 2l}
\end{split}
\end{equation}
and
\begin{equation} \label{eq: S_a,b squared}
\begin{split}
    \begin{split}
D_{kl}^{2} S_{2l-1, 2k-1}^{2} &= -\lambda_{k}^{2}(E_{2l-1, 2l-1} + E_{2k, 2k}) - \lambda_{l}^{2}(E_{2l, 2l} + E_{2k-1, 2k-1}) \\
 D_{kl}^{2} S_{2l, 2k-1}^{2} &= -\lambda_{k}^{2}(E_{2l, 2l} + E_{2k, 2k}) - \lambda_{l}^{2}(E_{2l-1, 2l-1} + E_{2k-1, 2k-1}) \\
D_{kl}^{2} S_{2l-1, 2k}^{2} &= -\lambda_{l}^{2}(E_{2l, 2l} + E_{2k, 2k}) - \lambda_{k}^{2}(E_{2l-1, 2l-1} + E_{2k-1, 2k-1})\\
D_{kl}^{2} S_{2l, 2k}^{2} &= -\lambda_{l}^{2}(E_{2l-1, 2l-1} + E_{2k, 2k}) - \lambda_{k}^{2}(E_{2l, 2l} + E_{2k-1, 2k-1})
\end{split}
\end{split}
\end{equation}

Define \([U_{a, b}]_{\alpha, \beta} := q_{a}^{\top} \partial_{\alpha} A\; \partial_{\beta} q_{b}\). By expanding, \([U_{a, b}]_{\alpha, \beta}
    = \sum_{c} [J_{a, c}]_{\alpha} [S_{c, b}]_{\beta}\).
These can be used to obtain equations for $T$. $T_{k,k}$ is already given by \eqref{eq: Taa expansion}. For $T_{2k-1, 2k}$,
\begin{equation}\label{eq: Symmetrised Tk expansion}
2 \lambda_{k} ([T_{2k-1, 2k}]_{\alpha, \beta} + [T_{2k, 2k-1}]_{\alpha, \beta})
=
[U_{2k, 2k}]_{\beta, \alpha}
+ [U_{2k, 2k}]_{\alpha, \beta}
- [U_{2k-1, 2k-1}]_{\beta, \alpha} 
- [U_{2k-1, 2k-1}]_{\alpha, \beta}
.
\end{equation}
For \(l \neq k\),
\begin{equation}\label{eq: T_a,b identity}
\begin{split}
    \lambda_{k} [T_{2l-1, 2k}]_{\alpha, \beta}
+ \lambda_{l} [T_{2l, 2k-1}]_{\alpha, \beta}
&=
- [J_{k}]_{\beta} [S_{2l-1, 2k}]_{\alpha}
- [J_{k}]_{\alpha} [S_{2l-1, 2k}]_{\beta} 
- [U_{2l-1, 2k-1}]_{\beta, \alpha} 
- [U_{2l-1, 2k-1}]_{\alpha, \beta}\\
- \lambda_{l} [T_{2l-1, 2k}]_{\alpha, \beta}
- \lambda_{k}[T_{2l, 2k-1}]_{\alpha, \beta}
&=
[J_{k}]_{\beta}[S_{2l, 2k-1}]_{\alpha}
+ [J_{k}]_{\alpha} [S_{2l, 2k-1}]_{\beta}
- [U_{2l, 2k}]_{\beta, \alpha} 
- [U_{2l, 2k}]_{\alpha, \beta} \\ \\
\lambda_{k} [T_{2l, 2k}]_{\alpha, \beta}
- \lambda_{l} [T_{2l-1, 2k-1}]_{\alpha, \beta}
&=
- [J_{k}]_{\beta} [S_{2l, 2k}]_{\alpha}
- [J_{k}]_{\alpha} [S_{2l, 2k}]_{\beta}
- [U_{2l, 2k-1}]_{\beta, \alpha} 
- [U_{2l, 2k-1}]_{\alpha, \beta}\\
\lambda_{l} [T_{2l, 2k}]_{\alpha, \beta}
- \lambda_{k} [T_{2l-1, 2k-1}]_{\alpha, \beta}
&=
[J_{k}]_{\beta} [S_{2l-1, 2k-1}]_{\alpha}
+ [J_{k}]_{\alpha} [S_{2l-1, 2k-1}]_{\beta}
- [U_{2l-1, 2k}]_{\beta, \alpha} 
- [U_{2l-1, 2k}]_{\alpha, \beta}
.
\end{split}
\raisetag{50pt}
\end{equation}
These linear system can be inverted via
\begin{equation}\label{eq: ST system matrix}
    \begin{bmatrix}
\lambda_{k}  & \lambda_{l} \\
-\lambda_{l} & -\lambda_{k}
\end{bmatrix}^{-1}
=
\frac{1}{\lambda_{k}^{2} - \lambda_{l}^{2}}
\begin{bmatrix}
\lambda_{k}  & \lambda_{l} \\
-\lambda_{l}  & -\lambda_{k}
\end{bmatrix}
,\quad
\begin{bmatrix}
\lambda_{k}  & \lambda_{l} \\
-\lambda_{l} & -\lambda_{k}
\end{bmatrix}^{-\top}
=
\frac{1}{\lambda_{k}^{2} - \lambda_{l}^{2}}
\begin{bmatrix}
\lambda_{k}  & -\lambda_{l} \\
\lambda_{l}  & -\lambda_{k}
\end{bmatrix}
.
\end{equation}
\end{lemma}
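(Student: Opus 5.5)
The plan is to work entirely in the frame of the singular vectors. From $Q^{\top}Q=I$ the matrices $S:=Q^{\top}\partial_\alpha Q$ are skew. Write $M_\alpha:=Q^{\top}\partial_\alpha A\,Q$. Differentiating $Q^{\top}AQ=J(\Lambda)$ and using $AQ=QJ(\Lambda)$ gives the basic identity
\[
M_\alpha=\partial_\alpha J(\Lambda)+S J(\Lambda)-J(\Lambda)S .
\]
Since $A$ is skew, $\partial_\alpha A$ is the elementary skew matrix for the index pair $\alpha$. Hence $[M_\alpha]_{ab}=q_a^{\top}\partial_\alpha A\,q_b=[J_{a,b}]_\alpha$, which is the source of every $J$ appearing in the lemma. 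The first-order formula $\partial_\alpha q_a=\sum_b q_b[S_{b,a}]_\alpha$ is just the expansion of $\partial_\alpha q_a$ in the orthonormal basis. Skewness of $S_{a,b}$ in its indices comes from differentiating $q_a^{\top}q_b=\delta_{ab}$. Skewness of $J_{a,b}$ in both senses is immediate from $E_{a,b}^{\top}=E_{b,a}$.

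Next I will read off the basic identity block by block, using the $2\times2$ block partition. On the diagonal block $k$, the $2\times 2$ piece of $S$ is a multiple of $J$, so it commutes with $J(\lambda_k)$. The commutator therefore vanishes there, and the $(2k-1,2k)$ entry gives $\partial_\alpha\lambda_k=[J_k]_\alpha$. On an off-diagonal block $(l,k)$ with $l\ne k$, the four entries give a $4\times4$ linear system in $S_{2l-1,2k-1},S_{2l,2k-1},S_{2l-1,2k},S_{2l,2k}$. This system splits into two $2\times2$ systems with matrix $\begin{bmatrix}\lambda_k&\lambda_l\\-\lambda_l&-\lambda_k\end{bmatrix}$, whose determinant is $-D_{kl}$. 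Distinctness of the $\lambda_k$ makes it invertible, which yields \eqref{eq: S_a,b identity}.

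The multiplication rule \eqref{eq: J multiplication} follows from $E_{ab}E_{cd}=\delta_{bc}E_{ad}$. Substituting \eqref{eq: S_a,b identity} into it gives \eqref{eq: JS multiplication}, \eqref{eq: JkS multiplication} and \eqref{eq: S_a,b squared}. This is mechanical bookkeeping, and I would organise it by the index cases $l\ne k$.

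For the second-order quantities I will differentiate $q_a^{\top}q_b=\delta_{ab}$ twice. This gives $T_{a,b}+T_{b,a}^{\top}$-type relations, and in particular \eqref{eq: Taa expansion} $[T_{a,a}]_{\alpha\beta}=-q_a^{\top}\text{-terms}=\sum_c[S_{a,c}]_\alpha[S_{c,a}]_\beta$. Symmetry of $T$ in $(\alpha,\beta)$ holds because partial derivatives commute. Differentiating $\partial_\beta\lambda_k=q_{2k-1}^{\top}\partial_\beta A\,q_{2k}$ once more, with $\partial_\alpha\partial_\beta A=0$, gives $[U_{2k-1,2k}]_{\alpha\beta}$-type terms. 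Expanding $\partial_\alpha q$ through $S$ then yields the stated formula for $\partial_\alpha\partial_\beta\lambda_k$.

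For $T$ off the diagonal I will differentiate the basic identity in $\beta$, or equivalently $AQ=QJ$ twice, and project onto $q_a^{\top}(\cdot)q_b$. The $U$-terms arise from the cross term $\partial_\alpha A\,\partial_\beta q$, and $U_{a,b}=\sum_c J_{a,c}S_{c,b}$ follows by inserting $\sum_c q_cq_c^{\top}=I$. The $(l,k)$ blocks again give linear systems with the same matrix, producing \eqref{eq: T_a,b identity} and the stated inverses \eqref{eq: ST system matrix}.

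I expect the main obstacle to be the within-block rotation gauge. Within a block, $S_k$ and the antisymmetric part of $T_{2k-1,2k}$ are not determined by $A$. For this reason only the symmetrised combination \eqref{eq: Symmetrised Tk expansion} can be obtained, and I will take it from the diagonal $(2k-1,2k)$ and $(2k,2k-1)$ entries of the twice-differentiated identity. I would check carefully that every formula claimed is gauge-independent and that signs are consistent across the four index cases.
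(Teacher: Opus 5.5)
Your proposal is correct and follows essentially the paper's route: differentiate $q_a^{\top}q_b=\delta_{ab}$ and the block eigen-equations $Aq_{2k-1}=-\lambda_k q_{2k}$, $Aq_{2k}=\lambda_k q_{2k-1}$ once and twice, project onto the frame, and invert the resulting $2\times 2$ systems; your identity $Q^{\top}\partial_\alpha A\,Q=\partial_\alpha J(\Lambda)+S J(\Lambda)-J(\Lambda)S$ is just the matrix packaging of the paper's entrywise equations. There are two small slips: the symmetrised relation for $T_{2k-1,2k}+T_{2k,2k-1}$ comes from the difference of the $(2k-1,2k-1)$ and $(2k,2k)$ entries of the twice-differentiated identity (the $(2k-1,2k)$ and $(2k,2k-1)$ entries instead involve $\partial_\alpha\partial_\beta\lambda_k$ and $T_{2k-1,2k-1}-T_{2k,2k}$), and invertibility of the off-diagonal systems needs $\lambda_k^2\neq\lambda_l^2$, i.e.\ distinct $|\lambda_k|$, not merely distinct signed values.
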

\begin{proof}
Skewness of $J_{a, b}$ as a matrix and its index follows from its definition. By the chain rule \(\partial_{ij} = - \partial_{ji}\) as \(A\) is skew-symmetric which implies \(S_{a, b}^{\top} = -S_{a, b}\). \([T_{a, b}]_{\alpha, \beta} = [T_{a, b}]_{\beta, \alpha}\) follows as partial derivatives commute \(\partial_{\alpha} \partial_{\beta} = \partial_{\beta} \partial_{\alpha}\).

\eqref{eq: svector derivatives} follows as $q$ is an orthogonal basis and for a vector $v = \sum_{a} q_a \; (q_{a}^{\top}v)$. To obtain expressions for the coefficients $S, T$, consider the orthogonality and singular value equations.

\textbf{Orthogonality} As \(q\) are an orthogonal basis $\delta_{a, b} = q_{a}^{\top} q_{b}$. By taking derivatives,
\begin{align*}
0 &= \partial_{\beta} q_{a}^{\top}q_{b} + q_{a}^{\top} \partial_{\beta} q_{b} \\
0 &= \partial_{\alpha} \partial_{\beta} q_{a}^{\top} q_{b} + \partial_{\beta}q_{a}^{\top} \partial_{\alpha}q_{b} + \partial_{\alpha} q_{a}^{\top} \partial_{\beta}q_{b} + q_{a}^{\top} \partial_{\alpha} \partial_{\beta}q_{b}
.
\end{align*}
In our notation,
\begin{align*}
0 &= [S_{b, a}]_{\beta} + [S_{a, b}]_{\beta} \\
0 &= [T_{b, a} + T_{a, b}]_{\alpha, \beta} + \partial_{\beta}q_{a}^{\top} \partial_{\alpha}q_{b} + \partial_{\alpha} q_{a}^{\top} \partial_{\beta}q_{b}
.
\end{align*}
The first equation establishes $S_{a, b} = - S_{b, a}$. For \eqref{eq: Taa expansion}, develop the second equation by expanding \(\partial_{\alpha} q\) and taking \(a = b\).
\begin{align*}
\partial_{\beta}q_{a}^{\top} \partial_{\alpha}q_{b}
&= \sum_{c} [S_{c, a}]_{\beta}  \; q_{c}^{\top} \sum_{d} [S_{d, b}]_{\alpha}  \; q_{d}
= \sum_{c} [S_{c, b}]_{\alpha} [S_{c, a}]_{\beta} \\
\partial_{\alpha}q_{b}^{\top} \partial_{\beta}q_{b}
&= \sum_{c} [S_{c, a}]_{\alpha} [S_{c, b}]_{\beta}
\end{align*}

\textbf{Singular equations}
As we are differentiating with respect to $A_{\alpha}$,
$
\partial_{\alpha} A = e_{\alpha_{1}} e_{\alpha_{2}}^{\top} - e_{\alpha_{2}} e_{\alpha_{1}}^{\top}
.
$
Hence $\partial_{\alpha} \partial_{\beta} A = 0$ and
$
q_{a}^{\top}\partial_{\alpha} A q_{b} = q_{a}^{\alpha_{1}} q_{b}^{\alpha_{2}} - q_{b}^{\alpha_{1}} q_{a}^{\alpha_{2}} = [J_{a, b}]_{\alpha}
.
$
Now take derivatives of the following equation.
\begin{align*}
Aq_{2k-1} &= - \lambda_{k} q_{2k} \\
\partial_{\alpha} A \; q_{2k-1} + A \partial q_{2k-1} &= - \partial_{\alpha} \lambda_{k} q_{2k} - \lambda_{k} \partial_{\alpha} q_{2k} \\
\partial_{\alpha} \partial_{\beta} A q_{2k-1}
+ \partial_{\beta} A \; \partial_{\alpha} q_{2k-1} 
+ \partial_{\alpha} A\; \partial_{\beta} q_{2k-1}
+ A \partial_{\alpha} \partial_{\beta}q_{2k-1}
&=
- \partial_{\alpha} \partial_{\beta} \lambda_{k} \; q_{2k}
- \partial_{\beta} \lambda_{k} \partial_{\alpha} q_{2k} \\
&- \partial_{\alpha} \lambda_{k} \partial_{\beta} q_{2k}
- \lambda_{k} \partial_{\alpha} \partial_{\beta} q_{2k} 
\end{align*}
\begin{align*}
Aq_{2k} &= \lambda_{k} q_{2k-1} \\
\partial_{\alpha} A \; q_{2k} + A \partial q_{2k} &= \partial_{\alpha} \lambda_{k} q_{2k-1} + \lambda_{k} \partial_{\alpha} q_{2k-1} \\
\partial_{\alpha} \partial_{\beta} A q_{2k}
+ \partial_{\beta} A \; \partial_{\alpha} q_{2k} 
+ \partial_{\alpha} A\; \partial_{\beta} q_{2k}
+ A \partial_{\alpha} \partial_{\beta}q_{2k}
&=
\partial_{\alpha} \partial_{\beta} \lambda_{k} \; q_{2k-1}
+ \partial_{\beta} \lambda_{k} \partial_{\alpha} q_{2k-1} \\
&+ \partial_{\alpha} \lambda_{k} \partial_{\beta} q_{2k-1}
+ \lambda_{k} \partial_{\alpha} \partial_{\beta} q_{2k-1}
\end{align*}
First focus on the first level derivatives and take its inner product with \(q\). To simplify use orthogonality and that \(x^\top \partial_{\alpha} A x = 0\). For \(l \neq k\),
\begin{align*}
\lambda_{k}\; q_{2k}^{\top} \partial_{\alpha} q_{2k-1} &= - \lambda_{k} q_{2k-1}^{\top} \partial_{\alpha} q_{2k} \\
q_{2k}^{\top} \partial_{\alpha} A \; q_{2k-1}  &= - \partial_{\alpha} \lambda_{k} \\
q_{2l-1}^{\top}\partial_{\alpha} A \; q_{2k-1} + \lambda_{l}\; q_{2l}^{\top} \partial_{\alpha} q_{2k-1} &=  - \lambda_{k} q_{2l-1}^{\top} \partial_{\alpha} q_{2k} \\
q_{2l}^{\top}\partial_{\alpha} A \; q_{2k-1} - \lambda_{l}\; q_{2l-1}^{\top} \partial_{\alpha} q_{2k-1} &= - \lambda_{k} q_{2l}^{\top} \partial_{\alpha} q_{2k}
\end{align*}
\begin{align*}
q_{2k-1}^{\top}\partial_{\alpha} A \; q_{2k}  &= \partial_{\alpha} \lambda_{k} \\
- \lambda_{k} q_{2k-1}^{\top} \partial_{\alpha} q_{2k} &= \lambda_{k} q_{2k}^{\top}\partial_{\alpha} q_{2k-1} \\
q_{2l-1}^{\top}\partial_{\alpha} A \; q_{2k} + \lambda_{l} q_{2l}^{\top} \partial_{\alpha} q_{2k} &= \lambda_{k} q_{2l-1}^{\top} \partial_{\alpha} q_{2k-1} \\
q_{2l}^{\top}\partial_{\alpha} A \; q_{2k} -\lambda_{l} q_{2l-1}^{\top} \partial_{\alpha} q_{2k} &= \lambda_{k} q_{2l}^{\top} \partial_{\alpha} q_{2k-1}
\end{align*}
These can be grouped into pairs. The first pair is a trivial equality which places no restriction \(S_{k}\) \footnote{This corresponds to the non-uniqueness of the block diagonalisation. I.e., each singular subspace has dimension 2 and the basis \(q_{2k-1}, q_{2k}\) can be rotated.}. The second pair shows \(\partial_{\alpha} \lambda_k = [J_{k}]_{\alpha}\). Differentiating again completes the proof of \eqref{equation: sval derivatives}.
\begin{align*}
\partial_{\beta} \lambda_{k} &= q_{2k-1}^{\top} \partial_{\beta} A q_{2k} \\
\partial_{\alpha}\partial_{\beta} \lambda_{k} &= \partial_{\alpha}q_{2k-1}^{\top} \partial_{\beta} A q_{2k} + q_{2k-1}^{\top} \partial_{\beta} A \partial_{\alpha}q_{2k} \\
&= q_{2k-1}^{\top} \partial_{\beta} A \partial_{\alpha} q_{2k} - q_{2k}^{\top} \partial_{\beta} A \partial_{\alpha} q_{2k -1} \\
&= \sum_{a} [J_{2k-1, a}]_{\beta} [S_{a, 2k}]_{\alpha} - [J_{2k, a}]_{\beta} [S_{a, 2k-1}]_{\alpha} \\
\end{align*}
The remaining pairs yield linear systems described by the matrices in \eqref{eq: ST system matrix}. Inverting yields \eqref{eq: S_a,b identity}. Apply the same technique to the second level derivatives to obtain \eqref{eq: Symmetrised Tk expansion} and \eqref{eq: T_a,b identity}.
\begin{align*}
[U_{2k-1, 2k-1}]_{\beta, \alpha} 
+ [U_{2k-1, 2k-1}]_{\alpha, \beta}
+ \lambda_{k} [T_{2k, 2k-1}]_{\alpha, \beta}
&=
- [J_{k}]_{\beta} [S_{k}]_{\alpha}
- [J_{k}]_{\alpha} [S_{k}]_{\beta}
- \lambda_{k}  [T_{2k-1, 2k}]_{\alpha, \beta} \\
[U_{2k, 2k-1}]_{\beta, \alpha}
+ [U_{2k, 2k-1}]_{\alpha, \beta}
- \lambda_{k}[T_{2k-1, 2k-1}]_{\alpha, \beta}
&=
- \partial_{\alpha} \partial_{\beta} \lambda_{k}
- \lambda_{k} [T_{2k, 2k}]_{\alpha, \beta} \\
[U_{2l-1, 2k-1}]_{\beta, \alpha} 
+ [U_{2l-1, 2k-1}]_{\alpha, \beta}
+ \lambda_{l} [T_{2l, 2k-1}]_{\alpha, \beta}
&=
- [J_{k}]_{\beta} [S_{2l-1, 2k}]_{\alpha}
- [J_{k}]_{\alpha} [S_{2l-1, 2k}]_{\beta} 
- \lambda_{k} [T_{2l-1, 2k}]_{\alpha, \beta} \\
[U_{2l, 2k-1}]_{\beta, \alpha} 
+ [U_{2l, 2k-1}]_{\alpha, \beta}
- \lambda_{l} [T_{2l-1, 2k-1}]_{\alpha, \beta}
&=
- [J_{k}]_{\beta} [S_{2l, 2k}]_{\alpha}
- [J_{k}]_{\alpha} [S_{2l, 2k}]_{\beta}
- \lambda_{k} [T_{2l, 2k}]_{\alpha, \beta}
\end{align*}
\begin{align*}
[U_{2k-1, 2k}]_{\beta, \alpha} 
+ [U_{2k-1, 2k}]_{\alpha, \beta}
+ \lambda_{k} [T_{2k, 2k}]_{\alpha, \beta}
&=
\partial_{\alpha} \partial_{\beta} \lambda_{k}
+ \lambda_{k} [T_{2k-1, 2k-1}]_{\alpha, \beta} \\
[U_{2k, 2k}]_{\beta, \alpha}
+ [U_{2k, 2k}]_{\alpha, \beta}
- \lambda_{k}[T_{2k-1, 2k}]_{\alpha, \beta}
&=
- [J_{k}]_{\beta} [S_{k}]_{\alpha}
- [J_{k}]_{\alpha} [S_{k}]_{\beta}
+ \lambda_{k} [T_{2k, 2k-1}]_{\alpha, \beta} \\
[U_{2l-1, 2k}]_{\beta, \alpha} 
+ [U_{2l-1, 2k}]_{\alpha, \beta}
+ \lambda_{l} [T_{2l, 2k}]_{\alpha, \beta}
&=
[J_{k}]_{\beta} [S_{2l-1, 2k-1}]_{\alpha}
+ [J_{k}]_{\alpha} [S_{2l-1, 2k-1}]_{\beta}
+ \lambda_{k} [T_{2l-1, 2k-1}]_{\alpha, \beta} \\
[U_{2l, 2k}]_{\beta, \alpha} 
+ [U_{2l, 2k}]_{\alpha, \beta}
- \lambda_{l} [T_{2l-1, 2k}]_{\alpha, \beta}
&=
[J_{k}]_{\beta}[S_{2l, 2k-1}]_{\alpha}
+ [J_{k}]_{\alpha} [S_{2l, 2k-1}]_{\beta}
+ \lambda_{k}[T_{2l, 2k-1}]_{\alpha, \beta}
\end{align*}
Equations \eqref{eq: J multiplication}, \eqref{eq: JS multiplication}, \eqref{eq: JkS multiplication}, \eqref{eq: S_a,b squared} follow from orthogonality.
\end{proof}

\subsection{Derivation of the Singular Value SDE}
\label{appendix:Derivation of the Singular Value SDE}

To get the SDE, we apply It\^o formula. The following results will be useful when computing the sums that appear in It\^o formula. Throughout we let $R_k = \sqrt{u_k}$.
\begin{lemma}[L\'evy's Area contraction identities] \label{lemma:L\'evy's Area contraction}
Let \(\bA_t\) be the matrix of L\'evy's areas and let \(f: \mathbb{N}^2 \to \mathbb{R}\) be a skew-symmetric function \(f(\alpha') = -f(\alpha)\) where $\alpha'=(\alpha_2, \alpha_1)$ is the transpose, then
    \begin{equation}
        \sum_{\alpha} f(\alpha) d\bA_{a} = \frac{1}{2}\sum_{i, j} f(i, j) B_{i} dB_{j}
    \end{equation}
    For \(g: \mathbb{N}^2 \times  \to \mathbb{R}\) skew in both variables separately \(g(\alpha', \beta) = g(\alpha, \beta') = - g(\alpha, \beta)\) then
    \begin{equation}
        \sum_{\alpha, \beta} g(\alpha, \beta) d \ip{\bA_{\alpha}}{\bA_{\beta}}
= \frac{1}{4}
\sum_{i, j, k} g((i, j), (k, j)) B_{i} B_{k}
dt
.
    \end{equation}
\end{lemma}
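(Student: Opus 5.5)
The plan is to compute everything directly from the coordinate definition $d\bA^{i,j}_t = \tfrac12(B^i_t\,dB^j_t - B^j_t\,dB^i_t)$. I read the sums over $\alpha,\beta$ as running over the independent entries $\alpha=(\alpha_1,\alpha_2)$ with $\alpha_1<\alpha_2$, matching the parametrisation of $\so(d)$ used for $\partial_\alpha$ in Lemma \ref{lemma:derivatives of singular vectors and values}. I will note in passing that with ordered pairs the constants would change by a factor $2$, resp.\ $4$; checking which convention gives the stated constants is the only real care point. The key observation is that for $f$ skew the product $f(\alpha)\,d\bA_\alpha$ is invariant under transposing $\alpha$, because $\bA_{\alpha'}=-\bA_\alpha$. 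Also the diagonal terms vanish, since $f(i,i)=0$ and $\bA^{i,i}=0$. Hence
\[
\sum_{\alpha_1<\alpha_2} f(\alpha)\,d\bA_\alpha=\tfrac12\sum_{i,j} f(i,j)\,d\bA^{i,j}.
\]

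\textbf{First identity.} Substitute the definition of $d\bA^{i,j}$ into the full ordered sum. In the term $-\tfrac12\sum_{i,j}f(i,j)B^j\,dB^i$, relabel $i\leftrightarrow j$ and use $f(j,i)=-f(i,j)$. This turns it into a copy of $\tfrac12\sum_{i,j} f(i,j)B^i\,dB^j$, so the full ordered sum equals $\sum_{i,j}f(i,j)B^i\,dB^j$. Combined with the factor $\tfrac12$ from the symmetrisation above, this gives $\tfrac12\sum_{i,j}f(i,j)B_i\,dB_j$.

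\textbf{Second identity.} In the same way, $g(\alpha,\beta)\,d\langle\bA_\alpha,\bA_\beta\rangle$ is invariant under transposing $\alpha$ or $\beta$ separately. So the restricted double sum is $\tfrac14$ of the sum over all ordered $\alpha=(i,j)$, $\beta=(k,l)$. Using $dB^a\,dB^b=\delta_{ab}\,dt$, I expand
\[
d\langle\bA^{i,j},\bA^{k,l}\rangle=\tfrac14\bigl(B^iB^k\delta_{jl}-B^iB^l\delta_{jk}-B^jB^k\delta_{il}+B^jB^l\delta_{ik}\bigr)dt .
\]
Summing against $g$, each of the four terms reduces to $\sum_{i,j,k} g((i,j),(k,j))B^iB^k$ by relabelling indices and using skewness of $g$ in each slot. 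For example, the second term becomes $-\sum g((i,j),(j,l))B^iB^l=\sum g((i,j),(l,j))B^iB^l$. The third and fourth terms are handled the same way, the fourth requiring transposes in both slots so that the two signs cancel. Collecting constants gives $\tfrac14\cdot\tfrac14\cdot 4=\tfrac14$, which is the claim.

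The main obstacle is purely bookkeeping: keeping the sign from each transpose straight in the four-term expansion and fixing the summation convention so the constants match.
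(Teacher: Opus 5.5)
Your proposal is correct and takes essentially the same route as the paper. Both expand $d\bA^{i,j}=\tfrac12(B^i\,dB^j-B^j\,dB^i)$ and the four-term covariation $d\langle\bA^{i,j},\bA^{k,l}\rangle$, use the paper's convention of summing over $\alpha_1<\alpha_2$, and use skewness of $f$ and $g$ to pass to full ordered sums; the only difference is cosmetic (you symmetrise before expanding and write out the four relabellings the paper only sketches), and your constants $\tfrac12$ and $\tfrac14\cdot\tfrac14\cdot4=\tfrac14$ are right.
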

\begin{proof}
\(d\bA_{\alpha} = \frac{1}{2} (B_{\alpha_1} dB_{\alpha_2} - B_{\alpha_2} dB_{\alpha_1})\). Then
\begin{equation*}
    \sum_{i < j} f(i, j) (B_{i} dB_{j} - B_{j} dB_{i})
    =
    \sum_{i < j} f(i, j) B_{i} dB_{j} + f(j, i) B_{j} dB_{i}
    =
    \sum_{i < j} f(i, j) B_{i} dB_{j}
    + \sum_{j < i} f(i, j) B_{i} dB_{j}
\end{equation*}
This yields the full sum over \(i, j\) as \(i = j\) has zero contribution. Similarly,
\begin{equation*}
d\ip{\bA_{\alpha}}{\bA_{\beta}}
= \frac{1}{4}
(B_{\alpha_{1}} B_{\beta_{1}}\, \delta_{{\alpha_{2}}{\beta_{2}}}
- B_{\alpha_{1}} B_{\beta_{2}}\, \delta_{{\alpha_{2}}{\beta_{1}}}
- B_{\alpha_{2}} B_{\beta_{1}}\, \delta_{{\alpha_{1}}{\beta_{2}}}
+ B_{\alpha_{2}} B_{\beta_{2}}\, \delta_{{\alpha_{1}}{\beta_{1}}})dt
\end{equation*}
and the sum weighted by \(g\) over \(\alpha = (i < j), \beta = (k < l)\) can be combined to recover a sum over \(i, j, k, l\). Finally, \(\delta_{lj}\) forces \(l = j\).
\end{proof}

Next, provided the singular values are distinct the following are Brownian motions.
\begin{lemma}[Radial and angular Brownian motions]
\label{lemma:Radial and angular Brownian motions}
    Let \(\bA_t\) be the matrix of L\'evy's areas which we block diagonalise into \(\bA_t = Q J(\Lambda) Q^{\top}\), denoting the columns of \(Q\) with \(q\). Consider the rotation and projection on the \(k\)-th singular subspace
    \begin{equation*}
        J_{k} = q_{2k-1} q_{2k}^{\top} - q_{2k} q_{2k-1}^{\top},\quad
        P_{k} = q_{2k-1} q_{2k-1}^{\top} + q_{2k} q_{2k}^{\top}
        .
    \end{equation*}
    Let \(R_k = \lp{J_k B} = \lp{P_k B}\) be the norm of Brownian motion on the \(k\)-th singular subspace.
    Then, the following are independent Brownian motions which we define as the radial and angular Brownian motions respectively.
    \begin{equation*}
        d\gamma_{k} = \frac{B^{\top} P_{k} dB}{R_{k}},\quad
d\theta_{k} = \frac{B^{\top} J_{k} dB}{R_{k}}
.
    \end{equation*}
\end{lemma}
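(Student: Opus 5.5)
We will prove this with Lévy's characterisation. Each $\gamma_k,\theta_k$ is a continuous local martingale started at $0$, being a stochastic integral against $dB$ with predictable integrand. It therefore suffices to compute all quadratic (co)variations and check that
\[
d\langle\gamma_k,\gamma_l\rangle=d\langle\theta_k,\theta_l\rangle=\delta_{kl}\,dt,\qquad d\langle\gamma_k,\theta_l\rangle=0 .
\]
Lévy's theorem then makes $(\gamma_1,\dots,\gamma_n,\theta_1,\dots,\theta_n)$ a $2n$-dimensional standard Brownian motion, and in particular gives independence.

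We work on the set where the singular values are distinct and $R_k>0$. By the density results of Section~\ref{section:Density of L\'evy's Area Matrix} (Lemma~\ref{lemma:Joint Density of L\'evy's area and Brownian Motion}) this set has full measure at each fixed $t>0$. On it, the columns $q_a(t)$ are measurable, adapted functions of $\bA_t$ up to rotations within each plane, and $P_k,J_k$ do not depend on that choice. Hence the integrands $P_kB/R_k$ and $J_k^\top B/R_k$ are well defined and predictable. To handle the $t=0$ degeneracy and any possible boundary hitting, we localise with stopping times: first exit from $\{R_k>\epsilon,\ |\lambda_k^2-\lambda_l^2|>\epsilon\}$, and start at a time $t_0>0$.

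The covariation computation is linear algebra with $d\langle B^i,B^j\rangle=\delta_{ij}dt$. For vector integrands $a,b$ we have $d\langle \int a^\top dB,\int b^\top dB\rangle=a^\top b\,dt$. We then use the relations, valid for the orthonormal frame $q$,
\[
P_kP_l=\delta_{kl}P_k,\qquad J_k^\top J_l=\delta_{kl}P_k,\qquad P_kJ_l=\delta_{kl}J_k,
\]
which are special cases of \eqref{eq: J multiplication}. They give:
\begin{itemize}
\item $d\langle\gamma_k,\gamma_l\rangle=B^\top P_kP_lB/(R_kR_l)\,dt=\delta_{kl}\,dt$, since $B^\top P_kB=R_k^2$;
\item $d\langle\theta_k,\theta_l\rangle=B^\top J_kJ_l^\top B/(R_kR_l)\,dt=\delta_{kl}\,dt$, since $J_kJ_k^\top=P_k$;
\item $d\langle\gamma_k,\theta_l\rangle=B^\top P_kJ_l^\top B/(R_kR_l)\,dt=\delta_{kl}B^\top J_k^\top B/R_k^2\,dt=0$, since $J_k$ is skew and so $x^\top J_kx=0$.
\end{itemize}
This step is routine; the only care needed is the orientation convention ($d\theta_k$ uses $B^\top J_k\,dB$, so the integrand vector is $J_k^\top B$).

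The main obstacle is not the algebra but the well-definedness of the integrals near degenerate times: when two $\lambda$'s collide or $R_k\to0$, the integrand becomes ill-defined or unbounded. The statement presupposes distinct singular values, so we will argue as follows. On each stochastic interval before the localising times the quadratic variations are exactly $t$. Since the integrands have unit norm whenever defined ($|P_kB|/R_k=1$), the integrals are in fact bounded-integrand integrals wherever the frame exists. Measurability of the frame is obtained from a measurable selection of the block diagonalisation (Lemma~\ref{lemma:Spectral Theorem for Skew-symmetric Matrices}). Finally, the exceptional set has Lebesgue-null time occupation almost surely, by absolute continuity of the law at each fixed $t$ and Fubini. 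Redefining the integrand as an arbitrary unit vector on that null set does not change the integral, so the processes are globally defined local martingales with quadratic variation $t$, and Lévy's theorem applies.
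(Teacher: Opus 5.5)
Your proposal is correct and follows the paper's proof: Lévy's characterisation, with the covariations computed from $P_kP_l=\delta_{kl}P_k$, $J_k^\top J_l=\delta_{kl}P_k$ and $P_kJ_l=\delta_{kl}J_k$, together with the skewness of $J_k$. The paper simply assumes the singular values are distinct, whereas you also check well-definedness: the integrands have unit norm, and the exceptional time set is Lebesgue-null by absolute continuity at fixed $t$ and Fubini. That extra step is a genuine improvement, since it makes the lemma independent of the later non-collision argument.
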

\begin{proof}
By L\'evy's characterisation of Brownian motion, it suffices to check their quadratic variation. As \(P_l^{\top} P_k = J_l^{\top} J_l = \delta_{lk} P_k\)
\begin{align*}
d\gamma_{k} d\gamma_{l} &= \frac{B^{\top} P_{k} dB dB^{\top} P_{l}^{\top}B}{R_{k}^{2}} 
= \frac{B^{\top} P_{k} B}{R_{k}^{2}} \delta_{kl} dt
= \delta_{kl} dt \\
d\theta_{k} d\theta_{l} &= \frac{B^{\top} J_{k} dB dB^{\top} J_{l}^{\top}B}{R_{k}^{2}}
= \frac{B^{\top} P_{k} B}{R_{k}^{2}} \delta_{kl} \; dt
= \delta_{kl} dt
.
\end{align*}
Finally, $P_k J_l^{\top} = \delta_{kl} J_k^{\top}$ is skew-symmetric and $P_l$ is symmetric, hence $P_{k} J_{l}$ is skew-symmetric and
\begin{align*}
d\gamma_{k} d\theta_{l} &= \frac{B^{\top} P_{k} dB dB^{\top} J_{l}^{\top}B}{R_{k}^{2}}
= \frac{B^{\top} J_{k}^{\top} B}{R_{k}^{2}} \delta_{kl} dt
= 0
.
\end{align*}
\end{proof}

\begin{lemma} \label{lemma:Derivation of singular value SDE}
    Let $d = 2n$ be even. The singular values \(\lambda_k\) of the matrix of L\'evy's areas and squared norm \(R_k^2\) of Brownian motion on each singular subspace satisfy the SDE given in Theorem \ref{theorem:SDE of Singular values and radii}.
\end{lemma}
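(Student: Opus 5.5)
The plan is to apply It\^o's formula to the smooth functions $\lambda_k(\bA)$ and $u_k(B,\bA)=\|P_k(\bA)B\|^2$, valid while the signed singular values stay distinct and $u_k>0$. Lemma \ref{lemma:derivatives of singular vectors and values} supplies all first and second derivatives of $\lambda_k$ and of the columns $q_a$ with respect to the entries $\bA_\alpha$. Lemma \ref{lemma:L\'evy's Area contraction} then converts the sums over $\alpha,\beta$ into expressions in $B$ and $dB$. Since all coefficient functions are antisymmetric in $\alpha$ (and in $\beta$), both contraction identities apply directly.

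\textbf{Singular values.} We have
$$d\lambda_k=\sum_\alpha [J_k]_\alpha\, d\bA_\alpha+\tfrac12\sum_{\alpha,\beta}\partial_\alpha\partial_\beta\lambda_k\, d\langle \bA_\alpha,\bA_\beta\rangle .$$
By the first contraction identity, the martingale part is $\tfrac12 B^\top J_k\,dB=\tfrac{R_k}{2}\,d\theta_k$, using Lemma \ref{lemma:Radial and angular Brownian motions}. For the drift, substitute the formula for $\partial_\alpha\partial_\beta\lambda_k$ in terms of $S_{a,b}$ and $J_{a,b}$, and use the second contraction identity, which produces $\tfrac14\sum_{i,j,m}g((i,j),(m,j))B_iB_m$. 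The $j$-contraction is a matrix product, so each term becomes $\tfrac14 B^\top (S_{a,2k}J_{2k-1,a}^\top - S_{a,2k-1}J_{2k,a}^\top) B$ (up to transposition conventions). Next, expand $S_{a,b}$ for $a$ in block $l\ne k$ using \eqref{eq: S_a,b identity}, and evaluate the products using \eqref{eq: JS multiplication}. Each product reduces to combinations of $E_{ab}$, and $B^\top E_{ab}B=(q_a^\top B)(q_b^\top B)$. The terms with $a$ in block $k$ involve $S_k$; these should drop out, because they correspond to the rotational gauge freedom. I expect the surviving terms to collapse to $\lambda_k(u_k+u_l)/(\lambda_k^2-\lambda_l^2)$, where $u_l=(q_{2l-1}^\top B)^2+(q_{2l}^\top B)^2$. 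Cross terms like $E_{2k-1,2k}$ must cancel between the four cases, and checking this is a good sanity test.

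\textbf{Subspace norms.} Write $u_k=(q_{2k-1}^\top B)^2+(q_{2k}^\top B)^2$ as a function of $(B,\bA)$. It\^o's formula then has three contributions: (i) the derivative in $B$, giving $2B^\top P_k\,dB=2R_k\,d\gamma_k$ plus the drift $\tr P_k\,dt=2\,dt$; (ii) the first derivative in $\bA$, namely $2\sum_{a\in\{2k-1,2k\}}(q_a^\top B)\,B^\top\partial_\alpha q_a$, contracted via Lemma \ref{lemma:L\'evy's Area contraction} and reduced with \eqref{eq: S_a,b identity} and \eqref{eq: JkS multiplication}, which should give the $d\theta_k,d\theta_l$ martingale terms; (iii) the second-order terms, namely the $\bA\bA$ Hessian involving $T$ and $S\cdot S$ (handled by \eqref{eq: Taa expansion}, \eqref{eq: Symmetrised Tk expansion}, \eqref{eq: T_a,b identity} and \eqref{eq: S_a,b squared}) and the mixed covariation $d\langle B_i,\bA_\alpha\rangle=\tfrac12(B_{\alpha_1}\delta_{i\alpha_2}-B_{\alpha_2}\delta_{i\alpha_1})\,dt$. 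These should produce the double sums over $l,r\ne k$ and the $(\lambda_k^2u_l^2-\lambda_l^2u_k^2)/D_{kl}^2$ term.

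The main obstacle is the bookkeeping in (iii), in particular solving the $2\times2$ systems for the $T$ components via \eqref{eq: ST system matrix} and showing that the gauge-dependent pieces ($S_k$, $T_{2k-1,2k}$) cancel. I would organise the computation by expressing every quantity in the rotated coordinates $b_a=q_a^\top B$. The final expression must depend only on the $u$'s, which provides a strong consistency check, and the identities in Lemma \ref{lemma: SDE for u} offer a further check.
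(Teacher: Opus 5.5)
Your plan follows the paper's proof: It\^o's formula for $\lambda_k$ and the singular vectors via the perturbation formulas of Lemma~\ref{lemma:derivatives of singular vectors and values}, the L\'evy area contraction identities to collapse the $\alpha,\beta$ sums, and for $u_k=B^\top P_k B$ a split into the $B$-part ($2R_k\,d\gamma_k+2\,dt$), the diffusion part of $dP_k$ (giving the $d\theta_k,d\theta_l$ terms), and the second-order $S\cdot S$ and $T$ terms, where the gauge-dependent $S_k$ pieces cancel between those two contributions. Two small corrections for when you carry it out: in the $\lambda_k$ drift the block-$k$ terms vanish trivially because $S_{a,a}=J_{a,a}=0$, so no gauge argument is needed; and the mixed covariation $d\langle B,\bA\rangle$ contribution to $du_k$ is identically zero, so the double sums over $l,r\neq k$ and the $D_{kl}^{-2}$ terms come entirely from the $\bA\bA$ second-order terms.
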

\begin{proof}
The matrix of L\'evy's areas and Brownian motion satisfy the following SDE.
\begin{align*}
    d\bA^{i, j}_t &= \frac{1}{2}(B^i_t dB^j_t - B^j_t dB^i_t)\\
    dB^i_t &= dB^i_t
\end{align*}
The singular values and vectors are functions of \(\bA\). For \(\alpha_1 < \alpha_2\), let \(\partial_{\alpha} := \frac{\partial}{\partial A_{\alpha}}\). By It\^o Formula,
\begin{align*}
d\lambda_{k} &= \sum_{\alpha} \partial_{\alpha} \lambda_{k} d\bA_{\alpha} + \frac{1}{2} \sum_{\alpha, \beta} \partial_{\alpha} \partial_{\beta} \lambda_{k} \;d \ip{\bA_{\alpha}}{\bA_{\beta}} \\
dq_{a} &= \sum_{\alpha} \partial_{\alpha} q_{a} d\bA_{\alpha} + \frac{1}{2} \sum_{\alpha, \beta} \partial_{\alpha} \partial_{\beta}  q_{a} \;d \ip{\bA_{\alpha}}{\bA_{\beta}}
\end{align*}
The squared norms are given by \(R_k^2 = B^{\top} P_k B\) and we obtain $dR_k^2$ via product rule once we have \(dP_k\).
Formulas for the derivatives with respect to $\bA_{\alpha}$ are given in Lemma \ref{lemma:derivatives of singular vectors and values} and we simplify the summation over \(\alpha, \beta\) via Lemma \ref{lemma:L\'evy's Area contraction}. As in Lemma \ref{lemma:Radial and angular Brownian motions}, we consider the Brownian motions \(\gamma_k, \theta_k\)
\begin{equation*}
d\gamma_{k} = \frac{B^{\top} P_{k} dB}{R_{k}},\quad
d\theta_{k} = \frac{B^{\top} J_{k} dB}{R_{k}}
.
\end{equation*}

\textbf{1. Singular values}
Using \eqref{equation: sval derivatives}, the diffusion term is
\begin{equation*}
    \sum_{\alpha} \partial_{\alpha} \lambda_{k} dA_{\alpha}
= \frac{1}{2} \sum_{i, j} \partial_{ij} \lambda_{k} B_{i} dB_{j}
= \frac{1}{2} \sum_{i, j} [J_{k}]_{ij} B_{i} dB_{j}
= \frac{1}{2} B^{\top} J_{k} dB
= \frac{1}{2} R_k d \theta_k
\end{equation*}
and the drift term is
$$
\frac{1}{2} \sum_{\alpha, \beta} \partial_{\alpha} \partial_{\beta} \lambda_{k} \;d \ip{\bA_{\alpha}}{\bA_{\beta}}
=
\frac{1}{8} \sum_{i, j, x} B_{i} B_{x}
\sum_{a} [S_{a, 2k}]_{ij} [J_{a, 2k-1}]_{xj}  - [S_{a, 2k-1}]_{\alpha} [J_{a, 2k}]_{xj}
.
$$
Use symmetry to swap $i, x$ and sum over $j$ to show the inner term is
$$
[J_{2k-1, a}^{\top} S_{a, 2k} - J_{2k, a}^{\top} S_{a, 2k-1}]_{ij} 
.
$$
The contribution of $a \in \{ 2k-1, 2k \}$ is 0 as $S_{a,a} = 0 = J_{a, a}$. For $l \neq k$, use \eqref{eq: S_a,b identity} and \ref{eq: J multiplication} to show the contribution of $a \in \set{2l-1, 2l}$ is
\begin{equation*}
    \begin{split}
D_{kl}(J_{2k-1, 2l-1}^{\top} S_{2l-1, 2k} - J_{2k, 2l-1}^{\top} S_{2l-1, 2k-1}
+ J_{2k-1, 2l}^{\top} S_{2l, 2k} - J_{2k, 2l}^{\top} S_{2l, 2k-1})
\\ =
\lambda_k (E_{2l-1, 2l-1} + E_{2k-1, 2k-1}) + \lambda_k(E_{2k, 2k} + E_{2l-1, 2l-1}) \\
+ \lambda_k(E_{2l, 2l} + E_{2k-1, 2k-1}) +  \lambda_k( E_{2l, 2l} +E_{2k, 2k}) \\
= 2 \lambda_k (P_l + P_k)
\end{split}
\end{equation*}
Summing over \(i, j\) yields
$$
\frac{1}{2} \sum_{\alpha, \beta} \partial_{\alpha} \partial_{\beta} \lambda_{k} \;d \ip{\bA_{\alpha}}{\bA_{\beta}}
=
\frac{\lambda_{k}}{4} \sum_{l \neq k}
\frac{R_{k}^{2} + R_{l}^{2}}{\lambda_{k}^{2} - \lambda_{l}^{2}}
.
$$

\textbf{2. Singular radii} By It\^o product rule and symmetry of $P_{k}$
$$
dR_{k}^{2} = 2 B^{\top} P_{k} dB + B^{\top} dP_{k} B + \{2 B^{\top}dP_{k} dB + dB^{\top} P_{k} dB\}
$$
The first term corresponds to the \(k\)-th radial Brownian motion $2 B^{\top} P_{k} dB \to 2 R_{k} d\gamma_{k}$. Also
$$
dB^{\top} P_{k} dB = \sum_{i, j} (P_{k})_{ij} \delta_{ij} dt = \mathrm{tr}(P_{k})dt = 2dt
$$
which recovers the usual SDE for a squared Bessel process. It remains to compute \(dP_k\).

If we decompose $q$ into its diffusion and drift term
\begin{align*}
dq_{2k-1} &= dg_{2k-1} + dh_{2k-1} \\
dq_{2k} &= dg_{2k} + dh_{2k}
\end{align*}
then the diffusion and drift term of $dP_{k} = dC_{k} + dD_{k}$ is
\begin{align*}
dC_{k} &= q_{2k-1} dg_{2k-1}^{\top} + dg_{2k-1} q_{2k-1}^{\top} + q_{2k} dg_{2k}^{\top} + dg_{2k} q_{2k}^{\top} \\
dD_{k} &= q_{2k-1} dh_{2k-1}^{\top} + dh_{2k-1} q_{2k-1}^{\top} + q_{2k} dh_{2k}^{\top} + dh_{2k} q_{2k}^{\top} \\
&+ dg_{2k-1} dg_{2k-1}^{\top} + dg_{2k} dg_{2k}^{\top}
.
\end{align*}
We now decompose $dR_{k}^{2}$ as
$$
2 R_{k} d\gamma_{k} + B^{\top} dC_{k} B
+ \set{B^{\top} dD_{k} B + 2 B^{\top} dC_{k} dB + 2 dt}
.
$$
Using the expressions below we get the diffusion term as
$$
2 R_k d\gamma_k + R_{k}\sum_{l \neq k } \frac{\lambda_{k} R_{l}^2 d\theta_{k}
+ \lambda_{l} R_{k} R_{l} d\theta_{l}}{\lambda_{k}^{2} - \lambda_{l}^{2}}
$$
and drift term
$$
2
+
\frac{\lambda_{k}^{2}}{2}
\sum_{l, r \neq k, l \neq r} \frac{R_{l}^{2}(R_r^2 + 2R_{k}^{2})}{(\lambda_{k}^{2} - \lambda_{l}^{2})(\lambda_{k}^{2} - \lambda_{r}^{2})}
+
\frac{1}{2}\sum_{l \neq k} \frac{\lambda_k^2 R_l^4 - \lambda_l^2 R_k^4}{(\lambda_k^2 -\lambda_l^2)^2}
dt
$$

\textbf{2.1 Contribution of the projection's diffusion term} We show
\begin{equation*}
B^{\top} dC_{k} B
=
R_{k}\sum_{l \neq k } \frac{\lambda_{k} R_{l}^2 d\theta_{k}
+ \lambda_{l} R_{k}R_{l} d\theta_{l}}{\lambda_{k}^{2} - \lambda_{l}^{2}},\quad
B^{\top} dC_{k} dB = 0
\end{equation*}
First consider the diffusion term of the singular vectors.
\begin{align*}
dg_{2k-1}
&=
\sum_{\alpha} \partial_{a} q_{2k-1} d\bA_{\alpha}
=
\frac{1}{2} \sum_{i, j} B_{i} dB_{j}
\sum_{a} q_{a} [S_{a, 2k-1}]_{ij} \\
dg_{2k}
&=
\sum_{\alpha} \partial_{a} q_{2k} d\bA_{\alpha}
=
\frac{1}{2} \sum_{i, j} B_{i} dB_{j}
\sum_{a} q_{a} [S_{a, 2k}]_{ij}
.
\end{align*}
Hence,
\begin{equation*}
    dC_k
    = \frac{1}{2} \sum_{i, j} B_{i} dB_{j}
    \sum_{a}
(q_{2k-1}q_{a}^{\top} + q_{a} q_{2k-1}^{\top}) [S_{a, 2k-1}]_{ij}
+ (q_{2k}q_{a}^{\top} + q_{a} q_{2k}^{\top}) [S_{a, 2k}]_{ij}
.
\end{equation*}
The contribution of $a \in  \{ 2k-1, 2k \}$ is 0 as the coefficients will match and $S_{a, b} = -S_{b, a}$.
For $l \neq k$, consider the \((x, y)\)-th component of the contribution of \(a = 2l-1, 2l\). Expand \(S\) using \eqref{eq: S_a,b identity} and withhold the \((\lambda_k^2 - \lambda_l^2)\) factor to show
\begin{equation*}
\begin{split}
(q_{2k-1}^{x}q_{2l-1}^{y} + q_{2l-1}^{x} q_{2k-1}^{y}) [\lambda_{k}(q_{2l-1}^{i} q_{2k}^{j} - q_{2k}^{i}q_{2l-1}^{j}) - \lambda_{l}(q_{2l}^{i} q_{2k-1}^{j} - q_{2k-1}^{i}q_{2l}^{j})] \\
+ (q_{2k}^{x}q_{2l-1}^{y} + q_{2l-1}^{x} q_{2k}^{y}) [ -\lambda_{l}(q_{2l}^{i} q_{2k}^{j} - q_{2k}^{i}q_{2l}^{j}) - \lambda_{k}(q_{2l-1}^{i} q_{2k-1}^{j} - q_{2k-1}^{i}q_{2l-1}^{j})] \\
+ (q_{2k-1}^{x}q_{2l}^{y} + q_{2l}^{x} q_{2k-1}^{y}) [\lambda_{k}(q_{2l}^{i} q_{2k}^{j} - q_{2k}^{i}q_{2l}^{j}) + \lambda_{l} (q_{2l-1}^{i} q_{2k-1}^{j} - q_{2k-1}^{i}q_{2l-1}^{j})] \\
+ (q_{2k}^{x}q_{2l}^{y} + q_{2l}^{x} q_{2k}^{y}) [\lambda_{l}(q_{2l-1}^{i} q_{2k}^{j} - q_{2k}^{i}q_{2l-1}^{j}) - \lambda_{k}(q_{2l}^{i} q_{2k-1}^{j} - q_{2k-1}^{i}q_{2l}^{j})]
.
\end{split}
\end{equation*}
The coefficient of $\lambda_{k}$ is given below and can be regrouped into $P_{l}^{yi} J_{k}^{xj} + P_{l}^{xi} J_{k}^{yj} + P_{l}^{yj} J_{k}^{ix} + P_{l}^{xj} J_{k}^{iy}$.
\begin{equation*}
\begin{split}
(q_{2k-1}^{x}q_{2l-1}^{y} + q_{2l-1}^{x} q_{2k-1}^{y})(q_{2l-1}^{i} q_{2k}^{j} - q_{2k}^{i}q_{2l-1}^{j})  \\
- (q_{2k}^{x}q_{2l-1}^{y} + q_{2l-1}^{x} q_{2k}^{y})(q_{2l-1}^{i} q_{2k-1}^{j} - q_{2k-1}^{i}q_{2l-1}^{j}) \\
+ (q_{2k-1}^{x}q_{2l}^{y} + q_{2l}^{x} q_{2k-1}^{y})(q_{2l}^{i} q_{2k}^{j} - q_{2k}^{i}q_{2l}^{j}) \\
- (q_{2k}^{x}q_{2l}^{y} + q_{2l}^{x} q_{2k}^{y})(q_{2l}^{i} q_{2k-1}^{j} - q_{2k-1}^{i}q_{2l}^{j})
.
\end{split}
\end{equation*}
Similarly, the coefficient of $\lambda_{l}$ can be regrouped into $P_{k}^{xi} J_{l}^{yj} + P_{k}^{yi} J_{l}^{xj} + P_{k}^{xj}J_{l}^{iy} + P_{k}^{yj} J_{l}^{ix}$.
\begin{equation*}
\begin{split}
-(q_{2k-1}^{x}q_{2l-1}^{y} + q_{2l-1}^{x} q_{2k-1}^{y})(q_{2l}^{i} q_{2k-1}^{j} - q_{2k-1}^{i}q_{2l}^{j}) \\
- (q_{2k}^{x}q_{2l-1}^{y} + q_{2l-1}^{x} q_{2k}^{y})(q_{2l}^{i} q_{2k}^{j} - q_{2k}^{i}q_{2l}^{j}) \\
+ (q_{2k-1}^{x}q_{2l}^{y} + q_{2l}^{x} q_{2k-1}^{y}) (q_{2l-1}^{i} q_{2k-1}^{j} - q_{2k-1}^{i}q_{2l-1}^{j})\\
+ (q_{2k}^{x}q_{2l}^{y} + q_{2l}^{x} q_{2k}^{y})(q_{2l-1}^{i} q_{2k}^{j} - q_{2k}^{i}q_{2l-1}^{j})
.
\end{split}
\end{equation*}

To compute $B^{\top} dC_{k} B$, we weight the sum over \(i,j,x,y\) by $B_{i} dB_{j} B_{x} B_{y}$. For both $\lambda_{k}, \lambda_{l}$, the last two terms vanish as $B^{\top}J B = 0$ and the first two terms are equal to $B^{\top} P_l B \; B^{\top} J_k dB = R_l^2 R_k d \theta_k$ and $B^{\top} P_k B \; B^{\top} J_l dB = R_k^2 R_l d\theta_l$ respectively. Taking note of the factors yields $B^{\top} dC_{k} B$ as given.

For $B^{\top} dC_{k} dB$, the sum is weighted by $B_{i} B_{x}\delta_{jy} dt$. The terms vanish as either they contain $J_{k}^{jj} = 0$ or are of the form $B^{\top} P_{k} J_{l} B = 0$.

\textbf{Contribution of the projection's drift term} We show
\begin{equation*}
    B^{\top} d D_k  B
    =
    \frac{\lambda_{k}^{2}}{2}
    \sum_{l, r \neq k, l \neq r} \frac{R_{l}^{2}(R_r^2 + 2R_{k}^{2})}{(\lambda_{k}^{2} - \lambda_{l}^{2})(\lambda_{k}^{2} - \lambda_{r}^{2})}
    dt
    +
    \frac{1}{2}\sum_{l \neq k} \frac{\lambda_k^2 R_l^4 - \lambda_l^2 R_k^4}{(\lambda_k^2 -\lambda_l^2)^2}
    dt
\end{equation*}
To proceed,  decompose $dD_{k} = dG_k + dH_k$ where
\begin{equation*}
\begin{split}
dG_k &= dg_{2k-1} dg_{2k-1}^{\top} + dg_{2k} dg_{2k}^{\top} \\
dH_k &= q_{2k-1} dh_{2k-1}^{\top} + dh_{2k-1} q_{2k-1}^{\top} + q_{2k} dh_{2k}^{\top} + dh_{2k} q_{2k}^{\top}
.
\end{split}
\end{equation*}
We first show the contribution of terms involving \(g\) is (omitting $dt$ for notational clarity)
\begin{equation*}
\begin{split}
-4 B^{\top}dG_k B
=
R_k^2 B^\top S_k^2 B
+
\sum_{l \neq k} \frac{2\lambda_k (R_l^2 B^{\top} S_k P_k B - R_k^2B^{\top}S_k P_l B)}{(\lambda_{k}^{2} - \lambda_{l}^{2})^2} \\
-
\sum_{l, r \neq k, l \neq r} \frac{2 \lambda_{k}^{2} R_{l}^{2} R_{r}^{2}}{(\lambda_{k}^{2} - \lambda_{l}^{2})(\lambda_{k}^{2} - \lambda_{r}^{2})}
-
\sum_{l \neq k} \frac{\lambda_{k}^{2}R_{l}^{2}(R_{k}^{2} + 2R_{l}^{2}) + \lambda_{l}^{2}R_{k}^{2}R_{l}^{2}}{(\lambda_{k}^{2} - \lambda_{l}^{2})^{2}}
.
\end{split}
\end{equation*}
Next the terms including $h$, contribute
\begin{equation*}
\begin{split}
4B^{\top} dH_k B
=
R_{k}^{2}B^{\top} S_{k}^{2} B + \sum_{l \neq k}
\frac{- (\lambda_{k}^{2} + \lambda_{l}^{2}) R_{k}^{2} R_{l}^{2}  - 2\lambda_{l}^{2} R_{k}^{4}}{(\lambda_{k}^{2} - \lambda_{l}^{2})} \\
+ 
\sum_{l, r \neq k, l \neq r} \frac{4\lambda_{k}^{2}R_{l}^{2}R_{k}^{2}}{(\lambda_{k}^{2} - \lambda_{l}^{2})(\lambda_{k}^{2} - \lambda_{r}^{2})}
+ \sum_{l \neq k}\frac{2\lambda_{k}(R_{k}^{2} B^{\top} P_{l}S_{k} B  - R_{l}^{2} B^{\top}P_{k} S_{k} B)}{(\lambda_{k}^{2} - \lambda_{l}^{2})^2}
.
\end{split}
\end{equation*}
Summing these together, terms including $S_k$ vanish. And the remaining terms can be grouped into a sum over \(k, l, r\) and \(l, k\).

\textbf{2.2 Contribution of the projection's drift term, Part 1}
Use the expansion
\[
dg_{2k-1} dg_{2k-1}^{\top} + dg_{2k} dg_{2k}^{\top}
=
\frac{1}{4} \sum_{i,j,x,y} B_{i} dB_{j} B_{x} dB_{y}
\sum_{a, b} q_{a} q_{b}^{\top} (
[S_{a, 2k-1}]_{ij} [S_{b, 2k-1}]_{xy}
+ [S_{a, 2k}]_{ij} [S_{b, 2k}]_{xy}
)
.
\]
As $dB_{j}dB_{y} = \delta_{jy} dt$, the sum over $j = y$ simplifies into (note $S$ is skew-symmetric)
$$
\frac{-1}{4}\sum_{i, j} B_{i} B_{j}
\sum_{a, b} q_{a} q_{b}
[S_{a, 2k-1}S_{b, 2k-1} + S_{a, 2k}S_{b, 2k}^{\top}]_{ij}
.
$$
Therefore, the contribution to $B^{\top} dD_{k} B$ is, where $W = Q^{\top} B$,
$$
\frac{-1}{4}\sum_{i, j,x, y} B_{i} B_{j}
\sum_{a, b} W_{a} W_{b}
[S_{a, 2k-1}S_{b, 2k-1} + S_{a, 2k}S_{b, 2k}]_{ij}
.
$$

We break into the following cases: \begin{enumerate*}
    \item \(a, b \in \set{2k-1, 2k}\)
    \item \(a\in \set{2k-1, 2k} \) with \(b \notin \set{2k-1, 2k}\) (and at the same time \(b\in \set{2k-1, 2k}\) with \(a \notin \set{2k-1, 2k}\))
    \item \(a, b \notin \set{2k-1, 2k}\)
    .
\end{enumerate*}
In each case, the stated contribution excludes the overall factor of $-1/4$.

\textbf{Case 1}
For $a, b \in \set{2k-1, 2k}$, the contribution is
$$
R_k^2 B^\top S_k^2 B
.
$$
As $S_{a, a} = 0$, the corresponding sum is only $R_{k}^{2}
[S_{k}^{2}]_{ij}$. Summing over $i, j$ yields the result.

\textbf{Case 2}
The contribution is
$$
2
\sum_{l \neq k} \frac{\lambda_k (R_l^2 B^{\top} S_k P_k B - R_k^2B^{\top}S_k P_l B)}{D_{kl}^2}
.
$$
This is the sum of \((a, c), (c, a)\) for $a \in \{ 2k -1, 2k \}$, which simplifies into
\begin{equation*}
2
\sum_{c \notin \{ 2k-1, 2k \}}
W_{2k-1} W_{c}
[S_{k}S_{c, 2k}]_{ij}
- W_{2k} W_{c}
[S_k S_{c, 2k-1}]_{ij}
.
\end{equation*}
Consider each sum for \(c \in \set{2l-1, 2l}\).
Factor out \(S_k\) by expanding the product (i.e., $[S_k S_{a, b}]_{ij} = \sum_{r} [S_k]_{ir} [S_{a, b}]_{rj}$) and consider the \((r, j)\)-th component.
\begin{equation*}
\begin{split}
W_{2k-1} W_{2l-1} ( -\lambda_{l}J_{2l,2k} - \lambda_{k}J_{2l-1, 2k-1})_{rj} 
+ W_{2k-1} W_{2l} (\lambda_{l} J_{2l-1,2k} - \lambda_{k}J_{2l, 2k-1})_{rj}
\\- W_{2k} W_{2l-1} (\lambda_{k} J_{2l-1, 2k} - \lambda_{l}J_{2l, 2k-1})_{rj}
- W_{2k} W_{2l} (\lambda_{k}J_{2l, 2k} + \lambda_{l}J_{2l-1, 2k-1})_{rj}
\end{split}
\end{equation*}
Expand $S$, which yields a factor of $D_{kl}^2$. Sum over \(j\), using that $J_{a, b}B = q_a W_b - q_b W_a$.
\begin{equation*}
\begin{split}
W_{2k-1} W_{2l-1} ( -\lambda_{l}(q_{2l} W_{2k} - q_{2k}W_{2l}) - \lambda_{k}(q_{2l-1}W_{2k-1}-q_{2k-1}W_{2l-1}))_{r} \\
+ W_{2k-1} W_{2l} (\lambda_{l} (q_{2l-1} W_{2k} - q_{2k}W_{2l-1}) - \lambda_{k}(q_{2l}W_{2k-1} - q_{2k-1} W_{2l}))_{r} \\
- W_{2k} W_{2l-1} (\lambda_{k} (q_{2l-1} W_{2k} - q_{2k}W_{2l-1}) - \lambda_{l}(q_{2l}W_{2k-1} - q_{2k-1} W_{2l}))_{r} \\
- W_{2k} W_{2l} (\lambda_{k}(q_{2l} W_{2k} - q_{2k}W_{2l}) + \lambda_{l}(q_{2l-1}W_{2k-1}-q_{2k-1}W_{2l-1}))_{r}
\end{split}
\end{equation*}
For each $l$, consider the coefficients of $\lambda$.
\begin{align*}
\lambda_{k} \to \{ -W_{2k-1} W_{2l-1}(q_{2l-1}W_{2k-1}-q_{2k-1}W_{2l-1}) \\
-W_{2k-1} W_{2l} (q_{2l}W_{2k-1} - q_{2k-1} W_{2l}) \\
- W_{2k} W_{2l-1} (q_{2l-1} W_{2k} - q_{2k}W_{2l-1}) \\
- W_{2k} W_{2l} (q_{2l} W_{2k} - q_{2k}W_{2l})
\} \\
=  R_{l}^{2}(q_{2k-1} W_{2k-1} + q_{2k} W_{2l}) - R_{k}^{2} (q_{2l-1}W_{2l-1} + q_{2l} W_{2l}) \\
\lambda_{l} \to \{ 
-W_{2k-1} W_{2l-1} (q_{2l} W_{2k} - q_{2k}W_{2l}) \\
W_{2k-1} W_{2l} (q_{2l-1} W_{2k} - q_{2k}W_{2l-1}) \\
W_{2k} W_{2l-1}(q_{2l}W_{2k-1} - q_{2k-1} W_{2l}) \\
- W_{2k} W_{2l}(q_{2l-1}W_{2k-1}-q_{2k-1}W_{2l-1})
\}
= 0
\end{align*}
This shows it is equivalent to summing over \( R_l^2[S_k P_k]_{ij} - R_k^2[S_k P_l]_{ij}\).

\textbf{Case 3}
For $l, r \neq k$, the corresponding sum is
\begin{equation*}
\begin{split}
\sum_{l, r \neq k}
W_{2l-1} W_{2r-1} [S_{2l-1, 2k-1}S_{2r-1, 2k-1} + S_{2l-1, 2k}S_{2r-1, 2k}]_{ij} \\
W_{2l-1} W_{2r} [S_{2l-1, 2k-1}S_{2r, 2k-1}+ S_{2l-1, 2k}S_{2r, 2k}]_{ij} \\
W_{2l} W_{2r-1} [S_{2l, 2k-1}S_{2r-1, 2k-1} + S_{2l, 2k}S_{2r-1, 2k}]_{ij} \\
W_{2l} W_{2r} [S_{2l, 2k-1}S_{2r, 2k-1}+ S_{2l, 2k}S_{2r, 2k}]_{ij}
\end{split}
\end{equation*}
Expand and multiply the $S$ matrices, simplifying via \eqref{eq: J multiplication}. Case on $l \neq r, l = r$. respectively, The contributions are 
$$
-\sum_{l, r \neq k, l \neq r} \frac{2 \lambda_{k}^{2} R_{l}^{2} R_{r}^{2}}{(\lambda_{k}^{2} - \lambda_{l}^{2})(\lambda_{k}^{2} - \lambda_{r}^{2})},\quad
-\sum_{l \neq k} \frac{\lambda_{k}^{2}R_{l}^{2}(R_{k}^{2} + 2R_{l}^{2}) + \lambda_{l}^{2}R_{k}^{2}R_{l}^{2}}{(\lambda_{k}^{2} - \lambda_{l}^{2})^{2}}
.
$$

\textbf{Case 3.1}
If $l \neq r$,
\begin{align*}
S_{2l-1, 2k-1} S_{2r-1, 2k-1} =
(\lambda_{k} J_{2l-1, 2k} - \lambda_{l}J_{2l, 2k-1})
(\lambda_{k} J_{2r-1, 2k} - \lambda_{r}J_{2r, 2k-1})
\\
= -\lambda_{k}^{2} E_{2l-1, 2r-1} - \lambda_{l} \lambda_{r} E_{2l, 2r}\\
S_{2l-1, 2k} S_{2r-1, 2k} =
(-\lambda_{l}J_{2l,2k} - \lambda_{k}J_{2l-1, 2k-1})
(-\lambda_{r}J_{2r,2k} - \lambda_{k}J_{2r-1, 2k-1})
\\
= - \lambda_{l} \lambda_{r} E_{2l, 2r} -\lambda_{k}^{2} E_{2l-1, 2r-1}\\
S_{2l-1, 2k-1} S_{2r, 2k-1} =
(\lambda_{k} J_{2l-1, 2k} - \lambda_{l}J_{2l, 2k-1})
(\lambda_{k}J_{2r, 2k} + \lambda_{r}J_{2r-1, 2k-1})
\\
= -\lambda_{k}^{2} E_{2l-1, 2r} + \lambda_{l} \lambda_{r} E_{2l, 2r-1}\\
S_{2l-1, 2k} S_{2r, 2k} =
(-\lambda_{l}J_{2l,2k} - \lambda_{k}J_{2l-1, 2k-1})
(\lambda_{r} J_{2r-1,2k} - \lambda_{k}J_{2r, 2k-1})
\\
= \lambda_{l} \lambda_{r} E_{2l, 2r-1} - \lambda_{k}^{2} E_{2l-1, 2r}\\
S_{2l, 2k-1} S_{2r-1, 2k-1} =
(\lambda_{k}J_{2l, 2k} + \lambda_{l}J_{2l-1, 2k-1})
(\lambda_{k} J_{2r-1, 2k} - \lambda_{r}J_{2r, 2k-1})
\\
= -\lambda_{k}^{2} E_{2l, 2r-1} + \lambda_{l} \lambda_{r} E_{2l-1, 2r}\\
S_{2l, 2k} S_{2r-1, 2k} =
(\lambda_{l} J_{2l-1,2k} - \lambda_{k}J_{2l, 2k-1})
(-\lambda_{r}J_{2r,2k} - \lambda_{k}J_{2r-1, 2k-1})
\\
= \lambda_{l} \lambda_{r} E_{2l-1, 2r} -\lambda_{k}^{2} E_{2l, 2r-1}\\
S_{2l, 2k-1} S_{2r, 2k-1} =
(\lambda_{k}J_{2l, 2k} + \lambda_{l}J_{2l-1, 2k-1})
(\lambda_{k}J_{2r, 2k} + \lambda_{r}J_{2r-1, 2k-1})
\\
= -\lambda_{k}^{2} E_{2l, 2r} - \lambda_{l} \lambda_{r}E_{2l-1, 2r-1}\\
S_{2l, 2k} S_{2r, 2k} =
(\lambda_{l} J_{2l-1,2k} - \lambda_{k}J_{2l, 2k-1})
(\lambda_{r} J_{2r-1,2k} - \lambda_{k}J_{2r, 2k-1})
\\
= -\lambda_{l}\lambda_{r} E_{2l-1, 2r-1} - \lambda_{k}^{2}E_{2l, 2r}
.
\end{align*}
Sum over $i, j$ and collect by coefficients of $\lambda$
\begin{equation*}
\begin{split}
-
\sum_{\substack{l, r \neq k, l \neq r}}
W_{2l-1}W_{2r-1} [
\lambda_{k}^{2} E_{2l-1, 2r-1} + \lambda_{l} \lambda_{r} E_{2l, 2r}
+ \lambda_{l} \lambda_{r} E_{2l, 2r} +\lambda_{k}^{2} E_{2l-1, 2r-1}
] \\
+W_{2l-1}W_{2r} [
\lambda_{k}^{2} E_{2l-1, 2r} - \lambda_{l} \lambda_{r} E_{2l, 2r-1}
-\lambda_{l} \lambda_{r} E_{2l, 2r-1} + \lambda_{k}^{2} E_{2l-1, 2r}
] \\
+W_{2l} W_{2r-1} [
\lambda_{k}^{2} E_{2l, 2r-1} - \lambda_{l} \lambda_{r} E_{2l-1, 2r}
- \lambda_{l} \lambda_{r} E_{2l-1, 2r} +\lambda_{k}^{2} E_{2l, 2r-1}
] \\
+W_{2l} W_{2r} [
\lambda_{k}^{2} E_{2l, 2r} + \lambda_{l} \lambda_{r}E_{2l-1, 2r-1}
+\lambda_{l}\lambda_{r} E_{2l-1, 2r-1} + \lambda_{k}^{2}E_{2l, 2r}
]
.
\end{split}
\end{equation*}
Collect the coefficients of $\lambda$
\begin{equation*}
\begin{split}
\lambda_{k}^{2} \to
\{ 2W_{2l-1}^{2}W_{2r-1}^{2} + 2W_{2l-1}^{2}W_{2r}^{2} +  2W_{2l}^{2} W_{2r-1}^{2} + 2 W_{2l}^{2} W_{2r}^{2}  \}
= 2 R_{l}^{2} R_{r}^{2} \\
\lambda_{l} \lambda_{r} \to W_{2l-1}W_{2r-1}W_{2l}W_{2r} \{ 2 -2 - 2 + 2  \}
= 0
\end{split}
\end{equation*}
and considering all the factors yields
\[
-
\sum_{l \neq r \neq k} \frac{2 \lambda_{k}^{2} R_{l}^{2} R_{r}^{2}}{(\lambda_{k}^{2} - \lambda_{l}^{2})(\lambda_{k}^{2} - \lambda_{r}^{2})}
.
\]

\textbf{Case 3.2}
If $l = r$,
\begin{align*}
S_{2l-1, 2k-1} S_{2r-1, 2k-1} =
(\lambda_{k} J_{2l-1, 2k} - \lambda_{l}J_{2l, 2k-1})
(\lambda_{k} J_{2r-1, 2k} - \lambda_{r}J_{2r, 2k-1})
\\
=  -\lambda_{k}^{2}(E_{2l-1} + E_{2k}) -\lambda_{l}^{2}(E_{2l} + E_{2k-1})\\
S_{2l-1, 2k} S_{2r-1, 2k} =
(-\lambda_{l}J_{2l,2k} - \lambda_{k}J_{2l-1, 2k-1})
(-\lambda_{r}J_{2r,2k} - \lambda_{k}J_{2r-1, 2k-1})
\\
= -\lambda_{l}^2(E_{2l} + E_{2k}) -\lambda_{k}^{2}(E_{2l-1} + E_{2k-1})\\
S_{2l-1, 2k-1} S_{2r, 2k-1} =
(\lambda_{k} J_{2l-1, 2k} - \lambda_{l}J_{2l, 2k-1})
(\lambda_{k}J_{2r, 2k} + \lambda_{r}J_{2r-1, 2k-1})
\\
= -\lambda_{k}^{2} E_{2l-1, 2l} -\lambda_{k} \lambda_{l} E_{2k, 2k-1} + \lambda_{k}\lambda_{l} E_{2k-1, 2k} + \lambda_{l}^2 E_{2l, 2l-1}\\
S_{2l-1, 2k} S_{2r, 2k} =
(-\lambda_{l}J_{2l,2k} - \lambda_{k}J_{2l-1, 2k-1})
(\lambda_{r} J_{2r-1,2k} - \lambda_{k}J_{2r, 2k-1})
\\
= \lambda_{l}^2 E_{2l, 2l-1} - \lambda_{l} \lambda_{k} E_{2k, 2k-1} + \lambda_{k}\lambda_{l} E_{2k-1, 2k} - \lambda_{k}^{2} E_{2l-1, 2l}\\
S_{2l, 2k-1} S_{2r-1, 2k-1} =
(\lambda_{k}J_{2l, 2k} + \lambda_{l}J_{2l-1, 2k-1})
\lambda_{k} J_{2r-1, 2k} - \lambda_{r}J_{2r, 2k-1}
\\
= -\lambda_{k}^{2} E_{2l, 2l-1} + \lambda_{k}\lambda_{l} E_{2k, 2k-1} - \lambda_{l}\lambda_{k}E_{2k-1, 2k} + \lambda_{l}^2E_{2l-1, 2l}\\
S_{2l, 2k} S_{2r-1, 2k} =
(\lambda_{l} J_{2l-1,2k} - \lambda_{k}J_{2l, 2k-1})
(-\lambda_{r}J_{2r,2k} - \lambda_{k}J_{2r-1, 2k-1})
\\
=  \lambda_{l}^2E_{2l-1, 2l} + \lambda_{l} \lambda_{k} E_{2k, 2k-1} - \lambda_{k}\lambda_{l} E_{2k-1, 2k} - \lambda_{k}^{2}E_{2l, 2l-1}\\
S_{2l, 2k-1} S_{2r, 2k-1} =
(\lambda_{k}J_{2l, 2k} + \lambda_{l}J_{2l-1, 2k-1})
(\lambda_{k}J_{2r, 2k} + \lambda_{r}J_{2r-1, 2k-1})
\\
= -\lambda_{k}^{2}(E_{2k} + E_{2l}) - \lambda_{l}^2 (E_{2k-1} + E_{2l-1})\\
S_{2l, 2k} S_{2r, 2k} =
(\lambda_{l} J_{2l-1,2k} - \lambda_{k}J_{2l, 2k-1})
(\lambda_{r} J_{2r-1,2k} - \lambda_{k}J_{2r, 2k-1})
\\
= -\lambda_{l}^2(E_{2k} + E_{2l-1}) -\lambda_{k}^{2}(E_{2l} + E_{2k-1})
.
\end{align*}
This simplifies the sum into
\begin{equation*}
\begin{split}
-
\sum_{l \neq k}
W_{2l-1}^2 [
\lambda_{k}^{2}(E_{2l-1} + E_{2k}) + \lambda_{l}^{2}(E_{2l} + E_{2k-1})
+ \lambda_{l}^2(E_{2l} + E_{2k}) + \lambda_{k}^{2}(E_{2l-1} + E_{2k-1})
] \\
2W_{2l-1}W_{2l} [
\lambda_{k}^{2} E_{2l-1, 2l} + \lambda_{k} \lambda_{l} E_{2k, 2k-1} - \lambda_{k}\lambda_{l} E_{2k-1, 2k} - \lambda_{l}^2 E_{2l, 2l-1}
] \\
2W_{2l} W_{2l-1} [
\lambda_{k}^{2} E_{2l, 2l-1} - \lambda_{k}\lambda_{l} E_{2k, 2k-1} + \lambda_{l}\lambda_{k}E_{2k-1, 2k} - \lambda_{l}^2E_{2l-1, 2l}
] \\
W_{2l}^2  [
\lambda_{k}^{2}(E_{2k} + E_{2l}) + \lambda_{l}^2 (E_{2k-1} + E_{2l-1})
+\lambda_{l}^2(E_{2k} + E_{2l-1}) +\lambda_{k}^{2}(E_{2l} + E_{2k-1})
].
\end{split}
\end{equation*}
Collect coefficients of \(\lambda\),
\begin{equation*}
\begin{split}
\lambda_{k}^{2} \to \{ 
W_{2l-1}^{2} (2 W_{2l-1}^{2} + R_{k}^{2})
+ 2W_{2l-1}^{2} W_{2l}^{2}
+ 2W_{2l}^{2}W_{2l-1}^{2}
+ W_{2l}^{2} (R_{k}^{2} + 2 W_{2l}^{2})
\}
= R_{k}^{2} R_{l}^{2} + 2R_{l}^{4} \\
\lambda_{l}^{2} \to \{ 
W_{2l-1}^{2} (2W_{2l}^{2} + R_{k}^{2})
- 2 W_{2l-1}^{2}W_{2l}^{2}
- 2 W_{2l-1}^{2}W_{2l}^{2}
+ W_{2l}^{2} (R_{k}^{2} + 2 W_{2l-1}^{2})
\}
= R_{k}^{2} R_{l}^{2} + 0 \\
\lambda_{l} \lambda_{k} \to \{ 0 \}
.
\end{split}
\end{equation*}
The contribution is
\[
-
\sum_{l \neq k} \frac{\lambda_{k}^{2}R_{l}^{2}(R_{k}^{2} + 2R_{l}^{2}) + \lambda_{l}^{2}R_{k}^{2}R_{l}^{2}}{(\lambda_{k}^{2} - \lambda_{l}^{2})^{2}}
.
\]

\textbf{2.3 Contribution of the projection's drift term, Part 2}
By It\^o formula,
\begin{align*}
dh_{2k-1}
&=
\frac{1}{2}\sum_{\alpha, \beta} \partial_{\alpha, \beta} q_{2k-1} d\ip{\bA_{\alpha}}{\bA_{\beta}}
=
\frac{dt}{8} \sum_{i, j, x} B_{i} B_{j}
\sum_{a} q_{a} [T_{a, 2k-1}]_{ir, jr} \\
dh_{2k}
&=
\frac{1}{2}
\sum_{\alpha, \beta} \partial_{\alpha, \beta} q_{2k} d\ip{\bA_{\alpha}}{\bA_{\beta}}
=
\frac{dt}{8} \sum_{i, j, r} B_{i} B_{j}
\sum_{a} q_{a} [T_{a, 2k}]_{ir, jr}
.
\end{align*}
We are interested in $B^{\top} dH_k B$ which simplifies into
$$
4 B^{\top} dH_k B
=
\sum_{i,j,r} B_{i} B_{j} 
\sum_{a}
W_{2k-1}W_{a} [T_{a, 2k-1}]_{ir, jr}
+ W_{2k}W_{a} [T_{a, 2k}]_{ir, jr}
$$
We show
\begin{equation*}
\begin{split}
4B^{\top} dH_k B
=
R_{k}^{2}B^{\top} S_{k}^{2} B + \sum_{l \neq k}
\frac{- (\lambda_{k}^{2} + \lambda_{l}^{2}) R_{k}^{2} R_{l}^{2}  - 2\lambda_{l}^{2} R_{k}^{4}}{D_{kl}^{2}} \\
+ 
2\sum_{l \neq k} \frac{2\lambda_{k}^{2}R_{l}^{2}R_{k}^{2}}{D_{kl}} \left(\sum_{r\neq l,k}\frac{1}{D_{kr}}\right)
+ \frac{\lambda_{k}(R_{k}^{2} B^{\top} P_{l}S_{k} B  - R_{l}^{2} B^{\top}P_{k} S_{k} B)}{D_{kl}^2}
.
\end{split}
\end{equation*}

Split into two cases: $a \in \{ 2k-1, 2k \}$ and $a \in \set{2l-1, 2l}$ for \(l \neq k\). The contributions stated exclude the factor of $1/4$.

\textbf{Case 1}
For $a \in \{ 2k-1, 2k \}$, the contribution is
$$
R_{k}^{2}B^{\top} S_{k}^{2} B + \sum_{l \neq k}
\frac{- (\lambda_{k}^{2} + \lambda_{l}^{2}) R_{k}^{2} R_{l}^{2}  - 2\lambda_{l}^{2} R_{k}^{4}}{D_{kl}^{2}}
.
$$
To show this, we split into two terms.
\begin{equation*}
\begin{split}
W_{2k-1}^{2} [T_{2k-1, 2k-1}]_{ir, jr}
+ W_{2k}W_{2k-1} [T_{2k-1, 2k}]_{ir, jr} \\
W_{2k-1}W_{2k} [T_{2k, 2k-1}]_{ir, jr}
+ W_{2k}^{2} [T_{2k, 2k}]_{ir, jr} \\
=
W_{2k-1}^{2} [T_{2k-1, 2k-1}]_{ir, jr}
+ W_{2k}^{2} [T_{2k, 2k}]_{ir, jr} \\
+ W_{2k-1} W_{2k} ([T_{2k-1, 2k}]_{ir, jr} + [T_{2k, 2k-1}]_{ir, jr})
\end{split}
\end{equation*}

\textbf{Case 1.1}
For the first term, apply \eqref{eq: Taa expansion}. The sum over \(r\) yields matrix multiplication. To simplify notation, define \([T]_{ij} := \sum_{r} [T]_{ir, jr}\) and interpret $T$ as a matrix. Hence,
$$
T_{2k-1, 2k-1} = S_{k}^{2} + \sum_{l \neq k} S_{2l-1, 2k-1}^{2} + S_{2l, 2k-1}^{2},\quad
T_{2k, 2k} = S_{k}^{2} + \sum_{l \neq k} S_{2l-1, 2k}^{2} + S_{2l, 2k}^{2}
.
$$
First $S_k \to (W_{2k-1}^2 + W_{2k}^2) B^{\top} S_k^2 B$. For \(l \neq k\), expand $S^2$ via \eqref{eq: S_a,b squared} and sum over $(i, j)$.
\begin{equation*}
\begin{split}
D_{kl}^2 B^{\top} (W_{2k-1}^2(S_{2l-1, 2k-1}^{2} + S_{2l, 2k-1}^{2}) + W_{2k}^2 (S_{2l-1, 2k}^{2} + S_{2l, 2k}^{2})) B \\
= - (\lambda_{k}^{2} + \lambda_{l}^{2}) R_{k}^{2} R_{l}^{2} - 2\lambda_{l}^{2}W_{2k-1}^{4} - 2\lambda_{l}^{2} W_{k}^{4} - 4\lambda_{k}^{2} W_{2k-1}^{2} W_{2k}^{2} \\
=
W_{2k-1}^{2} \{ 
-\lambda_{k}^{2}(W_{2l-1}^{2} + W_{2k}^{2}) - \lambda_{l}^{2}(W_{2l}^{2} + W_{2k-1}^{2}) \\
-\lambda_{k}^{2}(W_{2l}^{2} + W_{2k}^{2}) - \lambda_{l}^{2}(W_{2l-1}^{2} + W_{2k-1}^{2})
\} \\
+ W_{2k}^{2} \{ 
-\lambda_{l}^{2}(W_{2l}^{2} + W_{2k}^{2}) - \lambda_{k}^{2}(W_{2l-1}^{2} + W_{2k-1}^{2}) \\
-\lambda_{l}^{2}(W_{2l-1}^{2} + W_{2k}^{2}) - \lambda_{k}^{2}(W_{2l}^{2} + W_{2k-1}^{2})
\}
.
\end{split}
\end{equation*}
Therefore, the contribution is
\begin{equation*}
\begin{split}
R_{k}^{2}B^{\top} S_{k}^{2} B 
+ \sum_{l \neq k}
\frac{- (\lambda_{k}^{2} + \lambda_{l}^{2}) R_{k}^{2} R_{l}^{2} - 2\lambda_{l}^{2}R_k^4 - 4(\lambda_{k}^{2} - \lambda_l^2)W_{2k-1}^{2} W_{2k}^{2}}{D_{kl}^{2}}
.
\end{split}
\end{equation*}

\textbf{Case 1.2} By \eqref{eq: Symmetrised Tk expansion},
$$
2 \lambda_{k} ([T_{2k-1, 2k}]_{\alpha, \beta} + [T_{2k, 2k-1}]_{\alpha, \beta})
=
[U_{2k, 2k}]_{\beta, \alpha}
+ [U_{2k, 2k}]_{\alpha, \beta}
- [U_{2k-1, 2k-1}]_{\beta, \alpha} 
- [U_{2k-1, 2k-1}]_{\alpha, \beta}
.
$$
As we are summing over equal weights ($B_i, B_j$) with respect to $(i, j)$, \(\alpha = (i, r)\) and \(\beta = (j, r)\) can be swapped, which means its equivalent to consider
$$
\lambda_{k} ([T_{2k-1, 2k}]_{\alpha, \beta} + [T_{2k, 2k-1}]_{\alpha, \beta})
=
 [U_{2k, 2k}]_{\alpha, \beta}
- [U_{2k-1, 2k-1}]_{\alpha, \beta}
.
$$
Take the sum over $r$ and interpret $[U]_{ij} := \sum_{r} [U]_{ir, jr}$ as a matrix.
\begin{equation*}
\begin{split}
U_{2k-1, 2k-1} &= -J_{k}^{\top} S_{k} + \sum_{l \neq k} J_{2k-1, 2l-1}^{\top} S_{2l-1, 2k-1} + J_{2k-1, 2l}^{\top}S_{2l, 2k-1} \\
U_{2k, 2k} &= -J_{k}^{\top} S_{k} + \sum_{l \neq k} J_{2k, 2l-1}^{\top} S_{2l-1, 2k} + J_{2k, 2l}^{\top} S_{2l, 2k}
\end{split}
\end{equation*}
When considering their difference the $S_{k}$ term vanishes. For \(l \neq k\), use \eqref{eq: JS multiplication} to show the contribution is
\begin{equation*}
\begin{split}
D_{kl} B^{\top} (J_{2k, 2l-1}^{\top} S_{2l-1, 2k} + J_{2k, 2l}^{\top} S_{2l, 2k}
- J_{2k-1, 2l-1}^{\top} S_{2l-1, 2k-1} - J_{2k-1, 2l}^{\top}S_{2l, 2k-1})B
\\= 4 \lambda_{k} W_{2k-1} W_{2k} = 
\{ \lambda_{l} W_{2l-1} W_{2l} + \lambda_{k} W_{2k}W_{2k-1}  
-\lambda_{l} W_{2l-1}W_{2l} + \lambda_{k} W_{2k} W_{2k-1}
\} \\
- \{  -\lambda_{k} W_{2k-1}W_{2k} + \lambda_{l} W_{2l-1} W_{2l}
-\lambda_{k} W_{2k-1}W_{2k} -\lambda_{l} W_{2l}W_{2l-1}
\}
.
\end{split}
\end{equation*}
Therefore, the contribution is
\begin{equation*}
W_{2k-1}W_{2k} \sum_{i,j,r} B_i B_j \lambda_{k} [T_{2k-1, 2k} + T_{2k, 2k-1}]_{ir, jr}
=
\sum_{l \neq k} \frac{4W_{2k-1}^{2}W_{2k}^{2}}{D_{kl}}
\end{equation*}
which cancels with the final term of the previous case.

\textbf{Case 2} For \(l \neq k \), we show
\begin{equation*}
\begin{split}
\sum_{ijr} B_i B_j(W_{2k-1}W_{2l-1} [T_{2l-1, 2k-1}]_{ir, jr}
+ W_{2k}W_{2l-1} [T_{2l-1, 2k}]_{ir, jr} \\
+ W_{2k-1}W_{2l} [T_{2l, 2k-1}]_{ir, jr}
+ W_{2k}W_{2l} [T_{2l, 2k}]_{ir, jr}) \\
=
\sum_{l \neq k} \frac{4\lambda_{k}^{2}R_{l}^{2}R_{k}^{2}}{D_{kl}} \left(\sum_{r\neq l,k}\frac{1}{D_{kr}}\right)
+ \frac{2\lambda_{k}(R_{k}^{2} B^{\top} P_{l}S_{k} B  - R_{l}^{2} B^{\top}P_{k} S_{k} B)}{D_{kl}^2}
\end{split}
\end{equation*}
The \(T\) terms can be expressed through \(U, J, S\) via \eqref{eq: T_a,b identity}. By symmetry swap $\alpha = (i, r), \beta =(j, r)$ to simplify the linear systems. E.g.
\begin{align*}
\lambda_{k} [T_{2l-1, 2k}]_{\alpha, \beta}
+ \lambda_{l} [T_{2l, 2k-1}]_{\alpha, \beta}
&=
- 2[J_{k}]_{\alpha} [S_{2l-1, 2k}]_{\beta} 
- 2[U_{2l-1, 2k-1}]_{\alpha, \beta}\\
- \lambda_{l} [T_{2l-1, 2k}]_{\alpha, \beta}
- \lambda_{k}[T_{2l, 2k-1}]_{\alpha, \beta}
&=
2[J_{k}]_{\alpha}[S_{2l, 2k-1}]_{\beta}
- 2[U_{2l, 2k}]_{\alpha, \beta}
.
\end{align*}
Inverting yields
\begin{align*}
\frac{D_{kl}}{2} [T_{2l-1, 2k-1}]_{\alpha, \beta} = 
- \lambda_{l} ([J_{k}]_{\alpha} [S_{2l, 2k}]_{\beta}
+ [U_{2l, 2k-1}]_{\alpha, \beta}) \\
- \lambda_{k} ([J_{k}]_{\alpha} [S_{2l-1, 2k-1}]_{\beta}
- [U_{2l-1, 2k}]_{\alpha, \beta}) \\
\frac{D_{kl}}{2} [T_{2l, 2k-1}]_{\alpha, \beta} =
\lambda_{l} ([J_{k}]_{\alpha} [S_{2l-1, 2k}]_{\beta}
+ [U_{2l-1, 2k-1}]_{\alpha, \beta}) \\
-\lambda_{k} ([J_{k}]_{\alpha}[S_{2l, 2k-1}]_{\beta}
- [U_{2l, 2k}]_{\alpha, \beta}) \\
\frac{D_{kl}}{2} [T_{2l-1, 2k}]_{\alpha, \beta} =
-\lambda_{k} ([J_{k}]_{\alpha} [S_{2l-1, 2k}]_{\beta}
+ [U_{2l-1, 2k-1}]_{\alpha, \beta}) \\
+ \lambda_{l} ([J_{k}]_{\alpha}[S_{2l, 2k-1}]_{\beta}
- [U_{2l, 2k}]_{\alpha, \beta}) \\
\frac{D_{kl}}{2} [T_{2l, 2k}]_{\alpha, \beta} =
-\lambda_{k} ([J_{k}]_{\alpha} [S_{2l, 2k}]_{\beta}
+ [U_{2l, 2k-1}]_{\alpha, \beta}) \\
- \lambda_{l}([J_{k}]_{\alpha} [S_{2l-1, 2k-1}]_{\beta}
- [U_{2l-1, 2k}]_{\alpha, \beta})
.
\end{align*}
Hence, with a factor of \(D_{kl} / 2\), the sum of \(T\) weighted by \(W\) is
\begin{equation}
\begin{split}
([J_{k}]_{\alpha} [S_{2l-1, 2k-1}]_{\beta}) \to \{ 
-\lambda_{k} W_{2k-1}W_{2l-1}
-\lambda_{l} W_{2k}W_{2l}
\}
\\
([J_{k}]_{\alpha} [S_{2l, 2k-1}]_{\beta}) \to \{ 
\lambda_{l} W_{2k}W_{2l-1}
-\lambda_{k} W_{2k-1}W_{2l}
\}\\
([J_{k}]_{\alpha} [S_{2l-1, 2k}]_{\beta}) \to \{ 
-\lambda_{k} W_{2k}W_{2l-1}
+ \lambda_{l} W_{2k-1}W_{2l}
\}\\
([J_{k}]_{\alpha} [S_{2l, 2k}]_{\beta}) \to \{ 
-\lambda_{l} W_{2k-1}W_{2l-1}
-\lambda_{k} W_{2k}W_{2l}
\}\\
\\
(U_{2l-1, 2k-1}) \to \{ 
-\lambda_{k} W_{2k}W_{2l-1}
+\lambda_{l} W_{2k-1}W_{2l}
\}\\
(U_{2l, 2k-1}) \to \{ 
-\lambda_{l} W_{2k-1}W_{2l-1}
-\lambda_{k} W_{2k}W_{2l}
\}\\
(U_{2l-1, 2k}) \to \{ 
+\lambda_{k} W_{2k-1}W_{2l-1}
+ \lambda_{l} W_{2k}W_{2l}
\}\\
(U_{2l, 2k}) \to  \{ 
-\lambda_{l} W_{2k}W_{2l-1}
+ \lambda_{k} W_{2k-1}W_{2l}
\}
.
\end{split}
\end{equation}
Take the sum over $r$ and interpret $U$ as matrix as in the previous case.
\begin{equation*}
\begin{split}
[U_{2l-1,2k-1}]  
&  
- J_{2l-1, 2k}^{\top}S_{k} + J_{l}^{\top} S_{2l, 2k-1}
+ \sum_{r \neq l, k}
J_{2l-1, 2r-1}^{\top} S_{2r-1, 2k-1} 
+ J_{2l-1, 2r}^{\top} S_{2r, 2k-1} \\
[U_{2l,2k-1}]  
&=  
- J_{2l, 2k}^{\top}S_{k} - J_{l}^{\top} S_{2l-1, 2k-1}
+ \sum_{r \neq l, k}
J_{2l, 2r-1}^{\top} S_{2r-1, 2k-1} 
+ J_{2l, 2r}^{\top} S_{2r, 2k-1} \\
[U_{2l-1,2k}]  
&=  
J_{2l-1, 2k-1}^{\top}S_{k} + J_{l}^{\top} S_{2l, 2k}
+ \sum_{r \neq l, k}
J_{2l-1, 2r-1}^{\top} S_{2r-1, 2k} 
+ J_{2l-1, 2r}^{\top} S_{2r, 2k} \\
[U_{2l,2k}]  
&=  
J_{2l, 2k-1}^{\top}S_{k} - J_{l}^{\top} S_{2l-1, 2k}
+ \sum_{r \neq l, k}
J_{2l, 2r-1}^{\top} S_{2r-1, 2k} 
+ J_{2l, 2r}^{\top} S_{2r, 2k}
\end{split}
\end{equation*}
Let $H_{kl} = \sum_{r\neq l,k}\frac{1}{D_{kr}}$. By expanding with \eqref{eq: JS multiplication},
\begin{equation*}
\begin{split}
[U_{2l-1,2k-1}]  
&=  
- J_{2l-1, 2k}^{\top}S_{k}  
+  
\frac{-\lambda_k E_{2l-1,2k}+\lambda_l E_{2l,2k-1}}{D_{kl}}  
-  
2 \lambda_k E_{2l-1,2k} H_{kl} \\
[U_{2l,2k-1}]  
&=  
- J_{2l, 2k}^{\top}S_k  
+  
\frac{-\lambda_k E_{2l,2k}-\lambda_l E_{2l-1,2k-1}}{D_{kl}}  
-  
2 \lambda_k E_{2l,2k}H_{kl} \\
[U_{2l-1,2k}]  
&=  
J_{2l-1, 2k-1}^{\top}S_k  
+  
\frac{\lambda_l E_{2l,2k}+\lambda_k E_{2l-1,2k-1}}{D_{kl}}  
+  
2\lambda_kE_{2l-1,2k-1}H_{kl} \\
[U_{2l,2k}]  
&=  
J_{2l, 2k-1}^{\top}S_k  
+  
\frac{-\lambda_l E_{2l-1,2k}+\lambda_k E_{2l,2k-1}}{D_{kl}}  
+  
2\lambda_k E_{2l,2k-1} H_{kl}
\end{split}
\end{equation*}
Combined with \eqref{eq: JkS multiplication} and $B^{\top}E_{ab}B = W_a W_b$, we compute the sum. The coefficient of $H_{kl}$ is $2R_{l}^{2} R_{k}^{2} \lambda_{k}^{2}$. The coefficient of $S_k$ is
\begin{equation*}
\begin{split}
(-\lambda_{k} W_{2k}W_{2l-1}
+\lambda_{l} W_{2k-1}W_{2l})(-J_{2l-1, 2k}^{\top}) \\
+ (-\lambda_{l} W_{2k-1}W_{2l-1}
-\lambda_{k} W_{2k}W_{2l}) (-J_{2l, 2k}^{\top}) \\
+ (+\lambda_{k} W_{2k-1}W_{2l-1}
+ \lambda_{l} W_{2k}W_{2l}) (J_{2l-1, 2k-1}^{\top}) \\
+ (-\lambda_{l} W_{2k}W_{2l-1}
+ \lambda_{k} W_{2k-1}W_{2l}) (J_{2l, 2k-1}^{\top})
.
\end{split}
\end{equation*}
Summing over $i$, this is equivalent to summing over \(\lambda_{k} (R_{k}^{2} P_{l}^{\top} - R_{l}^{2} P_{k}^{\top}) / D_{kl}\). The \(U, JS\) contributions are of opposite signs and cancel ($\pm (\lambda_{k}^{2} + \lambda_{l}^{2}) R_{k}^{2} R_{l}^{2}$ respectively).
\end{proof}

\subsection{Well-posedness of the SDE}
\label{appendix:Well-posedness of the SDE}
We now show that provided the initial conditions are that $\lambda_k$ are distinct and $u_k > 0$, then the SDE is well-posed. For notation, we consider the Weyl Chamber
$$\lambda \in \mathcal{W}_n = \set{\lambda \in \R^{n}: |\lambda_1| \ge \cdots \ge |\lambda_n| \ge 0 }.$$
And we define $\mathcal{D}_n$ be the domain of $(\lambda, u)$ such that $\lambda \in \mathcal{W}_n$ and $u_k \ge 0$. Its interior $\mathcal{D}_n^{\rm{o}}$ corresponds to $\lambda \in \mathcal{W}^\mathrm{o}_n$ and $u_k > 0$. Recall $\lambda \in \mathcal{W}^\mathrm{o}_n$ means the $\lambda$ are distinct.
\begin{lemma}[Well-posedness of the SDE with interior initial conditions]
\label{lemma:SDE well-posed, interior IC}
    If \((\lambda, u)(0) \in \mathcal{D}^{\mathrm{o}}_n\), there exists a unique strong solution to the SDE. Furthermore, \(\forall t > 0, (\lambda, u)(t) \in \mathcal{D}^{\mathrm{o}}_n\).
\end{lemma}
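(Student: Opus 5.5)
The plan is the standard one for Dyson-type particle systems: local existence and uniqueness from regular coefficients, followed by a Lyapunov argument that rules out hitting the boundary of $\mathcal{D}_n$ in finite time.

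\textbf{Step 1: local well-posedness.} On $\mathcal{D}^{\mathrm{o}}_n$ the drift and diffusion coefficients in \eqref{eq:SDE Singular values} are smooth. They are rational in $\lambda$ with denominators $\lambda_k^2-\lambda_l^2$, and polynomial in $\sqrt{u_k}$, $u_k$. For $\varepsilon>0$ let $\mathcal{D}_\varepsilon$ be the set where $|\lambda_k^2-\lambda_l^2|\ge \varepsilon$ for $k\neq l$, $u_k\ge\varepsilon$, and $|\lambda|+\sum_k u_k\le 1/\varepsilon$. On $\mathcal{D}_\varepsilon$ the coefficients are bounded and Lipschitz. Extending them to globally Lipschitz functions gives a unique strong solution up to the exit time $\tau_\varepsilon$ of $\mathcal{D}_\varepsilon$. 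These solutions are consistent as $\varepsilon\downarrow0$, so they define a unique strong solution up to $\tau=\lim_{\varepsilon\to0}\tau_\varepsilon$. It remains to show $\tau=\infty$ almost surely.

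\textbf{Step 2: no explosion and no vanishing of $u$.} By Lemma \ref{lemma: SDE for u}, $u=\sum_k u_k$ is a $2n$-dimensional squared Bessel process, driven by a Brownian motion independent of the $\lambda$-dynamics. Since $2n\ge 2$, $u$ never hits $0$ and does not explode, so $\sum_k u_k$ stays bounded on compacts. The noise of $\lambda_k$ has coefficient $\sqrt{u_k}/2\le \sqrt{u}/2$. Away from collisions the drift is bounded, so $|\lambda|$ cannot explode without first approaching a collision.

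For each individual $u_k$, I would apply It\^o's formula to $-\log u_k$, or to $-\log \prod_k u_k$. The drift of $u_k$ is $2$ plus terms involving $\lambda_k^2/(\lambda_k^2-\lambda_l^2)$. Dividing by $u_k$, the $2\,dt$ term combines with the $-\tfrac12 d\langle u_k\rangle/u_k^2$ correction. The quadratic variation $d\langle u_k\rangle$ has leading part $4u_k\,dt$, so the combination is $(2-2)/u_k$, the critical two-dimensional Bessel case. The extra positive drift terms, such as $\lambda_l^2 u_k^2/(\lambda_k^2-\lambda_l^2)^2$, together with the cross noise terms, must be controlled. I would check that the corrections scale like $u_k^2$ or $u_k u_l$, so that after division by $u_k$ they stay bounded while $\lambda$ stays in a collision-free compact set.

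\textbf{Step 3: no collisions (the main obstacle).} I would use the Lyapunov function
\[
V=-\sum_{k<l}\log|\lambda_k^2-\lambda_l^2|,
\]
possibly with $-\log|\lambda_n|$ if needed, and compute $dV$ via It\^o. The drift of $\lambda_k^2-\lambda_l^2$ coming from the interaction term $\tfrac{\lambda_k}{4}\sum\frac{u_k+u_l}{\lambda_k^2-\lambda_l^2}$ is repulsive, with size $\frac{(\lambda_k^2+\lambda_l^2)(u_k+u_l)}{2(\lambda_k^2-\lambda_l^2)}$. The It\^o correction from the quadratic variation is $-\tfrac12(\lambda_k^2u_k+\lambda_l^2u_l)/(\lambda_k^2-\lambda_l^2)^2$. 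The hope is that, as in Dyson Brownian motion with $\beta\ge1$, the singular $1/D^2$ terms cancel or have a favourable sign. The first guess is that $\tfrac12\lambda^2(u_k+u_l)$ against $\tfrac12(\lambda_k^2u_k+\lambda_l^2u_l)$ leaves a nonnegative remainder near $\lambda_k^2=\lambda_l^2$. If so, $V(t\wedge\tau_\varepsilon)$ is bounded above by a local martingale plus a drift that is integrable on $[0,T]$, using the bounds from Step 2. Taking expectations then gives $\mathbb{P}(\tau_\varepsilon\le T)\to0$.

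Pairs with $\lambda_k=-\lambda_l$ require care, which is why $V$ is written in terms of $\lambda_k^2-\lambda_l^2$. Combining Steps 2 and 3 with McKean's argument gives $\tau=\infty$ almost surely, and hence $(\lambda,u)(t)\in\mathcal{D}^{\mathrm{o}}_n$ for all $t$.
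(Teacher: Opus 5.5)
Your architecture is the paper's: localise to a region where the coefficients are Lipschitz, then use the squared Bessel process $u$, a log-Vandermonde function of $s_k=\lambda_k^2$, and $-\log u_k$. However, the two computations that make it work are missing or misdescribed.

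\textbf{Collisions (Step 3).} You only check the pair term. In $d(-\log|s_k-s_l|)$ it leaves $-\frac{s_ku_l+s_lu_k}{2(s_k-s_l)^2}\le 0$, which is correct. The real difficulty is the three-body terms
\[
-\frac{1}{s_k-s_l}\Bigl(\frac{s_k}{2}\sum_{r\neq k,l}\frac{u_k+u_r}{s_k-s_r}-\frac{s_l}{2}\sum_{r\neq k,l}\frac{u_l+u_r}{s_l-s_r}\Bigr).
\]
These are of order $1/|s_k-s_l|$ with no sign, so they are not integrable a priori. Beating them locally with the $1/(s_k-s_l)^2$ repulsion is not uniform, since its coefficient $s_ku_l+s_lu_k$ degenerates near $0$ or for small $u$. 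It would also need the lower bound on $u_k$, which your Step 2 only obtains assuming no collisions, so the argument is circular. The missing idea is to sum over all pairs and symmetrise in the three indices, as for Dyson Brownian motion. This gives the exact identity
\[
\mathrm{drift}\Bigl(-\sum_{k<l}\log|s_k-s_l|\Bigr)=-\frac14\sum_k u_k\Bigl\{s_k\Bigl(\sum_{l\neq k}\frac{1}{s_k-s_l}\Bigr)^2+\sum_{l\neq k}\frac{s_l}{(s_k-s_l)^2}\Bigr\}\le 0,
\]
which is the content of the paper's Part 3. There it is combined with $\frac1n\sum_k s_k$ so that the Lyapunov function is bounded below; your McKean route also works once this identity is in hand. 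The same symmetrisation shows that $\sum_k s_k$ has drift exactly $\frac{2n-1}{4}u$. That gives non-explosion directly from the Bessel bound and replaces your ``cannot explode without first approaching a collision''. Also, do not add $-\log|\lambda_n|$: $\lambda_n$ does reach $0$ (for $n=1$ it is L\'evy's area itself), and $\mathcal{D}^{\mathrm{o}}_n$ allows this.

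\textbf{Positivity of $u_k$ (Step 2).} The check you propose, that the corrections scale like $u_k^2$ or $u_ku_l$, would fail. Let $C_k=\sum_{l\neq k}u_l/(\lambda_k^2-\lambda_l^2)$. The drift of $u_k$ contains the $u_k$-free term $\frac{\lambda_k^2}{2}C_k^2$, assembled from the $u_lu_r$ and $\lambda_k^2u_l^2$ pieces. The $d\theta_k$-coefficient $\sqrt{u_k}\,\lambda_kC_k$ contributes $u_k\lambda_k^2C_k^2$ to $d\langle u_k\rangle$. In $d(-\log u_k)$ these give $\mp\frac{\lambda_k^2C_k^2}{2u_k}$, so they are genuinely $O(1/u_k)$, and they cancel exactly. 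Locally, $u_k$ is a squared Bessel process of effective dimension $4\bigl(2+\tfrac12\lambda_k^2C_k^2\bigr)/\bigl(4+\lambda_k^2C_k^2\bigr)=2$. This is exactly critical, so any residual $c/u_k$ would decide whether $u_k$ hits $0$.

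Once this cancellation is shown, the drift of $-\log u_k$ has no $1/u_k$ singularity. It is bounded while $u\le M$, $\sum_k s_k\le M'$ and $|s_k-s_l|\ge 1/\widetilde R$. With the order Bessel $\to\sum_k s_k\to$ collisions $\to u_k>0$, as in the paper's Parts 1--4, the localisation closes without circularity.
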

\begin{proof}[Proof of Lemma \ref{lemma:SDE well-posed, interior IC}]
We follow the approach taken in \cite[Theorem 4.3.2]{andersonIntroductionRandomMatrices2009} to demonstrate the well-posedness of Dyson Brownian motion. Consider the following stopping times.
\begin{equation}\label{eq:Interior Stopping Times}
\begin{split}
    \tau_{M}^{1} &= \inf \left\{ t > 0: \max_{k \in [n]} u_{k} > M   \right\} \\
    \tau_{M'}^{2} &= \inf \left\{ t > 0: \max_{k \in [n]} s_{k} > M'   \right\} \\
    \tau^{3}_{R} &= \inf \left\{  t > 0: \max_{k, \neq l}\frac{1}{|s_{k} - s_{l}|} \ge R  \right\} \\
    \tau_{R'}^{4} &= \inf \left\{  t > 0: \min_{k} u_{k}(t) < \frac{1}{R'}  \right\} 
\end{split}
\end{equation}
As the initial conditions lie in the interior of the domain, these are all non-zero for sufficiently large thresholds.
Up to time $t \le \tau_{M}^{1} \land \tau_{M'}^{2} \land \tau_{R}^{3} \land \tau_{R'}^{4}$, the coefficients of the SDE \eqref{eq:SDE Singular values} are globally Lipschitz continuous and hence a unique pathwise solution exists. This follows as the stopping times ensure the square roots $\sqrt{ u_{k} }$ are bounded away from 0 and the singularities $\frac{1}{s_{k} - s_{l}}$ are bounded above. Then we show that all the stopping times converge to \(\infty\) as \(M, M', R, R' \to \infty\) and hence a solution exists for all time and remains in the interior.

The argument to show that all stopping times considered in \eqref{eq:Interior Stopping Times} converge to $\infty$ is standard: considering a suitable quantity to bound $\Prob{\tau_M \le t}$ above and show it decays to 0. By monotonicity of $\tau_M$ in $M$, a Borel-Cantelli argument implies $\tau_M \to \infty$.

Let $s_k = \lambda_k^2$. By product rule,
\begin{align*}
d s_{k}(t) &= \mathrm{sign}(\lambda_{k})\sqrt{s_{k} u_{k} } d\theta_{k}(t) + \left\{\frac{u_{k}}{4}+  \frac{s_{k}}{2}\sum_{l \neq k} \frac{u_{k} + u_{l}}{s_{k} - s_{l}}  \right\} dt
.
\end{align*}
Define $u = \sum_k u_k, s = \sum s_k$. As \(s_k, u_k \ge 0\), we have $0 \le s_k \le s, 0 \le u_k \le u$.  Part 1 shows $u_k$ cannot explode in finite time as $0 \le u_k \le u$ and $u$ is squared Bessel process. Part 2 shows that $s_k$ cannot explode as $0 \le s_k \le s$ and the drift of $s$ has no singularities and is controlled by $u$. Part 3 shows that $s_k$ cannot collide by considering a Lyapunov function which gets large if some $|s_k - s_l|$ is small. We show that it cannot explode as its drift is bounded above by $u$. Part $4$ shows $u_k$ cannot hit 0 by bounding drift of $- \log u_k$.

\textbf{Part 1: Upper bound on $u_k$} From Lemma \ref{lemma: SDE for u}, $u(t)$ is a $2n$-dimensional squared Bessel process.
$$
du(t) = 2\sqrt{u} d\gamma(t) + 2n \; dt
$$
Define $\tau_{M}^{u} = \inf \{ t > 0: u(t) > M \}$ and consider the SDE for $u$ up to $\tau_{M}^{u}$. Integrate and take expectations to deduce the following bound on $\Prob{\tau_{M}^{u} \le t}$.
\begin{align*}
M \Prob{\tau^{u}_{M} \le t} \le \Expectation{u(t) \Indicator{\tau^{u}_{M} \le t}} &= \Expectation{u(t \land \tau^{u}_{M})}
\le  u(0) + 2nt
\implies
\Prob{\tau_{M}^{u} \le t} \le \frac{u(0) + 2n t}{M}
\end{align*}
As $\tau_{M}^{u}$ is increasing in $M$ it suffices to show that $\Prob{\tau_{M}^{u} \le t} \to 0$ as $M \to \infty$ to conclude $\tau_{M}^{u} \to \infty$. For fixed $t$, we can take any subsequence $M_{i} \to \infty$ such that
$$
\sum_{i} \Prob{\tau^{u}_{M_{i}} \le t}
\le \sum_{i} \frac{u(0) + 2n t}{M_i}
< \infty \implies \Prob{\lim_{ i \to \infty } \tau_{M_{i}^{u}} \le t} = 0
$$
as $t$ was arbitrary $\lim_{ i \to \infty } \tau^{u}_{M_{i}} \to \infty$ and as $\tau_{M}$ is monotonic $\tau_{M}^{u} \to \infty$. A valid choice is $M_{i} = i^{2}$.

As $0 \le u_{k} \le u$ then $\tau_{M}^{1} \ge \tau_{M}^{u}$. Hence, $\tau_{M}^{1} \to \infty$.

\textbf{Part 2: Upper bound on $s_k$} The SDE for $s$ is
\begin{align*}
ds(t) &=  \sum_{k=1}^{n} \mathrm{sign}(\lambda_{k})\sqrt{ s_{k} u_{k} } \; d\theta_{k} + dt \sum_{k=1}^{n} \frac{u_{k}}{4}+  \frac{s_{k}}{2}\sum_{l \neq k} \frac{u_{k} + u_{l}}{s_{k} - s_{l}} \\
&=  \sum_{k=1}^{n} \mathrm{sign}(\lambda_{k})\sqrt{ s_{k} u_{k} } \; d\theta_{k} + \frac{2n-1}{4} u \;dt
\end{align*}
Considering the SDE up to $\tau^{2}_{M'} \land  \tau_{M}^{u}$, we deduce
\begin{align*}
nM' \Prob{\tau^{2}_{M'} \le t \land \tau_{M}^{u}} &\le \Expectation{s(t) \Indicator{\tau^{2}_{M'} \le t \land \tau_{M}^{u}}}
\le s(0) + \frac{2n-1}{4} M t
\end{align*}
To bound for $\Prob{\tau^{2}_{M'} \le t}$, use that $\Prob{X \le t} = \Prob{X \le t, X \le Y} + \Prob{X \le t, X \ge Y}$. Hence $ \Prob{X \le t} \le \Prob{X \le t, X \le Y} + \Prob{Y \le t}$. Therefore,
\[
\Prob{\tau^{2}_{M'} \le t} \le \frac{ s(0) + \frac{2n-1}{4} M t}{n M'} + \Prob{\tau_{M}^{u} \le t} \le \frac{ s(0) + \frac{2n-1}{4} M t}{n M'} + \frac{u(0) + 2n t}{M}
.
\]
To show $\tau_{M'}^{2} \to \infty$, choose $M = \sqrt{ M' }$ and observe
$$
\Prob{\tau^{2}_{M'} \le t} \lesssim \frac{1}{\sqrt{ M' }} \to 0
.
$$

\textbf{Part 3: Collisions of $s_k$}
Consider the Lyapunov function on $s_k$
$$
f(s) = \frac{1}{n}\sum_{k} s_{k} - \frac{1}{n^{2}} \sum_{l \neq k} \log |s_{k} - s_{l}|
$$
which has the property that if $s_{k}, s_{l}$ are close then $f$ must be big. I.e.
$$
-\frac{1}{n^{2}} \log |s_{k} - s_{l}| \le f(s) + 4,\quad f(s) +4 \ge 0
.
$$
In particular, $\tau_{R}^{f} \le \tau_{R}^{3}$ where
$$
\tau_{R}^{f} = \inf \left\{  t > 0: f(s) > \frac{\log R}{n^{2}} - 4  \right\}
.
$$
The drift term of $df$ can be bounded above by $u/2$.
$$
\frac{2n - 1}{4n} u
-
\frac{1}{2n^{2}} \sum_{k} u_{k} \left\{  s_{k} \left( \sum_{l \neq k}  \frac{1}{s_{k} - s_{l}}\right)^{2} + \sum_{l \neq k} \frac{s_{l}}{(s_{k} - s_{l})^{2}} \right\} \le \frac{u}{2}
$$
Applying the same argument as before, let $R' = \frac{\log R}{n^{2}} - 4$ and deduce
\begin{align*}
(R'+4)\Prob{\tau_{R}^{f} \le t \land \tau_{M}^{u}}
&\le \Expectation{(f(s(t)) + 4) \Indicator{\tau_{R}^{f} \le t \land \tau_{M}^{u}}}
\le f(s(0)) + 4 + \frac{M}{2}t
.
\end{align*}
Hence,
$$
\Prob{\tau_{R}^{f} \le t} \le
\frac{f(s(0)) + 4 + \frac{M}{2}t}{R' + 4} 
+ \frac{u(0) + 2n t}{M}
.
$$
Taking $M = \sqrt{ R' }$ shows that $\tau_{R}^{f} \to \infty$ and hence $\tau_{R}^{3} \to \infty$.

\textbf{Part 4: Lower bound on $u_k$}
For each $u_{k}$ consider the function $g(u_{k}) = - \log u_{k}$. The drift term of $g(u_{k})$ is
$$
\sum_{l, r \neq k, l \neq r} \frac{-s_{k} u_{l}}{(s_{k} - s_{l})(s_{k} - s_{r})} + \frac{1}{2} \sum_{l \neq k} \frac{-s_{k}u_{l} + s_{l} (u_{k} + u_{l})}{(s_{k} - s_{l})^{2}}
.
$$
The previous parts show the drift cannot explode and $-\log u_k < \infty \implies u_k > 0$. Formally, define
$$
\tau_{R}^{4, u_{k}} = \inf \left\{  t > 0: - \log u_{k} > \log R  \right\}
.
$$
Conditional on $\tau_{R}^{4} \le \tau_{M}^{1} \land \tau^{2}_{M'} \land \tau_{\widetilde{R}}^{3}$, the drift term is bounded above by
$$
\widetilde{C} := \left( n(n-1)(n-2) + n(n-1) \frac{3}{2} \right) M M' \widetilde{R}^{2}
$$
Let $R' = \log R$ and WLOG $R > 1$. As $u_{k} \le M, g(u_{k}) + \log M \ge 0$. We form the bound
\begin{align*}
(R' + \log M) \Prob{\tau_{R}^{4, u_{k}} \le t \land \tau_{M}^{1} \land \tau^{2}_{M} \land \tau_{\widetilde{R}}^{3}}
&\le \Expectation{(g(u_{k}(t)) + \log M) \Indicator{\tau_{R}^{4, u_{k}} \le t \land \tau_{M}^{1} \land \tau^{2}_{M} \land \tau_{\widetilde{R}}^{3}}} \\
&\le g(u_{k}(0)) + \log M + \widetilde{C}t
.
\end{align*}
Apply a union bound to get a bound for $\Prob{\tau_{R}^{4, u_{k}} \le t}$.
\begin{align*}
\Prob{\tau_{R}^{4, u_{k}} \le t}
&\le \frac{g(u_{k}(0)) + \log M + \widetilde{C}t}{ R'}
+ \Prob{\tau_{M}^{1} \le t}
+ \Prob{\tau_{M'}^{2} \le t}
+ \Prob{\tau_{\widetilde{R}}^{3} \le t} \\
&\le \frac{g(u_{k}(0)) + \log M + \widetilde{C}t}{ R'}
+ \Prob{\tau_{M}^{u} \le t}
+ \Prob{\tau_{M'}^{2} \le t}
+ \Prob{\tau_{\widetilde{R}}^{f} \le t}
\end{align*}
Up to constant factors, the sum is
$$
\lesssim \frac{\log M+ M M' \widetilde{R}^{2}}{R' + \log M}+ \frac{1}{M} + \frac{M}{M'} + \frac{M}{\log \widetilde{R}}
.
$$
It remains to show this decay to $0$ as $R' \to \infty$ where $M, M', \widetilde{R}$ are functions of $R'$. One choice is
$$
M = (\log R')^{1/3},\quad
M' = (R')^{1/2},\quad
\widetilde{R} = \exp((\log R')^{2/3})
.
$$
For any $\varepsilon >0$, $\widetilde{R} = o((R')^{\varepsilon})$, take $\varepsilon = 1/4$. $\log M$ is also harmless as it contributes a $\log \log$ term which cannot grow too large. Therefore the sum decays
$$
\lesssim \frac{o(R^{1/2 + 1/4})}{R'} + \frac{1}{\log(R')^{1/3}} + \frac{1}{(R')^{1/2}} + \frac{1}{\log(R')^{1/3}} \to 0
$$
and $\tau_{R}^{4, u_{k}} \to \infty$. Furthermore $\tau_{R}^{4} \ge \min_{k} \tau_{R}^{4, u_{k}}$, therefore $\tau_{R}^{4} \to \infty$ as well.

\end{proof}

\subsection{A PDE for the Joint Density of Singular Values and Squared Subspace Norms}
\label{appendix:Derivation of the PDE}
By the Kolmogorov forward equation, the SDE implies a PDE for the joint density of $p(\lambda, u)$. This is related to Corollary \ref{corollary:joint density of sigma, u} as by symmetry, $p(\sigma, u) = 2^n p(\lambda,u)$.

\begin{proposition}[PDE for the joint density]
\label{prop:Joint density PDE}
The joint density satisfies the following PDE.
\begin{equation}
\begin{split}
\partial_{t} p_{t}(\lambda, u)
&= \sum_{k=1}^{n}\frac{u_{k}}{8} \partial_{\lambda_{k}}^{2} p
+ \left( 2u_{k} + \frac{1}{2} X_{kk} \right) \partial_{u_{k}}^{2}p
+ Y_{kk} \partial_{\lambda_{k}} \partial_{u_{k}} p\\
&+  \sum_{l \neq k} Y_{kl} \partial_{u_{k}} \partial_{\lambda_{l}} p 
+ \sum_{l < k} X_{kl} \; \partial_{u_{k}} \partial_{u_{l}} p \\
&+ C\;p +\sum_{k=1}^{n} C_{\lambda_{k}} \partial_{\lambda_{k}}p  + C_{u_{k}} \partial_{u_{k}} p
\end{split}
\end{equation}
where
\begin{equation}
\begin{split}
C &= 
\frac{1}{4} \sum_{l \neq k} \frac{\lambda_{k}^{2}(u_{k} - 3 u_{l}) + \lambda_{l}^{2}(u_k + 5u_{l})}{D_{kl}^{2}}
+
\sum_{l, r \neq k , l \neq r} \frac{\lambda_{k}^{2}(u_{k} - u_{l})}{D_{kl} D_{kr}} \\
C_{\lambda_k} &=
\frac{\lambda_k}{4} \sum_{l \neq k} \frac{ u_{l} - 3u_{k}}{D_{kl}} \\
C_{u_k} &=
2
+ \frac{\lambda_{k}^{2}}{2}\left( \sum_{l \neq k} \frac{u_{l}}{D_{kl}}\right)^{2}
- 2 u_k \left( \sum_{l \neq k} \frac{u_{l}}{D_{kl}}\right)
- \frac{1}{2} \sum_{l \neq k} \frac{\lambda_{l}^{2} u_{k}^{2}}{D_{kl}^{2}}
- \sum_{l, r \neq k, l \neq r}
\frac{u_l u_k \lambda_k^2}{D_{kl} D_{kr}}
+
\frac{u_k u_l \lambda_l^2}{D_{kl} D_{lr}}
\end{split}
\end{equation}
and for $l \neq k$
\begin{equation}
\begin{split}
Y_{kk} &= \frac{u_{k}\lambda_{k}}{2} \sum_{l \neq k } \frac{ u_{l}}{D_{kl}} \\
Y_{kl} &= \frac{u_{k}}{2} \frac{\lambda_{l}u_{l} }{D_{kl}}\\
X_{kk} &=
\sum_{l \neq k} \frac{u_{k} u_{l}}{D_{kl}} \left[ \frac{\lambda_{k}^{2} u_{l} + \lambda_{l}^{2}u_{k}}{D_{kl}} + \lambda_{k}^{2}\sum_{r \neq l, k}\frac{u_{r}}{D_{kr}} \right] \\
X_{kl} &= u_{k} u_{l}\left[ \frac{-(\lambda_{k}^{2}u_{l} + \lambda_{l}^{2}u_{k})}{D_{kl}^{2}} + \sum_{r \neq l, k} \frac{2u_{r}\lambda_{r}^{2}}{D_{kr} D_{lr}} \right]
.
\end{split}
\end{equation}
\end{proposition}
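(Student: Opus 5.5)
The plan is to apply the Kolmogorov forward (Fokker--Planck) equation to the diffusion $(\lambda,u)$ of Theorem~\ref{theorem:SDE of Singular values and radii}. By Lemma~\ref{lemma:SDE well-posed, interior IC} it stays in $\mathcal{D}_n^{\rm o}$ for $t>0$, and there its coefficients are smooth. For $t>0$ the law has a density by Lemma~\ref{lemma:Joint Density of L\'evy's area and Brownian Motion} and Corollary~\ref{corollary:joint density of sigma, u}. So $p_t$ satisfies, in the distributional sense on $\mathcal{D}_n^{\rm o}$,
\[
\partial_t p = \tfrac12\sum_{i,j}\partial_i\partial_j(a_{ij}p) - \sum_i \partial_i(b_i p),
\]
where $b$ is the drift and $a=\sigma\sigma^\top$ is the diffusion matrix in the variables $(\lambda_1,\dots,\lambda_n,u_1,\dots,u_n)$. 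To justify this, I would test against $\phi\in C_c^\infty(\mathcal{D}_n^{\rm o})$, apply It\^o's formula to $\phi(\lambda_t,u_t)$, take expectations (the martingale term vanishes since $\phi$ has compact support in the interior), and integrate by parts. If a genuinely classical PDE is wanted, I would appeal to smoothness of $p$, e.g.\ via hypoellipticity or via the explicit integral representation.

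The first real step is computing $a$ from the SDE using $d\theta_k d\theta_l=\delta_{kl}dt$, $d\gamma_k d\gamma_l=\delta_{kl}dt$ and $d\gamma_k d\theta_l=0$ (Lemma~\ref{lemma:Radial and angular Brownian motions}). Write the noise of $du_k$ as $2\sqrt{u_k}d\gamma_k+\sum_m c_{km}d\theta_m$, with
\[
c_{kk}=\sqrt{u_k}\lambda_k\sum_{l\ne k}\frac{u_l}{D_{kl}},\qquad c_{km}=\frac{u_k\sqrt{u_m}\lambda_m}{D_{km}} \ (m\neq k).
\]
Then:
\begin{itemize}
\item $d\lambda_kd\lambda_l=\delta_{kl}\frac{u_k}{4}dt$, which gives the $\frac{u_k}{8}\partial_{\lambda_k}^2$ term;
\item $d\lambda_l\,du_k=\frac{\sqrt{u_l}}{2}c_{kl}dt$, which produces $Y_{kk}$ and $Y_{kl}$;
\item $du_kdu_l=(4u_k\delta_{kl}+\sum_m c_{km}c_{lm})dt$, which should give $4u_k+X_{kk}$ on the diagonal and $X_{kl}$ off the diagonal.
\end{itemize}
For the diagonal entry, $\frac12(4u_k+X_{kk})$ matches the stated coefficient $2u_k+\frac12X_{kk}$. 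For the off-diagonal entries, the sum over $l<k$ with full coefficient $X_{kl}$ is consistent with the symmetric pair $\frac12(a_{kl}+a_{lk})$. I would check that $\sum_m c_{km}c_{lm}$ matches the stated $X_{kl}$; the three contributions $m=k$, $m=l$ and $m=r\neq k,l$ correspond to the displayed terms.

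The second step is expanding the divergence form into non-divergence form: $\partial_t p=\frac12\sum a_{ij}\partial_{ij}p+\sum_i\tilde C_i\partial_ip+Cp$. Here
\[
\tilde C_i=\sum_j\partial_ja_{ij}-b_i,\qquad C=\tfrac12\sum_{ij}\partial_i\partial_j a_{ij}-\sum_i\partial_ib_i .
\]
This step is pure calculus with rational functions of $\lambda^2$ and polynomials in $u$. The derivatives to track include
\[
\partial_{\lambda_k}\frac{1}{D_{kl}}=-\frac{2\lambda_k}{D_{kl}^2},\qquad \partial_{\lambda_l}\frac{1}{D_{kl}}=\frac{2\lambda_l}{D_{kl}^2},
\]
together with $\partial_{u_k}$ of the $u$-polynomials. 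For instance, $C_{\lambda_k}=\frac18\partial_{\lambda_k}u_k\cdot 0+\partial_{u_k}(\tfrac12\text{-}Y_{kk})+\sum_{l}\partial_{u_l}Y_{lk}-b_{\lambda_k}$ should collapse to $\frac{\lambda_k}{4}\sum_{l\ne k}\frac{u_l-3u_k}{D_{kl}}$. This is a good sanity check to do first, since it involves only first-order drift and a single derivative of $Y$.

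The main obstacle is the bookkeeping for $C_{u_k}$ and $C$. These require second derivatives of $X_{kl}$, whose triple sums over $(k,l,r)$ involve partial-fraction identities such as
\[
\frac{1}{D_{kl}D_{kr}}+\frac{1}{D_{lk}D_{lr}}+\frac{1}{D_{rk}D_{rl}}=0 .
\]
These must be used to cancel and symmetrize terms, in the same spirit as the cancellations in Lemma~\ref{lemma: SDE for u}. My plan is to organize the computation by the index pattern: single sums over $l\neq k$ with $D_{kl}^{-1}$ or $D_{kl}^{-2}$, and double sums over distinct $l,r$. I would apply the cyclic identity to move all denominators to the form $D_{kl}D_{kr}$ or $D_{kl}D_{lr}$ as displayed. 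Finally, I would verify the $n=1$ and $n=2$ cases symbolically as a check on signs.
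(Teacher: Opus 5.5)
Your route is the paper's. You write the dynamics as $d(\lambda,u)=\mu\,dt+S\,d(\theta,\gamma)$ and form $SS^\top$; your $c_{km}$ is the paper's $Z_{km}$, so $Y=Z\,\mathrm{diag}(\sqrt{u_k}/2)$ and $X=ZZ^\top$. You then apply the forward equation and expand with $\tilde C_i=\sum_j\partial_ja_{ij}-b_i$ and $C=\tfrac12\sum_{ij}\partial_{ij}a_{ij}-\sum_i\partial_ib_i$. The weak formulation against $C_c^\infty(\mathcal{D}_n^{\rm o})$ is a sensible addition the paper omits, and since $SS^\top$ is positive definite in the interior, local parabolic regularity gives smoothness. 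One small correction: in $C_{\lambda_k}$ the $Y$-part is $\partial_{u_k}Y_{kk}+\sum_{l\neq k}\partial_{u_l}Y_{lk}$, with no factor $\tfrac12$. With that, it does give $\tfrac{\lambda_k}{4}\sum_{l\neq k}\tfrac{u_l-3u_k}{D_{kl}}$.

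Your plan will not close as written at $C_{u_k}$. The only terms with denominator $D_{kl}D_{lr}$ come from the triple-sum part of $\sum_{l\neq k}\partial_{u_l}X_{kl}$. Swapping $l\leftrightarrow r$ and using $D_{rl}=-D_{lr}$, this is
\[
2u_k\sum_{l\neq k}\sum_{r\neq k,l}\frac{u_r\lambda_r^2}{D_{kr}D_{lr}}=-2\sum_{l,r\neq k,\,l\neq r}\frac{u_ku_l\lambda_l^2}{D_{kl}D_{lr}},
\]
so the last term of $C_{u_k}$ carries coefficient $2$, not $1$ as displayed. This is a misprint in the statement. The paper's own intermediate line has $-\tfrac12\cdot 4$ there. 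Also, the identity $C=\sum_i\partial_i\tilde C_i-\tfrac12\sum_{ij}\partial_{ij}a_{ij}$ reproduces the stated (correct) $C$ only when this coefficient is $2$.

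Your proposed checks at $n=1,2$ cannot detect this, because every sum over distinct $l,r\neq k$ is empty there. Check $n=3$ instead: for example, $\lambda^2=(3,2,1)$, $u=(1,1,1)$, $k=1$ gives $\sum_{l\neq r}u_l\lambda_l^2/(D_{1l}D_{lr})=3/2\neq0$.

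Relatedly, in $X_{kl}$ the $m=k$ and $m=l$ contributions are not just the $D_{kl}^{-2}$ term. Via $\lambda_l^2D_{kr}-\lambda_k^2D_{lr}=\lambda_r^2D_{kl}$ they also supply half of the $2u_ku_l\sum_{r}u_r\lambda_r^2/(D_{kr}D_{lr})$ term; the $m=r$ products supply the other half. Beyond that one partial-fraction step, relabelling within distinct triples suffices, and the cyclic three-term identity is not needed.
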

\begin{proof}
The SDE can be written as
$$
d
\begin{bmatrix}
\lambda(t)  \\
u(t)
\end{bmatrix}
=
\mu
dt
+
S \;d \begin{bmatrix}
\theta(t) \\
\gamma(t)
\end{bmatrix}
$$
where
$$
\mu = \begin{bmatrix}
f \\
2 + g + h
\end{bmatrix},\quad
S =\begin{bmatrix}
\mathrm{diag}\left( \frac{\sqrt{ u_{k} }}{2} \right) & 0 \\
Z & \mathrm{diag}(2 \sqrt{ u_{k} })
\end{bmatrix}
$$
$$
f_{k} = \frac{\lambda_{k}}{4} \sum_{l \neq k} \frac{u_{l} + u_{k}}{D_{kl}},\quad
g_{k} = \frac{1}{2} \sum_{l \neq k} \frac{\lambda_{k}^{2} u_{l}^{2} - \lambda_{l}^{2}u_{k}^{2}}{D_{kl}^{2}},\quad
h_{k} =
\frac{\lambda_{k}^{2}}{2}
\sum_{l, r \neq k, l \neq r} \frac{u_{l}(u_{r} + 2u_{k})}{D_{kl}D_{kr}}
$$
and (for $l \neq k$)
$$
Z_{kk} =
\sqrt{ u_{k} } \lambda_{k}\sum_{l \neq k } \frac{ u_{l}}{D_{kl}},\quad
Z_{kl} = u_{k} \frac{\lambda_{l} \sqrt{u_{l} }}{D_{kl}}
.
$$
Next
$$
S S^{\top} = \begin{bmatrix}
\mathrm{diag}\left( \frac{u_{k}}{4} \right) & Y^{\top} \\
Y & \mathrm{diag}(4u_{k}) + X \\
\end{bmatrix}
$$
where $Y = Z \; \mathrm{diag}(\sqrt{ u_{k} } / 2)$
$$
Y_{kk} = \frac{u_{k}\lambda_{k}}{2} \sum_{l \neq k } \frac{ u_{l}}{D_{kl}},\quad
Y_{kl} = \frac{u_{k}}{2} \frac{\lambda_{l}u_{l} }{D_{kl}}
$$
and $X = Z Z^{\top}$
$$
X_{kk}
=
\sum_{l \neq k} \frac{u_{k} u_{l}}{D_{kl}} \left[ \frac{\lambda_{k}^{2} u_{l} + \lambda_{l}^{2}u_{k}}{D_{kl}} + \lambda_{k}^{2}\sum_{r \neq l, k}\frac{u_{r}}{D_{kr}}\right],\quad
X_{kl} = u_{k} u_{l}\left[ \frac{-(\lambda_{k}^{2}u_{l} + \lambda_{l}^{2}u_{k})}{D_{kl}^{2}} + \sum_{r \neq l, k} \frac{2u_{r}\lambda_{r}^{2}}{D_{kr} D_{lr}} \right]
.
$$

By the Kolmogorov Forward equation,
\begin{align*}
\partial_{t} p_{t}(\lambda, u)
=
-\sum_{k=1}^{n} \partial_{\lambda_{k}}(f_{k}\; p) - \partial_{u_{k}}((2 + g_{k} + h_{k})\; p) \\
+ \frac{1}{2}\sum_{k=1}^{n} \partial_{\lambda_{k}}^{2} \left( \frac{u_{k}}{4} p \right) + \partial_{u_{k}}^{2} ((4 u_{k} + X_{kk}) \; p)
+ 2 \partial_{u_{k}} \partial_{\lambda_{k}}(Y_{kk} \; p) \\
+ \sum_{l \neq  k} \partial_{\lambda_{k}} \partial_{u_{l}} (Y_{lk} \; p) + \frac{1}{2} \partial_{u_{k}} \partial_{u_{l}} (X_{kl} \; p)
\end{align*}
Furthermore as $\bA_{0} = 0, B_{0} = 0$, $p_{0}(\lambda, u) = \delta(\lambda) \delta(u)$. Now it remains to expand each term by product rule and collect by derivatives of $p$ to get the result.
\begin{align*}
\partial_{\lambda_{k}}(f_{k}\; p) &= \partial_{\lambda_{k}} f_{k} \; p + f_{k} \partial_{\lambda_{k}}p  \\
\partial_{u_{k}}((2 + g_{k} + h_{k})\; p) &= \partial_{u_{k}} (g_{k} + h_{k}) \; p + (2 + g_{k} + h_{k}) \partial_{u_{k}} p \\
\partial_{\lambda_{k}}^{2}\left( u_{k} p \right) &= u_{k} \partial_{\lambda_{k}}^{2} p \\
\partial_{u_{k}}^{2} (u_{k} \; p) &= 2 \partial_{u_{k}}p + u_{k} \partial_{u_{k}}^{2}p \\
\partial_{u_{k}}^{2} (X_{kk} \; p) &= \partial_{u_{k}}^{2} X_{kk} \; p + 2 \partial_{u_{k}} X_{kk} \partial_{u_{k}} p + X_{kk} \partial_{u_{k}}^{2}p\\
\partial_{u_{k}} \partial_{\lambda_{k}}(Y_{kk} \; p) &= \partial_{u_{k}} \partial_{\lambda_{k}} Y_{kk} \;p + \partial_{\lambda_{k}}Y_{kk} \partial_{u_{k}}p + \partial_{u_{k}} Y_{kk} \partial_{\lambda_{k}}p + Y_{kk} \partial_{u_{k}} \partial_{\lambda_{k}} p\\
\partial_{\lambda_{k}} \partial_{u_{l}} (Y_{lk} p) &= \partial_{\lambda_{k}} \partial_{u_{l}} Y_{lk} \;p + \partial_{u_{l}} Y_{lk} \; \partial_{\lambda_{k}}p   + \partial_{\lambda_{k}}Y_{lk} \; \partial_{u_{l}} p + Y_{lk} \partial_{\lambda_{k}} \partial_{u_{l}} p  \\
\partial_{\lambda_{l}} \partial_{u_{k}} (Y_{kl} p) &= \partial_{\lambda_{l}} \partial_{u_{k}} Y_{kl} \;p + \partial_{u_{k}} Y_{kl} \; \partial_{\lambda_{l}}p   + \partial_{\lambda_{l}}Y_{kl} \; \partial_{u_{k}} p + Y_{kl} \partial_{\lambda_{l}} \partial_{u_{k}} p  \\
\partial_{u_{k}} \partial_{u_{l}} (X_{kl} p) &= \partial_{u_{k}} \partial_{u_{l}} X_{kl} \; p+ \partial_{u_{k}}X_{kl} \partial_{u_{l}}  p +  \partial_{u_{l}} X_{kl} \partial_{u_{k}} p + X_{kl} \partial_{u_{k}} \partial_{u_{l}} p
\end{align*}

\textbf{Part 1: No derivative} The contribution is
$$
\sum_{k=1}^{n}
-\partial_{\lambda_{k}} f_{k} 
- \partial_{u_{k}} g_{k} 
- \partial_{u_{k}} h_{k}
+ \frac{1}{2}\partial_{u_{k}}^{2} X_{kk} 
+ \partial_{u_{k}} \partial_{\lambda_{k}} Y_{kk}
+ \sum_{l \neq k} \partial_{\lambda_{l}} \partial_{u_{k}} Y_{kl}
+ \frac{1}{2}
\partial_{u_{k}} \partial_{u_{l}} X_{kl}
$$
We compute
\begin{align*}
\partial_{\lambda_{k}} f_{k} = \frac{1}{4} \sum_{l \neq k} \frac{-(\lambda_{k}^{2} + \lambda_{l}^{2})(u_{l} + u_{k})}{D_{kl}^2}&,\quad
\partial_{u_{k}} g_{k} =  \sum_{l, r \neq k, l \neq r} \frac{\lambda_{k}^{2}u_{l}}{D_{kl} D_{kr}},\quad
\partial_{u_{k}}h_{k} = \sum_{l \neq k} \frac{ - \lambda_{l}^{2}u_{k}}{D_{kl}^{2}} \\
\partial^{2}_{u_{k}} X_{kk} = \sum_{l \neq k} \frac{ 2 \lambda_{l}^{2} u_{l}}{D_{kl}^{2}}&,\quad
\partial_{u_{k}} \partial_{\lambda_{k}} Y_{kk} = \frac{1}{2} \sum_{l \neq k} \frac{-(\lambda_{k}^{2} + \lambda_{l}^{2})u_{l}}{D_{kl}^{2}} \\
\partial_{\lambda_{l}} \partial_{u_{k}} Y_{kl} = \frac{u_{l}(\lambda_{k}^{2} + \lambda_{l}^{2})}{2D_{kl}^{2}}&,\quad
\partial_{u_{k}} \partial_{u_{l}} X_{kl} =  \frac{-2u_{l}\lambda_{k}^{2} - 2u_{k}\lambda_{l}^{2}}{D_{kl}^{2}} + \sum_{r \neq l, k} \frac{2u_{r}\lambda_{r}^{2}}{D_{kr} D_{lr}}
.
\end{align*}
The $Y_{kk},Y_{kl}$ terms cancel. The $D_{kl}^2$ can be gathered into
$$
\frac{1}{4} \sum_{l \neq k} \frac{\lambda_{k}^{2}(u_{k} - 3 u_{l}) + \lambda_{l}^{2}(u_k + 5u_{l})}{D_{kl}^{2}}
.
$$
Relabel
$$
\sum_{l, r \neq k, l \neq r} \frac{u_r \lambda_r^2}{D_{kr} D_{lr}}
=
\sum_{l, r \neq k, l \neq r} \frac{u_k \lambda_k^2}{D_{kl} D_{kr}}
.
$$
Therefore the remaining terms can be grouped as
$$
\sum_{l, r \neq k , l \neq r} \frac{\lambda_{k}^{2}(u_{k} - u_{l})}{D_{kl} D_{kr}}
.
$$
The overall coefficient is
$$
\frac{1}{4} \sum_{l \neq k} \frac{\lambda_{k}^{2}(u_{k} - 3 u_{l}) + \lambda_{l}^{2}(u_k + 5u_{l})}{D_{kl}^{2}}
+
\sum_{l, r \neq k , l \neq r} \frac{\lambda_{k}^{2}(u_{k} - u_{l})}{D_{kl} D_{kr}}
.
$$

\textbf{Part 2: $\partial_{\lambda_k}$} For fixed $k$, the contribution is
$$
- f_k +\partial_{u_k} Y_{kk} + \sum_{l \neq k}  \partial_{u_{l}} Y_{lk}
$$
Recall
$$
f_{k} = \frac{\lambda_{k}}{4} \sum_{l \neq k} \frac{u_{l} + u_{k}}{D_{kl}}
$$
and we compute
$$
\partial_{u_{k}} Y_{kk} = \frac{\lambda_{k}}{2} \sum_{l \neq k } \frac{ u_{l}}{D_{kl}},\quad
\partial_{u_{l}} Y_{lk} = \frac{-\lambda_{k} u_{k}}{2D_{kl}}
.
$$
Therefore the sum is
$$
\frac{\lambda_k}{4} \sum_{l \neq k} \frac{ u_{l} - 3u_{k}}{D_{kl}}
.
$$

\textbf{Part 3: $\partial_{u_k}$} For fixed $k$, the contribution is
$$
-(2 +g_{k} +h_{k}) + \partial_{u_{k}} X_{kk} + 4 + \partial_{\lambda_{k}} Y_{kk} + \sum_{l \neq k} \partial_{\lambda_{l}} Y_{kl} + \partial_{u_{l}} X_{kl}
.
$$
Recall
$$
g_{k} = \frac{1}{2} \sum_{l \neq k} \frac{\lambda_{k}^{2} u_{l}^{2} - \lambda_{l}^{2}u_{k}^{2}}{D_{kl}^{2}},\quad
h_{k} =
\frac{\lambda_{k}^{2}}{2}
\sum_{l, r \neq k, l \neq r} \frac{u_{l}(u_{r} + 2u_{k})}{D_{kl}D_{kr}}
.
$$
We compute
$$
\partial_{u_{k}} X_{kk} = \sum_{l \neq k} \frac{\lambda_{k}^{2}u_{l}^{2}  + 2 \lambda_{l}^{2} u_{k} u_{l}}{D_{kl}^{2}}
+
\sum_{l,r \neq k, r \neq l} \frac{\lambda_{k}^{2} u_{l} u_{r}}{D_{kl}D_{kr}},\quad
\partial_{\lambda_{k}} Y_{kk} = \frac{u_{k}}{2} \sum_{l \neq k} \frac{-(\lambda_{k}^{2} + \lambda_{l}^{2})u_{l}}{D_{kl}^{2}}
$$
and
$$
\partial_{\lambda_{l}}Y_{kl} = \frac{(\lambda_{l}^{2} + \lambda_{k}^{2}) u_{l} u_{k}}{2D_{kl}^{2}},\quad
\partial_{u_{l}} X_{kl} =  \frac{-2u_{l} u_{k} \lambda_{k}^{2} - u_{k}^{2}\lambda_{l}^{2}}{D_{kl}^{2}} + u_{k} \sum_{r \neq l, k} \frac{2u_{r}\lambda_{r}^{2}}{D_{kr} D_{lr}}
.
$$
First observe the constant becomes $2$ and $Y_{kk}, Y_{kl}$ terms cancel exactly. Now focus on the $D_{kl}^2$ denominator terms. This yields
$$
\frac{1}{2} \sum_{l \neq k} \frac{-(\lambda_{k}^{2}u_{l}^{2} - \lambda_{l}^{2}u_{k}^{2}) +2(\lambda_{k}^{2}u_{l}^{2}  + 2 \lambda_{l}^{2} u_{k} u_{l}) - 2(2u_{l} u_{k} \lambda_{k}^{2} + u_{k}^{2}\lambda_{l}^{2})}{D_{kl}^{2}}
.
$$
Simplify the numerator into $(\lambda_{k}^{2}u_{l}^{2} - u_{k}^{2}\lambda_{l}^{2}) - 4u_{l}u_{k} D_{kl}$. Hence we get
$$
\frac{1}{2} \sum_{l \neq k} \frac{\lambda_{k}^{2}u_{l}^{2} - \lambda_{l}^{2}u_{k}^{2}}{D_{kl}^{2}}
-2 \sum_{l \neq k} \frac{u_{l}u_{k}}{D_{kl}}
.
$$
The following sum can be relabelled via $l \leftrightarrow r$.
$$
\sum_{l \neq k} \partial_{u_l} X_{kl}
=
\sum_{l, r \neq k, l \neq r} \frac{-2u_k u_l \lambda_l^2}{D_{kl} D_{lr}}
$$
Now consider the remaining terms which simplifies into
$$
\frac{1}{2} \sum_{l, r \neq k, l \neq r}
\frac{\lambda_k^2(u_{l}u_{r} - 2u_{l}u_{k})}{D_{kl}D_{kr}}
-\frac{4u_k u_l \lambda_l^2}{D_{kl} D_{lr}}
$$
For the first term subtract out the condition $l = r$ to reveal
$$
\frac{\lambda_{k}^{2}}{2}\left( \sum_{l \neq k} \frac{u_{l}}{D_{kl}}\right)^{2}
- \frac{1}{2} \sum_{l \neq k} \frac{\lambda_{k}^{2}u_{l}^{2}}{D_{kl}^{2}}
.
$$
Therefore the overall sum is
$$
2
+ \frac{\lambda_{k}^{2}}{2}\left( \sum_{l \neq k} \frac{u_{l}}{D_{kl}}\right)^{2}
- 2 u_k \left( \sum_{l \neq k} \frac{u_{l}}{D_{kl}}\right)
- \frac{1}{2} \sum_{l \neq k} \frac{\lambda_{l}^{2} u_{k}^{2}}{D_{kl}^{2}}
- \sum_{l, r \neq k, l \neq r}
\frac{u_l u_k \lambda_k^2}{D_{kl} D_{kr}}
+
\frac{u_k u_l \lambda_l^2}{D_{kl} D_{lr}}
.
$$

\textbf{Part 4: $\partial_{\lambda_k}^2$} Only through $\frac{1}{2}\partial_{\lambda_k}^2 (u_k / 4)$. This yields $u_k / 8$.

\textbf{Part 5: $\partial_{u_k}^2$}
This only comes from $\frac{1}{2}\partial_{u_k}^2((4u_k + X_{kk})) p)$. This yields $2u_k + \frac{1}{2} X_{kk}$.

\textbf{Part 6: $\partial_{\lambda_k} \partial_{u_k}$} This only comes from $\partial_{u_k} \partial_{\lambda_k}(Y_{kk} \; p)$. This yields $Y_{kk}$.

\textbf{Part 7: $\partial_{\lambda_k} \partial_{u_l}$} Only from $\partial_{\lambda_k} \partial_{u_l}(Y_{lk} \; p)$. We cannot symmetrise and yields $Y_{lk}$.

\textbf{Part 8: $\partial_{u_k} \partial_{u_l}$} This only comes from $\frac{1}{2}\partial_{u_k} \partial_{u_l} (X_{kl} \;p)$. By symmetrising to $l < k$, we get $X_{kl}$.

\end{proof}

\section{Proofs for the Determinantal Point Process}
\label{appendix:Proofs for the Determinantal Point Process}
In this section we prove the steps required in the proof of Theorem \ref{theorem: Singular values are a DPP} and obtain the recurrence relations as stated in Lemmas \ref{lemma:Recurrence} and \ref{lemma:Moments of the empirical measure}.
For convenience, throughout this section we index $i$ starting from 0 and let the domain of the DPP be $\mathcal{X} = \R$ instead of $\R_{+}$ to simplify the integrals. As the integrands are even, this will only incur factors of 2. This is resolved in Section \ref{section:Simplifying the Correlation Function}.

\subsection{LU Decomposition of the Gram Matrix}
\label{section:LU decomposition of the Gram matrix}
In our setting, the Gram matrix is
$$
G_{ij} = \int_{\R} \xi_{i}(x) \eta_{j}(x) w(x) dx
$$
where
$$
\xi_{i}(x) = x^{2i},\quad
\eta_{i}(x) = \sech(x)^{2i},\quad
w(x)
=
\begin{cases}
\sech(x) & d = 2n \\
x \tanh(x) \sech(x) & d = 2n +1
\end{cases}
$$
To obtain the LU decomposition, we interpret $G_{ij} / (2i)!$ as the coefficient of $t^{2i}$ in the generating function for fixed \(j\). In the even case,
$$
\frac{G_{ij}}{(2i)!} =  [t^{2i}]G_{j}(t),\quad
G_{j}(t) := \int_{\R} \sum_{l=0}^{\infty} \frac{(tx)^{2l}}{(2l)!} \sech(x)^{2j+1} \, dx
.
$$
The generating function is the Laplace transform of \(\sech(t)^{2j+1}\) which we show is the product of $\sec(x)$ and an elementary symmetric polynomial.
Hence its coefficients are given by convolution. Explicitly, given a function $f$ and a polynomial $g$ of degree $j$
\begin{equation*}
    f(t)g(t) = \sum_{i=0}^{\infty} a_{i}t^{i} \sum_{l=0}^{j} b_{l} t^{l}
= \sum_{i=0}^{\infty} t^{i} \sum_{l=0}^{i \land j} a_{i -l}b_{l}
\end{equation*}
and $[t^i] fg = \sum_{l=0}^{i \land j} a_{i-l} b_{l}$. This implies an LU decomposition and is the basis for our proof.
\begin{equation}
\label{equation:Scheme for factorisation}
(D_{1})_{i} \cdot \sum_{l=0}^{i \land j} L_{il} \cdot (D_{2})_{l} \cdot U_{lj} \cdot (D_{3})_{j}
=
(D_{1} L D_{2} U D_{3})_{ij}
.
\end{equation}

\begin{lemma}[LU Decomposition of the Gram Matrix]
\label{lemma:Factorising G}
For \(d  = 2n\),
\begin{equation*}
    G = \left[\int_{\R} x^{2i} \sech(x)^{2j + 1}dx\right]_{i, j = 0}^{n-1}
    =
    D_{1} L D_{2} U D_{3}
\end{equation*}
where \(D\) are diagonal matrices
\begin{equation*}
(D_{1})_{i}
=
(-1)^{i} \left( \frac{\pi}{2} \right)^{2i} (2i)!,\quad
(D_{2})_{i}
=
\left( \frac{2}{\pi} \right)^{2i},\quad
(D_{3})_{i}
=
\frac{\pi}{(2i)!},
\end{equation*}
\(E_{2i}\) are the signed Euler numbers (also known as secant number, see OEIS A000364) and \(L, U\) are lower and upper triangular matrices respectively.
\begin{equation*}
    L_{ij} =
\begin{cases}
\frac{E_{2(i-j)}}{(2(i-j))!} & i \ge j \\
0 & \text{otherwise}
\end{cases}
,\quad
U_{ij} =
\begin{cases}
e_{j-i}(\{ (2k-1)^{2} \}_{k \in[j]}) & i \le j \\
0 & \text{otherwise}
\end{cases}
\end{equation*}

For \(d = 2n+1\) odd,
\begin{equation*}
    G = \left[\int_{\R} x^{2i+1} \sech(x)^{2j + 1} \tanh(x) dx\right]_{i, j = 0}^{n-1}
    =
    D'_{1} L D_{2} U D'_{3}
\end{equation*}
where $L, U, D_2$  are the same but the diagonal matrices $D_1', D_2'$ differ by a factor of $(2i+1)$.
\begin{equation*}
(D'_{1})_{i}
=
(-1)^{i} \left( \frac{\pi}{2} \right)^{2i} (2i+1)!,\quad
(D'_{3})_{i}
=
\frac{\pi}{(2i+1)!}
\end{equation*}

\end{lemma}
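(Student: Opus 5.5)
The plan is to compute the generating function $G_j(t)$ in closed form with Lemma \ref{lemma:Laplace Transform of powers of sech}, and then read off coefficients via the convolution scheme \eqref{equation:Scheme for factorisation}.

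\textbf{Even case.} Since the integrand is even in $x$, we have $G_j(t)=\int_{\R}\cosh(tx)\sech(x)^{2j+1}\,dx=\varphi_{2j+1}(t)$. For $|t|<1$ the series $\sum_l (tx)^{2l}/(2l)!$ may be exchanged with the integral by monotone convergence, because $\cosh(tx)\sech(x)^{2j+1}$ is integrable there. Lemma \ref{lemma:Laplace Transform of powers of sech} gives
\[
G_j(t)=\frac{2^{2j}}{(2j)!}\,\Gamma\!\left(j+\tfrac12+\tfrac t2\right)\Gamma\!\left(j+\tfrac12-\tfrac t2\right).
\]
Setting $s=t/2$ and applying $\Gamma(z+1)=z\Gamma(z)$ $j$ times to each factor gives
\[
\Gamma\!\left(j+\tfrac12+s\right)\Gamma\!\left(j+\tfrac12-s\right)=\Gamma\!\left(\tfrac12+s\right)\Gamma\!\left(\tfrac12-s\right)\prod_{k=1}^{j}\left(\left(k-\tfrac12\right)^2-s^2\right).
\]
The reflection formula then gives $\Gamma(\tfrac12+s)\Gamma(\tfrac12-s)=\pi\sec(\pi s)$. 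Absorbing the $2^{2j}$ into the product yields
\[
G_j(t)=\frac{\pi}{(2j)!}\,\sec\!\left(\tfrac{\pi t}{2}\right)\prod_{k=1}^{j}\left((2k-1)^2-t^2\right).
\]
This is the "secant times elementary symmetric polynomial" structure announced before the lemma.

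\textbf{Reading off coefficients.} I expand the polynomial factor in $t^2$ as
\[
\prod_{k=1}^j\left((2k-1)^2-t^2\right)=\sum_{l=0}^{j}(-1)^l e_{j-l}\left(\{(2k-1)^2\}_{k\in[j]}\right)t^{2l}.
\]
I expand the secant as
\[
\sec\!\left(\tfrac{\pi t}{2}\right)=\sum_{m\ge0}(-1)^m E_{2m}\left(\tfrac{\pi}{2}\right)^{2m}\frac{t^{2m}}{(2m)!},
\]
using the signed Euler numbers, with $E_{2m}(-1)^m>0$ so that all coefficients are positive. Only even powers occur, so the Cauchy product gives
\[
[t^{2i}]G_j=\frac{\pi}{(2j)!}\sum_{l=0}^{i\wedge j}(-1)^{i-l}E_{2(i-l)}\left(\tfrac{\pi}{2}\right)^{2(i-l)}\frac{1}{(2(i-l))!}\,(-1)^l e_{j-l}.
\]
Pulling out $(-1)^i(\pi/2)^{2i}$ leaves $(2/\pi)^{2l}$ inside the sum. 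Multiplying by $(2i)!$ gives exactly $(D_1)_i\sum_l L_{il}(D_2)_l U_{lj}(D_3)_j$, which is the claimed factorisation $G=D_1LD_2UD_3$. Since the $t^{2i}$-coefficient of an analytic function on $|t|<1$ is determined there, the coefficient identity is rigorous.

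\textbf{Odd case.} I reduce to the even case by integrating by parts. Because $\frac{d}{dx}\sech(x)^{2j+1}=-(2j+1)\sech(x)^{2j+1}\tanh(x)$, and the boundary terms vanish by exponential decay,
\[
\int_{\R}x^{2i+1}\tanh(x)\sech(x)^{2j+1}\,dx=\frac{2i+1}{2j+1}\int_{\R}x^{2i}\sech(x)^{2j+1}\,dx.
\]
So the odd Gram matrix is $\mathrm{diag}(2i+1)\,G_{\text{even}}\,\mathrm{diag}\big(1/(2j+1)\big)$. This turns $D_1$ into $D_1'$ and $D_3$ into $D_3'=\pi/(2j+1)!$, with $L$, $D_2$ and $U$ unchanged.

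\textbf{Main obstacle.} The step needing the most care is the Gamma-function manipulation: the shift identity, the placement of the factors $2^{2j}$ and $1/4$, and above all the sign convention for the signed Euler numbers, so that the $(-1)^i$ in $D_1$ and the $(-1)^l$ from the product combine correctly. I would check the result at $j=0$ (pure secant) and $i=j=1$ against direct computation, for example $\int\sech=\pi$ and $\int x^2\sech(x)\,dx=\pi^3/4$.
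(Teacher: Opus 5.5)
Your proposal is correct and follows the paper's proof essentially step for step. Both arguments use the Gamma-function Laplace transform, then the shift and reflection identities to get $\frac{\pi}{(2j)!}\sec(\pi t/2)\prod_{k=1}^{j}\bigl((2k-1)^2-t^2\bigr)$, then coefficient comparison via the convolution scheme. The only cosmetic difference is in the odd case: you integrate by parts on the moments to get the factor $\frac{2i+1}{2j+1}$, while the paper integrates by parts on the generating function to get the factor $t/(2j+1)$, which is the same identity.
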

\begin{proof}
    Consider the generating function, whose coefficient of \(t^{2i}\) is \(G_{ij}/(2i)!\)
\begin{equation*}
G_{j}(t) 
= \int_{\mathbb{R}} e^{tx} \sech(x)^{2j+1} \, dx
= 2^{2j} \frac{\Gamma\left( j+\frac{1+t}{2} \right)\Gamma\left( j+\frac{1-t}{2} \right)}{\Gamma(2j+1)}
.
\end{equation*}
Lemma \ref{lemma:Laplace Transform of powers of sech} evaluates the Laplace transform and applying the identity $\Gamma(z+1) = z \Gamma(z)$ yields
\begin{equation*}
\Gamma\left( j+\frac{1+t}{2} \right)\Gamma\left( j+\frac{1-t}{2} \right)
=
\Gamma\left( \frac{1+t}{2} \right) \Gamma\left( \frac{1-t}{2} \right)
\frac{(-1)^{j}}{4^{j}}
\prod_{k=1}^{j} (t^{2} - (2k-1)^{2})
.
\end{equation*}
By Euler's reflection formula, the first term is equal to \(\pi \sec(\pi t / 2)\) which has a Taylor series in terms of the signed Euler numbers $E_{2m}$ (also known as secant numbers).
$$
\Gamma\left( \frac{1+t}{2} \right) \Gamma\left( \frac{1-t}{2} \right)
=
\pi \sec\left( \frac{\pi t}{2} \right)
=
\sum_{i=0}^{\infty} t^{2i}
\left( \frac{\pi}{2} \right)^{2i}
\frac{(-1)^{i} E_{2i}}{(2i)!}
$$
The second term can be written in terms of the elementary symmetric polynomials $e_{k}$.
$$
E_{j}(t) :=  \prod_{k=1}^{j} (t^{2} - (2k-1)^{2})
=
\sum_{i=0}^{j} t^{2i} (-1)^{j-i} e_{j-i}(\{ (2k - 1)^{2} \}_{k \in[j]})
$$
Therefore,
$$
G_{j}(t)
=
\pi \sec\left( \frac{\pi t}{2} \right)
E_{j}(t)
\;
\frac{(-1)^{j}}{\Gamma(2j+1)}
.
$$
By comparing coefficients of $t^{2i}$ and comparing with \eqref{equation:Scheme for factorisation}, we obtain the LU decomposition.
$$
G_{ij} =
(2i)! (-1)^{i} \left( \frac{\pi}{2} \right)^{2i}
\cdot
\sum_{l=0}^{i \land j}
\frac{E_{2(i-l)}}{(2(i-l))!}
\cdot
\left( \frac{\pi}{2} \right)^{-2l}
\cdot
e_{j-l}(\{ (2k-1)^{2} \}_{k\in[j]})
\cdot
\frac{\pi}{(2j)!}
$$

For \(d  = 2n+1\) odd, we consider the coefficients of $t^{2i+1}$ of
$$
G_{j}(t) = \int_{\mathbb{R}} e^{tx} \sech(x)^{2j+1} \tanh(x)  dx 
.
$$
As \(\frac{d}{dx} \sech(x)^{2j+1} = -(2j+1) \sech(x)^{2j+1} \tanh(x)\), we apply integration by parts to show
$$
G_{j}(t)
= \left[ e^{tx} \frac{-1}{2j+1} \sech(x)^{2j+1} \right]_{-\infty}^{\infty}
+ \frac{1}{2j+1} \int_{\R} e^{tx} \sech(x)^{2j+1} \, dx 
= \frac{t}{2j+1} \int_{\R} e^{tx} \sech(x)^{2j+1} \, dx
$$
for \(t < 1\) as \(\sech(x) \sim \exp(-|x|)\) for \(|x|\) large and \(e^{tx} \sech(x)^{2j + 1} \sim \exp(tx -|x|(2j+1)) \to 0\) for $|x| \to \infty$.
Except the factor of \(t /(2j+1)\), this is the same as the even case. We compare the coefficients of $t^{2i+1}$ to obtain the result.
\end{proof}

\subsection{Computing the Biorthogonal Functions}
\label{section:Computing the biorthogonal functions}
By Lemma \ref{lemma:Biorthogonal ensemble}, the biorthogonal functions are
\begin{equation*}
    p_{m}(x) = \sum_{k=0}^{m} P_{m, k} \xi_k,\quad
    q_{m}(x) = \sum_{k=0}^{m} Q_{m, k} \eta_k
\end{equation*}
where $P = L^{-1}$ and $Q^{\top} = U^{-1}$.
$L$ is a Toeplitz matrix, and its inverse another Toeplitz matrix which is given by the following lemma.
\begin{lemma}[Inverse of Toeplitz Matrices]
\label{lemma:Products of Toeplitz Matrices}
Given a Toeplitz matrix $A$ we define its generator as
\[
A(t) = \sum_{i \ge 0} a_i t^i \text{ with }a_{i-j}\coloneqq A_{ij},\quad a_{-i} = 0 \text{ for } i \ge 1.
\]
If $A, B$ are lower triangular Toeplitz Matrices, then $C=AB$ is also lower triangular Toeplitz with generator $C(t) = A(t) B(t)$.
If $A$ is invertible ($a_0 \neq 0$) then its inverse is also a lower triangular Toeplitz with generating function $A^{-1}(t)$.
\end{lemma}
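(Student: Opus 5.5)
The plan is to identify lower triangular Toeplitz matrices with truncated power series. The map from matrix to generator turns products into products of series, and inverses then come for free.

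For a lower triangular Toeplitz $A$ with $A_{ij}=a_{i-j}$ and $a_{-k}=0$ for $k\ge1$, write $A(t)=\sum_{i\ge0}a_it^i$. If the matrices are $N\times N$, only coefficients up to $t^{N-1}$ matter, so everything can be read modulo $t^N$.

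\textbf{Products.} Let $C=AB$. Because both matrices are lower triangular, the sum defining each entry is finite:
$$C_{ij}=\sum_k a_{i-k}b_{k-j}=\sum_{k=j}^{i}a_{i-k}b_{k-j}.$$
Substituting $r=k-j$ gives
$$C_{ij}=\sum_{r=0}^{i-j}a_{(i-j)-r}b_r$$
for $i\ge j$, and $C_{ij}=0$ for $i<j$. This depends only on $i-j$, so $C$ is lower triangular Toeplitz. Its entries are $c_m=\sum_{r=0}^m a_{m-r}b_r$, which is exactly the Cauchy product coefficient $[t^m]A(t)B(t)$. Hence $C(t)=A(t)B(t)$.

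\textbf{Inverses.} Suppose $a_0\neq0$. Then $A(t)$ is invertible in the ring of formal power series, and its inverse $A^{-1}(t)=\sum_i\tilde a_it^i$ is determined by the recursion
$$\tilde a_0=\frac{1}{a_0},\qquad \tilde a_m=-\frac{1}{a_0}\sum_{r=1}^m a_r\tilde a_{m-r}.$$
Let $\tilde A$ be the lower triangular Toeplitz matrix with generator $A^{-1}(t)$. By the product step, $A\tilde A$ and $\tilde A A$ are lower triangular Toeplitz with generator $A(t)A^{-1}(t)=1$, so both equal the identity. Uniqueness of inverses then gives $A^{-1}=\tilde A$. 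Truncating at size $N$ causes no trouble, since the top-left $N\times N$ block of a product of lower triangular matrices involves only the top-left blocks of the factors.

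No step here is a real obstacle. The only point needing care is the index bookkeeping in the Cauchy product: the lower triangular structure is what makes the sums finite, and it is also what makes the truncation modulo $t^N$ consistent with matrix multiplication. For use in the paper, $L$ from Lemma~\ref{lemma:Factorising G} has generator $\sec$-type series $\sum_i E_{2i}s^{i}/(2i)!$ in $s=t^2$, so $P=L^{-1}$ has generator given by the reciprocal, namely the $\cos$ series.
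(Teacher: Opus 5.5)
Your proof is correct and follows the same route as the paper. You compute $C_{ij}=\sum_{k=j}^{i}a_{i-k}b_{k-j}$, recognise the Cauchy product, and deduce the inverse from $A(t)A^{-1}(t)=1$; your explicit recursion for $A^{-1}(t)$ when $a_0\neq 0$ and your truncation remark only fill in details the paper leaves implicit.

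One aside on your closing remark, which is not part of the lemma: in the paper's usage the $E_{2i}$ are signed, so $L$ has generator $\sech(\sqrt{x})$ and $L^{-1}$ has generator $\cosh(\sqrt{x})$, i.e.\ $L^{-1}_{ij}=1/(2(i-j))!$ without the alternating sign that a $\sec/\cos$ reading would produce.
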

\begin{proof}
    $C_{ij} = \sum_{k=0}^{n-1} A_{ik} B_{kj} = \sum_{k=0}^{n-1} a_{i-k} b_{k-j}$ is also Toeplitz with symbol $c_n = \sum_{k=0}^{n} a_{k} b_{n-k}$.
    Therefore, $C(t) = A(t) B(t)$ and $A, B$ are an inverse pair if $A(t) B(t) = 1$.
\end{proof}

\begin{lemma}[Inverse of L]
\label{lemma:Inverse of L}
The following lower triangular Toeplitz matrices are an inverse pair
$$
L_{ij} =
\frac{E_{2(i-j)}}{(2(i-j))!} \Indicator{i \ge j},\quad
L^{-1}_{ij} = \frac{1}{(2(i-j))!} \Indicator{i \ge j}
.
$$
\end{lemma}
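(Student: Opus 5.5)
By Lemma \ref{lemma:Products of Toeplitz Matrices}, it suffices to work with generating functions. Both matrices in the statement are lower triangular Toeplitz matrices, and the diagonal entry of $L$ is $E_0/0! = 1 \neq 0$, so $L$ is invertible. The lemma then says that $L^{-1}$ is the lower triangular Toeplitz matrix whose generator is $L(t)^{-1}$. So the task reduces to identifying the generator of $L$ and inverting it as a power series.

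Because only even indices $2(i-j)$ appear, I would use the variable $s = t^2$, or equivalently read off coefficients of $t^{2k}$. The generator of $L$ is then
\[
L(t) = \sum_{k \ge 0} \frac{E_{2k}}{(2k)!}\, t^{2k},
\]
and the goal is to identify it in closed form. The proof of Lemma \ref{lemma:Factorising G} gives the series
\[
\sec\left(\tfrac{\pi t}{2}\right) = \sum_i \left(\tfrac{\pi}{2}\right)^{2i}\frac{(-1)^i E_{2i}}{(2i)!}\, t^{2i}.
\]
Substituting $\tfrac{\pi t}{2} \mapsto i t$ removes both the factor $\left(\tfrac{\pi}{2}\right)^{2i}$ and the sign $(-1)^i$. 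This gives $L(t) = \sec(it) = \sech(t)$, equivalently
\[
L(t) = \sum_k E_{2k}\, \frac{t^{2k}}{(2k)!} = \frac{1}{\cosh t}.
\]
This is just the classical exponential generating function of the signed Euler numbers.

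Its reciprocal is $\cosh t = \sum_k \frac{t^{2k}}{(2k)!}$. This is exactly the generator of the proposed inverse, whose entries are $1/(2(i-j))!$ for $i \ge j$. Since $\sech(t)\cosh(t) = 1$ as formal power series in $t^2$, Lemma \ref{lemma:Products of Toeplitz Matrices} gives $L L^{-1} = I$.

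Truncating to the finite $n \times n$ matrices causes no problem. For lower triangular Toeplitz matrices, the $(i,j)$ entry of the product depends only on the coefficients of index at most $i-j \le n-1$. Hence the finite product agrees with the truncation of the product of the series.

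The only real obstacle is the sign convention for $E_{2k}$, namely whether the paper's "signed Euler numbers" have exponential generating function $\sech$ (with $E_2 = -1$) or $\sec$. I would settle this by a direct check against the $\sec(\pi t/2)$ expansion used in Lemma \ref{lemma:Factorising G}. The first two coefficients of that expansion are $1$ and $(\pi/2)^2/2$, which forces $E_2 = -1$, consistent with $\sech$.
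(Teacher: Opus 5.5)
Your proposal is correct and follows the paper's argument: the paper likewise takes the generator of $L$ in the variable $x=t^2$, identifies it as $\sech(\sqrt{x})$, notes that its reciprocal $\cosh(\sqrt{x})=\sum_i x^i/(2i)!$ is the generator of the claimed inverse, and concludes by Lemma~\ref{lemma:Products of Toeplitz Matrices}. Your extra checks are also correct: the sign convention $E_2=-1$ matches the $\sec$ expansion in Lemma~\ref{lemma:Factorising G}, and truncating to $n\times n$ matrices causes no problem; these make explicit what the paper leaves implicit.
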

\begin{proof}
Their generating functions \(L\) and are an inverse pair.
$$
L(x) = \sum_{i=0}^{\infty} \frac{E_{2i}}{(2i)!} x^{i} = \sech(\sqrt{ x }),\quad
L^{-1}(x) = \cosh(\sqrt{ x }) = \sum_{i=0}^{\infty} \frac{x^{i}}{(2i)!}
.
$$
Hence, by Lemma \ref{lemma:Products of Toeplitz Matrices} their associated lower triangular Toeplitz matrices are also an inverse pair.
\end{proof}

For \(U\), we use the following result to invert a matrix specified by elementary symmetric polynomials. This gives a matrix specified by complete symmetric polynomials.
\begin{lemma}[Inverse of a matrix of symmetric polynomials]
\label{lemma:Inverse of symmetric polynomial matrix}
Given a sequence \(x_1, \dots, x_n\), consider the elementary $e_k$ and complete $h_k$ symmetric polynomials.
Then the upper triangular matrices $A, B$ are an inverse pair.
$$
A_{mn}=e_{n-m}(x_{1:n}),\quad
B_{mn} = (-1)^{n-m} h_{n-m}(x_{1:m+1})
$$
\end{lemma}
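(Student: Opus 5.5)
The plan is to verify $BA = I$ directly via generating functions for symmetric polynomials. Since $A$ and $B$ are square upper triangular with unit diagonal ($e_0 = h_0 = 1$), a one-sided inverse is automatically two-sided, so $AB = I$ then follows for free. I will index rows and columns from $0$, as elsewhere in this appendix, so that $A_{mn} = e_{n-m}(x_1,\dots,x_n)$ for $n \ge m$ and $B_{mn} = (-1)^{n-m}h_{n-m}(x_1,\dots,x_{m+1})$ for $n\ge m$. Both matrices vanish below the diagonal.

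Fix $m \le k$. Triangularity restricts the product to
\[
(BA)_{mk} = \sum_{n=m}^{k} (-1)^{n-m} h_{n-m}(x_{1:m+1})\, e_{k-n}(x_{1:k}).
\]
If $k = m$ this is $h_0 e_0 = 1$. Otherwise $k \ge m+1$, so the variable set $S = \{x_1,\dots,x_{m+1}\}$ is contained in $T = \{x_1,\dots,x_k\}$. I will use the standard generating functions
\[
\sum_{j\ge 0} e_j(T)\, t^j = \prod_{x\in T}(1+xt), \qquad \sum_{j \ge 0} (-1)^j h_j(S)\, t^j = \prod_{x \in S}(1+xt)^{-1}.
\]
The second identity is the complete symmetric generating function $\prod_{x\in S}(1-xt)^{-1}$ evaluated at $-t$. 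The sum above is then the Cauchy product coefficient $[t^{k-m}]$ of these two series. The terms with $n$ outside $[m,k]$ contribute nothing, since $e_{k-n}$ vanishes for $n>k$ and negative indices are zero.

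The product of the two series telescopes to $\prod_{i=m+2}^{k}(1+x_i t)$, because $S\subseteq T$. This is a polynomial of degree $k-m-1$, so its coefficient of $t^{k-m}$ is $e_{k-m}(x_{m+2},\dots,x_k) = 0$, as there are only $k-m-1$ variables. Hence $(BA)_{mk} = \delta_{mk}$ for $m \le k$. The entries with $m > k$ vanish trivially, since a product of upper triangular matrices is upper triangular.

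The only delicate point is the bookkeeping of the variable sets. The $h$'s in row $m$ use $x_{1:m+1}$ while the $e$'s in column $k$ use $x_{1:k}$, and the cancellation hinges on the inclusion $S\subseteq T$, which holds exactly in the off-diagonal range $k\ge m+1$. I would double-check this index convention against the application $U_{ij} = e_{j-i}(\{(2k-1)^2\}_{k\in[j]})$, so that the lemma yields $Q^\top = U^{-1}$ with entries $(-1)^{j-i}h_{j-i}(\{(2k-1)^2\}_{k\in[i+1]})$. If instead one computes $AB$ directly, the variable sets are not nested in the right direction and the telescoping fails. For that reason I would deliberately compute $BA$ and invoke the one-sided-inverse argument for finite square matrices.
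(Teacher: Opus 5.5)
Your proof is correct, but you multiply in the opposite order from the paper. The paper computes $AB$ directly. There the entry $(AB)_{mn}=\sum_{k=0}^{r} e_k(x_{1:m+k})(-1)^{r-k}h_{r-k}(x_{1:m+k+1})$, with $r=n-m$, has variable sets that shift with the summation index, so there is no single generating-function product to exploit. The paper therefore splits each term using the one-variable recursions $e_k(x_{1:p})=e_k(x_{1:p-1})+x_p e_{k-1}(x_{1:p-1})$ and $h_j(x_{1:p})=h_j(x_{1:p-1})+x_p h_{j-1}(x_{1:p})$. The cross terms cancel in adjacent pairs, so the sum does not depend on $m$, and a second index-shift cancellation shows the $m=0$ sum vanishes.

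By computing $BA$ instead, you make both variable sets constant along the sum: $x_{1:m+1}$ from the row and $x_{1:k}$ from the column. The entry is then one Cauchy coefficient of $\prod_{i=1}^{k}(1+x_it)\prod_{i=1}^{m+1}(1+x_it)^{-1}=\prod_{i=m+2}^{k}(1+x_it)$ for $k\ge m+1$, and a degree count finishes it. Your only extra inputs are the standard generating functions and the fact that a one-sided inverse of a square matrix is two-sided.

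Your route is shorter, and it shows the lemma is just the nested-variable version of the identity $\sum_{j}(-1)^j h_j e_{r-j}=0$ for $r\ge 1$. The paper's route uses nothing beyond the defining recursions. One small correction to your closing remark: computing $AB$ does not actually fail, as the paper shows. It only loses the clean product structure and needs the recursion-based telescoping instead.
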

\begin{proof}
Recall \(e_0 = h_0 = 1\) and \(e_{-1}, h_{-1} = 0\) and for \(k \ge 1\)
\begin{align}
e_{k}(x_{1}, \dots, x_{n}) &= \sum_{1 \le i_{1} < \dots < i_{k} \le n} \prod_{j=1}^{k} x_{i_{j}} \\
h_{k}(x_{1}, \dots, x_{n}) &= \sum_{1 \le i_{1} \le \dots \le i_{k} \le n} \prod_{j=1}^{k} x_{i_{j}}
.
\end{align}
For notational ease, let $r = n-m$
$$
(AB)_{mn}
= \sum_{k=0}^{n-m} A_{m, m+k} B_{m+k, n}
=\sum_{k=0}^{r} e_{k}(x_{1:m+k}) (-1)^{r-k} h_{r-k}(x_{1:m+k+1})
:= S_{r}(m)
$$
Clearly $S_{0}(m) = 1 = A_{mm} B_{mm}$ as $A_{mm} = B_{mm} = 1$.
Now, for $r > 0$ we consider the recursions
\begin{align*}
e_{k}(x_{1:m+k}) &= e_{k}(x_{1:m+k-1}) + x_{m+k} e_{k-1}(x_{1:m+k-1}) \\
h_{r-k}(x_{1:m+k+1}) &= h_{r-k}(x_{1:m+k}) + x_{m+k+1} h_{r-k-1}(x_{1:m+k+1})
\end{align*}
We decompose
\begin{align*}
e_{k}(x_{1:m+k})h_{r-k}(x_{1:m+k+1}) 
&= e_{k}(x_{1:m+k-1})h_{r-k}(x_{1:m+k}) \\
&+ x_{m+k} e_{k-1}(x_{1:m+k-1})  h_{r-k}(x_{1:m+k}) \\
&+ e_{k}(x_{1:m+k})x_{m+k+1} h_{r-k-1}(x_{1:m+k+1})
.
\end{align*}
Noting $e_{-1} = h_{-1} = 0$ and summing over $k$ with the weight $(-1)^{r-k}$ yields the last two contributions to cancel to 0.
Therefore, \(S_{r}(m) = S_{r}(m-1) = S_{r}(0)\).
It suffices to show $S_{r}(0) = 0$. Similarly,
\begin{align*}
e_{k}(x_{1:k}) &= x_{1} \dots x_{k} \\
h_{r-k}(x_{1:k+1}) &= h_{r-k}(x_{1:k}) + x_{k+1} h_{r-k-1}(x_{1:k+1})
.
\end{align*}
Noting that $h_{r}(\emptyset) = 0$ for $r > 0$ and $h_{-1} = 0$, we can shift the indices and cancel.
$$
S_{r}(0) = \sum_{k=0}^{r} (-1)^{r-k} x_{1} \dots x_{k} h_{r-k}(x_{1:k})
 - \sum_{k=0}^{r} (-1)^{r-k-1} x_{1}\dots x_{k+1} h_{r-k-1}(x_{1:k+1})
= 0
$$
\end{proof}

\begin{lemma}[Inverse of U]
\label{lemma:Inverse of U}
The following lower triangular matrices are an inverse pair
\begin{equation*}
    U_{ij} = e_{j-i}(\{ (2k-1)^{2} \}_{k \in[j]})
    \Indicator{i \le j}, \quad
    U^{-1}_{ij} =
    (-1)^{j-i} h_{j-i}(\{ (2k-1)^{2} \}_{k \in[i+1]})
    .
\end{equation*}
Furthermore, $U^{-1}_{ij} = (-1)^{j-i} W_{2i+1, j-i} \Indicator{i \le j}$ where \(W\) is the sequence which arises in Lemma \ref{lemma:Derivatives of sech}.
\end{lemma}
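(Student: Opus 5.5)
The first claim follows directly from Lemma \ref{lemma:Inverse of symmetric polynomial matrix}, applied with $x_k=(2k-1)^2$. With $A_{mn}=e_{n-m}(x_{1:n})$ we recover $U_{ij}=e_{j-i}(\{(2k-1)^2\}_{k\in[j]})$. The inverse is $B_{mn}=(-1)^{n-m}h_{n-m}(x_{1:m+1})$, which is the stated $U^{-1}$. One point needs care: with indices starting at $0$, $U_{ij}$ uses the first $j$ odd squares, while the lemma's $A_{mn}$ uses $x_{1:n}$ with $n$ the column index. I will check that the two index shifts agree. With $0$-indexing, $U_{ij}$ involves $x_1,\dots,x_j$, and $B_{ij}$ then involves $x_1,\dots,x_{i+1}$, matching the statement. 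I will also verify the diagonal ($e_0=h_0=1$) and the triangular structure. Although the statement says "lower triangular", these matrices are upper triangular; I will note this in passing.

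For the identification with $W$, I will show $h_r(\{(2k-1)^2\}_{k\in[i+1]})=W_{2i+1,r}$ by induction, using the recursion of Lemma \ref{lemma:Derivatives of sech}. Complete symmetric polynomials satisfy
\[
h_r(x_{1:m})=x_m\,h_{r-1}(x_{1:m})+h_r(x_{1:m-1}),
\]
which is obtained by splitting monomials according to whether $x_m$ appears. Setting $m=i+1$ and $x_{i+1}=(2i+1)^2$ gives
\[
h_r(x_{1:i+1})=(2i+1)^2h_{r-1}(x_{1:i+1})+h_r(x_{1:i}).
\]
This is exactly $W_{2i+1,r}=(2i+1)^2W_{2i+1,r-1}+W_{2i-1,r}$. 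The initial conditions also match. For $r=0$, $h_0=1=W_{2i+1,0}$. For $r=1$, $h_1(x_{1:i+1})=\sum_{m=0}^{i}(2m+1)^2=W_{2i+1,1}$. For $i=0$, $h_r(x_1)=1$ for every $r$; I will check that $W_{1,r}=1$ is consistent, interpreting $W_{-1,r}=0$ for $r\ge1$.

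Double induction on $(i,r)$ then gives the identity on the full index range, and multiplying by $(-1)^{j-i}$ yields the claim. The main obstacle is bookkeeping: the paper's conventions for $W$ at the boundary ($W_{-1,k}$, and whether the $k=1$ formula is an independent definition or a consequence) must line up with the convention $h_r(\emptyset)=\delta_{r0}$. I will settle this first, by checking that $W_{2i+1,1}$ from the general recursion agrees with the stated closed form.
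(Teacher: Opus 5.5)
Your proposal is correct and follows the paper's own proof: apply Lemma~\ref{lemma:Inverse of symmetric polynomial matrix} with $x_k=(2k-1)^2$, then identify $h_r(x_{1:i+1})$ with $W_{2i+1,r}$ by showing both satisfy the same recursion with the same base case $r=0$. Your recursion $h_r(x_{1:m})=x_m h_{r-1}(x_{1:m})+h_r(x_{1:m-1})$ is the right one (the paper's displayed version with $h_{k-1}(x_1,\dots,x_i)$ is a typo). You are also right that $U$ and $U^{-1}$ are upper, not lower, triangular.
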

\begin{proof}
    Consider the sequence \(x_i = (2i-1)^{2}\), by Lemma \ref{lemma:Inverse of symmetric polynomial matrix} $U^{-1}_{ij} = (-1)^{j-i} h_{j-i}(\{ (2k-1)^{2} \}_{k \in[j]})$.
    To finish we observe \(h_k(x_1, \dots, x_{i+1}) = W_{2i+1, k}\).
    For fixed \(i\), as the complete symmetric polynomial can be characterised by the recursion
    \begin{equation*}
        h_{k}(x_1, \dots, x_{i+1}) = x_{i+1} h_{k-1}(x_{1}, \dots, x_{i}) + h_k(x_1, \dots, x_{i})
        .
    \end{equation*}
    By definition of \(W\), it satisfies the same recursion $W_{2i+1, k} = x_{i+1} W_{2i+1, k-1} + W_{2i-1, k}$ and the same base case \(k = 0, W_{2i+1, 0} = h_0(x_1, \dots, x_{i+1}) = 1\). Therefore, the sequences are identical.
\end{proof}

Now we compute the biorthogonal functions.
\begin{lemma}[Biorthogonal functions]
\label{lemma:Biorthogonal functions}
For \(m = 0, \dots, n-1\)
\begin{equation*}
    p_m(x)
    =
    \frac{1}{2}
    \begin{cases}
        \frac{(-1)^m}{(2m)!} \left[ \left( x + i\frac{\pi}{2} \right)^{2m} + \left( x - i\frac{\pi}{2} \right)^{2m} \right], & d = 2n\\
        \frac{(-1)^m}{x(2m + 1)!} \left[ \left( x + i\frac{\pi}{2} \right)^{2m+1} + \left( x - i\frac{\pi}{2} \right)^{2m+1} \right], & d = 2n+1
    \end{cases}
\end{equation*}

\begin{equation*}
        q_{m}(x)
=
\frac{1}{\pi}
\begin{cases}
\frac{(-1)^{m}}{\sech(x)} \sech^{(2m)}(x), & d = 2n \\
\frac{(-1)^{m+1}}{\sech(x) \tanh(x)} \sech^{(2m+1)}(x), & d = 2n + 1
\end{cases}
\end{equation*}
\end{lemma}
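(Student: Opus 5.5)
We compute $p_m$ and $q_m$ directly from Lemma \ref{lemma:Biorthogonal ensemble}, using the factorisation $G = D_1 L D_2 U D_3$ of Lemma \ref{lemma:Factorising G}. The plan is to absorb the diagonal factors into the two triangular factors, $\tilde L = D_1 L$ and $\tilde U = D_2 U D_3$. Then $P = \tilde L^{-1} = L^{-1} D_1^{-1}$ and $Q^{\top} = \tilde U^{-1} = D_3^{-1} U^{-1} D_2^{-1}$, where $L^{-1}$ and $U^{-1}$ are given by Lemmas \ref{lemma:Inverse of L} and \ref{lemma:Inverse of U}. The diagonal normalisation is not canonical: any split $D_2 = D_2' D_2''$ between the two factors gives a valid biorthogonal pair. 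We choose the split that produces the stated constants, and we carry the factor $\tfrac12$ from working on $\R$ rather than $\R_+$ explicitly.

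\textbf{The functions $p_m$ (even case).} We have
\[
p_m(x) = \sum_{k\le m} (L^{-1})_{mk}(D_1^{-1})_{k}\, x^{2k},
\]
up to the chosen diagonal normalisation. Writing this out,
\[
p_m(x) \propto \sum_{k} \frac{1}{(2(m-k))!}\,\frac{(-1)^k (2/\pi)^{2k}}{(2k)!}\, x^{2k}.
\]
After multiplying by $(-1)^m(\pi/2)^{2m}$, this becomes
\[
\frac{1}{(2m)!}\sum_k \binom{2m}{2k} x^{2k} \bigl(i\tfrac{\pi}{2}\bigr)^{2(m-k)},
\]
which is exactly the even part of the binomial expansion, $\tfrac12\bigl[(x+i\pi/2)^{2m}+(x-i\pi/2)^{2m}\bigr]/(2m)!$. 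The odd case is identical: replace $(2k)!$ by $(2k+1)!$ using $D_1'$, divide by $x$, and extract the odd part of $(x\pm i\pi/2)^{2m+1}$.

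\textbf{The functions $q_m$.} We have $q_m = \sum_{k\le m} Q_{mk}\,\sech(x)^{2k}$, where
\[
Q_{mk} = (U^{-1})_{km}\times(\text{diagonal}) = (-1)^{m-k}W_{2k+1,m-k}\times(\text{diagonal}).
\]
With the factor $(D_3^{-1})_m = (2m)!/\pi$ and the $(\pi/2)$-powers from $D_2$, we compare with Lemma \ref{lemma:Derivatives of sech}:
\[
\sech^{(2m)}(x) = \sum_k (-1)^k W_{2k+1,m-k}(2k)!\,\sech(x)^{2k+1}.
\]
After dividing by $\sech(x)$, the coefficient of $\sech^{2k}$ is $(-1)^kW_{2k+1,m-k}(2k)!$. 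We must therefore check that the diagonal factors assemble into $(-1)^k (2k)!$ times a function of $m$ alone, namely $(-1)^m/\pi$. This check is the main obstacle.

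The indexing in Lemma \ref{lemma:Biorthogonal ensemble} makes $q_m$ use $(U^{-1})^{\top}$ row $m$, i.e.\ the column entries $U^{-1}_{km}$. The $(2k)!$ must then come from $D_1$ rather than $D_3$. So the correct assignment is to move $D_1$ across: $G=LU$ is determined only up to a diagonal $\Delta$, $G = (L\Delta)(\Delta^{-1}U)$, and we pick $\Delta$ so that the resulting $p_m,q_m$ match the stated forms. Then we verify biorthogonality by the scalar identity $\sum_{k}P_{mk}G_{kl}Q_{lk'}^{\dots}=\delta$, which reduces to checking that the diagonal product $D_1D_2D_3$ is consistent with the chosen normalisation, including the signs $(-1)^m$ on both sides so that they cancel. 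The odd case follows in the same way from $\sech^{(2m+1)} = \tanh\cdot\sum(-1)^{k+1}W_{2k+1,m-k}(2k+1)!\,\sech^{2k+1}$, dividing by $\sech\tanh$ and using $D_1',D_3'$.
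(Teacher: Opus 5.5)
Your strategy is the paper's: factor $G = D_1 L D_2 U D_3$ (Lemma~\ref{lemma:Factorising G}), invert $L$ and $U$ via Lemmas~\ref{lemma:Inverse of L} and~\ref{lemma:Inverse of U}, then recognise a binomial expansion and Lemma~\ref{lemma:Derivatives of sech}. However, the diagonal bookkeeping in the $q_m$ step is wrong, and the repair you propose cannot work.

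Since $Q^{\top}$ has the form $D_3^{-1}U^{-1}(\cdots)$, its entries are $(Q^{\top})_{km} = (D_3^{-1})_k\, U^{-1}_{km}\,(\cdots)_m$. Left multiplication by $D_3^{-1}$ acts on the row index $k$. So it contributes $(D_3^{-1})_k = (2k)!/\pi$, not $(D_3^{-1})_m = (2m)!/\pi$. The factor $(2k)!$ you were looking for is therefore already present, together with the $1/\pi$; your ``main obstacle'' comes only from this misindexing.

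Your fix of moving $D_1$ across is also not viable in principle. Two LU factorisations of $G$ differ only by a diagonal, $G = (L\Delta)(\Delta^{-1}U)$, which rescales $p_m$ by $\Delta_m^{-1}$ and $q_m$ by $\Delta_m$. No such choice can introduce a $k$-dependent factor into the coefficients of $q_m$; those are determined by $G$ up to one overall constant for each $m$.

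What the diagonal freedom does control is that per-$m$ constant, and your initial split $\tilde L = D_1L$, $\tilde U = D_2UD_3$ is the wrong one. It gives $P_{mk} = L^{-1}_{mk}(D_1^{-1})_k$ and $Q_{mk} = (D_3^{-1})_kU^{-1}_{km}(D_2^{-1})_m$. So your $p_m$ is $(2/\pi)^{2m}$ times the stated one, and your $q_m$ is $(\pi/2)^{2m}$ times the stated one. To get the lemma exactly, put all of $D_2$ on the lower factor: $\tilde L = D_1LD_2$ and $\tilde U = UD_3$, i.e.\ $P = D_2^{-1}L^{-1}D_1^{-1}$ and $Q^{\top} = D_3^{-1}U^{-1}$. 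Then
\[
P_{mk} = \Bigl(\frac{\pi}{2}\Bigr)^{2(m-k)}\frac{(-1)^k}{(2k)!\,(2(m-k))!},\qquad
Q_{mk} = \frac{(-1)^{m-k}}{\pi}\,(2k)!\,W_{2k+1,m-k}.
\]
Your binomial computation, together with $(-1)^{m-k} = (-1)^m(-1)^k$ and Lemma~\ref{lemma:Derivatives of sech}, then yields the stated $p_m$ and $q_m$ directly. The odd case is identical, with $(2k+1)!$ supplied by $D_1'$ and $D_3'$.

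Biorthogonality needs no separate check, since $PGQ^{\top} = \tilde L^{-1}\tilde L\tilde U\tilde U^{-1} = I$ for any split. Finally, no $\R$-versus-$\R_+$ factor belongs in this lemma. Here $G$ is the Gram matrix on $\R$, and the $\tfrac12$ in $p_m$ is just the average of a complex number and its conjugate. The domain factor is handled afterwards in Section~\ref{section:Simplifying the Correlation Function}.
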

\begin{proof}
By Lemma \ref{lemma:Factorising G}, \(G = D_{1} L D_{2} U D_{3}\) and we set
$$
G^{-1} =
(D_{3}^{-1} U^{-1})
(D_{2}^{-1} L^{-1} D_{1}^{-1})
\to
P = D_{2}^{-1} L^{-1} D_{1}^{-1},\quad
Q^{\top} = D_{3}^{-1} U^{-1}
$$
The inverse of \(L, U\) are given in Lemmas \ref{lemma:Inverse of L}, \ref{lemma:Inverse of U}.
In the even case \(d = 2n\),
\begin{align*}
    P_{m, k} =
    \left(\frac{\pi}{2}\right)^{2(m-k)}
    \frac{1}{(2k)!}
    \frac{(-1)^{k}}{(2(m-k))!},\quad
    Q_{m, k} =
    \frac{ (-1)^{m-k}}{\pi} (2k)! W_{2k+1, m-k}
    .
\end{align*}
Therefore,
{\small
\begin{align*}
p_{m}(x)
&= \sum_{k=0}^{m} x^{2k} \left(\frac{\pi}{2}\right)^{2(m-k)}
    \frac{1}{(2k)!}
    \frac{(-1)^{k}}{(2(m-k))!}
=
\frac{1}{(2m)!}
\sum_{k=0}^{m} \binom{2m}{2k} (ix)^{2k} \left( \frac{\pi}{2} \right)^{2m-2k} \\
&= 
\frac{1}{2(2m)!}\left[ \left( \frac{\pi}{2} + ix \right)^{2m} + \left( \frac{\pi}{2} - ix \right)^{2m} \right] 
= 
\frac{(-1)^m}{2(2m)!}\left[ \left( x - i\frac{\pi}{2} \right)^{2m} + \left( x + i\frac{\pi}{2} \right)^{2m} \right]
\end{align*}
\begin{align*}
q_{m}(x)
&= \sum_{k=0}^{m} \sech(x)^{2k} \frac{ (-1)^{m-k}}{\pi} (2k)! W_{2k+1, m-k} \\
&= \frac{(-1)^{m}}{\pi \sech(x)} \sum_{k=0}^{m} \sech(x)^{2k+1} (-1)^{k} (2k)! W_{2k+1, m-k}
= \frac{(-1)^{m}}{\pi \sech(x)} \sech ^{(2m)}(x)
.
\end{align*}
}
In the odd case, we use the following coefficients and apply the same technique.
\begin{align*}
    P_{m, k} =
    \left(\frac{\pi}{2}\right)^{2(m-k)}
    \frac{1}{(2k+1)!}
    \frac{(-1)^{k}}{(2(m-k))!},\quad
    Q_{m, k} =
    \frac{ (-1)^{m-k}}{\pi} (2k+1)! W_{2k+1, m-k}
\end{align*}
\end{proof}

\subsection{Simplifying the Correlation Function}
\label{section:Simplifying the Correlation Function}
Finally, we show how to arrive at the correlation function and kernel stated in Theorem \ref{theorem: Singular values are a DPP}, which we notate here with $K'$. That is
\begin{equation*}
    \rho_k(x_1, \dots, x_k) = \det\left[ K'_{n}\left( x_{i}, x_{j}\right) \right]_{i, j = 1}^{k},\quad k \le n
\end{equation*}
where the kernel is
\begin{equation*}
    K'_n(x, y) = \sum_{m=0}^{n-1} p'_m(x) q'_m(y)
\end{equation*}
and $p'_m, q'_m$ are biorthogonal functions in the sense that \(\int_{0}^{\infty} p'_i(x) q'_j(x) dx = \delta_{ij}\).
\begin{align*}
        p'_m(x)
        &=
        \mathrm{Re} \left[
\begin{cases}
\left( x + i\frac{\pi}{2}\right)^{2m}, & d = 2n\\
\left( x + i \frac{\pi}{2} \right)^{2m + 1}, & d = 2n+1
\end{cases} \right] \\
        q'_{m}(x)
&=
\frac{2}{\pi}
\begin{cases}
\frac{1}{(2m)!} \sech^{(2m)}(x), & d = 2n \\
\frac{-1}{(2m + 1)!} \sech^{(2m+1)}(x), & d = 2n + 1
.
\end{cases}
\end{align*}
When computing the Gram matrix $G$, we used the domain $\mathcal{X} = \R$. As $\xi, \eta, w$ are even functions, we need to adjust the Gram matrix $G$ by a factor of $1/2$. When computing $P, Q$ from $G^{-1}$ this gets converted to a factor of $2$. Hence, this scales the $k$-point correlation function by $2^{k}$.
\begin{equation*}
    \rho_k(x_1, \dots, x_k) = 2^{k}
    \det\left[ K_n\left( x_i, x_j \right) \right]_{i, j = 1}^{k}
    \prod_{i=1}^{k} w(x_i)
\end{equation*}
where $K_n(x, y) = \sum_{m=0}^{n-1} p_{m}(x) q_{m}(y)$ and
\begin{align*}
    p_m(x)
    &=
    \frac{1}{2}
    \begin{cases}
        \frac{(-1)^m}{(2m)!} \left[ \left( x + i\frac{\pi}{2} \right)^{2m} + \left( x - i\frac{\pi}{2} \right)^{2m} \right], & d = 2n\\
        \frac{(-1)^m}{x(2m + 1)!} \left[ \left( x + i\frac{\pi}{2} \right)^{2m+1} + \left( x - i\frac{\pi}{2} \right)^{2m+1} \right], & d = 2n+1
    \end{cases} \\
        q_{m}(x)
&=
\frac{1}{\pi}
\begin{cases}
\frac{(-1)^{m}}{\sech(x)} \sech^{(2m)}(x), & d = 2n \\
\frac{(-1)^{m+1}}{\sech(x) \tanh(x)} \sech^{(2m+1)}(x), & d = 2n + 1
\end{cases}
.
\end{align*}
Any kernel representation is valid as long as the determinant of its Gram matrix is the same.
\begin{itemize}
    \item $p'_m$ - $p_m$ corresponds to the real part of $(x + i \pi/2)^{2m}$ as it is summed with its complex conjugate. We then shift the factorials and $(-1)^m$ from $p_m$ to $q_m$.
    In the odd case, the factor of $1/x$ will be handled in the third point.
    \item $q'_m$ - We absorb the factors from $p_m$ to cancel $(-1)^m$ and introduce the factorials.
    \item $w$ -  Finally, we absorb $\prod_{i=1}^{k} 2w(x_i)$ into the determinant. In the even case we use it to multiply each column by $2w(x_j)$, this multiplies the kernel by $2w(y)$ which recovers $q'_m$. 
    In the odd case we split the weight function into $x$ and $ 2\sech(x) \tanh(x)$. Multiplying the columns $ 2\sech(x_j) \tanh(x_j)$ recovers $q'_m$ and multiplying the rows by $x_i$ recovers $p'_m(x)$. 
\end{itemize}
Furthermore, the functions are now biorthogonal with respect to the standard integral product instead of the one weighted by $w$ as
\begin{equation*}
\delta_{ij}
= \int_{\mathbb{R}} p_{i}(x)q_{j}(x) w(x)  \, dx
= \frac{1}{2} \int_{\mathbb{R}} p'_{i}(x) q'_{j}(x) \, dx 
= \int_{\mathbb{R}_{+}} p'_{i}(x) q'_{j}(x) \, dx 
\end{equation*}

\subsection{Proofs for Recurrence Relations of the Biorthogonal Functions}
\label{appendix:Proofs for recurrence relations for the Biorthogonal functions}
Again these follow by considering a suitable generating function and comparing coefficients.

\begin{proof}[Proof of Lemma \ref{lemma:Recurrence}]
For both $p_m, q_m$, we show they are polynomials, up to a factor, and identify the coefficients of their recurrence relation.
We use the notation $p_m^e, p_m^o$ to denote the even and odd version of $p_m$ and similarly for other variables.

\textbf{Part 1: $p_m$ recurrence}
$p_m$ is a polynomial. In the even case $d=2n$, it is an even function $p_m(x) = p_m(-x)$. Hence it must only have even degree terms and be a polynomial in $x^2$. For $d=2n+1$, it is an odd function hence must be a polynomial in $x^2$ times $x$. Next let $z = x + i \frac{\pi}{2}$ and set $p_m$ and $\psi_m$ as the real and imaginary part of $z^{2m}$ in the even case (and $z^{2m+1}$ in the odd case respectively).
Using $x^{2} = \left( x \pm i \frac{\pi}{2} \right)^{2} + \left( \frac{\pi}{2} \right)^{2} + \mp i \pi x$ shows that in both even and odd cases
\[
x^{2} p_{m} = p_{m+1} + \left( \frac{\pi}{2} \right)^{2} p_{m} + \pi x \, \psi_{m}(x)
.
\]
To establish the recursion, we express $x \psi_m$ in terms of $p_m$. To this end, we introduce the generating functions $P(t), \Psi(t)$ over $p_m, \psi_m$ respectively. We define $R(t) = \tan(\frac{\pi}{2}t) P(t)$ and then show $\frac{d}{dt} R(t) = x\Psi(t) + \frac{\pi}{2} P(t)$. Comparing coefficients yields the result.

In the even case, the generating functions are
\begin{align*}
P^{e}(t) &= \sum_{m=0}^{\infty} \frac{p^e_{m}}{(2m)!} t^{2m} = \mathrm{Re} \left( \cosh\left( t\left( x + i \frac{\pi}{2} \right) \right) \right) = \cosh(tx) \cos\left( \frac{\pi t}{2} \right) \\
\Psi^{e}(t) &= \sum_{m=0}^{\infty} \frac{\psi^{e}_{m}}{(2m)!} t^{2m} = \mathrm{Im}\left( \cosh\left( t\left( x + i \frac{\pi}{2} \right) \right) \right) = \sinh(xt) \sin\left( \frac{\pi t}{2} \right) \\
R^{e}(t) &= \cosh(tx) \sin\left( \frac{\pi t}{2} \right) = \tan\left( \frac{\pi}{2}t \right) P^{e}(t) = \sum_{m=0}^{\infty} r^{e}_{m} \frac{t^{2m+1}}{(2m+1)!}
\end{align*}
where $r^{e}_{m}$ is obtained by convolving the coefficients of $\tan(\pi t / 2)$ and $P^{e}$.
\[
r^{e}_{m} = \sum_{k=0}^{m} \binom{2m+1}{2k} T_{m - k} \left( \frac{\pi}{2} \right)^{2(m-k)+1} p^{e}_{k}(x)
\]
The derivative identity follows as
$$
\frac{d}{dt} R^{e}(t) = x \sinh(tx) \sin\left( \frac{\pi t}{2} \right) + \frac{\pi}{2} \cosh(xt) \cos\left( \frac{\pi t}{2} \right) =x\Psi^{e}(t) + \frac{\pi}{2} P^{e}(t)
.
$$
Comparing coefficients gives an expression for $x \psi^{e}_m$ and establishes the recursion for $x^{2} p^{e}_{m}$.
$$
x\psi^{e}_{m} = r^{e}_{m} - \frac{\pi}{2} p^{e}_{m} = 2m\left( \frac{\pi}{2} \right) p^{e}_{m} + \sum_{k=0}^{m-1} \binom{2m+1}{2k} T_{m - k} \left( \frac{\pi}{2} \right)^{2(m-k)+1} p^{e}_{k}(x)
$$

In the odd case, we consider the generating functions
\begin{align*}
P^{o}(t) &= \sum_{m=0}^{\infty} \frac{p^{o}_{m}}{(2m+1)!} t^{2m+1} = \mathrm{Re} \left( \sinh\left( t\left( x + i \frac{\pi}{2} \right) \right) \right) = \sinh(tx) \cos\left( \frac{\pi t}{2} \right) \\
\Psi^{o}(t) &= \sum_{m=0}^{\infty} \frac{\psi^{o}_{m}}{(2m+1)!} t^{2m+1} = \mathrm{Im}\left( \sinh\left( t\left( x + i \frac{\pi}{2} \right) \right) \right) = \cosh(xt) \sin\left( \frac{\pi t}{2} \right) \\
R^{o}(t) &= \sinh (tx) \sin\left( \frac{\pi t}{2} \right) = \tan\left( \frac{\pi}{2}t \right) P^{o}(t) = \sum_{m=0}^{\infty} r^{o}_{m} \frac{t^{2m+2}}{(2m+2)!}
\end{align*}
where $r^o_m$ is obtained by convolving $\tan$ with $P^o$.
$$
r^{o}_{m} = \sum_{k=0}^{m} \binom{2m+2}{2k+1} T_{m - k} \left( \frac{\pi}{2} \right)^{2(m-k)+1} p^{o}_{k}(x)
$$
Again $\frac{d}{dt} R^{o}(t) = x\Psi^{o}(t) + \frac{\pi}{2} P^{o}(t)$ and comparing coefficients yields the result. Finally a standard result is that the tangent numbers (OEIS A000182) are given by
$$
T_{m} = 2 \left( \frac{2}{\pi} \right)^{2m+2}(2m+1)! \,(1 - 2^{-(2m+2)})\zeta(2m+2)
$$
where $\zeta(s) = \sum_{n=1}^{\infty} \frac{1}{n^{s}}$ is the Riemann Zeta function.

\textbf{Part 2: $q_m$ recurrence}
By Lemma \ref{lemma:Derivatives of sech}, $q^{e}_m(x) / \sech(x)$ and $q^{o}_m(x) / (\sech(x) \tanh(x))$ are polynomial in $\sech(x)^2$.
Let $y = \sech(x)^2$. Again we consider a generating function $Q(t)$ over $q_m$ and show that $y Q(t) = \csc(t)^{2} Q(t) + f(t)$ for some function $f$ depending on the parity. We obtain an explicit formula for the coefficients of $f$ and comparing coefficients yields the result.

In the even case,
$$
Q^{e}(t) = \frac{\pi / 2}{\sech(x)} \sum_{m=0}^{\infty} (-1)^{m} q^{e}_{m}(x) t^{2m} = \frac{\sech(x + it) + \sech(x - it)}{2\sech(x)} =  \frac{\cos(t)}{1 - y \sin(t)^{2}}
$$
and
$$
y Q^{e}(t) = y \frac{\cos(t)}{1 - y \sin(t)^{2}} = \frac{1}{\sin(t)^{2}} \left[ \frac{\cos(t)}{1 - y \sin(t)^{2}} - \cos (t) \right]
=
\csc(t)^{2} Q^{e}(t) - \frac{\cos(t)}{\sin(t)^{2}}
.
$$
Comparing the coefficient of $t^{2m}$ in $yQ(t)$, we deduce
$$
\beta^{e}_{k, m} = [t^{2k-2}] \csc(t)^{2},\quad \beta^{e}_{m+1, m} = [t^{2m}] \left( \csc(t) - \frac{\cos(t)}{\sin(t)^{2}} \right)
.
$$

In the odd case, 
$$
Q^{o}(t) = \frac{\pi i / 2}{\sech(x) \tanh(x)} \sum_{m=0}^{\infty} (-1)^{m}q^{o}_{m}(x) t^{2m} = \frac{\sech(x + it) - \sech(x - it)}{-2i \sech(x) \tanh(x)} =  \frac{\sin(t)}{1 - y \sin(t)^{2}}
$$
and $y Q^{o}(t) = \csc(t)^{2} Q^{o}(t) - \frac{1}{\sin(t)}$. Comparing the coefficient of $t^{2m+1}$, we deduce
$$
\beta^{o}_{k, m} = [t^{2k-2}] \csc(t)^{2},\quad
\beta^{o}_{m+1, m} = [t^{2m}] \csc(t)^{2} - [t^{2m+1}] \csc(t)
.
$$
In both cases, $\beta_{0, m} = 1$ as $[t^{-1}]\csc(t) = 1$.

\textbf{Part 2.1 Explicit formula for the coefficients} For $k \ge 1$, we show
\begin{align*}
    [t^{2k}] \csc(t)^{2} &= \frac{2(2k+1)}{\pi^{2k+2}}\sum_{n=1}^{\infty} \frac{1}{n^{2k+2}} = \frac{2(2k+1) \zeta(2k+2)}{\pi^{2k+2}} \\
    [t^{2k+1}] \csc(t) &= \frac{2}{\pi^{2k+2}} \sum_{n=1}^{\infty} \frac{(-1)^{n+1}}{n^{2k+2}}
=\frac{2(1-2^{-(2k+1)}) \zeta(2k+2)}{\pi^{2k+2}} \\
    -[t^{2k}] \frac{\cos(t)}{\sin(t)^{2}} &= (2k+1) [t^{2k+1}] \csc(t)
    .
\end{align*}
This gives $\beta_{k, m}$ for $1 \le k \le m$. For $k = m + 1$, we combine to give
\begin{align*}
    \beta^{e}_{m+1, m} &= \frac{2(2m+1)\zeta(2m+2)}{\pi^{2m+2}}(1+1-2^{-2m-1}) = \frac{4(2m+1)\zeta(2m+2)}{\pi^{2m+2}}(1-2^{-2m-2}) \\
\beta_{m+1, m}^{o} &= \frac{4\zeta(2m+2)}{\pi^{2m+2}} (2m+1 +1 -2^{-2m-1}) = \frac{4\zeta(2m+2)}{\pi^{2m+2}} (m+1 -2^{-2m-2})
.
\end{align*}
As our generating function used $(-1)^m$, we have shown recurrence for $(-1)^{m} q_m$ and
$$
\sech(x)^{2}(-1)^m q_{m}(x) = \sum_{k=0}^{m+1} \beta_{k, m} ((-1)^{m+1-k}q_{m+1-k}(x))
.
$$
To recover the formulation in the lemma, we cancel the factors of $(-1)^m$.

Next, $\beta_{k, m} \ge 0$ as all term are positive. To show $\beta_{k, m} \le 1$, use $\zeta(2k) \le \zeta(2) = \pi^2 / 6  < 2$ and consider $f(x)= 4(2x+1)/\pi^{2x+2}$ which is decreasing with $f(0) < 1$. Hence $f(x) < 1$ for $x \ge 0$ and $\beta_{k, m} \le f(k-1) < 1$.

\textbf{Part 2.2 Derivation of the coefficients}
Use the pole expansion of $\csc^2$, expand $(t-a)^{-2}$ and compare coefficients to deduce
\begin{align*}
\csc(t)^{2} &= \frac{1}{t^{2}} + \sum_{n=1}^{\infty} \frac{1}{(t - n \pi)^{2}} + \frac{1}{(t + n \pi)^{2}} \\
&= \frac{1}{t^{2}} + \sum_{n=1}^{\infty} \sum_{r \ge 0} \frac{2(2r+1)}{\pi^{2r+2} n^{2r+2}} t^{2r}
.
\end{align*}
The coefficients of $-\frac{\cos(t)}{\sin(t)^{2}} = \frac{d}{dt}\csc(t)$ follow from $\csc$. Again, we consider the pole expansion.
\begin{align*}
\csc(t) &= \frac{1}{t} + \sum_{n=1}^{\infty} \frac{(-1)^{n}}{t-n\pi} + \frac{(-1)^{n}}{t + n \pi} \\
&= \frac{1}{t} + \sum_{n=1}^{\infty} (-1)^{n+1} \sum_{r \ge 0} \frac{2t^{2r+1}}{n^{2r+2} \pi^{2r+2}}
\end{align*}
\end{proof}

\begin{proof}[Proof of Lemma \ref{lemma:Moments of the empirical measure}]
Let $\alpha^N_m$ denote the coefficient of $p_m$ in $x^{2N} p_m$
By biorthogonality, $m_N = \sum_{m=0}^{n-1} \alpha^{N}_m$. Let $z = (x + i \frac{\pi}{2})$, by the binomial theorem
\begin{align*}
x^{2N} &= \sum_{j=0}^{N} \binom{2N}{2j} \left( \frac{\pi}{2} \right)^{2(N-j)} (-1)^{N-j} z^{2j}
- i \frac{\pi}{2} \sum_{j=0}^{N-1} \binom{2N}{2j+1}\left( \frac{\pi}{2} \right)^{2(N-j-1)} (-1)^{N-j-1} z^{2j+1} \\
&= \sum_{j=0}^{N} \binom{2N}{2j} \left( \frac{\pi}{2} \right)^{2(N-j)} (-1)^{N-j} \overline{z}^{2j}
+ i \frac{\pi}{2} \sum_{j=0}^{N-1} \binom{2N}{2j+1}\left( \frac{\pi}{2} \right)^{2(N-j-1)} (-1)^{N-j-1} \overline{z}^{2j+1}
.
\end{align*}
Let $\widetilde{\psi}_{m} = \mathrm{Im}\left( z^{2m+1} \right)$. Therefore,
$$
x^{2N} p_{m}
=
\sum_{j=0}^{N} \binom{2N}{2j} \left( \frac{\pi}{2} \right)^{2(N-j)} (-1)^{N-j} \,p_{m+j}
+ \frac{\pi}{2} \sum_{j=0}^{N-1} \binom{2N}{2j+1}\left( \frac{\pi}{2} \right)^{2(N-j-1)} (-1)^{N-j-1}\, \widetilde{\psi}_{m + j}
.
$$
Note that
$$
\widetilde{\psi}_{m+j}
=
x \psi_{m+j} + \frac{\pi}{2} p_{m+j}
=
\sum_{k=0}^{m+j} \binom{2(m+j)+1}{2k} T_{m+j-k} \left( \frac{\pi}{2} \right)^{2(m + j-k) +1} p_{k}
.
$$
Collecting the coefficients of $p_m$, we deduce
$$
\alpha^N_m
=
\left( \frac{\pi}{2} \right)^{2N} (-1)^{N}
+ \frac{\pi}{2} \sum_{j=0}^{N-1}
\binom{2N}{2j+1}\left( \frac{\pi}{2} \right)^{2N-1} (-1)^{N-j-1}
\binom{2(m+j)+1}{2m} T_{j}
$$
and summing over $m$ yields the result.
\end{proof}

\end{document}